\newtheorem{thm}{Theorem}[section]
\newtheorem*{wq}{Weiss' Question \cite{W84}}
\newtheorem{cor}[thm]{Corollary}
\newtheorem{lem}[thm]{Lemma}
\newtheorem{prop}[thm]{Proposition}
\theoremstyle{definition}
\newtheorem{defn}[thm]{Definition}
\theoremstyle{remark}
\numberwithin{equation}{section}
\newcommand{\R}{\mathbb{R}}
\newcommand{\N}{\mathbb{N}}
\newcommand{\Z}{\mathbb{Z}}
\newcommand{\Q}{\mathbb{Q}}
\newcommand{\cU}{\mathcal{U}}
\newcommand{\cV}{\mathcal{V}}
\newcommand{\cW}{\mathcal{W}}
\newcommand{\symd}{\triangle}
\newcommand{\res}{\restriction}
\newcommand{\acts}{\curvearrowright}
\newcommand{\from}{\colon}
\newcommand{\Basdim}{\mathrm{asdim}_{\textbf{B}}}
\newcommand{\asdim}{\mathrm{asdim}}
\newcommand{\asi}{\mathrm{asi}}
\newcommand{\walkeq}[3][]{\mathfrak{F}^{#1}_{#3}({#2})}
\newcommand{\colorA}{cyan}
\newcommand{\colorB}{green}
\newcommand{\colorC}{pink}
\newcommand{\colorAtext}{blue}
\newcommand{\colorBtext}{\colorB}
\newcommand{\colorCtext}{\colorC}
\begin{document}

\title[Borel asymptotic dimension and hyperfinite relations]{Borel asymptotic dimension and hyperfinite equivalence relations}
% Borel asymptotic dimension and polycyclic groups
%Asymptotic dimension, hyperfinite equivalence relations, and polycyclic groups
%\keywords{asymptotic dimension, hyperfinite, polycyclic, F{\o}lner tiling}
%\subjclass[2020]{Primary 03E15; Secondary }
% Primary:
% 03E15: Descriptive set theory
% 37A15: General groups of measure-preserving transformations and dynamical systems
% Secondary
% 37A55: Dynamical systems and the theory of $C^*$-algebras
% 37B05: Dynamical systems involving transformations and group actions with special properties (minimality, distality, proximality, expansivity, etc.)
% Other choices
% 37B99: Topological dynamics, None of the above, but in this section
% 46L55: (Functional Analysis) Noncommutative dynamical systems

\author[Conley]{Clinton T. Conley}
\author[Jackson]{Steve C. Jackson}
\author[Marks]{Andrew S. Marks}
\author[Seward]{Brandon M. Seward}
\author[Tucker-Drob]{Robin D. Tucker-Drob}

\thanks{This paper was produced as the result of a SQuaRE workshop supported by the American Institute of Mathematics. The first author was supported by NSF grant DMS-1855579, the second author was supported by NSF grant DMS-1800323, the third author was supported by NSF grant DMS-2054182, the fourth author was supported by NSF grant DMS-1955090, and the fifth author was supported by NSF grant DMS-2216533.}

\begin{abstract}
A long standing open problem in the theory of hyperfinite equivalence relations asks if the orbit equivalence relation generated by a Borel action of a countable amenable group is hyperfinite. In this paper we prove that this question always has a positive answer when the acting group is polycyclic, and we obtain a positive answer for all free actions of a large class of groups including the lamplighter group $\Z_2 \wr \Z$ and all virtually solvable groups having finite Pr{\"u}fer rank. This marks the first time that a group of exponential volume-growth has been verified to have this property. In obtaining this result we introduce a new tool for studying Borel equivalence relations by extending Gromov's notion of asymptotic dimension to the Borel setting. We show that countable Borel equivalence relations of finite Borel asymptotic dimension are hyperfinite, and more generally we prove under a mild compatibility assumption that increasing unions of such equivalence relations are hyperfinite. As part of our main theorem, we prove for a large class of solvable groups that all of their free Borel actions have finite Borel asymptotic dimension (and finite dynamic asymptotic dimension in the case of a continuous action on a zero-dimensional space). We also provide applications to Borel chromatic numbers, Borel and continuous F{\o}lner tilings, topological dynamics, and $C^*$-algebras.
\end{abstract}

\maketitle

\section{Introduction}

The asymptotic dimension $\asdim(X, \rho)$ of a metric space $(X, \rho)$ is a large-scale analog of Lebesgue covering dimension. It was first introduced by Gromov in 1993 as a quasi-isometry invariant of finitely generated groups \cite{G93} (for a survey of asymptotic dimension and its applications to group theory, see \cite{BD08}). In recent years, a related notion of \emph{dynamic asymptotic dimension} was introduced in the context of topological dynamical systems by Guentner, Willet, and Yu \cite{GWY17}. While a few of our results pertain to topological dynamics and dynamic asymptotic dimension, our main focus is to further adapt the notion of asymptotic dimension to the context of Borel equivalence relations, and to investigate its applications.

Let $X$ be a standard Borel space, let $\rho : X \times X \rightarrow [0, +\infty]$ be a Borel extended metric on $X$, where by \emph{extended} we mean that the value $+\infty$ is allowed, and define the Borel equivalence relation $E_\rho = \{(x, y) \in X \times X : \rho(x, y) < \infty\}$. When every class of $E_\rho$ is countable, we define the \emph{Borel asymptotic dimension} of $(X, \rho)$, denoted $\Basdim(X, \rho)$, to be $d$ if $d \in \N \cup \{+\infty\}$ is least with the property that for every radius $r > 0$ there exists a Borel equivalence relation $F \subseteq E_\rho$ such that the $F$-classes have uniformly bounded finite diameter and every ball of radius $r$ meets at most $d+1$ many classes of $F$. The original notion of asymptotic dimension, which we will refer to as \emph{standard asymptotic dimension} for clarity, is defined in the same manner but without the Borel measurability restrictions. In particular, we always have $\Basdim(X, \rho) \geq \asdim(X, \rho)$.

It is perhaps a bit surprising that in the Borel context the most pertinent information is whether or not the Borel asymptotic dimension is finite, as the following theorem reveals. For the statement below, recall that a metric $\rho$ is \emph{proper} if every ball of finite radius has finite cardinality.

\begin{thm} \label{intro:equal}
Let $X$ be a standard Borel space and let $\rho$ be a Borel proper extended metric on $X$. If the Borel asymptotic dimension of $(X, \rho)$ is finite, then it is equal to the standard asymptotic dimension of $(X, \rho)$.
\end{thm}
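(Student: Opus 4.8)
Since $\Basdim(X,\rho)\ge\asdim(X,\rho)$ always holds, the entire content is the reverse inequality. Writing $n:=\asdim(X,\rho)$, the plan is to show that for every radius $r>0$ there is a Borel equivalence relation $F\subseteq E_\rho$ with uniformly bounded classes such that every ball of radius $r$ meets at most $n+1$ classes of $F$. Two consequences of the hypotheses will be used throughout. First, since $\rho$ is proper, for every $s>0$ the Borel graph $\Gamma_s$ on $X$ defined by $x\mathrel{\Gamma_s}y\iff 0<\rho(x,y)\le s$ is locally finite. Second, finiteness of $\Basdim(X,\rho)$, say $\Basdim(X,\rho)=d$, provides at every scale $s$ a Borel equivalence relation whose classes are finite of uniformly bounded diameter and such that every ball of radius $s$ meets at most $d+1$ of them; these ``bounded Borel chunkings'' are the device that lets us convert finitary combinatorial choices into Borel ones.

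The case $n=0$ is clean and does not even use finiteness of $\Basdim$. Fix $r$ and let $F$ be the connectedness equivalence relation of $\Gamma_{3r}$. Since $\asdim(X,\rho)=0$, the definition applied at scale $3r$ yields a partition of $X$ into pairwise $3r$-separated sets of diameter at most some fixed $S$; no $\Gamma_{3r}$-edge joins two of them, so each $\Gamma_{3r}$-component lies in one of them and has diameter at most $S$, while distinct components are more than $3r$ apart. Thus $F$ has uniformly bounded finite classes, and every ball of radius $r$ — having diameter at most $2r$ — meets exactly one of them. Finally $F$ is Borel: ``there is a $\Gamma_{3r}$-path of length at most $k$ from $x$ to $y$'' is Borel for each $k$, by induction on $k$ using the Luzin--Novikov theorem and the local finiteness of $\Gamma_{3r}$ to handle the existential quantifier over the last intermediate vertex, and $F$ is the countable union over $k$.

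For $n\ge 1$ the plan is to Borelize a classical witness. It is standard that $\asdim(X,\rho)\le n$ is equivalent to: for every $s>0$ there are families $\cU_0,\dots,\cU_n$ of uniformly bounded subsets of $X$, each family pairwise-separated at scale $s$, whose union is $X$. One seeks a Borel analogue valid at scale $r$. The idea is to first cut $X$ into the finite classes of a bounded Borel chunking at a large scale; on each chunk $C$ the trace of such families is a finite combinatorial object of the required kind, and at least one exists, so after fixing a Borel linear order on $X$ to break ties, uniformizing the Borel finite-section relation ``$P$ is such a structure on $C$'' produces a Borel assignment $C\mapsto P_C$. It then remains to amalgamate the $P_C$ into a single Borel equivalence relation on $X$ with uniformly bounded classes and multiplicity at most $n+1$ at scale $r$, which one organizes, as is standard in asymptotic-dimension arguments, by peeling off the $n+1$ colours one at a time across a rapidly increasing finite sequence of scales $r=r_0\ll r_1\ll\dots\ll r_n$.

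The main obstacle is the behaviour at chunk interfaces: choosing the $P_C$ independently can double the multiplicity across a boundary, since a colour-$i$ piece of $P_C$ may land within distance $r$ of a colour-$i$ piece of $P_{C'}$ for an adjacent chunk $C'$. Overcoming this is exactly where the full strength of $\Basdim(X,\rho)<\infty$ — rather than merely hyperfiniteness of $E_\rho$ — enters: the uniform bound $d+1$ on how many stage-$i$ chunks a ball of radius $r_i$ can meet forces the boundary regions to be handled to lie in uniformly bounded finite regions that are mutually $r_i$-far apart, so that at stage $i$ they can be reconciled simultaneously and Borel-measurably, without interference, using one colour's worth of slack. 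Carrying out this telescoping construction in the Borel category and verifying the claimed bounds on the resulting equivalence relation is the technical heart of the proof, and is the Borel counterpart of the telescoping arguments familiar from the theory of asymptotic dimension.
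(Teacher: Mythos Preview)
Your overall shape is right --- use finiteness of $\Basdim$ to produce a Borel partition into pieces on which the classical witness can be uniformized, then amalgamate --- and your uniformization step is essentially the paper's Lemma~\ref{lem:smooth_asdim}. But the proposal stops precisely where the content is: you explicitly say that ``carrying out this telescoping construction \ldots\ is the technical heart of the proof,'' and you do not carry it out. The hints you give for how it should go are not convincing. The claim that the multiplicity bound $d+1$ forces the interface regions to lie in ``uniformly bounded finite regions that are mutually $r_i$-far apart'' is not correct in general (think of a grid partition of $\R^2$: the interface region is connected at every scale), and ``peeling off the $n+1$ colours one at a time'' across scales indexed by $n=\asdim$ is not a mechanism that obviously controls multiplicity --- you have no colour of slack to spend if the target is exactly $n+1$ colours.

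The paper's route is cleaner and avoids these issues. Rather than a single chunking, it uses the (trivially available) bound $\asi(X,\rho)\le\Basdim(X,\rho)=:s<\infty$ to partition $X$ into $s+1$ Borel pieces $X_0,\dots,X_s$ on each of which $\walkeq{X_i}{\cdot}$ is bounded; your uniformization idea (Lemma~\ref{lem:smooth_asdim}) then gives that each $X_i$ has Borel $(15r_i,r_{i+1})$-dimension at most $d=\asdim(X,\rho)$ for a rapidly increasing sequence $r_0\ll\cdots\ll r_{s+1}$. The amalgamation (Lemma~\ref{lem:finiteunion}) is then a two-at-a-time induction over the \emph{pieces}, not the colours: if $X_0$ has Borel $(r_0,r_1)$-dimension $\le d$ and $X_1$ has Borel $(3r_1,r_2)$-dimension $\le d$, one simply sets $U_i=U_i^0\cup U_i^1$. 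The point is that any $\walkeq{U_i^0}{r_0}$-class has diameter $\le r_1$, so together with its $r_0$-neighbourhood it has diameter $\le 3r_1$ and therefore can touch at most one $\walkeq{U_i^1}{3r_1}$-class. That scale relationship --- small pieces at one stage versus large separation at the next --- is the actual mechanism, and it is indexed by the number of pieces in the Borel partition, not by the target dimension $n$. Your sketch does not isolate this, and without it the interface problem is genuinely unresolved.
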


We also find that Borel and standard asymptotic dimension agree modulo a meager set.

\begin{thm} \label{intro:bp}
Let $X$ be a Polish space and let $\rho$ be a Borel proper extended metric on $X$. Then there is an $E_\rho$-invariant comeager set $X' \subseteq X$ such that $\Basdim(X', \rho) = \asdim(X', \rho)$.
\end{thm}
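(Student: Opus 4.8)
The plan is to combine Theorem~\ref{intro:equal} with a Baire-category argument, carried out one radius at a time.

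\emph{Reductions.} If $\asdim(X,\rho)=\infty$ we may take $X'=X$, since $\Basdim\geq\asdim$ always. So assume $d:=\asdim(X,\rho)<\infty$. By Theorem~\ref{intro:equal} it is enough to find an $E_\rho$-invariant dense $G_\delta$ set $X'$ with $\Basdim(X',\rho)<\infty$ (we shall get $\leq d$), and for this it suffices to treat each radius separately: if for every $r\in\N$ we produce an $E_\rho$-invariant comeager $G_\delta$ set $Y_r$, a constant $R_r$, and a Borel equivalence relation $F_r\subseteq E_\rho\res Y_r$ whose classes have diameter $\leq R_r$ and such that every ball of radius $r$ meets at most $d+1$ of them, then $X':=\bigcap_r Y_r$, equipped with the relations $F_r\res X'$, witnesses $\Basdim(X',\rho)\leq d$. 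Keeping the $Y_r$ and $X'$ simultaneously $E_\rho$-invariant and $G_\delta$ requires only routine descriptive-set-theoretic bookkeeping, which I omit.

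\emph{One radius.} Fix $r$, and pick $R=R_r$ so that $X$ carries a (not necessarily Borel) cover of mesh $\leq R$ with $r$-multiplicity $\leq d+1$; such $R$ exists since $\asdim(X,\rho)\leq d$, and refining this cover to a partition in the standard way yields a (not necessarily Borel) equivalence relation $G\subseteq E_\rho$ with classes of diameter $\leq R$ and $r$-multiplicity $\leq d+1$. Restricting $G$ to a single $E_\rho$-class $C$ shows that every class carries a witness of this kind. Because $\rho$ is proper each $r$-ball is finite, so ``every $r$-ball meets at most $d+1$ classes'' is a clopen condition and ``all classes have diameter $\leq R$'' a closed condition on the Polish space of equivalence relations on $C$. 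Hence (encoding equivalence relations on $C$ inside a Polish space via, say, a Feldman--Moore enumeration of $C$) the set $\cW(C)$ of valid witnesses on $C$ is a nonempty closed---by local finiteness, in fact compact---subspace, depending Borel-measurably on $C$; concretely, for a suitable Polish space $Z$ the set $\{(x,F):F\in\cW([x]_{E_\rho})\}\subseteq X\times Z$ is Borel with nonempty compact sections.

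\emph{The category step, and the main obstacle.} By uniformization for Borel sets with compact sections (Jankov--von Neumann would already suffice) there is a Borel map $x\mapsto F(x)\in\cW([x]_{E_\rho})$. The obstruction---and this is exactly where the \emph{Borel} version of the theorem is genuinely hard---is that $F$ need not be $E_\rho$-\emph{coherent}: for $x\mathrel{E_\rho}y$ the witnesses $F(x)$ and $F(y)$ live on the common class but may be different partitions of it, so $F$ does not glue into a single sub-equivalence relation of $E_\rho$. Modulo a meager set I would recover coherence as follows. First discard the $E_\rho$-invariant Borel part of $X$ on which $E_\rho$ has finite classes: there $E_\rho$ is smooth (finite Borel equivalence relations admit Borel transversals), so a coherent choice of witness---e.g.\ the lexicographically least valid witness relative to a transversal-induced ordering of each class---is Borel and $\Basdim$ is already finite. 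On the aperiodic part, build a coherent witness $F_r$ by a recursion along a marker sequence (a decreasing sequence of Borel complete sections): at each stage only finitely much data about each point is being fixed, and the set of points at which the partial construction cannot be continued in an orbit-consistent Borel way is shown, via Kuratowski--Ulam, to be meager; discarding it and iterating yields a comeager $G_\delta$ set $Y_r$ on which the resulting $F_r$ is a Borel equivalence relation with classes of diameter $\leq R$ and $r$-multiplicity $\leq d+1$.

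Assembling the $Y_r$ and $F_r$ produces $X'$ with $\Basdim(X',\rho)\leq d<\infty$, and Theorem~\ref{intro:equal} applied to the standard Borel space $(X',\rho)$ then gives $\Basdim(X',\rho)=\asdim(X',\rho)$. I expect essentially all of the difficulty to be in the coherence step: producing a Borel choice of a bounded-geometry witness on each class is immediate, whereas making that choice \emph{coherent}, so that it assembles into an honest equivalence relation on $X$, is precisely the phenomenon that makes the Borel (rather than generic) version of the statement hard.
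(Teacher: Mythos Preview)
Your approach diverges from the paper's in a way that leaves the central step unproved. The paper does not attempt to build, on a comeager set, a Borel witness for asymptotic dimension $\leq d$ directly. Instead it proves (Theorem~\ref{thm:asi}(b)) that on an $E_\rho$-invariant comeager $G_\delta$ set $X'$ the \emph{asymptotic separation index} satisfies $\asi(X',\rho)\leq 1$, and then invokes Theorem~\ref{thm:equal} in its stronger form: $\asi<\infty$ already forces $\Basdim=\asdim$. Showing $\asi\leq 1$ generically is far easier than what you attempt---it only requires, for each $r$, covering each class by a sequence of bounded pieces with appropriately nested boundaries, and this can be done with proper Borel colorings of the graph $\{(x,y):0<\rho(x,y)\leq 6a_n+2r\}$ plus a single Kuratowski--Ulam argument to select a generic sequence of colors. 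No uniformization over a compact space of partitions and no coherence repair is needed.

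Your ``coherence step'' is where the proof proposal has a genuine gap. You correctly identify that a Borel selection $x\mapsto F(x)\in\cW([x]_{E_\rho})$ need not be $E_\rho$-invariant, but the proposed fix---a recursion along a marker sequence in which ``the set of points at which the partial construction cannot be continued\ldots is shown, via Kuratowski--Ulam, to be meager''---is a description of a hope, not an argument. It is not clear what local data are being fixed at each stage, what the recursion is along, why the obstruction set at each stage is meager, or why the limit of the recursion is a Borel equivalence relation with the required diameter and multiplicity bounds. Marker-sequence constructions of this kind are delicate even for $\Z$-actions, and you give no indication of how the $(d+1)$-multiplicity constraint survives the gluing. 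The paper sidesteps all of this: once one realizes that Theorem~\ref{thm:equal} only needs finite $\asi$ rather than finite $\Basdim$, the generic construction becomes essentially trivial, and the bootstrapping from $\asi\leq 1$ up to $\Basdim=\asdim$ is handled cleanly by Lemmas~\ref{lem:smooth_asdim} and~\ref{lem:finiteunion}.
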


In this paper, our primary focus will be the Borel asymptotic dimensions associated with Borel group actions and, ultimately, the subsequent applications to the theory of hyperfinite equivalence relations. If $G$ is a countable group and $\tau : G \times G \rightarrow [0, +\infty)$ is a proper right-invariant metric on $G$, then to every Borel action $G \acts X$ of $G$ on a standard Borel space $X$ one can associate a Borel proper extended metric $\rho_{\tau} : X \times X \rightarrow [0, +\infty]$ by declaring $\rho_{\tau}(x, y)$ to be the minimum of $\{\tau(1_G, g) : g \in G, \ g \cdot x = y\}$ when this set is non-empty, and $\infty$ otherwise.

For a countable group $G$, the standard asymptotic dimension of $(G, \tau)$ does not depend upon the choice of the proper right-invariant metric $\tau$ \cite[Prop. 62]{BD08} and is referred to as the \emph{asymptotic dimension of $G$}. Similarly, for a Borel action $G \acts X$ the Borel asymptotic dimension of $(X, \rho_\tau)$ does not depend on the choice of $\tau$ (see Lemma \ref{lem:asdim_action}). To simplify terminology, we will therefore speak of the Borel asymptotic dimension of the action $G \acts X$ and write $\Basdim(G \acts X)$.

Our main theorem is below. Recall that a \emph{normal series} for a group $G$ is a sequence $G = G_0 \rhd G_1 \rhd \ldots \rhd G_n = \{1_G\}$ of normal subgroups of $G$. We refer to $G_k / G_{k+1}$ as a quotient of consecutive terms, and we call $G_0 / G_1$ the top quotient. The standard asymptotic dimension of $G$ is always bounded by the sum of the standard asymptotic dimensions of the consecutive quotients in a normal series (see Thm. 68 in \cite{BD08} and the remark following it). 

\begin{thm} \label{intro:polyasdim}
Let $G$ be a countable group admitting a normal series where each quotient of consecutive terms is a finite group, an increasing union of characteristic finite subgroups, or a torsion-free abelian group with finite $\Q$-rank, except the top quotient can be any group of uniform local polynomial volume-growth or the lamplighter group $\Z_2 \wr \Z$. If $X$ is a standard Borel space and $G \acts X$ is a free Borel action, then $\Basdim(G \acts X) = \asdim(G) < \infty$. Additionally, if $X$ is a locally compact $0$-dimensional second countable Hausdorff space and $G$ acts freely and continuously on $X$, then the dynamic asymptotic dimension of $G \acts X$ is at most $\asdim(G) < \infty$.
\end{thm}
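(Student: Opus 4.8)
The main claim has three parts: (1) $\Basdim(G \acts X) < \infty$ for the relevant class of groups, (2) $\Basdim(G \acts X) = \asdim(G)$, and (3) the dynamic asymptotic dimension bound in the continuous setting.

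For part (2), once we have finiteness: we always have $\Basdim \geq \asdim$. The action is free, so the orbit map $g \mapsto g \cdot x$ is a bijection $G \to [x]_{E_\rho}$, and the metric $\rho_\tau$ restricted to each orbit is isometric to $(G, \tau)$. So $\asdim(X, \rho_\tau) = \asdim(G)$ trivially from the definition (a single orbit already computes it, and more orbits don't increase it). Now $\rho_\tau$ is a Borel proper extended metric since $\tau$ is proper; so Theorem \ref{intro:equal} applies and gives $\Basdim(X, \rho_\tau) = \asdim(X, \rho_\tau) = \asdim(G)$. Thus the whole content is finiteness.

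For finiteness (part (1)), I would induct on the length of the normal series, using a "permanence" or "fibering" principle for Borel asymptotic dimension: if $N \lhd G$ and both $\Basdim$ of the induced action on an appropriate quotient structure and the fibers have finite Borel asymptotic dimension, then so does the whole action, with the dimension bounded by the sum. Concretely, given a radius $r$, one first uses the bounded-geometry equivalence relation for the quotient group $G/N$ acting on the space of $N$-orbits (this requires selecting Borel transversals, or working with the equivalence relation $E_N$ generated by $N$), pulls this back to a partition into "$N$-tubes" of controlled size, and within each tube applies the equivalence relation witnessing finite $\Basdim$ for the $N$-action (restricted to balls of the appropriate enlarged radius). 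The union/join of these two equivalence relations — one coarse, one fine — is the witness, and the ball-meets-at-most-$(d+1)$ bound adds across the two layers. The base cases are: finite groups (trivially $\Basdim = 0$), torsion-free abelian groups of finite $\Q$-rank (which embed quasi-isometrically into $\Z^n$, so one adapts the standard fact $\asdim(\Z^n) = n$ to the Borel setting via an explicit Borel coloring/tiling argument — likely the "hyperplane-shift" construction of box partitions in $\N$ orbits), groups of uniform local polynomial growth (these are virtually nilpotent by Gromov's theorem, hence have finite $\asdim$, and one needs a Borel version; this is presumably done elsewhere in the paper for polycyclic groups), and the lamplighter group $\Z_2 \wr \Z$, which needs a bespoke argument since it has exponential growth — here one likely exploits the semidirect structure $\bigoplus_{\Z} \Z_2 \rtimes \Z$ and the fact that within a fixed radius only finitely many lamp coordinates are active, reducing to a finite-by-$\Z$ situation at each scale. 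The subtlety is that "only the top quotient is allowed to be these exotic groups" — so in the inductive step, $N$ is built from the good (finite or torsion-free abelian finite-rank) pieces and $G/N$ is the top quotient; one fibers the top quotient over the rest, or vice versa, matching whichever direction makes the Borel selection feasible.

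The main obstacle I expect is the Borel measurability in the fibering step: classically, one freely chooses transversals for cosets of $N$ and the argument is soft, but in the Borel category one must produce the coarse equivalence relation on the $N$-orbit-space by a Borel construction, and $E_N \subseteq E_\rho$ is itself only hyperfinite (not smooth) in general, so there is no Borel transversal. The resolution is presumably to work with the quotient metric structure directly: the action $G \acts X$ descends to a "virtual" action of $G/N$ on the Borel quotient, and one invokes the already-established finite-$\Basdim$ bound for $G/N$-actions \emph{relative to the fibers} — i.e., a relative/fibered version of the definition where the "balls of radius $r$" are measured in the quotient metric $\rho_\tau / (\text{diam of } N\text{-orbits in radius } r)$. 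Making this relative fibering lemma precise and Borel is the technical heart. For part (3), the dynamic asymptotic dimension bound follows because on a locally compact $0$-dimensional second countable space one can take the Borel equivalence relations $F$ witnessing $\Basdim \leq \asdim(G)$ and refine/replace them by clopen ones (using $0$-dimensionality to find clopen transversals for the bounded $F$-classes), which is exactly the combinatorial condition defining dynamic asymptotic dimension $\leq \asdim(G)$; the only care needed is that "uniformly bounded diameter" of $F$-classes translates to the open-cover formulation of \cite{GWY17}, which it does since bounded orbit-pieces in a proper metric are finite hence compact-open.
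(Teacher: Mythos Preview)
Your reduction of part (2) to finiteness via Theorem \ref{intro:equal} is correct and matches the paper.

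For part (1), however, there is a genuine gap. You correctly diagnose the obstacle: the $N$-orbit space is not standard Borel when $E_N$ is not smooth, so there is no quotient action of $G/N$ in the usual sense and no Borel transversal to fiber over. But your proposed resolution---a ``relative/fibered version of the definition'' applied to a ``virtual'' quotient action---is not made precise, and the paper explicitly states that it does not know how to make such a direct extension-permanence argument work: ``even in the simplest situation of an extension by $\Z$, we do not know of any property of $G$-actions that both is preserved by $\Z$-extensions and implies all free Borel $G$-actions have finite Borel asymptotic dimension.'' The actual fix is different in kind. The paper's inductive hypothesis (Definition \ref{defn:expand}, ``good asymptotics'') is a statement about the pair $(G_k, G)$ simultaneously, not about $G_k$ alone: in addition to $\Basdim(G_k \acts X) < \infty$, one demands that for every finite $C \subseteq G$ there is a uniform $\ell$ such that for every finite $B \subseteq G_k$, $X$ is covered by $\ell+1$ Borel sets that are $CB \setminus G_k$-independent. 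This extra independence clause is what replaces a quotient action: it lets one build (Lemma \ref{lem:expand}) a cover in which the $G_k$-dynamics is locally trivialized and the $G/G_k$-structure becomes visible in an ordered, layered way. The step from $(G_{k+1},G)$ to $(G_k,G)$ (Lemma \ref{lem:polystep}) then uses a bounded-packing property of $G_k/G_{k+1}$ relative to the conjugation automorphisms coming from \emph{all of $G$}, not an intrinsic asymptotic-dimension bound for $G_k/G_{k+1}$-actions. This is also why the lamplighter and polynomial-growth cases are handled uniformly via packing (Lemma \ref{lem:autopack}) rather than by separate base-case asymptotic-dimension computations as you sketch.

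For part (3), your plan to take the Borel witness and ``refine/replace'' it by a clopen one is not how the paper proceeds and is unlikely to work as stated: there is no general mechanism to approximate an arbitrary finite Borel equivalence relation by a clopen one preserving the $r$-ball multiplicity bound. The paper instead reruns the entire inductive argument, checking at each step (Lemmas \ref{lem:clopen_expand}, \ref{lem:clopen_polystep}) that all the sets produced can be taken clopen from the outset.
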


The above theorem clearly applies to the lamplighter group and all groups of uniform local polynomial volume-growth. It also applies, by Lemma \ref{lem:prufer} below, to all groups that are virtually solvable and have finite Pr{\"u}fer rank (a group has finite Pr{\"u}fer rank if there is $k$ so that every finitely generated subgroup can be generated by $k$ elements or fewer). The class of virtually sovlable groups having finite Pr{\"u}fer rank contains all polycyclic groups (i.e. groups admitting a normal series where every quotient of consecutive terms is a finitely-generated abelian group), all solvable groups that are linear over $\Q$, the Baumslag--Solitar group $BS(1,2)$, as well as some more exotic groups which are not virtually torsion-free nor linear over any field \cite{LenRob}.

Theorem \ref{intro:polyasdim} and the theorem below provide many new examples, both in the Borel and the topological settings, of groups having the property that all of their free actions admit F{\o}lner tilings. In view of the assumption of the theorem below, we remark that both Borel and dynamic asymptotic dimension are monotone decreasing upon restricting an action to the action of a subgroup (see for instance Corollary \ref{cor:subgroups}). In particular, the assumption of the next theorem will be met provided $G$ satisfies (or all of its finitely generated subgroups satisfy) the assumption of Theorem \ref{intro:polyasdim}.

\begin{thm} \label{intro:folner}
Let $G$ be a countable amenable group, let $X$ be a standard Borel space, and let $G \acts X$ be a free Borel action. Assume that $\Basdim(H \acts X) < \infty$ for every finitely generated subgroup $H \leq G$. Then for every finite $K \subseteq G$ and $\delta > 0$ there exist $(K, \delta)$-invariant finite sets $F_1, \ldots, F_n \subseteq G$ and Borel sets $C_1, \ldots, C_n \subseteq X$ such that the map $\theta : \bigsqcup_{i=1}^n F_i \times C_i \rightarrow X$ given by $\theta(f, c) = f \cdot c$ is a bijection. Similarly, if $X$ is a compact $0$-dimensional second countable Hausdorff space, $G$ acts continuously and freely on $X$, and the dynamic asymptotic dimension of $H \acts X$ is finite for every finitely generated subgroup $H \leq G$, then the same conclusion holds with the sets $C_1, \ldots, C_n$ being additionally clopen. 
\end{thm}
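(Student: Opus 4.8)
The plan is to reduce the Følner tiling statement to the finiteness of Borel asymptotic dimension via a compactness-and-refinement argument, handling the Borel and continuous cases in parallel. First I would observe that since $G$ is amenable, for the given $K$ and $\delta$ one can fix a large finite $S \subseteq G$ (a big ball in a proper right-invariant metric $\tau$) such that every sufficiently ``$S$-invariant'' finite set is $(K,\delta)$-invariant; concretely, this follows from the fact that a set which is a union of large $S$-balls, or is $(S,\epsilon)$-invariant for small $\epsilon$, is automatically $(K,\delta)$-invariant. It therefore suffices to produce a Borel partition of $X$ into finitely many pieces $F_i \cdot C_i$ where each $F_i$ is $(S,\epsilon)$-invariant and the translates $\{f \cdot c : f \in F_i\}$ are distinct (this uses freeness of the action). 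Since $K$ and $S$ are finite, we may replace $G$ by the finitely generated subgroup $H = \langle S \rangle$ for the purpose of constructing the tiles; by hypothesis $\Basdim(H \acts X) = d < \infty$ (respectively the dynamic asymptotic dimension is finite in the continuous case), and a tiling of $X$ by $(S,\epsilon)$-invariant $H$-sets is in particular the desired tiling by $(K,\delta)$-invariant $G$-sets.

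Next, working with $H$ and the metric $\rho = \rho_\tau$, I would apply the definition of finite Borel asymptotic dimension at a radius $r$ chosen much larger than the diameter of $S$: this yields a Borel subequivalence relation $F \subseteq E_\rho|_H$ with uniformly bounded classes such that every $r$-ball meets at most $d+1$ classes of $F$. The standard way to extract a tiling from such an $F$ is an iterated ``thickening'' construction over the $d+1$ colors: using a Borel $(d+1)$-coloring-type structure on the classes of $F$ (obtained from the fact that balls meet few classes, via a Borel transversal and marker argument), one inductively enlarges a chosen family of ``centers'' in each color class so that the resulting cells are unions of large translates $Sg$ of $S$ and hence $(S,\epsilon)$-invariant, while controlling overlaps through the bounded-geometry bound $d+1$. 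This is exactly the mechanism by which finite asymptotic dimension yields Følner-type partitions; the bounded diameter of $F$-classes keeps each cell finite, and freeness guarantees that $f \mapsto f\cdot c$ is injective on each tile, giving the bijection $\theta$. In the continuous, zero-dimensional case, one runs the same construction but chooses the transversals, markers, and the sets $C_i$ to be clopen rather than merely Borel, which is possible because a compact zero-dimensional space admits clopen partitions refining any open cover and the dynamic asymptotic dimension witnesses are open/clopen by definition; the amenability input is identical.

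The main obstacle I expect is the combinatorial heart of the extraction: turning the ``every $r$-ball meets $\le d+1$ classes of $F$'' condition into an actual finite Borel partition into $(S,\epsilon)$-invariant tiles, uniformly and measurably, rather than merely into a bounded cover. This requires an inductive coloring/thickening scheme — essentially a Borel version of the classical argument that $\asdim < \infty$ implies Property A / the existence of good partitions of unity — and care must be taken that at each of the $d+1$ stages the newly formed cells remain $(S,\epsilon)$-invariant after being merged with translates of $S$, while the total number of cells stays bounded. I would isolate this as a separate lemma: ``if $\Basdim(H \acts X) \le d$ then for every finite $S \subseteq H$ and $\epsilon > 0$ there is a finite Borel partition of $X$ into sets of the form $F_i \cdot C_i$ with each $F_i$ being $(S,\epsilon)$-invariant and $\theta$ injective.'' Granting that lemma, the amenable reduction in the first paragraph finishes the proof; the zero-dimensional/continuous variant of the lemma is proved by the same induction with ``clopen'' substituted for ``Borel'' throughout, using that the dynamic asymptotic dimension provides open covers that can be shrunk to clopen ones on a compact zero-dimensional space.
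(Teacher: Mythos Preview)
Your reduction to the finitely generated subgroup $H = \langle K \rangle$ and the observation that the continuous/clopen case runs in parallel are both correct and match the paper. However, there is a real gap at precisely the point you yourself flag as the ``main obstacle'': you never explain where the F{\o}lner invariance of the tiles comes from. The equivalence relation $F$ produced by finite Borel asymptotic dimension has classes of uniformly bounded diameter and low boundary multiplicity, but these classes have \emph{no} invariance properties whatsoever --- they are arbitrary bounded-diameter sets. Your suggestion that one can ``enlarge centers so that the resulting cells are unions of large translates $Sg$ of $S$ and hence $(S,\epsilon)$-invariant'' does not work: a finite union of translates of a large ball $S$ is not $(S,\epsilon)$-invariant unless those translates overlap heavily (two disjoint copies of $S$ have boundary comparable to their volume), so there is no automatic passage from ``union of $S$-translates'' to ``$(S,\epsilon)$-invariant.'' The lemma you isolate is essentially the whole theorem, and the thickening scheme you sketch does not prove it.

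The paper supplies the missing ingredient: the Downarowicz--Huczek--Zhang theorem \cite{DHZ19}, which says that any countable amenable group $H$ can itself be exactly tiled by translates of finitely many $(K',\delta)$-invariant shapes $A_0,\ldots,A_\ell \subseteq H$. The paper invokes this theorem $d+1$ times with increasingly strong invariance, obtaining for each level $i \le d$ a group-theoretic tiling $\{A_p^i t : p \le \ell_i,\ t \in T_p^i\}$ of $H$. It then takes a Borel cover $\{U_0,\ldots,U_d\}$ with each $\walkeq{U_i}{B}$ finite (this is where the dimension hypothesis enters), selects one marker point from each $\walkeq{U_i}{B}$-class, and uses these markers to transport the level-$i$ group tiling to the space. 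An inductive absorption map $s$ across the $d+1$ levels then produces cells of the form $s^{-1}(A_p^i \cdot y)$ satisfying $A_p^i \cdot y \subseteq s^{-1}(A_p^i \cdot y) \subseteq K_1\cdots K_i A_p^i \cdot y$; it is the pre-arranged $(K_0\cdots K_i,\delta)$-invariance of the group shapes $A_p^i$ that forces these cells to be $(K,\delta)$-invariant. Without the DHZ tiling of the group there is no source of F{\o}lner shapes anywhere in your argument.
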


The clopen F{\o}lner tilings provided by Theorems \ref{intro:polyasdim} and \ref{intro:folner} immediately yield interesting consequences to topological dynamics and $C^*$-algebras, most notably providing new examples of classifiable crossed products. 

\begin{cor}
Let $G$ be a countably infinite group such that the assumptions of Theorem \ref{intro:polyasdim} are satisfied by $G$ or are satisfied by every finitely generated subgroup of $G$. Then every free continuous action of $G$ on a compact metrizable space having finite covering dimension is almost finite. As a consequence, the crossed products arising from minimal such actions are classified by the Elliot invariant (ordered $K$-theory paired with tracial states) and are simple ASH algebras of topological dimension at most $2$.
\end{cor}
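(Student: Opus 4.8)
The plan is to reduce to the zero-dimensional case, where Theorems \ref{intro:polyasdim} and \ref{intro:folner} apply directly, and then to feed the resulting structure into the established theory of almost finiteness and the Elliott classification of $C^*$-algebras. We may assume $G$ is countably infinite and amenable: in the first case of the hypothesis amenability holds because each quotient type appearing in the normal series (finite, torsion-free abelian, uniform local polynomial growth, lamplighter) is amenable and amenability is closed under extensions, and in the second case $G$ is a directed union of amenable subgroups; the finite case of the corollary is vacuous.

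Now let $G \acts X$ be a free continuous action on a compact metrizable space with $\dim X < \infty$. First I would build a zero-dimensional extension. Fix a continuous surjection $f \from K \to X$ from the Cantor set $K$, the equivariant embedding $\iota \from X \to X^G$ onto a closed subset given by $\iota(x)(g) = g^{-1} \cdot x$, and the equivariant map $\phi \from K^G \to X^G$, $\phi(y)(g) = f(y(g))$, and set $Z = \phi^{-1}(\iota(X)) \subseteq K^G$ with the restricted shift action and $\pi = \iota^{-1} \circ (\phi \res Z) \from Z \to X$. Then $Z$ is a closed $G$-invariant subset of $K^G$, hence compact, metrizable (as $G$ is countable), and zero-dimensional; $\pi$ is a continuous equivariant surjection; and $G \acts Z$ is free, since $g \cdot z = z$ forces $g \cdot \pi(z) = \pi(z)$ and hence $g = 1_G$. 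In either case of the hypothesis, the dynamic asymptotic dimension of $H \acts Z$ is finite for every finitely generated $H \leq G$: by Theorem \ref{intro:polyasdim} applied to $H \acts Z$ when all finitely generated subgroups of $G$ satisfy its assumption, and by Theorem \ref{intro:polyasdim} for $G \acts Z$ together with monotonicity under restriction to subgroups (Corollary \ref{cor:subgroups}) when $G$ itself does. Theorem \ref{intro:folner} then supplies, for every finite $K \subseteq G$ and $\delta > 0$, a clopen F{\o}lner tiling of $Z$ with $(K, \delta)$-invariant shapes; since such a tiling is precisely a clopen castle with $(K, \delta)$-invariant shapes whose levels partition $Z$ (so that its remainder is empty, hence $n$-small for every $n$), the action $G \acts Z$ is almost finite.

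The remaining point is to transfer almost finiteness from $Z$ down to the finite-dimensional factor $X$. Here I would appeal to the theory of Kerr and Kerr--Szab\'o: a free continuous action of a countably infinite amenable group on a finite-dimensional compact metrizable space is almost finite provided it is a factor of a free almost finite action on a zero-dimensional compact metrizable space, the hypothesis $\dim X < \infty$ entering --- via the small boundary property / almost finiteness in measure --- to upgrade the data pulled back along $\pi$ to genuine open castles on $X$. This descent, rather than the construction of $Z$ or the invocation of Theorems \ref{intro:polyasdim}--\ref{intro:folner} (which do the new work), is where the technical weight lies, and it is the main obstacle; in the write-up I expect it to be dispatched essentially by citation. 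Granting it, $G \acts X$ is almost finite.

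Finally, assume in addition that $G \acts X$ is minimal. Then $C(X) \rtimes G$ is simple (by minimality and freeness, $G$ being amenable), separable, unital, and nuclear; it is stably finite, carrying a tracial state induced by a $G$-invariant Borel probability measure on $X$ (which exists since $G$ is amenable); and it satisfies the UCT (Tu, as $G$ is amenable). Almost finiteness of the action implies, by Kerr's theorem, that $C(X) \rtimes G$ is $\mathcal{Z}$-stable. Hence, by the Elliott classification theorem for simple separable unital nuclear $\mathcal{Z}$-stable $C^*$-algebras satisfying the UCT (Gong--Lin--Niu, Elliott--Gong--Lin--Niu, Tikuisis--White--Winter), $C(X) \rtimes G$ is classified by its Elliott invariant --- ordered $K$-theory paired with tracial states --- and, as the algebras in this class are realized by the ASH models underpinning the classification, $C(X) \rtimes G$ is isomorphic to a simple unital ASH algebra of topological dimension at most $2$.
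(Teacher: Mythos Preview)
Your argument is correct and lands in the same place as the paper's, but the packaging differs slightly. The paper (in Corollary~\ref{cor:cstar1}) does not construct the Cantor extension $Z$ by hand: for $0$-dimensional $X$ it invokes the clopen versions of the main results (Theorems~\ref{thm:normal_clopen} and~\ref{thm:clopen_folner}) together with \cite[Thm.~10.2]{Kerr} to get almost finiteness directly, and for finite-dimensional $X$ it simply cites \cite[Thm.~7.6 and Cor.~7.7]{KeSz} as a black box, those results taking as input that every free $0$-dimensional action of $G$ is almost finite. Your explicit construction of $Z$ and the subsequent ``descent'' is precisely what lives inside the Kerr--Szab\'o citation, so you have unpacked one layer the paper leaves wrapped; neither approach gains anything the other lacks. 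For the classification step the paper routes through $\mathcal{Z}$-stability \cite[Thm.~12.4]{Kerr}, finite nuclear dimension \cite{CETWW}, UCT \cite{Tu99}, and stable finiteness, then cites \cite[Cor.~D and Thm.~6.2(iii)]{TWW}; your chain via $\mathcal{Z}$-stability and the Gong--Lin--Niu/Elliott--Gong--Lin--Niu/Tikuisis--White--Winter theorem is equivalent.
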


Finite Borel asymptotic dimension also provides a bound on the Borel chromatic number $\chi_{\textbf{B}}(\Gamma)$ of a Borel graph $\Gamma$ in terms of its standard chromatic number $\chi(\Gamma)$. The theorem below follows a theme of similar results. Specifically, Conley and Miller established the same bound for Baire-measurable chromatic numbers of locally finite Borel graphs, and when the graph is furthermore hyperfinite they obtained this bound for $\mu$-measurable chromatic numbers \cite{CM16}. More recently, Gao--Jackson--Krohne--Seward obtained this bound for the Borel chromatic numbers of Schreier graphs induced by Borel actions of finitely-generated nilpotent groups \cite{GJKSc}.

\begin{thm} \label{intro:chi}
Let $X$ be a standard Borel space, let $\Gamma$ be a Borel graph on $X$, and let $\rho$ be the graph metric on $\Gamma$. Assume that $\Gamma$ is locally finite and that $\Basdim(X, \rho) < \infty$. Then $\chi_\textbf{B}(\Gamma) \leq 2 \cdot \chi(\Gamma) - 1$.
\end{thm}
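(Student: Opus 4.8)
If $\chi(\Gamma) = \infty$ there is nothing to prove, so assume $k := \chi(\Gamma) < \infty$; in particular every finite subgraph of $\Gamma$ is properly $k$-colourable, and, since a finite Borel equivalence relation admits a Borel ordering of its classes, such colourings can be chosen Borel-uniformly across a Borel family of finite pieces. Since $\rho$ is the graph metric of $\Gamma$ we have $E_\rho = E_\Gamma$, so, writing $d := \Basdim(X, \rho) < \infty$, the hypothesis supplies at every scale $r$ a Borel equivalence relation $F \subseteq E_\Gamma$ with uniformly bounded finite classes such that every $r$-ball meets at most $d + 1$ of them. The natural plan is to convert this multi-scale data into a Borel toast for $\Gamma$ and then colour along it.

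Here a \emph{toast} means an increasing sequence $\emptyset = D_0 \subseteq D_1 \subseteq D_2 \subseteq \cdots$ of Borel subsets of $X$ with $\bigcup_n D_n = X$ such that every $\Gamma$-connected component of every $D_n$ is finite and, for each $n$ and each component $C$ of $D_n$, the closed $\Gamma$-neighbourhood $N_\Gamma[C]$ is contained in a single component of $D_{n+1}$. To build one I would fix rapidly increasing radii $1 = r_0 \ll r_1 \ll r_2 \ll \cdots$, take for each $n$ a Borel equivalence relation $F_n \subseteq E_\Gamma$ as above at scale $r_n$ with $F_n$-classes of diameter at most $N_n$, and amalgamate the $F_n$ into the levels $D_n$ by the inductive procedure used to prove that finite Borel asymptotic dimension implies hyperfiniteness. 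The role of the bounded-overlap condition is precisely to keep each piece from spreading uncontrollably during this amalgamation, so that taking each $r_{n+1}$ large relative to $N_n$ and to the resulting growth bound secures the buffer condition. Carrying this out Borel-uniformly --- obtaining the genuine nesting-with-buffer structure of a toast, and not merely a hyperfiniteness witness --- is the main obstacle, and it is here that the strengthening from ``hyperfinite'' to ``finite Borel asymptotic dimension'' enters essentially.

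Given the toast, I would define the colouring by recursion on the level, following Conley--Miller \cite{CM16} and Gao--Jackson--Krohne--Seward \cite{GJKSc}: build proper Borel colourings $c_n$ of $\Gamma \restriction D_n$ with values in $\{0, 1, \dots, 2k - 2\}$ so that $c_{n+1}$ extends $c_n$, whence $c := \bigcup_n c_n$ is a proper Borel $(2k - 1)$-colouring of $\Gamma$. The inductive step is carried out independently inside each component $C'$ of $D_{n+1}$: it contains finitely many components $C_1, \dots, C_m$ of $D_n$, each with $N_\Gamma[C_i]$ buffered inside $C'$, and one must extend the already-defined proper colouring of $\Gamma \restriction (C_1 \cup \cdots \cup C_m)$ to a proper colouring of the finite graph $\Gamma \restriction C'$ using the same $2k - 1$ colours. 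This is the combinatorial heart of the matter: colouring the remaining part of $C'$ in successive layers expanding outward from the $C_i$, the buffer shells absorb the discrepancy between the old colouring and a fresh proper $k$-colouring of the interior, and a standard accounting shows $2k - 1$ colours always suffice --- this is exactly the count appearing in \cite{CM16, GJKSc}. Since every step is Borel and touches only finite pieces, the colouring $c$ is Borel, giving $\chi_\textbf{B}(\Gamma) \leq 2k - 1 = 2\chi(\Gamma) - 1$.
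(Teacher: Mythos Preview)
Your plan is sound in outline but takes a longer route than the paper, and the part you flag as ``the main obstacle'' is genuinely not filled in: the hyperfiniteness proof you invoke produces an increasing union of finite Borel equivalence relations, but \emph{not} the buffer condition $N_\Gamma[C] \subseteq C'$ that a toast requires. What actually yields the toast is a different construction---one builds, for each scale, a $(d+1)$-indexed Borel cover whose pieces have uniformly bounded $\walkeq{\cdot}{r}$-classes, and arranges these covers to be \emph{orthogonal} in the sense that for fixed index $i$ the boundaries of the $U_i^n$ are well separated across $n$; one then reads off a single index $c(x)$ that works infinitely often and sets $V_n = U_{c(x)}^n$ on each $E_\rho$-class. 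This is not the hyperfiniteness argument, and without it your sketch has a real gap.

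The paper's proof avoids the recursive colouring entirely by factoring through a new invariant, the \emph{asymptotic separation index} $\asi(X,\rho)$: one first proves that $\Basdim(X,\rho) < \infty$ implies $\asi(X,\rho) \leq 1$ (this is where the orthogonal-covers machinery above lives), and then proves directly that $\asi(X,\rho) = s$ gives $\chi_{\textbf{B}}(\Gamma) \leq (s+1)(\chi(\Gamma)-1)+1$. The second step needs only a \emph{single} scale: take Borel sets $U_0,\ldots,U_s$ covering $X$ with $\walkeq{U_i}{4}$ bounded, set $V_i = B(U_i;1)$, choose a Borel proper $\chi(\Gamma)$-colouring $\pi_i$ of $\Gamma \res V_i$ on each overlapping piece, and then define $c(x) = 0$ if every applicable $\pi_i$ gives $x$ the last colour, and $c(x) = (i,\pi_i(x))$ for the least $i$ with $\pi_i(x) \neq \chi(\Gamma)-1$ otherwise. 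No recursion, no toast, and the $2\chi(\Gamma)-1$ count drops out of the case $s=1$. Your approach would also work once the toast is built, but the paper's is both shorter and more general (it gives a bound for every finite $s$, not just $s=1$).
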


In fact we obtain a result more general than the above by incorporating a new quantity we define called the \emph{asymptotic separation index}.

From the above theorem and previous work of the authors \cite{CJMST-D}, it follows that there are hyperfinite bounded degree acyclic Borel graphs having infinite Borel asymptotic dimension. This is in contrast to the fact that acyclic graphs always have standard asymptotic dimension at most $1$. On the other hand, in Lemma \ref{lem:bound_to_one} we prove that the Borel asymptotic dimension of a Borel graph is at most $1$ when the graph is induced by a bounded-to-one Borel function.

Lastly, we discuss applications to the theory of hyperfinite equivalence relations, which was our primary motivation in this work. Recall that an equivalence relation $E$ on a standard Borel space $X$ is \emph{Borel} if $E$ is a Borel subset of $X \times X$, and \emph{finite} (or \emph{countable}) if every $E$-class is finite (respectively countable). $E$ is \emph{hyperfinite} if it is the union of an increasing sequence of finite Borel equivalence relations. From each Borel action $G \acts X$ of a countable group $G$ we obtain the \emph{orbit equivalence relation} $E_G^X = \{(x, y) \in X \times X : \exists g \in G \ g \cdot x = y\}$, which is a countable Borel equivalence relation.

The study of Borel actions of countable groups is of fundamental significance to the theory of countable Borel equivalence relations, as every countable Borel equivalence relation can be represented as the orbit equivalence relation of a Borel action of a countable group \cite{FM77}. Additionally, under the hierarchy of Borel reducibility, the hyperfinite equivalence relations are the simplest class of countable Borel equivalence relations whose study is non-trivial (as made precise by the Glimm-Effros dichotomy and the equivalence of hyperfiniteness and Borel reducibility to $E_0$; see \cite[Thm. 7.1]{DJK94}). Thus, a fundamental problem is to determine for which actions $G \acts X$ is the orbit equivalence relation $E_G^X$ hyperfinite. A long-standing and widely-known formulation of this problem is the following.
\begin{wq}
If $G$ is a countable amenable group, $X$ a standard Borel space, and $G \acts X$ a Borel action, must the orbit equivalence relation $E_G^X$ be hyperfinite?
\end{wq}
Every countable non-amenable group $G$ admits a Borel action $G \acts X$ for which $E_G^X$ is not hyperfinite \cite{JKL02}. Thus, if Weiss' question has a positive answer, then it provides a characterization for the class of amenable groups. 

In a striking result in 1980, Ornstein and Weiss proved that for every Borel action $G \acts X$ of a countable amenable group $G$ the orbit equivalence relation $E_G^X$ is $\mu$-almost-everywhere hyperfinite for every Borel probability measure $\mu$ on $X$\footnote{The published version of their result is stated for non-atomic quasi-invariant probability Borel measures, but can be generalized to any Borel probability measure $\mu$ as follows. The collection of points containing an atom in their orbit is countable, and the restriction of $E_G^X$ to this set is easily seen to be smooth and thus hyperfinite. We can therefore discard that set and assume $\mu$ is non-atomic. Pick any enumerate $g_i$ of $G$ and set $\nu = \sum 2^{-i} (g_i)_* \mu$. Then $\nu$ is non-atomic and quasi-invariant, so there is a $G$-invariant $\nu$-conull set $Y$ with the restriction of $E_G^X$ to $Y$ hyperfinite. Now simply notice that $Y$ is $\mu$-conull as well.}, meaning for each $\mu$ there is a $G$-invariant $\mu$-conull set $Y \subseteq X$ such that the restriction of $E_G^X$ to $Y$ is hyperfinite \cite{OW80}. It was in light of this result that Weiss posed his question in \cite{W84} and proceeded, in that same paper, to provide a positive answer to his question for $\Z$ (a second proof for $\Z$ was published by Slaman and Steel in 1988 \cite{SS88}).

Since the conception of Weiss' question, significant attention has gone to the project of expanding the class of groups known to have a positive answer. In unpublished work shortly after posing his question, Weiss proved the answer is positive for the groups $\Z^n$. Years later, in 2002, Jackson, Kechris, and Louveau generalized this result by proving that $E_G^X$ is hyperfinite whenever $G$ is a finitely-generated group of polynomial volume-growth \cite[Thm. 1.16]{JKL02} (equivalently, whenever $G$ has a finite-index nilpotent subgroup \cite{G81,Wo68}). In an extremely technical and long proof that introduced important new methods, Gao and Jackson obtained a positive answer for all countable abelian groups in 2007 \cite{GJ15}. Through a further expansion of those methods, a positive answer was obtained for all locally nilpotent groups by Schneider and Seward in 2015 \cite{SS} (a group is \emph{locally nilpotent} if every finitely-generated subgroup is nilpotent).

These last two results (of Gao--Jackson and Schneider--Seward) can be equivalently described as partially removing the finite-generation assumption from the theorem of Jackson--Kechris--Louveau. Indeed, the classes of groups (abelian and locally nilpotent) considered in these two works have the property that all their finitely-generated subgroups have polynomial volume-growth. The assumption of polynomial volume-growth is a universal trait of prior solutions to Weiss' question and is fundamental to the structure of all prior proofs. For this reason, reliance upon polynomial volume-growth has widely been viewed as a critical obstruction to further progress.

Through Theorem \ref{intro:polyasdim} we obtain the first solution to Weiss' question that includes groups of exponential volume-growth (any solvable group not containing a finite-index nilpotent subgroup has exponential volume-growth \cite{Wo68}). We obtain this result by relating Borel asymptotic dimension to hyperfiniteness.

\begin{thm} \label{intro:asdimhyp}
Let $X$ be a standard Borel space and let $\rho$ be a Borel proper extended metric on $X$. If the Borel asymptotic dimension of $(X, \rho)$ is finite, then the finite-distance equivalence relation $E_\rho = \{(x, y) \in X \times X : \rho(x, y) < \infty\}$ is hyperfinite.
\end{thm}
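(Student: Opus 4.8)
The plan is to exploit the definition of finite Borel asymptotic dimension directly: for each radius $r$ we obtain a Borel equivalence relation with uniformly bounded classes such that every $r$-ball meets at most $d+1$ of them, and we want to assemble these (over a cofinal sequence of radii) into an increasing sequence of finite Borel equivalence relations whose union is $E_\rho$. Two issues must be overcome. First, the witnessing equivalence relations $F_r$ for different $r$ need not be nested. Second, even for a fixed $r$, knowing that each ball meets at most $d+1$ classes does not by itself give hyperfiniteness of $E_\rho$ — it gives a bounded "overlap," which must be converted into finiteness through a coloring/refinement argument.

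I would proceed as follows. Fix a countable cofinal sequence of radii $r_1 < r_2 < \cdots$ and for each $n$ let $F_n \subseteq E_\rho$ be a Borel equivalence relation witnessing $\Basdim(X,\rho) \le d$ at radius $r_n$, so each $F_n$ has classes of diameter at most some finite bound and every $r_n$-ball meets at most $d+1$ classes of $F_n$. The key combinatorial step is to use the bounded-meeting property to Borel-color the $F_n$-classes with $d+1$ colors so that any two distinct classes within distance $r_n$ of each other receive different colors; this is possible because the "conflict graph" on $F_n$-classes (join two classes if some point of one is within $r_n$ of some point of the other) has the property that every vertex, being inside an $r_n$-ball around any of its points, together with its neighbors comprises at most $d+1$ classes meeting that ball — more carefully, one bounds the chromatic number of this graph using that each class meets at most $d+1$ classes in any ball, hence has bounded degree after one controls diameters, and one colors it Borel-measurably by the standard greedy argument for bounded-degree Borel graphs. (The precise degree bound: since $F_n$-classes have diameter $\le s_n$, a class near a fixed class lies in the $(r_n + 2s_n)$-ball around a point, which can be covered by finitely many $r_n$-balls, each meeting $\le d+1$ classes.) Using such a coloring, for each color $i$ and each $n$ let $F_n^{(i)}$ be the sub-relation of $F_n$ restricted to the classes of color $i$.

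Now I would build the increasing sequence. The trick, standard in the asymptotic-dimension-implies-hyperfinite circle of ideas, is to "stack" the colorings across the scales $r_n$ so that at stage $n$ one has merged along all of the relations available at scales $r_1, \dots, r_n$ in a controlled way, producing a finite Borel equivalence relation $E_n$ with $E_n \subseteq E_{n+1}$ and $\bigcup_n E_n = E_\rho$. Concretely, one can define $E_n$ by: first take the finest common coarsening obtained by intersecting the $F_m$ for $m \le n$ with the coloring classes, then take transitive closures in a bounded way — the bounded-diameter hypothesis on each $F_m$ guarantees finiteness of the resulting classes at each stage (each step enlarges diameters by a bounded amount). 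That $\bigcup_n E_n = E_\rho$ holds because if $\rho(x,y) < \infty$, then for all large $n$ the pair $(x,y)$ is "captured": $y$ lies in the $r_n$-ball around $x$, and by the coloring one can route from $x$ to $y$ through at most $d+1$ classes, each contributing a bounded jump, so $(x,y) \in E_n$ once $r_n \ge \rho(x,y)$ and the stacking has gone deep enough. Properness of $\rho$ is used to ensure the intermediate classes are finite (balls are finite), which upgrades "bounded diameter" to "finite" for all the equivalence relations constructed.

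The main obstacle I expect is the stacking/merging step — making the sequence genuinely increasing while keeping every $E_n$ a bona fide \emph{equivalence relation} with \emph{finite} classes. The naive union $F_1 \cup \cdots \cup F_n$ is not transitive, and taking transitive closure could in principle produce infinite classes; controlling this requires carefully using the $(d+1)$-bounded-meeting property at scale $r_n$ to show that the transitive closure stabilizes after a bounded number of steps (depending only on $d$ and the diameter bounds, not on the point), so that classes remain finite. This is precisely where the finiteness of the asymptotic dimension (the uniform bound $d+1$) is essential, and it is the heart of the argument. Everything else — Borelness of the colorings via bounded-degree greedy coloring, cofinality of the radii, finiteness via properness — is routine.
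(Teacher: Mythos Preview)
Your proposal has a genuine gap at the nesting step. The ``stacking/merging'' you describe is too vague, and the specific suggestion---take common coarsenings or transitive closures of the $F_m$ for $m \le n$---does not stay bounded without further ideas; you correctly identify this as the heart of the matter but do not supply the missing mechanism. In addition, your claim of a $(d{+}1)$-coloring of the conflict graph on $F_n$-classes is not justified: the $r_n$-ball condition bounds how many classes a single \emph{point} sees, not how many classes a \emph{class} is adjacent to, and your parenthetical fix gives only a degree bound depending on the diameter $s_n$, hence a coloring with far more than $d+1$ colors. With the number of colors growing in $n$, the subsequent stacking argument loses the uniform control it needs.

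The paper's argument sidesteps both difficulties with two clean moves. First, nesting is achieved via Borel selectors: given $F_{n-1}$ (finite, since $\rho$ is proper), take a Borel selector $s$ for it, choose $r_n \ge n$ large enough that each $F_{n-1}$-class sits inside an $r_n$-ball, pick a uniformly bounded Borel equivalence relation $E$ such that every $2r_n$-ball meets at most $d+1$ classes, and set $F_n = \{(x,y): s(x)\, E\, s(y)\}$. This is automatically a coarsening of $F_{n-1}$, is uniformly bounded, and inherits the $(d{+}1)$-meeting property at scale $r_n$. Second, one does not aim for $\bigcup_n F_n = E_\rho$. Instead, the union $F_\infty = \bigcup_n F_n$ is hyperfinite by construction, and since for every $x$ and every $n$ the ball $B(x;n)$ meets at most $d+1$ classes of $F_n$ (hence of $F_\infty$), each $E_\rho$-class is a union of at most $d+1$ classes of $F_\infty$; the standard fact \cite[Prop.~1.3.(vii)]{JKL02} then gives that $E_\rho$ is hyperfinite. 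No coloring is needed.
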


\begin{cor} \label{intro:hyp1}
If $G$ is a countable group satisfying the assumption of Theorem \ref{intro:polyasdim}, $X$ is a standard Borel space, and $G \acts X$ is a free Borel action, then the orbit equivalence relation $E_G^X$ is hyperfinite. 	
\end{cor}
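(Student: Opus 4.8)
The plan is to combine Theorem \ref{intro:polyasdim} with Theorem \ref{intro:asdimhyp} in essentially one step, once we recall how the orbit equivalence relation sits inside the metric structure. Concretely, fix a proper right-invariant metric $\tau$ on $G$ (for instance the word metric associated with a finite or, in the non-finitely-generated case, a proper weight function on a generating set). Since $G \acts X$ is a free Borel action, the associated Borel extended metric $\rho_\tau$ on $X$ introduced before Theorem \ref{intro:polyasdim} has the property that $x$ and $y$ lie at finite $\rho_\tau$-distance if and only if they lie in the same $G$-orbit; that is, $E_{\rho_\tau} = E_G^X$. Moreover $\rho_\tau$ is a \emph{proper} extended metric: a ball of radius $r$ around $x$ is contained in $\{g \cdot x : \tau(1_G, g) \le r\}$, which is finite because $\tau$ is proper and the action is free. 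Thus $(X, \rho_\tau)$ satisfies the hypotheses of Theorem \ref{intro:asdimhyp}.

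It remains only to check that $\Basdim(X, \rho_\tau) < \infty$, and this is exactly the content of Theorem \ref{intro:polyasdim}: for a group $G$ of the stated form (a normal series with finite or torsion-free finite-$\Q$-rank abelian consecutive quotients, and top quotient of uniform local polynomial volume-growth or the lamplighter group) and a free Borel action $G \acts X$, we have $\Basdim(G \acts X) = \asdim(G) < \infty$. By the remark preceding Theorem \ref{intro:polyasdim} (and Lemma \ref{lem:asdim_action}), $\Basdim(G \acts X)$ is computed using any proper right-invariant metric, so $\Basdim(X, \rho_\tau) = \Basdim(G \acts X) < \infty$. Feeding this into Theorem \ref{intro:asdimhyp} yields that $E_{\rho_\tau} = E_G^X$ is hyperfinite, which is the claim.

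There is essentially no obstacle in this argument as stated: the corollary is a formal consequence of the two cited theorems together with the elementary observations that freeness makes $\rho_\tau$ proper and makes $E_{\rho_\tau}$ coincide with the orbit equivalence relation. All of the genuine difficulty has been front-loaded into Theorem \ref{intro:polyasdim} (constructing, in a Borel way, the bounded equivalence relations witnessing finite asymptotic dimension for such solvable groups — in particular handling the exponential-growth examples like $BS(1,2)$ and $\Z_2 \wr \Z$) and into Theorem \ref{intro:asdimhyp} (showing finite Borel asymptotic dimension implies hyperfiniteness, presumably by iterating the $d+1$-colored bounded equivalence relations over a sequence of radii and assembling an increasing sequence of finite Borel subequivalence relations whose union is $E_\rho$). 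If one wished to be careful about the non-finitely-generated case of $G$, one would note only that $\tau$ can still be taken proper and right-invariant (every countable group admits such a metric), so the reduction goes through verbatim.
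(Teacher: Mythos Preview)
Your proof is correct and matches the paper's (implicit) approach: the corollary is presented there as an immediate consequence of Theorems \ref{intro:polyasdim} and \ref{intro:asdimhyp}, with no separate argument given beyond the combination you describe. One minor remark: freeness of the action is not actually needed to see that $\rho_\tau$ is proper (the ball is an image of the finite set $\{g : \tau(1_G,g) \le r\}$ regardless), though freeness is of course essential for invoking Theorem \ref{intro:polyasdim}.
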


In the case of polycyclic groups, work of Schneider and Seward (\cite[Cor. 8.2]{SS}) allows us to remove the assumption that the actions are free. We thus obtain a positive answer to Weiss' question for all polycyclic groups.

\begin{cor} \label{intro:polyhyp}
If $G$ is a polycyclic group, $X$ is a standard Borel space, and $G \acts X$ is any Borel action, then the orbit equivalence relation $E_G^X$ is hyperfinite.
\end{cor}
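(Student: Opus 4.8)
The plan is to bootstrap from Corollary~\ref{intro:hyp1}, which already gives hyperfiniteness for \emph{free} Borel actions of groups satisfying the hypothesis of Theorem~\ref{intro:polyasdim}, and to remove the freeness assumption by a stabilizer-decomposition argument; this reduction is the content of \cite[Cor.~8.2]{SS}, so the proof is essentially the combination of these two inputs once one records the relevant formal facts about the class of groups in play. First I would note that every polycyclic group satisfies the hypothesis of Theorem~\ref{intro:polyasdim} (as observed after that statement), that the class of polycyclic groups is closed under passing to subgroups and to quotients, and that polycyclic groups are Noetherian and hence have only countably many subgroups. Consequently Corollary~\ref{intro:hyp1} applies to every free Borel action of every polycyclic group, and in particular to every free Borel action of every subgroup of $G$ and every quotient thereof.

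Next I would reduce an arbitrary Borel action $G \acts X$ of a polycyclic group $G$ to the free case. The stabilizer map $x \mapsto G_x$ is Borel from $X$ into the standard Borel space of subgroups of $G$, so for each $H \leq G$ the set $X_{=H} = \{x \in X : G_x = H\}$ is Borel; since $G$ has only countably many subgroups, $X$ decomposes into the countably many $G$-invariant Borel sets $Y_{[H]} = \{x \in X : G_x \text{ is conjugate to } H\}$, indexed by conjugacy classes of stabilizers. As hyperfiniteness of countable Borel equivalence relations is closed under countable disjoint unions, it suffices to show that $E_G^X \restriction Y_{[H]}$ is hyperfinite for each such class.

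Fixing $H$, every $G$-orbit meeting $Y_{[H]}$ meets $X_{=H}$, so $X_{=H}$ is a Borel complete section for $E_G^X \restriction Y_{[H]}$. Moreover $H$ fixes every point of $X_{=H}$, the group $N_G(H)$ preserves $X_{=H}$ (since $G_{g \cdot x} = g G_x g^{-1}$), and for $x,y \in X_{=H}$ one can have $g \cdot x = y$ only when $gHg^{-1} = G_y = H$; hence $E_G^X \restriction X_{=H}$ is precisely the orbit equivalence relation of the natural \emph{free} action of $N_G(H)/H$ on $X_{=H}$. The group $N_G(H)/H$ is a quotient of a subgroup of $G$, hence polycyclic, so by the first paragraph this orbit equivalence relation is hyperfinite. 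Since a countable Borel equivalence relation is hyperfinite whenever its restriction to some Borel complete section is (see \cite{DJK94, JKL02}), $E_G^X \restriction Y_{[H]}$ is hyperfinite, and summing over the countably many conjugacy classes yields that $E_G^X$ is hyperfinite.

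I expect no substantial obstacle: everything is routine once Corollary~\ref{intro:hyp1} is available and \cite[Cor.~8.2]{SS} is invoked for the passage from free to arbitrary actions. The only points requiring (minor) care are the Borelness and countability of the stabilizer stratification, the verification that the subquotients $N_G(H)/H$ again fall under Theorem~\ref{intro:polyasdim}, and the complete-section characterization of hyperfiniteness; were one to bypass the citation to \cite{SS}, this last characterization is the single genuinely non-formal ingredient one would need to supply.
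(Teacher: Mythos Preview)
Your proposal is correct and follows exactly the paper's route: combine Corollary~\ref{intro:hyp1} for free actions with \cite[Cor.~8.2]{SS} to drop the freeness hypothesis for polycyclic groups. You have additionally unpacked the stabilizer-decomposition argument behind the citation to \cite{SS}, which the paper leaves as a black box; your sketch of it is accurate.
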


As previously mentioned, the works of Gao--Jackson \cite{GJ15} and Schneider--Seward \cite{SS} saw a tremendous increase in the length and technicality of their arguments only to partially remove the finite-generation assumption in the work of Jackson--Kechris--Louveau \cite{JKL02}. A compelling reason for why this occurred is that removing finite-generation is equivalent to confronting a special case of another long-standing fundamental open problem on hyperfinite equivalence relations, one that is generally considered harder than Weiss' question. Specifically, the \emph{Union Problem} asks: if $X$ is a standard Borel space and $(E_n)$ is an increasing sequence of hyperfinite equivalence relations on $X$, must the equivalence relation $\bigcup_n E_n$ be hyperfinite? If we express a group $G$ as an increasing union of finitely generated subgroups $G_n$, then the overlap between this problem and Weiss' question can be seen by observing that for every action $G \acts X$ we have $E_G^X = \bigcup_n E_{G_n}^X$.

Using the notion of Borel asymptotic dimension, we solve a special case of the Union Problem.

\begin{thm} \label{intro:union}
Let $X$ be a standard Borel space and let $(\rho_n)_{n \in \N}$ be a sequence of Borel proper extended metrics on $X$. Assume that for every $n \in \N$ and $r > 0$ the $\rho_n$-balls of radius $r$ have uniformly bounded $\rho_{n+1}$-diameter, and set $E = \bigcup_n E_{\rho_n}$. If $(X, \rho_n)$ has finite Borel asymptotic dimension for every $n$ then $E$ is hyperfinite.
\end{thm}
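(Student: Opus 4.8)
The plan is to reduce Theorem~\ref{intro:union} to Theorem~\ref{intro:asdimhyp} by constructing, from the sequence $(\rho_n)$, a single Borel proper extended metric $\rho_\infty$ on $X$ with $E_{\rho_\infty} = E = \bigcup_n E_{\rho_n}$ and $\Basdim(X, \rho_\infty) < \infty$. The hypothesis that $\rho_n$-balls have uniformly bounded $\rho_{n+1}$-diameter is exactly the ``mild compatibility assumption'' alluded to in the abstract, and it is what should make such a $\rho_\infty$ exist with controlled asymptotic dimension. Concretely, after passing to a subsequence and rescaling each $\rho_n$ (replacing it by an equivalent metric, e.g.\ multiplying by a large constant or composing with a concave function), I would try to arrange that $\rho_{n+1} \leq \rho_n$ pointwise on the set where $\rho_n$ is finite, and that the radius-$r$ balls of $\rho_n$ have $\rho_{n+1}$-diameter at most, say, $1$; then one can define $\rho_\infty = \inf_n \rho_n$ (or a suitable limit), which is still a Borel extended metric since the triangle inequality passes to decreasing limits, is still proper because on each $E$-class it is eventually dominated by a proper metric, and satisfies $E_{\rho_\infty} = E$.

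The heart of the matter is the bound on $\Basdim(X, \rho_\infty)$. Fix a radius $r > 0$. Because the $\rho_n$-balls of radius $r$ have uniformly bounded $\rho_{n+1}$-diameter (hence, telescoping, uniformly bounded $\rho_m$-diameter for every $m > n$, and in the limit uniformly bounded $\rho_\infty$-diameter), a $\rho_\infty$-ball of radius $r$ looks, at scale $n$, like a bounded piece of $(X,\rho_n)$. The idea is then: pick $n$ large enough (depending on $r$) that $\rho_\infty$ and $\rho_n$ agree ``up to the scale $r$'' in the relevant sense, apply finiteness of $\Basdim(X, \rho_n) =: d_n$ to get a bounded Borel equivalence relation $F \subseteq E_{\rho_n}$ whose classes have bounded $\rho_n$-diameter with every $\rho_n$-ball of radius $r' $ (for an appropriately chosen larger $r'$) meeting at most $d_n + 1$ classes, and observe that $F$ is then also a sub-equivalence-relation of $E_{\rho_\infty}$ with bounded $\rho_\infty$-diameter classes, and that every $\rho_\infty$-ball of radius $r$ is contained in some $\rho_n$-ball of radius $r'$ and hence meets at most $d_n+1$ classes of $F$. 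The only subtlety is that $d_n$ varies with $n$ and hence with $r$, so this naively gives $\Basdim(X,\rho_\infty) \leq \sup_n d_n$, which could be infinite. To fix this I would instead use Theorem~\ref{intro:asdimhyp} directly on each $(X,\rho_n)$ to get hyperfiniteness of each $E_{\rho_n}$, together with a more careful ``diagonal'' construction of an increasing sequence of finite Borel equivalence relations witnessing hyperfiniteness of $E$: at stage $k$ one wants a finite Borel equivalence relation $R_k \subseteq E$ with $R_k \subseteq R_{k+1}$ and $\bigcup_k R_k = E$, built by combining a witness sequence for $E_{\rho_{n(k)}}$-hyperfiniteness with the compatibility bounds so that $R_k$-classes, which are $\rho_{n(k)}$-bounded, are automatically $\rho_{n(k+1)}$-bounded and can be absorbed into the next stage.

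In more detail, the cleanest route is probably: (1) use Theorem~\ref{intro:asdimhyp} to fix for each $n$ a sequence $(F^n_j)_{j\in\N}$ of finite Borel equivalence relations, increasing in $j$, with union $E_{\rho_n}$; (2) choose a rapidly increasing sequence $j(n)$ so that $F^n_{j(n)}$ contains the ``$n$-th ball structure'' — precisely, so that $F^n_{j(n)}$ refines into pieces of $\rho_n$-diameter large enough to swallow all $\rho_{n-1}$-balls of radius $n$; (3) set $E_n := F^n_{j(n)}$ and check, using the uniform boundedness hypothesis, that $E_n \subseteq E_{n+1}$ after possibly enlarging (one may need to pass to the equivalence relation generated by $E_n \cup E_{n+1}$, which is still finite because each $E_n$-class has $\rho_{n+1}$-diameter bounded by the compatibility constant and $E_{n+1}$ already has bounded $\rho_{n+1}$-diameter classes containing balls of that size); (4) conclude $E = \bigcup_n E_n$ is hyperfinite. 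The main obstacle I anticipate is Step~(3): ensuring the $E_n$ can be made genuinely increasing, rather than merely cofinal, without losing finiteness of the classes — this is where the hypothesis that radius-$r$ $\rho_n$-balls have uniformly bounded $\rho_{n+1}$-diameter must be used in an essential and quantitatively precise way, and it is the analogue of the standard ``tower'' argument in hyperfiniteness proofs (as in Weiss' treatment of $\Z$ and its successors), now carried out one metric layer at a time rather than one F\o lner scale at a time.
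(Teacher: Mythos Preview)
Your second approach has a genuine gap at Step~(3), and it is exactly the gap that makes the Union Problem hard. Even if each $E_n$-class has $\rho_{n+1}$-diameter bounded by some constant $c$, and each $E_{n+1}$-class has bounded $\rho_{n+1}$-diameter, the equivalence relation generated by $E_n \cup E_{n+1}$ need not have bounded (or even finite) classes: you can have arbitrarily long chains $x_0 \ E_n \ x_1 \ E_{n+1} \ x_2 \ E_n \ x_3 \cdots$. Your parenthetical fix assumes that $E_{n+1}$-classes ``contain balls of that size,'' but the $F^{n+1}_j$ coming from an arbitrary hyperfiniteness witness for $E_{\rho_{n+1}}$ carry no such geometric information; their classes can be ragged and need not contain any ball at all. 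In effect you are trying to deduce the theorem from hyperfiniteness of each $E_{\rho_n}$ as a black box, which would solve (this instance of) the Union Problem without using finite Borel asymptotic dimension in any essential way.

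The paper's proof uses the asymptotic-dimension data directly rather than passing through Theorem~\ref{intro:asdimhyp}. It builds, via an orthogonality construction (Corollary~\ref{cor:ortho2}), a sequence of finite Borel covers $\mathcal{U}^n$ of $X$ (indexed by $d_n+1$ pieces) with the crucial property that for any fixed $k$ and any $x$, at most $d_k+1$ of the covers $\mathcal{U}^n$ with $n \geq k$ can divide the ball $B_{\rho_k}(x; r_k)$. From each $\mathcal{U}^n$ one reads off a finite Borel equivalence relation $F_n$, and the point is \emph{not} that the $F_n$ are increasing --- they need not be --- but that for each pair $(x,y) \in E$ one has $(x,y) \in F_n$ for all but finitely many $n$. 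Hence $E = \bigcup_k \bigcap_{n \geq k} F_n$, where now $\bigcap_{n \geq k} F_n$ is automatically finite and increasing in $k$. This ``eventually in $F_n$'' mechanism, driven by the bound on how many covers can divide a given ball, is the missing idea; it replaces the doomed attempt to force $F_n \subseteq F_{n+1}$.
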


The above theorem allows us to immediately expand on Corollary \ref{intro:hyp1}.

\begin{cor}
If $G$ is the increasing union of groups satisfying the assumption of Theorem \ref{intro:polyasdim}, $X$ is a standard Borel space, and $G \acts X$ is a free Borel action, then the orbit equivalence relation $E_G^X$ is hyperfinite.
\end{cor}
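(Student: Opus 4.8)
The plan is to deduce the statement from Theorems~\ref{intro:polyasdim} and~\ref{intro:union}, with the metrics $\rho_n$ in Theorem~\ref{intro:union} taken to be pulled back from the word metrics of an exhausting sequence of subgroups. First I would write $G=\bigcup_{n\in\N}G_n$ as an increasing union of subgroups $G_n$, each satisfying the assumption of Theorem~\ref{intro:polyasdim} (which is possible by the hypothesis together with the countability of $G$). Since $G\acts X$ is free, the restriction $G_n\acts X$ of the action to each $G_n$ is again a free Borel action.

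Next, for each $n$ fix a proper right-invariant metric $\tau_n$ on the countable group $G_n$ (every countable group admits one), and let $\rho_n\from X\times X\to[0,+\infty]$ be the associated Borel proper extended metric, $\rho_n(x,y)=\min\{\tau_n(1_{G_n},g):g\in G_n,\ g\cdot x=y\}$ with value $\infty$ when this set is empty. Then $E_{\rho_n}=E_{G_n}^X$, and by Theorem~\ref{intro:polyasdim} (together with Lemma~\ref{lem:asdim_action}, which makes the value independent of the choice of $\tau_n$) we have $\Basdim(X,\rho_n)=\Basdim(G_n\acts X)=\asdim(G_n)<\infty$ for every $n$. It then remains to verify the compatibility hypothesis of Theorem~\ref{intro:union}: for each $n$ and each $r>0$, the $\rho_n$-balls of radius $r$ have uniformly bounded $\rho_{n+1}$-diameter. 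Since $\tau_n$ is proper, the set $S_{n,r}=\{g\in G_n:\tau_n(1_{G_n},g)\le r\}$ is finite, and $B_{\rho_n}(x,r)\subseteq\{g\cdot x:g\in S_{n,r}\}$ for every $x\in X$. For $g',g''\in S_{n,r}$ freeness gives $g''\cdot x=(g''(g')^{-1})\cdot(g'\cdot x)$ with $g''(g')^{-1}\in G_n\le G_{n+1}$, so $\rho_{n+1}(g'\cdot x,g''\cdot x)\le\tau_{n+1}(1_{G_{n+1}},g''(g')^{-1})\le\max\{\tau_{n+1}(1_{G_{n+1}},hk^{-1}):h,k\in S_{n,r}\}$, which is a finite constant depending only on $n$ and $r$; hence the $\rho_{n+1}$-diameter of $B_{\rho_n}(x,r)$ is bounded independently of $x$.

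Now Theorem~\ref{intro:union} applies to the sequence $(\rho_n)_{n\in\N}$ and shows that $E:=\bigcup_n E_{\rho_n}$ is hyperfinite; since $E_{\rho_n}=E_{G_n}^X$ and $G=\bigcup_n G_n$, we have $E=\bigcup_n E_{G_n}^X=E_G^X$, so $E_G^X$ is hyperfinite, as desired. The substantive content here is entirely packaged in the two cited theorems, so I do not anticipate a real obstacle: the only genuine verification is the uniform-bounded-diameter condition above, which is an elementary consequence of properness of the chosen metrics and the inclusions $G_n\le G_{n+1}$, and the rest is the routine bookkeeping of reducing the statement about $G$-orbits to the hypotheses of Theorem~\ref{intro:union}.
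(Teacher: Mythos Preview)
Your proposal is correct and is exactly the argument the paper has in mind: the paper states this result (as Corollary~\ref{cor:free_weiss}) immediately after Theorem~\ref{thm:union} without proof, treating it as a direct consequence of Theorems~\ref{thm:normal_asdim} and~\ref{thm:union}. You have simply written out the routine verification (choice of increasing $G_n$, construction of the $\rho_n$, and the uniform-diameter compatibility check) that the paper leaves implicit.
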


Theorem \ref{intro:union} also allows us to, for the first time, fully remove the finite-generation assumption from the work of Jackson--Kechris--Louveau \cite{JKL02}. Indeed, it is simple to prove (as we do in Corollary \ref{cor:polygrow_asdim}) that Borel actions $G \acts X$ have finite Borel asymptotic dimension whenever $G$ is a finitely-generated group of polynomial volume-growth.

\begin{cor}
Let $G$ be a countable group with the property that every finitely generated subgroup of $G$ has polynomial volume-growth (equivalently, every finitely generated subgroup of $G$ contains a finite-index nilpotent subgroup). If $X$ is a standard Borel space and $G \acts X$ is any Borel action then the orbit equivalence relation $E_G^X$ is hyperfinite.
\end{cor}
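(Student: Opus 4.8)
The plan is to deduce the statement directly from Theorem~\ref{intro:union} together with the finiteness of Borel asymptotic dimension for actions of finitely generated groups of polynomial volume-growth (Corollary~\ref{cor:polygrow_asdim}). Since $G$ is countable, fix an enumeration $G = \{g_1, g_2, \ldots\}$ and set $G_n = \langle g_1, \ldots, g_n \rangle$, so that $(G_n)_{n \in \N}$ is an increasing sequence of finitely generated subgroups of $G$ with $\bigcup_n G_n = G$; by hypothesis each $G_n$ has polynomial volume-growth. Put $S_n = \{g_1^{\pm 1}, \ldots, g_n^{\pm 1}\} \setminus \{1_G\}$, a finite symmetric generating set for $G_n$, chosen precisely so that $S_n \subseteq S_{n+1}$. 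Let $\tau_n$ be the associated word metric on $G_n$ and let $\rho_n = \rho_{\tau_n}$ be the extended metric on $X$ induced by the restricted action $G_n \acts X$, i.e.\ $\rho_n(x,y)$ is the least value of $\tau_n(1_G, g)$ over $g \in G_n$ with $g \cdot x = y$, and $+\infty$ when no such $g$ exists.

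First I would record that each $\rho_n$ is a Borel proper extended metric: right-invariance and symmetry of $\tau_n$ yield the triangle inequality and symmetry of $\rho_n$, while $\tau_n(1_G,g)=0$ forces $g=1_G$, so $\rho_n(x,y)=0$ implies $x=y$; properness holds because a $\rho_n$-ball of radius $r$ about $x$ is contained in the finite set $\{g \cdot x : g \in G_n,\ \tau_n(1_G,g) \le r\}$. Crucially, this uses nothing about freeness of the action. Next, $E_{\rho_n} = E_{G_n}^X$, hence $\bigcup_n E_{\rho_n} = \bigcup_n E_{G_n}^X = E_G^X$ since $G = \bigcup_n G_n$. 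To invoke Theorem~\ref{intro:union} it remains to verify the compatibility hypothesis. Because $S_n \subseteq S_{n+1}$, any $S_n$-word representing an element $g \in G_n$ is also an $S_{n+1}$-word, so $\tau_{n+1}(1_G,g) \le \tau_n(1_G,g)$ for all $g \in G_n$. Consequently, if $y = g\cdot x$ and $z = h\cdot x$ both lie in a $\rho_n$-ball of radius $r$, then $z = (hg^{-1})\cdot y$ with $hg^{-1} \in G_n \subseteq G_{n+1}$ and
\[
\tau_{n+1}(1_G, hg^{-1}) \le \tau_{n+1}(1_G,h) + \tau_{n+1}(1_G,g^{-1}) \le \tau_n(1_G,h) + \tau_n(1_G,g) \le 2r,
\]
so $\rho_{n+1}(y,z) \le 2r$; thus for each $n$ and $r$ the $\rho_n$-balls of radius $r$ have $\rho_{n+1}$-diameter at most $2r$, uniformly.

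Finally, by Corollary~\ref{cor:polygrow_asdim} each action $G_n \acts X$ has finite Borel asymptotic dimension, i.e.\ $\Basdim(X, \rho_n) < \infty$ for every $n$. All hypotheses of Theorem~\ref{intro:union} are now in place, and it follows that $E_G^X = \bigcup_n E_{\rho_n}$ is hyperfinite. I do not expect a genuine obstacle in this argument: the real work has already been done in Theorem~\ref{intro:union} and Corollary~\ref{cor:polygrow_asdim}. The only points that need attention are the choice of a coherent family of word metrics with $S_n \subseteq S_{n+1}$ --- which is exactly what makes the bounded-diameter hypothesis of Theorem~\ref{intro:union} automatic --- and the observation that properness of $\rho_n$ requires no freeness, which is what allows us to drop both the finite-generation assumption of Jackson--Kechris--Louveau and any freeness hypothesis at the same time.
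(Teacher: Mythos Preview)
Your proposal is correct and follows essentially the same approach that the paper indicates: the paper explicitly says just before this corollary that it follows from Theorem~\ref{intro:union} together with Corollary~\ref{cor:polygrow_asdim}, and your argument simply fills in the routine details (nested generating sets $S_n \subseteq S_{n+1}$ to get the compatibility hypothesis, and the observation that Corollary~\ref{cor:polygrow_asdim} applies to arbitrary Borel actions, not just free ones). There is nothing to add.
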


For instance, a new group to which the above corollary applies but prior results do not is the group $(\bigoplus_{n \in \N} \Z) \rtimes S_\infty$, where $S_\infty$ is the group of finitely-supported permutations of $\N$.

In addition to the formal results we obtain, we believe that the methods we employ are equally valuable. The introduction of Borel asymptotic dimension into the study of hyperfinite equivalence relations is exciting both for the new results it leads to, as discussed above, and for how it reshapes our understanding of prior results. Specifically, the papers by Gao--Jackson \cite{GJ15} and Schneider--Seward \cite{SS}, which were primarily devoted to partially removing the finite-generation assumption in the work of Jackson--Kechris--Louveau \cite{JKL02}, were quite long (74 and 45 pages, respectively) and highly technical. Theorem \ref{intro:union} achieves the chief goal of those papers, but its proof is less technical and tremendously shorter (only a few pages, including all supporting lemmas). Together with Corollary \ref{cor:polygrow_asdim} (which also has a very short proof), these self-contained arguments encompass all prior work on Weiss' question and a bit more. Our proofs certainly differ significantly in terms of formal concepts and details, however we find it both surprising and satisfying that, in our opinion, our arguments capture the core intuitive components of those prior works. We hope therefore that this work can lead to a renewed understanding of prior discoveries and can reinvigorate the pursuit of Weiss' question.

\subsection*{Outline}
Section \ref{sec:prelim} contains a review of preliminary material. In Section \ref{sec:asdim} we introduce Borel asymptotic dimension and asymptotic separation index. Many of the remaining sections are mostly independent of one another. In Section \ref{sec:general} we prove that Borel asymptotic dimension is equal to standard asymptotic dimension when the former is finite, and we prove that in many situations the asymptotic separation index is at most $1$. In the first half of Section \ref{sec:polygrow} we retrace part of the work of Jackson--Kechris--Louveau, but in the language of Borel asymptotic dimension, and prove that Borel actions of groups of polynomial volume-growth have finite Borel asymptotic dimension. In the second half of Section \ref{sec:polygrow} we study packing phenomena for certain group extensions and, relying upon that, in Section \ref{sec:series} we prove our main theorem that all free actions of groups with suitable normal series have finite Borel asymptotic dimension. In Section \ref{sec:hyperfinite} we prove that finite Borel asymptotic dimension implies hyperfiniteness and, relying a bit on Section \ref{sec:general}, we use Borel asymptotic dimension to solve an instance of the Union Problem. Finally, we study applications to Borel chromatic numbers in Section \ref{sec:chromatic}, F{\o}lner tilings in Section \ref{sec:folner}, and topological dynamics and $C^*$-algebras in Section \ref{sec:calg}. Much of this last section discusses alterations to proofs from earlier in the paper, arguing how clopen sets can be used in place of Borel sets.

\section{Preliminaries} \label{sec:prelim}

As is customary in logic, throughout the paper we identify each positive integer $n$ with the set $\{0, 1, \ldots, n-1\}$.

By an \emph{extended} metric we mean a metric that is allowed to take value $+\infty$. Given an extended metric $\rho$ on $X$ we write $E_\rho = \{(x, y) \in X \times X : \rho(x, y) < \infty\}$ for the finite-distance equivalence relation. We let $B(x; r)$ denote the closed $r$-ball centered at $x$, and we call $\rho$ \emph{proper} if every ball of finite radius has finite cardinality.

\subsection{Groups and group actions}
A \emph{normal series} for a group $G$ is a sequence $G = G_0 \rhd G_1 \rhd \ldots \rhd G_n = \{1_G\}$ of normal subgroups of $G$. We refer to $G_k / G_{k+1}$ as a \emph{quotient of consecutive terms}, and we call $G_0 / G_1$ the \emph{top quotient}.

If $G$ is an abelian group, then its \emph{torsion subgroup} is the subgroup consisting of all elements of finite order. $G$ is called \emph{torsion} if it is equal to its torsion subgroup, and $G$ is called \emph{torsion-free} if its torsion subgroup is trivial. The \emph{$\Q$-rank} of $G$ (when $G$ is written additively) is the largest cardinality of a $\Z$-linearly independent subset, or equivalently the dimension of the $\Q$-vector space $\Q \otimes G$.

If $P$ is a property of groups, then we will say that $G$ is \emph{locally} $P$ if every finitely generated subgroup of $G$ has property $P$, and we will say $G$ is \emph{virtually} $P$ if $G$ contains a finite-index subgroup with property $P$.

A group $G$ has \emph{finite Pr{\"u}fer rank} if there is $k \in \N$ so that every finitely generated subgroup of $G$ can be generated by $k$ elements or fewer. When this is the case, the \emph{Pr{\"u}fer rank} of $G$ is the least $k$ for which this property holds.

\begin{lem} \label{lem:prufer}
Let $G$ be a virtually solvable group having finite Pr{\"u}fer rank. Then $G$ admits a normal series where each quotient of consecutive terms is either a finite group, an increasing union of finite characteristic subgroups, or a torsion-free abelian group with finite $\Q$-rank.
\end{lem}

\begin{proof}
From the definition it is immediate that the property of having finite Pr{\"u}fer rank passes from a group to all of its subgroups. Additionally, it passes to all quotients. Specifically, suppose that $\Gamma$ has Pr{\"u}fer rank $k$ and $\phi : \Gamma \rightarrow \Lambda$ is an epimorphism. Given any finite collection $\lambda_1, \ldots, \lambda_\ell \in \Lambda$ we can pick $\gamma_1, \ldots, \gamma_\ell \in \Gamma$ satisfying $\phi(\gamma_i) = \lambda_i$ for each $i$. Since $\Gamma$ has Pr{\"u}fer rank $k$ there are $s_1, \ldots, s_k \in \Gamma$ with $\langle s_1, \ldots, s_k \rangle = \langle \gamma_1, \ldots, \gamma_\ell \rangle$. It follows that $\langle \phi(s_1), \ldots, \phi(s_k) \rangle = \langle \lambda_1, \ldots, \lambda_\ell \rangle$. Thus every finitely generated subgroup of $\Lambda$ is at most $k$-generated and $\Lambda$ has Pr{\"u}fer rank at most $k$.

Next, we note that if $\Gamma$ is a torsion abelian group having finite Pr{\"u}fer rank, then $\Gamma$ is the increasing union of characteristic finite subgroups. Indeed, $\Gamma$ is the increasing union of the characteristic subgroups $\Gamma_n$, where $\Gamma_n$ is the subgroup generated by all elements having order $n$ or smaller. Each element of $\Gamma_n$ has order at most $n!$ by commutativity, and if $k$ is the Pr{\"u}fer rank of $\Gamma$ then every finitely generated subgroup of $\Gamma_n$ is $k$-generated and thus has cardinality at most $(n!)^k$. Therefore $|\Gamma_n| \leq (n!)^k$ and $\Gamma_n$ is finite as desired.

We now prove the lemma. Let $H$ be a finite-index solvable subgroup of $G$. The left-translation action of $G$ on the space of cosets $G / H$ produces a homomorphism $\phi : G \rightarrow \mathrm{Sym}(G/H)$. The kernel of this map, $\ker \phi = \bigcap_{g \in G} g H g^{-1}$, is a subgroup of $H$ and thus solvable, and it has finite index in $G$ since $\mathrm{Sym}(G/H)$ is finite.

Set $G_0 = G$ and $G_1 = \ker \phi$. Let $i \geq 1$ and suppose that $G_{2i-1}$ has been defined. Set $G_{2i+1} = [G_{2i-1}, G_{2i-1}]$ and let $G_{2i}$ be the set of elements $g \in G_{2i-1}$ such that $g^m \in G_{2i+1}$ for some $m > 0$. It is easily seen by induction that $G_j$ is characteristic in $G_1$ for all $j \geq 1$,  and thus $(G_j)$ is a normal series for $G$. Additionally, since $(G_{2i-1})_{i \geq 1}$ is the derived series of $G_1$ and $G_1$ is solvable, there is $n \geq 1$ with $G_n = \{1_G\}$.

By construction $G_0 / G_1$ is a finite group. For $i \geq 1$, $G_{2i} / G_{2i+1}$ is the torsion subgroup of the abelian group $G_{2i-1} / G_{2i+1}$, so $G_{2i-1} / G_{2i}$ is torsion-free and abelian. Additionally, since $G_{2i-1} / G_{2i}$ has finite Pr{\"u}fer rank it has finite $\Q$-rank. Lastly, $G_{2i} / G_{2i+1}$ is a torsion abelian group having finite Pr{\"u}fer rank and thus is the increasing union of finite characteristic subgroups.
\end{proof}

A finitely generated group $G$ has \emph{polynomial volume-growth} if there is $d \in \N$ (the least of which is called the \emph{degree}) so that for any (equivalently every) finite symmetric generating set $B$ containing $1_G$ there is $c > 0$ with $|B^r| \leq c r^d$ for all $r \geq 1$. By Gromov's theorem \cite{G81,Wo68}, this is equivalent to $G$ being finitely generated and virtually nilpotent.

We say that $G$ has \emph{local polynomial volume-growth} if every finitely generated subgroup of $G$ has polynomial volume-growth, and has \emph{uniform local polynomial volume-growth} if there is $d \in \N$ (the least of which is referred to as the \emph{degree}) so that every finitely generated subgroup of $G$ has polynomial volume-growth of degree at most $d$. For instance, locally finite groups have uniform local polynomial volume-growth of degree $0$ and countable abelian groups of $\Q$-rank $d$ have uniform local polynomial volume-growth of degree $d$.

Every finitely generated group $G$ can naturally be viewed as a metric space. Specifically, to any finite symmetric generating set $A$ containing $1_G$ one can associate the \emph{$A$-word length metric} $\tau_A$ defined by letting $\tau_A(g, h)$ be the least $n \in \N$ with $g h^{-1} \in A^n$. Then $\tau_A$ is a proper right-invariant metric. More generally, every countable group $G$ admits some proper right-invariant metric. For instance, if $G$ is finite we can let $\tau$ be the discrete metric, and otherwise we can fix a bijection $w : \Z_+ \rightarrow G$ and define
$$\tau(g, h) = \min \{n_1 + n_2 + \ldots + n_k : k \geq 1, \ w(n_1) w(n_2) \cdots w(n_k) = g h^{-1}\}.$$

An action $G \acts X$ is \emph{free} if $g \cdot x \neq x$ for every $x \in X$ and non-identity $g \in G$. If $G$ acts on $X$ and $\tau$ is a proper right-invariant metric on $G$, then we can define a proper extended metric $\rho_\tau$ on $X$ by letting $\rho_\tau(x, y)$ be the minimum of $\tau(1_G, g)$ over all $g \in G$ satisfying $g \cdot x = y$, and be $+\infty$ if no such $g$ exists. Since $\tau$ is right-invariant, for any fixed $x \in X$ the map $g \mapsto g \cdot x$ from $(G, \tau)$ to $(X, \rho_\tau)$ is $1$-Lipschitz and is an isometry when the action is free.

\subsection{Standard asymptotic dimension}

Let $(X, \rho)$ be an extended metric space. We say that an equivalence relation $E$ on $X$ is \emph{bounded} if every $E$-class has finite $\rho$-diameter, and is \emph{uniformly bounded} if there is a uniform finite bound to the $\rho$-diameters of $E$-classes. We also say $E$ is \emph{countable} if every $E$-class is countable, and \emph{finite} if every $E$-class is finite. Moreover, $E$ is \emph{uniformly finite} if there is a finite uniform bound to the cardinalities of the $E$-classes.

For $U \subseteq X$ and $r > 0$, we denote by $\walkeq{U}{r}$ the smallest equivalence relation on $U$ with the property that $(x,y) \in \walkeq{U}{r}$ whenever $x, y \in U$ satisfy $\rho(x, y) \leq r$. Similarly, for an action $G \acts X$, $U \subseteq X$, and finite $A \subseteq G$, we let $\walkeq{U}{A}$ be the smallest equivalence relation on $U$ satisfying $(x, y) \in \walkeq{U}{A}$ whenever $x, y \in U$ and $x \in A \cdot y$ or $y \in A \cdot x$.

There are several equivalent definitions for the standard asymptotic dimension of a metric space $(X, \rho)$ (see for instance \cite[Thm. 19]{BD08}), but we will only be interested in two. Consider the following statements.
\begin{enumerate}
\item[(1)] For every $r > 0$ there is a uniformly bounded equivalence relation $E$ on $X$ such that for every $x \in X$ the ball $B(x; r)$ meets at most $d+1$ classes of $E$.
\item[(2)] For every $r > 0$ there are sets $U_0, \ldots, U_d$ that cover $X$ and have the property that $\walkeq{U_i}{r}$ is uniformly bounded for every $i$.	
\end{enumerate}
Then (1) and (2) are equivalent for every $d \in \N$ (this is a standard fact, recorded for instance in \cite[Thm. 19]{BD08} and reproved in the Borel setting in the next section). The \emph{standard asymptotic dimension} of $(X, \rho)$, denoted $\asdim(X, \rho)$, is the least $d \in \N$ for which (1) and (2) are true, or $+\infty$ if no such $d$ exists. We will use this same definition for extended metric spaces as well.

A visual demonstration, based on (1), that $\R^2$ has standard asymptotic dimension $2$ is depicted in Figure \ref{fig:plane}. In that figure, the classes of $E$ are rectangles of side-length greater than $4r$, and the rectangles are staggered so that every $r$-ball meets at most $3$ rectangles, rather than $4$. A similar demonstration, based on (2), is depicted in Figure \ref{fig:plane2}. The objects depicted in these figures are connected by a construction that will be discussed in the next section.

\begin{figure}
\begin{tikzpicture}
\foreach \x in {0,1,2,3,4} {
		\draw (1.5*\x,0)--(1.5*\x,1);
		\draw (1.5*\x,2)--(1.5*\x,3);
		}
\foreach \x in {0,1,2,3} {
		\draw (1.5*\x+0.75,0)--(1.5*\x+0.75,-0.3);
		\draw (1.5*\x+0.75,1)--(1.5*\x+0.75,2);
		\draw (1.5*\x+0.75,3)--(1.5*\x+0.75,3.3);
		}
\foreach \y in {0,1,2,3} {
		\draw (-0.3,\y)--(6+0.3,\y);
		}
\draw (-2,1.85) circle [radius=0.2];
\node [below] at (-2,1.65) {Ball of radius};
\node [below] at (-2,1.25) {$r = 4r_0$};
\end{tikzpicture}
\caption{An equivalence relation $E$ satisfying condition (1) when $X = \R^2$ and $d = 2$.} \label{fig:plane}
\end{figure}
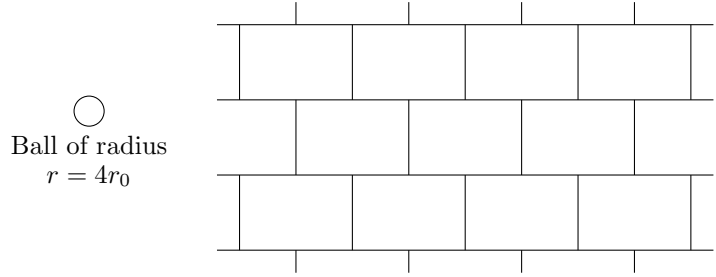

\begin{figure}
\begin{tikzpicture}
\draw [\colorC, fill=\colorC] (-0.375,-0.25) rectangle (6.375,3.25);
\foreach \y in {0,1,2,3} {
		\draw [\colorB, fill=\colorB] (-0.375,\y-0.08) rectangle (6.375,\y+0.08);
		\draw (-0.375,\y-0.08)--(6.375,\y-0.08);
		\draw (-0.375,\y+0.08)--(6.375,\y+0.08);
		}
\foreach \x in {0,1,2,3} {
		\draw [\colorB, fill=\colorB] (1.5*\x+0.75-0.08,0) rectangle (1.5*\x+0.75+0.08,-0.25);
		\draw [fill=\colorB] (1.5*\x+0.75-0.08,1) rectangle (1.5*\x+0.75+0.08,2);
		\draw [\colorB, fill=\colorB] (1.5*\x+0.75-0.08,3) rectangle (1.5*\x+0.75+0.08,3.25);
		\draw (1.5*\x+0.75-0.08,0)--(1.5*\x+0.75-0.08,-0.25); \draw (1.5*\x+0.75+0.08,0)--(1.5*\x+0.75+0.08,-0.25);
		\draw (1.5*\x+0.75-0.08,3)--(1.5*\x+0.75-0.08,3.25); \draw (1.5*\x+0.75+0.08,3)--(1.5*\x+0.75+0.08,3.25);
		}
\foreach \x in {0,1,2,3,4} {
		\draw [fill=\colorB] (1.5*\x-0.08,0) rectangle (1.5*\x+0.08,1);
		\draw [fill=\colorB] (1.5*\x-0.08,2) rectangle (1.5*\x+0.08,3);
		}
\foreach \x in {0,1,...,8} {
		\foreach \y in {0,1,2,3} {
				\draw [fill=\colorA, radius=0.15] (1.5*\x/2,\y) circle;
				}
		}
\draw (-2,1.85) circle [radius=0.05];
\node [below] at (-2,1.65) {Ball of radius};
\node [below] at (-2,1.25) {$r_0$};
\end{tikzpicture}
\caption{Sets $U_0$ (\colorCtext), $U_1$ (\colorBtext), $U_2$ (\colorAtext) satisfying condition (2) when $r = r_0$, $X = \R^2$, and $d = 2$.} \label{fig:plane2}
\end{figure}
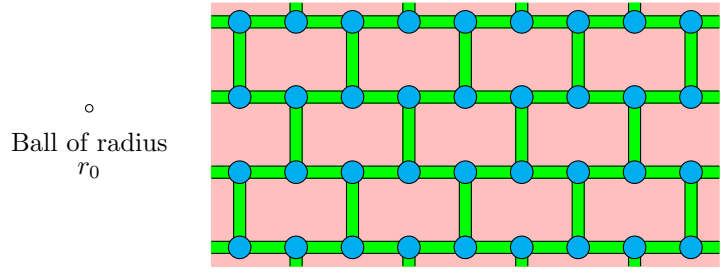

We define the standard asymptotic dimension of a group $G$, denoted $\asdim(G)$, and the standard asymptotic dimension of an action $G \acts X$, denoted $\asdim(G \acts X)$, to be $\asdim(G, \tau)$ and $\asdim(X, \rho_\tau)$, respectively, for any proper right-invariant metric $\tau$ on $G$. These two quantities do not depend on the choice of $\tau$.

\begin{lem} \label{lem:asdim_action}
Let $G \acts X$ be an action of a countable group $G$. Then $\asdim(G \acts X)$ is equal to the least $d \in \N$ with the property that for every finite set $A \subseteq G$ there are sets $U_0, \ldots, U_d$ covering $X$ such that $\walkeq{U_i}{A}$ is uniformly finite for every $i \in d+1$, and is equal to $\infty$ if no such $d$ exists.
\end{lem}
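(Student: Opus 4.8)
The plan is to fix an arbitrary proper right-invariant metric $\tau$ on $G$ and show that $\asdim(X,\rho_\tau)$ --- which by definition equals $\asdim(G\acts X)$ --- coincides with the quantity described in the statement, by bounding each by the other. Throughout I would use characterization (2) of standard asymptotic dimension. The bridge between the metric $\rho_\tau$ and finite subsets of $G$ is the following pair of elementary observations: (a) for each $r>0$ the ball $A_r := \{g\in G : \tau(1_G,g)\le r\}$ is a finite subset of $G$ (by properness of $\tau$), the $\rho_\tau$-ball $B(x;r)$ is exactly $A_r\cdot x$, and consequently $\walkeq{U}{r} = \walkeq{U}{A_r}$ for every $U\subseteq X$; and (b) every finite $A\subseteq G$ is contained in $A_r$ for $r=\max_{g\in A}\tau(1_G,g)$, and $\walkeq{U}{A}\subseteq\walkeq{U}{A'}$ whenever $A\subseteq A'$.

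First I would show that if condition (2) holds with $d+1$ sets for every radius (that is, $\asdim(X,\rho_\tau)\le d$) then the condition in the statement holds with the same $d$. Given a finite $A\subseteq G$, choose $r$ as in (b) so that $A\subseteq A_r$, and apply (2) at radius $r$ to obtain a cover $U_0,\dots,U_d$ of $X$ with each $\walkeq{U_i}{r}$ uniformly bounded, say of $\rho_\tau$-diameter at most $M$. Since $\walkeq{U_i}{A}\subseteq\walkeq{U_i}{A_r}=\walkeq{U_i}{r}$, every $\walkeq{U_i}{A}$-class is contained in a single $\rho_\tau$-ball of radius $M$, which by (a) has at most $|A_M|$ elements; hence each $\walkeq{U_i}{A}$ is uniformly finite, with a bound independent of $A$-choices beyond $M$.

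Conversely, suppose the condition in the statement holds with parameter $d$; I would show $\asdim(X,\rho_\tau)\le d$. Given $r>0$, set $A:=A_r$ and take a cover $U_0,\dots,U_d$ of $X$ with every $\walkeq{U_i}{A}$ uniformly finite, say with classes of cardinality at most $K$. By (a), $\walkeq{U_i}{r}=\walkeq{U_i}{A}$, so each of its classes has at most $K$ points. Given two points of such a class, a chain of minimal length joining them within the class witnesses the walk relation; minimality forces the chain to be injective, hence to have at most $K-1$ steps, and each step has $\rho_\tau$-length at most $r$, so the class has $\rho_\tau$-diameter at most $(K-1)r$. Thus $\walkeq{U_i}{r}$ is uniformly bounded and condition (2) holds with $d+1$ sets. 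Combining the two directions (and noting the case where either quantity is $+\infty$ is immediate) yields the claimed equality.

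I do not anticipate a real obstacle: the argument is essentially a dictionary translation between the metric formulation and the group-theoretic one. The only point deserving a moment's attention is the equivalence between ``uniformly bounded'' and ``uniformly finite'' for these walk equivalence relations, which holds here precisely because the metric comes from a group action --- a ball $B(x;M)=A_M\cdot x$ has cardinality at most the fixed finite number $|A_M|$ regardless of $x$, while conversely a class of bounded cardinality inside $\walkeq{U}{r}$ is joined by short walks and hence has bounded diameter.
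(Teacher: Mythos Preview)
Your proposal is correct and follows exactly the approach the paper sketches: the paper's proof simply records the two observations you labeled (a) and (b) --- that $\tau$-balls in $G$ are finite and that every finite $A\subseteq G$ sits in some $\tau$-ball --- and then explicitly leaves the remaining details as an exercise. You have supplied precisely those details, including the passage between ``uniformly bounded'' and ``uniformly finite'' via the uniform cardinality bound $|A_M|$ on $\rho_\tau$-balls and the minimal-chain argument for the converse.
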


\begin{proof}
This follows from the basic fact that if $\tau$ is any proper right-invariant metric on $G$ then for any $r \geq 0$ the set $A = \{a \in G : \tau(a, 1_G) \leq r\}$ is finite and for any finite $A \subseteq G$ there is an $r \geq 0$ with $\tau(a, 1_G) \leq r$ for all $a \in A$. We leave it as an easy exercise for the reader to fill in the details.
\end{proof}

We note the following immediate consequence.

\begin{cor} \label{cor:subgroups}
Let $G \acts X$ be an action of a countable group $G$. If $H \leq G$ then the restricted action satisfies $\asdim(H \acts X) \leq \asdim(G \acts X)$. Moreover, $\asdim(G \acts X)$ is the supremum of $\asdim(H \acts X)$ as $H$ varies over all finitely generated subgroups of $G$.
\end{cor}

Similar statements hold for the standard asymptotic dimension of groups. To give a few examples, locally finite groups have standard asymptotic dimension $0$, $\Q^d$ has standard asymptotic dimension $d$, solvable groups have standard asymptotic dimension bounded by their Hirsch length (i.e. the sums of the $\Q$-ranks of the abelian quotients appearing in their derived series, which may be infinite), and non-abelian free groups have standard asymptotic dimension $1$. On the other hand, $\bigoplus_{n \in \N} \Z$, $\Z \wr \Z$, and the Grigorchuk group have infinite standard asymptotic dimension, as does any group having them as a subgroup. Further examples can be found in \cite{BD08}. It is also worth noting that every countable group has standard asymptotic dimension bounded by the sum of the standard asymptotic dimensions of the quotients of consecutive terms in any normal series (see Thm. 68 in \cite{BD08} and the remark following it).

\subsection{Borel equivalence relations and graphs}

A set $X$ equipped with a distinguished $\sigma$-algebra $\mathcal{B}(X)$ is called a \emph{standard Borel space} if there exists a separable and completely metrizable topology on $X$ such that $\mathcal{B}(X)$ coincides with the $\sigma$-algebra generated by the open sets. In this setting, we call the members of $\mathcal{B}(X)$ \emph{Borel} sets. When $X$ is a standard Borel space and $\rho$ is a Borel measurable extended metric on $X$, we call $(X, \rho)$ a \emph{Borel extended metric space}.

An equivalence relation $E$ on $X$ is \emph{Borel} if $E$ is a Borel subset of $X \times X$. A Borel equivalence relation $E$ is \emph{hyperfinite} if it can be expressed as the increasing union of a sequence of finite Borel equivalence relations $E_n$. For $x \in X$ we write $[x]_E$ for the $E$-class of $x$, and for $Y \subseteq X$ we write $[Y]_E = \{x \in X : \exists y \in Y \ (x, y) \in E\}$ for the \emph{$E$-saturation} of $Y$.

The equivalence relations we consider will always be countable. An important feature of countable Borel equivalence relations is that the Lusin-Novikov uniformization theorem provides Borel functions $f_n : X \rightarrow X$ with the property that $E = \{(x, f_n(x)) : x \in X, \ n \in \N\}$ \cite[Thm. 18.10]{K95}. This fact is quite useful for verifying that sets are Borel. For instance, if $E$ is a countable Borel equivalence relation and $Y \subseteq X$ is Borel, then $[Y]_E$ is Borel as well because if $(f_n)_{n \in \N}$ is as described above then $x \in [Y]_E$ if and only if there is $n \in \N$ with $f_n(x) \in Y$. The next lemma provides another example.

\begin{lem} \label{lem:walkborel}
Let $X$ be a standard Borel space and let $U \subseteq X$ be Borel.
\begin{enumerate}
\item If $\rho$ is a Borel extended metric on $X$ and $E_\rho$ is countable, then $\walkeq{U}{r}$ is Borel for every $r \geq 0$.
\item If $G \acts X$ is a Borel action then $\walkeq{U}{A}$ is Borel for every finite $A \subseteq G$.	
\end{enumerate}	
\end{lem}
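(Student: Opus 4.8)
The plan is to use the Lusin--Novikov uniformization theorem to reduce the definition of $\walkeq{U}{r}$ (and $\walkeq{U}{A}$) to a countable supremum of finite unions and compositions of Borel relations, and then observe that each of these operations preserves Borelness for countable equivalence relations.

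First I would handle part (2), which is slightly cleaner. Fix a finite set $A \subseteq G$ and let $R_A = \{(x,y) \in U \times U : x \in A \cdot y \text{ or } y \in A \cdot x\}$. Since $A$ is finite and the action is Borel, $R_A = \bigcup_{a \in A} \bigl(\{(x,y) : x = a \cdot y\} \cup \{(x,y) : y = a \cdot x\}\bigr) \cap (U \times U)$ is a finite union of Borel sets, hence Borel. Now $\walkeq{U}{A}$ is by definition the smallest equivalence relation on $U$ containing $R_A$; since $R_A$ is already symmetric, this is the reflexive-transitive closure, i.e. $\walkeq{U}{A} = \Delta_U \cup \bigcup_{n \geq 1} R_A^{\circ n}$, where $R_A^{\circ n}$ denotes the $n$-fold relational composition and $\Delta_U$ is the diagonal on $U$. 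The key point is that $R_A \subseteq E_G^X \res U$, which is a \emph{countable} Borel equivalence relation, so each $R_A^{\circ n}$ is a subrelation of a countable Borel equivalence relation and in particular has countable sections. The composition of two Borel relations with countable sections is Borel: writing $R_A$ via Lusin--Novikov as a countable union of graphs of Borel partial functions, each composite $R_A^{\circ n}$ becomes a countable union of graphs of Borel partial functions and is therefore Borel. A countable union of Borel sets is Borel, so $\walkeq{U}{A}$ is Borel.

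For part (1), I would derive it from part (2) via the standard device of representing $E_\rho$ as an orbit equivalence relation and noting that the metric comparison is Borel. Concretely, fix $r \geq 0$ and set $R_r = \{(x,y) \in U \times U : \rho(x,y) \leq r\}$. This is Borel because $\rho$ is a Borel function and $U$ is Borel. Moreover $R_r \subseteq E_\rho \res U$, which is countable by hypothesis, so $R_r$ again has countable sections; it is also symmetric. Exactly as above, $\walkeq{U}{r} = \Delta_U \cup \bigcup_{n \geq 1} R_r^{\circ n}$ is a countable union of compositions of a Borel relation with countable sections, hence Borel. (If one prefers to invoke part (2) directly: by the Feldman--Moore theorem write $E_\rho$ as $E_G^X$ for a Borel action of a countable group $G$, choose for each $a \in G$ the Borel set on which $\rho(x, a\cdot x) \leq r$, and assemble $R_r$ from these; but the direct argument above is shorter.)

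The only real subtlety, and the step I would be most careful about, is the claim that the composition of two Borel relations having countable vertical sections is again Borel. This is where Lusin--Novikov is genuinely used: a Borel set $R \subseteq X \times X$ with all sections $R_x = \{y : (x,y) \in R\}$ countable can be written as $R = \bigcup_{n} \operatorname{graph}(f_n)$ for Borel partial functions $f_n$, and then $(S \circ R) = \bigcup_{m,n} \{(x,z) : f_n(x) \text{ is defined}, \ (f_n(x), z) \in \operatorname{graph}(g_m)\}$ reduces to preimages of Borel sets under Borel functions. Everything else is bookkeeping. I would also remark that one must check $R_r^{\circ n}$ (resp. $R_A^{\circ n}$) indeed has countable sections before iterating — this follows because it is contained in the countable equivalence relation $E_\rho \res U$ (resp. $E_G^X \res U$) — so that the Lusin--Novikov argument applies uniformly at every stage of the union.
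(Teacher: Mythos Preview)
Your proposal is correct and is essentially the same argument as the paper's: both use Lusin--Novikov uniformization of the ambient countable Borel equivalence relation to express $\walkeq{U}{r}$ (resp.\ $\walkeq{U}{A}$) as a countable union of Borel conditions indexed by finite walks. The only cosmetic differences are that the paper treats (1) first and writes the path condition directly in terms of the uniformizing functions $f_n$, whereas you treat (2) first and package the same computation as iterated relational composition.
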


\begin{proof}
Consider (a). Since $E_\rho$ is countable, we may let $f_n : X \rightarrow X$, $n \in \N$, be as given by the Lusin--Novikov uniformization theorem. Then $(x, y) \in \walkeq{U}{r}$ if and only if there is $k \in \N$ and $n_0, \ldots, n_k \in \N$ with $f_{n_0}(x) = x$, $f_{n_k}(x) = y$, $\rho(f_{n_i}(x), f_{n_{i+1}}(x)) \leq r$ for all $i < k$, and $f_{n_i}(x) \in U$ for all $i \leq k$. Thus $\walkeq{U}{r}$ is Borel. The proof of (b) is nearly identical, but one can use the elements of $G$ instead of the $f_n$'s.
\end{proof}

In our arguments the equivalence relations $\walkeq{U}{r}$ and $\walkeq{U}{A}$ will often be finite. A useful feature of finite Borel equivalence relations is that they admit \emph{Borel selectors}, meaning there is a Borel function $s \from X \rightarrow X$ satisfying $s(y) = s(x) \ E \ x$ for all $(x, y) \in E$ \cite[Thm. 12.16]{K95}. In particular, since $s$ is finite-to-one the image $s(X)$ is a Borel set that contains exactly one point from every $E$-class \cite[Ex. 18.14]{K95}. 

If $X$ is a standard Borel space, a \emph{Borel graph} on $X$ is a set of edges $\Gamma$ that is a Borel subset of $X \times X$. The \emph{graph metric} $\rho$ is defined by letting $\rho(x, y)$ be the shortest possible length of a path joining $x$ and $y$, or $\infty$ if no such path exists. This metric is proper when $\Gamma$ is \emph{locally finite}, meaning every $x \in X$ has finite degree in $\Gamma$. A set $Y \subseteq X$ is $\Gamma$-independent if no two points in $Y$ are joined by an edge, and a \emph{proper coloring} of $\Gamma$ is a function $c : X \rightarrow Z$ with the property that $c^{-1}(z)$ is $\Gamma$-independent for every $z \in Z$. The \emph{chromatic number} of $\Gamma$, denoted $\chi(\Gamma)$, is the least cardinality of a set $Z$ such that there exists proper coloring $c : X \rightarrow Z$. When $X$ is a standard Borel space and $\Gamma$ is a Borel graph, the \emph{Borel chromatic number} $\chi_{\textbf{B}}(\Gamma)$ is defined in the same way but with the requirement that $Z$ be a standard Borel space and that $c$ be Borel measurable. 

The following proposition gives a case when the Borel chromatic number is equal to the standard chromatic number:

\begin{prop}[{\cite[Proposition 1]{CM16}}]\label{lem:smooth_coloring}
Suppose that $X$ is a standard Borel space and $\Gamma$ is a locally countable Borel graph on $X$ whose connectedness relation $E_\Gamma$ on $X$ admits a Borel selector. Then $\chi(G) = \chi_B(G)$. 
\end{prop}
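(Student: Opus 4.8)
Since a Borel proper coloring is in particular a proper coloring, $\chi(\Gamma) \le \chi_{\textbf{B}}(\Gamma)$ holds automatically, so the plan is to produce a Borel proper coloring using $\chi(\Gamma)$ colors. A preliminary observation: as $\Gamma$ is locally countable, every connected component $C$ is a countable set, so $\chi(\Gamma \restriction C) \le |C| \le \aleph_0$ and hence $\chi(\Gamma) \le \aleph_0$. If $\chi(\Gamma) = \aleph_0$, a Borel $\aleph_0$-coloring is easy to come by: writing $s$ for the Borel selector of $E_\Gamma$, $T = \{x : s(x) = x\}$ for the resulting Borel transversal, and $h_i \colon X \to X$ ($i \in \N$) for Lusin--Novikov functions enumerating $E_\Gamma$, one Borel-enumerates each (countable) class $[t]_{E_\Gamma}$ without repetition and lets $c(x)$ record the position of $x$ in the enumeration of $[s(x)]_{E_\Gamma}$. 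This $c$ is Borel and injective on every $E_\Gamma$-class, hence a proper coloring. So the substantive case is $\chi(\Gamma) = n < \infty$.

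For that case the idea is to choose a coloring ``fiberwise'' over $T$ and then make the choice canonical so that it is Borel. With $s$, $T$, $h_i$ as above, for $t \in T$ pull $\Gamma$ back to a graph $\Gamma_t$ on $\N$ by putting $(i,j) \in \Gamma_t$ iff $(h_i(t), h_j(t)) \in \Gamma$; then $i \mapsto h_i(t)$ is a graph homomorphism from $\Gamma_t$ onto $\Gamma \restriction [t]_{E_\Gamma}$, so $\chi(\Gamma_t) \le \chi(\Gamma \restriction [t]_{E_\Gamma}) \le n$. Consequently the set $Q_t \subseteq n^{\N}$ of proper $n$-colorings of $\Gamma_t$ is a nonempty (compose a proper $n$-coloring of $\Gamma \restriction [t]_{E_\Gamma}$ with $i \mapsto h_i(t)$) closed subset of the compact space $n^{\N}$. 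Let $\phi_t$ be the lexicographically least element of $Q_t$, let $\iota(x)$ be the least $i$ with $h_i(s(x)) = x$, and set $c(x) := \phi_{s(x)}(\iota(x))$. Then $c$ takes values in $n$, and it is a proper coloring: if $(x,y) \in \Gamma$ then $s(x) = s(y) =: t$ and $(\iota(x), \iota(y)) \in \Gamma_t$, whence $c(x) = \phi_t(\iota(x)) \ne \phi_t(\iota(y)) = c(y)$.

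The only nonroutine step --- and the one I expect to be the crux --- is showing that $t \mapsto \phi_t$ is Borel (which then makes $c$ Borel and finishes the proof, yielding $\chi_{\textbf{B}}(\Gamma) \le n = \chi(\Gamma)$). I would argue by induction on $k$ that $t \mapsto \phi_t \restriction k$ is Borel; it suffices to see that for every $\sigma \in n^{k}$ the set $\{\, t \in T : \exists \phi \in Q_t,\ \phi \restriction k = \sigma \,\}$ is Borel. This is where the finiteness of $n$ and the compactness of $n^{\N}$ enter, through a De Bruijn--Erd\H{o}s type argument: by compactness, there is $\phi \in Q_t$ with $\phi \restriction k = \sigma$ if and only if for every finite $S \subseteq \N$ containing $k$ there is $\psi \colon S \to n$ with $\psi \restriction k = \sigma$ that properly colors $\Gamma_t \restriction S$. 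The right-hand side is a countable conjunction (over finite $S$) of finite disjunctions (over the finitely many candidate $\psi$) of conditions of the form $(h_i(t), h_j(t)) \in \Gamma$, each of which is Borel in $t$; hence the displayed set is Borel. This is the heart of the matter; everything else is bookkeeping with Lusin--Novikov uniformization and the Borel selector.
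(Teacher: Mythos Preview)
Your argument is correct and proves the proposition in the full generality stated. The paper itself does not prove this result---it is quoted from \cite{CM16}---and the only argument the paper sketches is for the special case it actually uses, where every $E_\Gamma$-class is finite: there one fixes a Borel linear order on $X$ and, on each finite component, selects the lexicographically least proper $\chi(\Gamma)$-coloring with respect to that order; Borelness is immediate because only finitely many points and finitely many candidate colorings are in play.

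Your proof shares the same governing idea (pick the lex-least coloring on each component) but carries it out in the general locally countable setting, where components may be infinite. The two genuine additions you need over the paper's sketch are the Lusin--Novikov enumeration to transfer each component to a graph on $\N$, and the compactness/De~Bruijn--Erd\H{o}s step to show that the lex-least element of the closed set $Q_t \subseteq n^{\N}$ depends Borel-measurably on $t$. Both are handled cleanly. So: same strategy, but your version is the honest proof of the full statement, while the paper's remark is the shortcut available in the finite-component case.
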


We will use this proposition in the case when the Borel graph $\Gamma$ has finite connected components and hence has a Borel selector by the above. In this special case, one way of proving this proposition is by taking a Borel linear ordering of $X$ and then taking the lexicographically least coloring using $\chi(G)$-colors in each connected component. It is not hard to verify this coloring is Borel.

\section{Borel Asymptotic Dimension} \label{sec:asdim}

In this section we introduce Borel asymptotic dimension. We first translate into the Borel setting the two conditions discussed in Section \ref{sec:prelim} that define standard asymptotic dimension, and we check that they are equivalent when $E_\rho$ is countable.

\begin{lem} \label{lem:equivalent_asym_dim}
Let $(X, \rho)$ be a Borel extended metric space with $E_\rho$ countable. Then for every $d \in \N$ the following statements are equivalent.
\begin{enumerate}
\item[(1)] For every $r > 0$ there is a uniformly bounded Borel equivalence relation $E$ such that for every $x \in X$ the ball $B(x; r)$ meets at most $d+1$ classes of $E$.
\item[(2)] For every $r > 0$ there are Borel sets $U_0, U_1, \ldots, U_d$ that cover $X$ and have the property that $\walkeq{U_i}{r}$ is uniformly bounded for every $i \in d+1$.
\end{enumerate}
Furthermore, if (1') and (2') are the statements where ``uniformly bounded'' is replaced by ``bounded'' in (1) and (2) respectively, then we similarly have that (1') and (2') are equivalent for every $d \in \N$.
\end{lem}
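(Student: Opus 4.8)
The plan is to prove the implications (2)$\Rightarrow$(1) and (1)$\Rightarrow$(2) with uniform bounds, and then observe that exactly the same two constructions, dropping every appeal to a uniform bound, give (2')$\Rightarrow$(1') and (1')$\Rightarrow$(2'). The whole point is that both directions are purely combinatorial once we know, from Lemma \ref{lem:walkborel}, that the relations $\walkeq{U_i}{r}$ are Borel, and from the remarks following that lemma that finite (or finite-diameter, via countability) Borel equivalence relations admit Borel selectors; so the classical argument recorded in \cite[Thm. 19]{BD08} goes through verbatim provided we are careful that every set and relation we write down is Borel.

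For (2)$\Rightarrow$(1): fix $r>0$ and apply (2) at radius $3r$ (say) to get Borel sets $U_0,\dots,U_d$ covering $X$ with each $\walkeq{U_i}{3r}$ uniformly bounded. Set $V_0 = U_0$ and inductively $V_i = U_i \setminus (V_0 \cup \dots \cup V_{i-1})$, so the $V_i$ are Borel, pairwise disjoint, and still cover $X$; note $\walkeq{V_i}{r} \subseteq \walkeq{U_i}{3r}$ is uniformly bounded. Now define $E$ on $X$ by declaring $x\sim y$ iff $x,y$ lie in the same $V_i$ and $(x,y)\in\walkeq{V_i}{r}$. This $E$ is a Borel equivalence relation (it is the disjoint union over $i$ of the Borel relations $\walkeq{V_i}{r}$ restricted to the Borel sets $V_i$), and it is uniformly bounded. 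The key point is that any ball $B(x;r)$ meets at most $d+1$ classes of $E$: the ball is covered by the $d+1$ pieces $V_i$, and within a single $V_i$ all points of $B(x;r)\cap V_i$ are pairwise at distance $\le 2r$... — here one has to be slightly careful, since two points of $V_i$ at distance $\le 2r$ need not be $\walkeq{V_i}{r}$-related. The clean fix, which is why I chose the inflation factor, is to instead apply (2) at radius $r$ directly and argue that $B(x;r)\cap V_i$ lies in a single $\walkeq{V_i}{r}$-class because any two of its points are joined through the center: if $y,z \in B(x;r)\cap V_i$ then... this still fails since $x$ need not be in $V_i$. The actual argument: use the \emph{closed} $r$-ball and the fact that condition (2) should be applied so that it is $\walkeq{V_i}{2r}$, not $\walkeq{V_i}{r}$, that is uniformly bounded; then let $E$ be generated by the $\walkeq{V_i}{r}$, and observe $B(x;r)$ meets a class of $E$ contained in $V_i$ only in points forming a subset of a single $\walkeq{V_i}{2r}$-class — so refining: take $E$ to be, on each $V_i$, the relation $\walkeq{V_i}{r}$, and bound the number of classes met by the number of pieces $V_i$, using that $B(x;r)$ is contained in the union of the $V_i$ and that points of $B(x;r)\cap V_i$ — being within $r$ of $x$ — are pairwise within $2r$, hence (after arranging $\walkeq{V_i}{2r}=\walkeq{V_i}{r}$, which we may by rescaling the radius in (2)) in one class.

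For (1)$\Rightarrow$(2): fix $r>0$, apply (1) at radius $r$ to get a uniformly bounded Borel equivalence relation $E$ with every $B(x;r)$ meeting at most $d+1$ $E$-classes. Choose a Borel selector $s\from X\to X$ for $E$ (available by the remarks after Lemma \ref{lem:walkborel}, since $E$ has uniformly bounded, hence finite since $\rho$ need not be proper — so here instead use that $E$ is countable and bounded, and one still gets a Borel transversal $T = s(X)$ by \cite[Thm. 12.16]{K95}). For each $x$ the $E$-class $[x]_E$ has a well-defined "index" obtained by colouring the transversal $T$: build a Borel graph $H$ on $T$ by joining $s(x),s(y)$ when there is $z$ with $s(x),s(y)$ both meeting $B(z;r)$; this $H$ has bounded degree (bounded in terms of $d$ and the uniform diameter bound), hence finite Borel chromatic number $d+1$ or so — actually we want exactly $d+1$ colours, which follows from a greedy colouring once we know every $r$-ball meets $\le d+1$ classes, by the standard "nerve" argument: the classes met by a common ball form a clique in $H$, so $H$ is $(d+1)$-colourable by the usual inductive argument (which is Borel here because $H$ is a bounded-degree Borel graph). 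Let $c\from T\to\{0,\dots,d\}$ be a Borel proper colouring and put $U_i = \{x \in X : c(s(x)) = i\}$; these are Borel and cover $X$. Finally $\walkeq{U_i}{r}$ is uniformly bounded: if $x,y\in U_i$ with $\rho(x,y)\le r$ then $[x]_E$ and $[y]_E$ are both met by $B(x;r)$, so they are $H$-adjacent (or equal) and receive the same colour only if equal, forcing $[x]_E=[y]_E$; hence $\walkeq{U_i}{r}$ refines $E$ and inherits its uniform bound.

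The main obstacle is the Borelness bookkeeping in the (1)$\Rightarrow$(2) direction: producing the colouring $c$ of the transversal in a Borel way. This is handled by Proposition \ref{lem:smooth_coloring} together with the observation that the relevant graph on the transversal is locally finite with a Borel selector for its connectedness relation (indeed its components are finite, being contained in $E$-saturations of bounded sets), so its Borel chromatic number equals its classical one, which the nerve argument bounds by $d+1$. For the primed statements one simply repeats both constructions replacing "uniformly bounded" by "bounded" throughout — every step above that invoked a uniform bound (finiteness of selectors, bounded degree of the auxiliary graph) still goes through class-by-class, since within a single class everything is finite, and the colouring argument only used that each ball meets $\le d+1$ classes, not any uniformity; the only care needed is that a bounded Borel equivalence relation is still smooth (admits a Borel transversal), which holds because a countable bounded Borel equivalence relation is in particular countable and $E_\rho$-related points being at finite distance makes each class finite when $\rho$ is proper — and in the general extended-metric case one uses the Feldman–Moore / Lusin–Novikov machinery recorded after Lemma \ref{lem:walkborel} to get a Borel transversal for any smooth countable Borel equivalence relation, bounded ones being smooth.
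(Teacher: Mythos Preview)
Your $(2)\Rightarrow(1)$ direction is essentially the paper's argument: apply (2) at radius $2r_0$, disjointify the $U_i$'s, and let $E$ be the union of the relations $\walkeq{V_i}{2r_0}$; any two points of $B(x;r_0)\cap V_i$ are at distance at most $2r_0$ and hence in one $\walkeq{V_i}{2r_0}$-class, so $B(x;r_0)$ meets at most $d+1$ classes of $E$. Your write-up circles around this before landing on it, but the idea is correct and matches the paper.

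Your $(1)\Rightarrow(2)$ direction, however, has a genuine gap. You build a graph $H$ on the transversal by joining two $E$-classes whenever some ball $B(z;r)$ meets both, and then claim $H$ is $(d+1)$-colourable because ``the classes met by a common ball form a clique in $H$''. That observation only bounds the \emph{clique number} of $H$ by $d+1$; it says nothing about the chromatic number. A small example: take $X=\{0,0.3,1,1.3,2,2.3,3,3.3,4,4.3\}\subset\R/5\Z$ with the arc metric, let $E$ have the five classes $A_i=\{i,i+0.3\}$, and take $r=0.7$. One checks that every ball $B(z;0.7)$ meets at most two classes (so $d=1$ works for this $E$), yet consecutive classes $A_i,A_{i+1}$ are $H$-adjacent and non-consecutive ones are not, so $H$ is a $5$-cycle with chromatic number $3>d+1$. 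Thus no Borel (or even classical) $(d+1)$-colouring of $H$ exists, and your construction of the $U_i$'s breaks down. Your appeal to Proposition~\ref{lem:smooth_coloring} doesn't help: that proposition equates Borel and classical chromatic numbers, but here the classical chromatic number is already too large.

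The paper's $(1)\Rightarrow(2)$ avoids colouring altogether. It applies (1) at the inflated radius $(d+2)r_0$ and then defines $U_i$ to be the set of $x$ for which both $B(x;(i+1)r_0)$ and $B(x;(i+2)r_0)$ meet exactly $i+1$ classes of $E$. The point is that as the ball radius grows from $r_0$ to $(d+2)r_0$ the number of $E$-classes met is monotone and bounded by $d+1$, so there must be a radius where it stabilizes for one step; this pigeonhole gives the cover, and the stabilization forces $\walkeq{U_i}{r_0}$-classes to stay within a bounded neighbourhood of a single $E$-class. This argument uses no selectors and no graph colouring, and it transfers verbatim to the primed versions --- which is fortunate, since your approach there additionally relies on the false claim that a merely bounded countable Borel equivalence relation is smooth.
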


\begin{proof}
Since $E_\rho$ is countable, the Lusin--Novikov uniformization theorem provides Borel functions $f_n \from X \to X$, $n \in \N$, with $E_\rho = \{(x, f_n(x)) : x \in X, \ n \in \N\}$. We prove that (1) and (2) are equivalent; the equivalence of (1') and (2') follows the same argument but replacing ``uniformly bounded'' with ``bounded'' throughout.

(1) $\Rightarrow$ (2). Fix $r_0 > 0$. Apply (1) with $r = (d + 2) r_0$ to obtain a uniformly bounded Borel equivalence relation $E$ such that for every $x \in X$ the $(d + 2) r_0$-ball around $x$ meets at most $d + 1$ classes of $E$. For each $0 \leq i \leq d$ define $U_i$ to be the set of $x \in X$ having the property that both $B(x; (i+1) r_0)$ and $B(x; (i+2) r_0)$ meet $i+1$ many classes of $E$. Figure \ref{fig:plane2} depicts the sets $U_0, U_1, U_2$ that are obtained when this construction is applied to the relation $E$ in Figure \ref{fig:plane}. Notice that $B(x; r_0)$ must meet at least one $E$-class and, by our choice of $E$, $B(x; (d+2) r_0)$ must meet strictly less than $d+2$ many $E$-classes. Thus monotonicity properties imply that $\{U_i : 0 \leq i \leq d\}$ covers $X$. Additionally, each $U_i$ is Borel because for every $i, j$ the set $C_{i,j} = \{x \in X : B(x; (i+1) r_0) \text{ meets at least } j \text{ classes of } E\}$ is Borel, as $x \in C_{i,j}$ if and only if there are $n_1, \ldots, n_j \in \N$ with $(f_{n_s}(x), f_{n_t}(x)) \not\in E$ and $\rho(x, f_{n_s}(x)) \leq (i+1) r_0$ for all $s \neq t$.

Now suppose that $x, x' \in U_i$ and $\rho(x, x') \leq r_0$. We claim that for every class $D \in X / E$ we have that $B(x; (i+1) r_0)$ meets $D$ if and only if $B(x' ; (i+1) r_0)$ meets $D$. Indeed, if $B(x; (i+1) r_0)$ meets $D$ then by the triangle inequality $B(x' ; (i+2) r_0)$ meets $D$, and since $B(x' ; (i+1) r_0)$ and $B(x' ; (i+2) r_0)$ meet the same number of $E$-classes, we conclude that $B(x' ; (i+1) r_0)$ meets $D$. From this claim it follows that for every $x \in U_i$ the $\walkeq{U_i}{r_0}$-class of $x$ is contained in the ball of radius $(i+1) r_0$ around $[x]_E$. So $\walkeq{U_i}{r_0}$ is uniformly bounded since $E$ is uniformly bounded.

(2) $\Rightarrow$ (1). Fix $r_0 > 0$. Apply (2) with $r = 2 r_0$ to get Borel sets $U_0, U_1, \ldots, U_d$. By replacing each $U_i$ with $U_i \setminus \bigcup_{j < i} U_j$ we can assume that the $U_i$'s partition $X$. Then $E = \bigcup_i \walkeq{U_i}{2r_0}$ is a uniformly bounded Borel equivalence relation on $X$. For every $x$ and every $i$ we have that $B(x; r_0)$ can meet at most one class of $\walkeq{U_i}{2r_0}$, and thus $B(x; r_0)$ meets at most $d+1$ many classes of $E$.
\end{proof}

Since the non-Borel versions of (1) and (2) are both commonly used to define standard asymptotic dimension and these conditions remain equivalent in the Borel setting when $E_\rho$ is countable, we restrict our definition of Borel asymptotic dimension to this setting. We leave open how to define Borel asymptotic dimension in other situations.

\begin{defn} \label{defn:asdim}
Let $(X, \rho)$ be a Borel extended metric space such that $E_\rho$ is countable. The \emph{Borel asymptotic dimension} of $(X, \rho)$, denoted $\Basdim(X, \rho)$ is the least $d \in \N$ for which the statements (1) and (2) in Lemma \ref{lem:equivalent_asym_dim} are true, and is $\infty$ if no such $d$ exists. Similarly, the \emph{asymptotic separation index} of $(X, \rho)$, denoted $\asi(X, \rho)$ is the least $d \in \N$ for which the statements (1') and (2') are true, and is $\infty$ if no such $d$ exists.
\end{defn}

The notion of asymptotic separation index is a new concept that we introduce. We will see that this notion carries useful consequences, however it is unclear how rich this notion is. For one, the notion is useless when Borel constraints are removed as it is always at most $1$ (pick a single point from each $E_\rho$-class and use annuli of inner and outer radii $2rn$ and $2r(n+1)$, $n \in \N$, to determine the classes of $E$ as in (1')). Moreover, we will prove in the next section that the asymptotic separation index is often at most $1$, and we know of no example where it is both finite and greater than $1$.

For a Borel action $G \acts X$ of a countable group $G$, we can define the Borel asymptotic dimension $\Basdim(G \acts X)$ to be $\Basdim(X, \rho_\tau)$ for any choice of proper right-invariant metric $\tau$ on $G$. Once again this quantity does not depend on the choice of $\tau$ and the condition in Lemma \ref{lem:asdim_action} again characterizes the value $\Basdim(G \acts X)$ when the sets $U_i$ are required to be Borel. Similarly, Borel asymptotic dimensions of group actions have the same monotonicity properties described in Corollary \ref{cor:subgroups}. Similar statements apply to the asymptotic separation index of a Borel action $G \acts X$, but we will not need this.

\section{General properties} \label{sec:general}

In this section we establish some basic constraints on the values of Borel asymptotic dimension and asymptotic separation index. We first note that these values can be $0$ only in fairly trivial situations.

\begin{cor}
Let $(X, \rho)$ be a Borel extended metric space with $E_\rho$ countable. Then $\Basdim(X, \rho) = 0$ if and only if $\walkeq{X}{r}$ is uniformly bounded for every $r > 0$. Similarly, $\asi(X, \rho) = 0$ if and only if $\walkeq{X}{r}$ is bounded for every $r > 0$.
\end{cor}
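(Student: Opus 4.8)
The plan is to unwind Definition \ref{defn:asdim} in the case $d = 0$. Since $\Basdim(X,\rho)$ and $\asi(X,\rho)$ take values in $\N \cup \{\infty\}$ and $0$ is the least possible value, $\Basdim(X,\rho) = 0$ holds precisely when the parameter $d = 0$ already satisfies statements (1) and (2) of Lemma \ref{lem:equivalent_asym_dim}, and likewise $\asi(X,\rho) = 0$ holds precisely when $d = 0$ satisfies (1') and (2'). So the entire content of the corollary is the observation that, for $d = 0$, these conditions say exactly what is claimed.

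First I would observe that when $d = 0$, condition (2) of Lemma \ref{lem:equivalent_asym_dim} asks, for each $r > 0$, for a single Borel set $U_0$ that covers $X$ — hence $U_0 = X$ — with $\walkeq{X}{r} = \walkeq{U_0}{r}$ uniformly bounded. Thus (2) with $d = 0$ is literally the assertion that $\walkeq{X}{r}$ is uniformly bounded for every $r > 0$. For the forward implication this is immediate from $\Basdim(X,\rho) = 0$. For the converse, if $\walkeq{X}{r}$ is uniformly bounded for every $r$, then $U_0 = X$ witnesses condition (2) for each $r$, where the only point needing justification is that $\walkeq{X}{r}$ is Borel, which is Lemma \ref{lem:walkborel}(a) (and is where countability of $E_\rho$ is used); hence $\Basdim(X,\rho) = 0$. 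The statement for $\asi$ is verified identically, replacing ``uniformly bounded'' by ``bounded'' and condition (2) by (2') throughout.

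There is essentially no obstacle; the only named input is that $\walkeq{X}{r}$ is a Borel equivalence relation, supplied by Lemma \ref{lem:walkborel}(a). If preferred, one can argue instead via condition (1)/(1'): for $d = 0$, a uniformly bounded (resp.\ bounded) Borel equivalence relation $E$ with every ball $B(x;r)$ meeting at most one $E$-class is precisely one with $\walkeq{X}{r} \subseteq E$, and $\walkeq{X}{r}$ is itself the minimal such equivalence relation, so its (uniform) boundedness is equivalent to the existence of such an $E$.
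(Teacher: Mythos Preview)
Your proof is correct and takes essentially the same approach as the paper: both argue via condition (2)/(2') of Lemma \ref{lem:equivalent_asym_dim}, observing that with $d=0$ the single covering set must be $U_0 = X$. One minor remark: condition (2) only asks that $U_0$ be Borel and $\walkeq{U_0}{r}$ be uniformly bounded, not that $\walkeq{U_0}{r}$ itself be Borel, so your appeal to Lemma \ref{lem:walkborel}(a) is unnecessary (though harmless).
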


\begin{proof}
This is immediate from conditions (2) and (2') of Lemma \ref{lem:equivalent_asym_dim}, as we must have $U_0 = X$.
\end{proof}

We next prove that Borel asymptotic dimension can only differ from its standard counterpart when it is infinite.

\begin{thm} \label{thm:equal}
Suppose that $(X,\rho)$ is a proper Borel extended metric space. If $\asi(X, \rho) < \infty$ (or, less generally, if $\Basdim(X, \rho) < \infty$), then the Borel asymptotic dimension and the standard asymptotic dimension of $(X, \rho)$ are equal.
\end{thm}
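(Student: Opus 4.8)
The inequality $\Basdim(X,\rho) \geq \asdim(X,\rho)$ always holds, since every Borel witness to condition (1) is in particular a witness to the non-Borel condition (1), and similarly $\asi$ dominates the standard quantity obtained by dropping Borel constraints — which, as remarked after Definition \ref{defn:asdim}, is at most $1$. So the content is the reverse inequality: assuming $\asdim(X,\rho) = d < \infty$ and that $\asi(X,\rho) < \infty$ (the case $\Basdim(X,\rho)<\infty$ being a special instance since $\asi \leq \Basdim$ — or at least the two finiteness hypotheses can be unified), I want to produce, for each $r > 0$, a \emph{Borel} uniformly bounded equivalence relation $E$ such that every $r$-ball meets at most $d+1$ classes.

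The plan is to combine a (non-Borel) standard asymptotic dimension witness at a large scale with a Borel asymptotic separation witness at the scale of its pieces. Fix $r > 0$. Using $\asdim(X,\rho)=d$, apply condition (2) at a suitably inflated radius $R \gg r$ to obtain (non-Borel) sets $V_0,\ldots,V_d$ covering $X$ with $\walkeq{V_i}{R}$ uniformly bounded, say by $M$. This gives a covering of $X$ by "blocks" — the $\walkeq{V_i}{R}$-classes — of diameter $\leq M$, with the key separation feature that within a single color $i$, distinct blocks are pairwise more than $R$ apart. The blocks are not Borel, but I don't need them to be: I only need to know they exist abstractly in order to control how balls interact. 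Now invoke $\asi(X,\rho) = e < \infty$: for the radius $R' = R + 2M$ (or so), condition (2') yields \emph{Borel} sets $W_0,\ldots,W_e$ covering $X$ with each $\walkeq{W_j}{R'}$ merely bounded (each class has finite, but not uniformly bounded, diameter). The idea is that, because the standard-asdim blocks have uniformly bounded size $\leq M$ and the separation scale $R$ is large, each bounded $\walkeq{W_j}{R'}$-class is forced to be a finite union of at most — controlled number — of these blocks, hence is actually \emph{uniformly} bounded after all; and properness lets us turn "bounded Borel pieces" into honest finite equivalence relations with Borel selectors.

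More concretely, here is the mechanism I'd use. Let $W$ be one of the Borel sets $W_j$ and let $D$ be a $\walkeq{W}{R'}$-class. For each color $i \leq d$, the points of $D$ lying in $V_i$ are spread among $\walkeq{V_i}{R}$-blocks; since consecutive points of a $\walkeq{W}{R'}$-chain are within $R' > R$, and since $D$ itself is bounded, I claim $D$ meets boundedly many blocks — the bound coming from a volume/packing estimate using properness together with the fact that $D$ has $\rho$-diameter at most... hmm, this is exactly where I must be careful, since a priori $D$ is only bounded, not uniformly. The cleaner route: first pass to a \emph{coarser} Borel equivalence relation by intersecting the $\walkeq{W_j}{R'}$ structure with balls of radius $R$ around chosen block-representatives, or more robustly, define on each Borel set $W_j$ the relation $x \sim y$ iff $x,y$ lie in the same block for \emph{every} color that contains one of them — but blocks aren't Borel. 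So instead I would run the standard trick in reverse: use the fact (to be proven, and this is the crux) that a bounded Borel equivalence relation $F$ on a proper Borel extended metric space whose classes are known to each be covered by $\leq N$ uniformly-bounded sets is automatically uniformly bounded, because boundedness of each class plus properness gives finiteness of each class, and the global bound $NM$ caps the diameter. Assembling: set $E' = \bigcup_j \walkeq{W_j}{R'}$ after disjointifying the $W_j$'s; argue via the $V_i$-block structure that each $E'$-class has diameter $\leq (d+1)(\text{something})\cdot M =: M'$ uniformly, so $E'$ is a uniformly bounded Borel equivalence relation; then re-run the argument of Lemma \ref{lem:equivalent_asym_dim}, (2)$\Rightarrow$(1), starting from the \emph{standard} dimension $d$ at scale $\gg r, M'$, but now routing every non-Borel choice through $E'$-classes (which are Borel and uniformly bounded) rather than through points. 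Because $E'$ has uniformly bounded Borel classes, quotienting by $E'$ yields a standard Borel space on which the induced metric is still proper, and on which I can \emph{again} apply $\asi<\infty$ — but finitely many iterations suffice, and at the end a Borel selector (available since all relations in play are finite by properness) converts everything into the desired Borel $E$.

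The main obstacle, and where I expect the real work to be, is precisely the passage from "bounded" (the output of $\asi$, condition (2')) to "uniformly bounded" (needed for condition (1) and hence for $\Basdim$). The mechanism must be: a $\walkeq{W_j}{R'}$-class, while a priori of unbounded diameter, can only grow by jumping between $\walkeq{V_i}{R}$-blocks, and each such jump either stays inside a block (diameter contribution $\leq M$) or crosses to a new block of a \emph{different} color $i' \neq i$ (since same-color blocks are $> R \geq R'$... wait, I need $R > R'$, so I should instead inflate the $\asi$ scale relative to the $\asdim$ scale the other way: pick the standard-dimension scale $R$ much larger than the $\asi$ scale $R'$). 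With $R \gg R' \gg M \gg r$, a $\walkeq{W_j}{R'}$-chain can never leave and re-enter a given color-$i$ block, so the chain visits at most $d+1$ blocks total, giving diameter $\leq (d+1)M + (d)R' \leq R$, a uniform bound. That inequality chain is the heart of the proof; everything else (Borelness via Lusin--Novikov and Lemma \ref{lem:walkborel}, finiteness and selectors via properness, disjointification) is routine. I would therefore structure the writeup as: (i) reduce to producing a uniformly bounded Borel $E$ for each $r$; (ii) fix scales $r \ll M \ll R' \ll R$ with $M$ the $\asdim$-block bound at scale $R$; (iii) take Borel $\asi$-sets $W_j$ at scale $R'$, disjointify, form $E' = \bigcup_j \walkeq{W_j}{R'}$; (iv) prove uniform boundedness of $E'$ by the block-hopping estimate; (v) run (2)$\Rightarrow$(1) of Lemma \ref{lem:equivalent_asym_dim} to extract the final $E$ with the at-most-$(d+1)$-classes-per-ball property, concluding $\Basdim(X,\rho) \leq d = \asdim(X,\rho)$, which with the trivial reverse inequality finishes the proof.
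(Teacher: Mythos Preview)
Your block-hopping argument---the step you call ``the heart of the proof''---has a genuine gap. You claim that with $R \gg R' \gg M$ a $\walkeq{W_j}{R'}$-chain ``visits at most $d+1$ blocks total,'' inferring this from ``can never leave and re-enter a given color-$i$ block.'' But even granting the no-re-entry claim, it only says the chain does not revisit any individual \emph{block}; it does not say each \emph{color} appears at most once. Nothing stops a chain from hitting a color-$0$ block, then a nearby color-$1$ block, then a \emph{different} color-$0$ block, and so on: same-color blocks are more than $R$ apart, but the chain can detour through other colors to reach new ones. Since $\asi$ only guarantees that each $\walkeq{W_j}{R'}$-class is bounded, not uniformly so, a single class can have arbitrarily large diameter and visit arbitrarily many blocks. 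So $E'$ need not be uniformly bounded and the scheme collapses here. There is also a scale-ordering problem: the block-diameter bound $M$ is determined by the $\asdim$ scale $R$, and in general $M(R) \geq R$ (already for $\Z$ a simple counting argument forces this), so the ordering $R \gg R' \gg M$ cannot be arranged.

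The paper's route is different and never attempts to uniformly bound the $\asi$-pieces. Instead it uses that on each Borel piece $X_i$ the relation $\walkeq{X_i}{r}$ is \emph{finite} (bounded plus proper), hence admits a Borel selector; this lets one transfer the non-Borel $(d{+}1)$-cover coming from $\asdim$ to a Borel $(d{+}1)$-cover of $X_i$ at the same $(r,R)$ scale (Lemma~\ref{lem:smooth_asdim}), by Borel-measurably choosing one of the finitely many admissible $(d{+}1)$-partitions of each finite class. One is then left with $s+1$ Borel pieces, each carrying a Borel $(d{+}1)$-cover at its own scale, and a short multi-scale gluing (Lemma~\ref{lem:finiteunion}) combines them into a single Borel $(d{+}1)$-cover of $X$. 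The idea you are missing is this selector-based Borelification within each finite piece, rather than a global geometric estimate across pieces.
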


To prove this we need two supporting lemmas. We say that the standard (respectively Borel) $(r, R)$-dimension of $(X, \rho)$ is $d$ if $d \in \N$ is least with the property that there exist sets (respectively Borel sets) $U_0, \ldots, U_d$ covering $X$ such that for every $i$ each class of $\walkeq{U_i}{r}$ has diameter at most $R$, and is $\infty$ if no such $d$ exists.

\begin{lem}\label{lem:smooth_asdim}
Let $(X, \rho)$ be a proper Borel extended metric space having standard $(r, R)$-dimension $d$. If $Y \subseteq X$ is Borel and $\walkeq{Y}{r}$ is bounded, then $Y$ has Borel $(r, R)$-dimension at most $d$.
\end{lem}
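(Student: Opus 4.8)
The plan is to solve the problem one $\walkeq{Y}{r}$-class at a time, reducing to a purely finite selection and then checking Borelness. First I would record two preliminaries. Since $\rho$ is proper, every $E_\rho$-class is a countable increasing union $\bigcup_n B(x;n)$ of finite sets, hence countable, so Lemma~\ref{lem:walkborel}(a) applies and $\walkeq{Y}{r}$ is Borel; and since $\walkeq{Y}{r}\subseteq E_\rho$ is bounded, each of its classes lies in a ball of finite radius and is therefore \emph{finite}. Thus $\walkeq{Y}{r}$ is a finite Borel equivalence relation on the standard Borel space $Y$. The second preliminary is a localization principle: for any $V\subseteq Y$, an $r$-step path inside $V$ has all consecutive vertices $\walkeq{Y}{r}$-related, so every $\walkeq{V}{r}$-class is contained in a single $\walkeq{Y}{r}$-class $C$, and in fact the restriction of $\walkeq{V}{r}$ to $V\cap C$ coincides with $\walkeq{V\cap C}{r}$ (such a path between two points of $C$ never leaves $C$). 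Consequently it suffices to assign to each $\walkeq{Y}{r}$-class $C$ a tuple $(W^C_0,\dots,W^C_d)$ of subsets of the finite set $C$ with $\bigcup_i W^C_i=C$ and every $\walkeq{W^C_i}{r}$-class of $\rho$-diameter at most $R$ — call such a tuple a \emph{good cover of $C$} — in a way that makes the sets $V_i:=\bigcup_C W^C_i$ Borel; by localization these $V_i$ then cover $Y$ and each $\walkeq{V_i}{r}$-class equals some $\walkeq{W^C_i}{r}$-class, hence has diameter at most $R$, witnessing Borel $(r,R)$-dimension at most $d$.

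Next I would show good covers exist and can be chosen canonically. Let $U_0,\dots,U_d$ cover $X$ with every $\walkeq{U_i}{r}$-class of diameter at most $R$, as provided by the standard $(r,R)$-dimension hypothesis. For each $\walkeq{Y}{r}$-class $C$, the tuple $(U_0\cap C,\dots,U_d\cap C)$ covers $C$, and since an $r$-step path inside $U_i\cap C$ is in particular one inside $U_i$, each $\walkeq{U_i\cap C}{r}$-class sits inside a $\walkeq{U_i}{r}$-class and so has diameter at most $R$; hence $(U_0\cap C,\dots,U_d\cap C)$ is a good cover of $C$. As $C$ is finite there are only finitely many good covers of it, so after fixing a Borel linear order $\preceq$ on $Y$ I would let $(W^C_0,\dots,W^C_d)$ be the lexicographically $\preceq$-least good cover of $C$ (ordering finite subsets of $C$ via their $\preceq$-increasing enumerations, and tuples lexicographically). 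Note the $U_i$ themselves need not be Borel — they only serve to guarantee a good cover exists in each class.

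The remaining, and main, point is that each $V_i$ is Borel. Here I would invoke Lusin--Novikov to fix Borel functions $g_n\from Y\to Y$ with $\walkeq{Y}{r}=\{(x,g_n(x)):x\in Y,\ n\in\N\}$, so the finite class $[x]_{\walkeq{Y}{r}}=\{g_n(x):n\in\N\}$ is available in a Borel fashion. For $x\in Y$, the finitely many candidate covers of $[x]_{\walkeq{Y}{r}}$, the property of a candidate being a good cover (a finite conjunction of conditions on the values $g_n(x)$ involving only $\rho$ and $\walkeq{\cdot}{r}$-connectedness within a finite set, all Borel), the selection of the $\preceq$-least good one, and finally membership of $x$ in the $i$-th component of that cover, are all Borel in $x$; hence $V_i$ is Borel. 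I expect this verification to be the only genuinely delicate step — everything else is bookkeeping — and it is precisely the kind of ``finite search inside each class'' argument already invoked in the paper for the lexicographically least coloring following Proposition~\ref{lem:smooth_coloring}.
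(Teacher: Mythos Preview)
Your proposal is correct and follows essentially the same approach as the paper: both recognize that $\walkeq{Y}{r}$ is a finite Borel equivalence relation, that each class admits only finitely many good covers (existence coming from the standard $(r,R)$-dimension witnesses), and then make a Borel selection of one such cover per class. The only cosmetic difference is the selection mechanism---the paper works in the standard Borel space $X^{[<\infty]}$ of finite subsets and applies a Borel selector to the finite equivalence relation ``same underlying class,'' while you work pointwise via Lusin--Novikov and pick the lexicographically $\preceq$-least good cover---but these are interchangeable standard devices.
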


\begin{proof}
This lemma is proved using a similar idea to Proposition~\ref{lem:smooth_coloring}.
We assume $d < \infty$, as otherwise there is nothing to prove. Since $\rho$ is proper, $\walkeq{Y}{r}$ is a finite Borel equivalence relation, and each equivalence class admits only finitely many partitions into $d + 1$ pieces. Let $X^{[< \infty]}$ be the standard Borel space of finite subsets of $X$. Let $P$ be the set of $(D_0, \ldots, D_d) \in \prod_{i \leq d} X^{[< \infty]}$ such that $D_0, \ldots, D_d$ are disjoint, and their union $C = D_0 \cup \ldots \cup D_d$ is an equivalence class of $\walkeq{Y}{r}$. Let $P' \subseteq P$ be the set of $(D_0, \ldots, D_d) \in P$ such that for each $i$, the $\walkeq{D_i}{r}$-classes have diameter at most $R$. Since we are assuming $Y$ has $(r,R)$-dimension at most $d$, each $\walkeq{Y}{r}$ class $C$ has at least one partition in $P'$. It is easy to check using Lusin-Novikov uniformization that $P$ and $P'$ are Borel. We want to find a Borel way of taking each $\walkeq{Y}{r}$ class $C$, and choosing exactly one partition of it that lies in $P'$. 

Let $F$ be the equivalence relation on $P'$ where $(D_0, \ldots, D_d) \mathrel{F} (D_0', \ldots, D_d')$ if $\bigcup_{i \leq d} D_i = \bigcup_{i \leq d} D'_i$, so they partition the same $\walkeq{Y}{r}$ class.  Then $F$ is a Borel equivalence relation with finite classes, so we can use a Borel selector to obtain a Borel set $P_0 \subseteq P'$ that contains exactly one partition of each $\walkeq{Y}{r}$ class. The sets $U_i = \{x \colon (\exists (D_0, \ldots, D_d) \in P_0) x \in D_i\}$ are Borel by Lusin-Novikov uniformization, and they witness that $Y$ has Borel $(r,R)$-dimension at most $d$.
\end{proof}

\begin{lem}\label{lem:finiteunion}
Suppose that $(X,\rho)$ is a Borel extended metric space, and that $X$ is the union of Borel sets $X_0 \cup \cdots \cup X_{n-1}$. Assume that each $X_i$ has Borel $(15 r_i, r_{i+1})$-dimension at most $d$, where the $r_i$'s are non-decreasing. Then $(X, \rho)$ has Borel $(r_0, 5 r_n)$-dimension at most $d$.
\end{lem}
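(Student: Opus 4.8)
The plan is to build the desired cover of $X$ by refining the covers of the pieces $X_0, \dots, X_{n-1}$ one at a time, from the smallest scale $r_0$ upward, keeping track of how the diameters accumulate. For each $i$, let $V_0^i, \dots, V_d^i$ be Borel sets covering $X_i$ witnessing that $X_i$ has Borel $(15r_i, r_{i+1})$-dimension at most $d$, i.e. each $\walkeq{V_j^i}{15r_i}$-class has diameter at most $r_{i+1}$. I will produce Borel sets $U_0, \dots, U_d$ covering $X$ such that each $\walkeq{U_j}{r_0}$-class has diameter at most $5r_n$. The natural first guess, $U_j = \bigcup_i V_j^i$, does not obviously work, because a $\walkeq{U_j}{r_0}$-path can weave back and forth between the different pieces $V_j^i$ and accumulate unbounded diameter; the factor $15$ in the hypothesis and the non-decreasing choice of the $r_i$ are exactly what let us control this.

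The key step is an inductive construction. For $k = 0, 1, \dots, n-1$ I will define Borel sets $W_0^{(k)}, \dots, W_d^{(k)}$ covering $X_0 \cup \cdots \cup X_k$ with the property that every $\walkeq{W_j^{(k)}}{3r_0}$-class has diameter at most $5r_{k+1}$. For the base case $k=0$ take $W_j^{(0)} = V_j^0$: a $\walkeq{V_j^0}{3r_0}$-class is contained in a $\walkeq{V_j^0}{15r_0}$-class, hence has diameter at most $r_1 \le 5r_1$. For the inductive step, suppose the $W_j^{(k-1)}$ are given and consider the pieces $V_0^k, \dots, V_d^k$ covering $X_k$. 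The idea is to set $W_j^{(k)} = W_j^{(k-1)} \cup (V_j^k \setminus N)$, where $N$ is a suitable ``buffer'' neighborhood of $X_0 \cup \cdots \cup X_{k-1}$ that we delete from the new pieces so that, at scale $3r_0$, no $\walkeq{W_j^{(k)}}{3r_0}$-path can pass from a surviving part of $V_j^k$ into $W_j^{(k-1)}$ and back. Concretely, one removes from $V_j^k$ all points within $\rho$-distance roughly $5r_k$ (a little more than the current diameter bound) of $W_j^{(k-1)}$; this is Borel since $\rho$ is Borel and $E_\rho$ is countable. A $\walkeq{W_j^{(k)}}{3r_0}$-class then either already lives inside one $\walkeq{W_j^{(k-1)}}{3r_0}$-class (diameter $\le 5r_k \le 5r_{k+1}$), or it meets the trimmed $V_j^k$; in the latter case the $3r_0$-steps cannot jump the buffer, so the class is contained in a $\walkeq{V_j^k}{3r_0}$-class enlarged by the buffer on each end, giving diameter at most $r_{k+1} + 2 \cdot O(r_k) \le 5r_{k+1}$ by the triangle inequality, using $r_k \le r_{k+1}$ and the slack built into the constant $15$. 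Taking $U_j = W_j^{(n-1)}$ and noting $\walkeq{U_j}{r_0} \subseteq \walkeq{U_j}{3r_0}$ then gives the conclusion with $(r_0, 5r_n)$-dimension at most $d$.

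The main obstacle is bookkeeping the constants: one has to choose the buffer width and verify that a $3r_0$-path in $W_j^{(k)}$ that starts in the trimmed $V_j^k$ genuinely cannot reach $W_j^{(k-1)}$, which requires the buffer to exceed the current diameter bound $5r_k$, and then one has to check that widening a $\walkeq{V_j^k}{3r_0}$-class (whose own diameter is $\le r_{k+1}$) by a buffer of size $\approx 5r_k$ on each side still fits inside $5r_{k+1}$ — this is where $15r_i$ rather than $r_i$ in the hypothesis, together with monotonicity of the $r_i$, is used, since it forces the ratio $r_{i+1}/r_i$ to be large enough to absorb the added buffers. I would organize this as a single clean inductive claim with the constant $5$ fixed in advance and the buffer width chosen as a function of it, so that all the inequalities reduce to something like $r_{k+1} + 4r_k \le 5r_{k+1}$, which holds trivially, with the extra room handled by the factor $15$ when converting scale $15r_i$ down to scale $3r_0$. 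Borelness of every set constructed is immediate from Lemma \ref{lem:walkborel} and the Lusin--Novikov uniformization theorem, exactly as in the proof of Lemma \ref{lem:smooth_asdim}.
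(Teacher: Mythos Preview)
Your construction has a genuine coverage gap. When you set $W_j^{(k)} = W_j^{(k-1)} \cup (V_j^k \setminus N)$ with $N$ a $5r_k$-neighbourhood of $W_j^{(k-1)}$, a point $x \in X_k \setminus (X_0 \cup \cdots \cup X_{k-1})$ that happens to lie in $N$ is removed from $V_j^k$ but is not in any $W_{j'}^{(k-1)}$; there is no reason it should lie in some other $V_{j'}^k \setminus N_{j'}$ either. So the $W_j^{(k)}$ need not cover $X_0 \cup \cdots \cup X_k$, and the induction breaks. Your diameter analysis is also inconsistent with the construction: if the buffer genuinely blocks $3r_0$-steps between $V_j^k \setminus N$ and $W_j^{(k-1)}$, then a class meeting $V_j^k \setminus N$ sits entirely inside $V_j^k \setminus N$ and is not ``enlarged by the buffer on each end'' at all.

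More importantly, you dismissed the naive union $U_j = \bigcup_i V_j^i$ too quickly, and this is exactly what the paper uses. Reducing by induction to two pieces (with $X_0$ of Borel $(r_0,r_1)$-dimension $\le d$ and $X_1$ of Borel $(3r_1,r_2)$-dimension $\le d$), the paper sets $U_j = U_j^0 \cup U_j^1$ and observes: any $\walkeq{U_j^0}{r_0}$-class has diameter $\le r_1$, so two points of $U_j^1$ linked through such a class are within $2r_0 + r_1 \le 3r_1$ of each other and hence lie in the \emph{same} $\walkeq{U_j^1}{3r_1}$-class. Thus each $\walkeq{U_j}{r_0}$-class is either a single $\walkeq{U_j^0}{r_0}$-class or a subset of one $\walkeq{U_j^1}{3r_1}$-class together with the $\walkeq{U_j^0}{r_0}$-classes hanging off it, giving diameter $\le r_2 + 2(r_0 + r_1) \le 5r_2$. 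The ``weaving'' you worried about is harmless precisely because the small-scale pieces are too short to bridge between distinct large-scale pieces; no trimming is needed, so no coverage problem arises. Iterating with $r_1' = 5r_{k+1}$ and $3r_1' = 15r_{k+1}$ explains the constant $15$.
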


\begin{proof}
This is trivial if $d = \infty$, so we assume $d < \infty$. Passing to a subset cannot increase Borel asymptotic dimension, so we may assume that the sets $X_i$ are pairwise disjoint.  By induction on $n$, it suffices to show that if $\{X_0, X_1\}$ is a Borel partition of $X$ where $X_0$ has Borel $(r_0, r_1)$-dimension at most $d$ and $X_1$ has Borel $(3 r_1, r_2)$-dimension at most $d$, then $X$ has Borel $(r_0, 5 r_2)$-dimension at most $d$.

Our assumptions imply that there are Borel sets $U_0^0, \ldots, U_d^0$ covering $X_0$ where the diameter of every class in $\walkeq{U_i^0}{r_0}$ is at most $r_1$, and there are Borel sets $U_0^1, \ldots, U_d^1$ covering $X_1$ where the diameter of every class in $\walkeq{U_i^1}{3r_1}$ is at most $r_2$. Define $U_i = U_i^0 \cup U_i^1$. Then the $U_i$'s are Borel and cover $X$. Notice that the classes of $\walkeq{U_i}{r_0}$ are obtained by gluing together classes of $\walkeq{U_i^0}{r_0}$ with classes of $\walkeq{U_i^1}{r_0}$ whenever the distance between them is at most $r_0$. If $D_0$ is a $\walkeq{U_i^0}{r_0}$-class then, since $D$ has diameter at most $r_1$ and $2 r_0 + r_1 \leq 3 r_1$, the set of points in $U_i^1$ that are distance at most $r_0$ from $D_0$ must lie in a single $\walkeq{U_i^1}{3 r_1}$-class. Therefore, every class of $\walkeq{U_i}{r_0}$ is either a $\walkeq{U_i^0}{r_0}$-class or else it is the union of a subset $B$ of some $\walkeq{U_i^1}{3r_1}$-class and the $\walkeq{U_i^0}{r_0}$-classes that are distance at most $r_0$ to $B$. We conclude that every $\walkeq{U_i}{r_0}$-class has diameter at most $r_2 + 2 (r_0 + r_1) \leq 5 r_2$.
\end{proof}

\begin{proof}[Proof of Theorem \ref{thm:equal}]
Suppose that the standard asymptotic dimension of $(X,\rho)$ is $d$ and that its asymptotic separation index is $s < \infty$. Fix $r_0$; we will find some $R$ such that $(X,\rho)$ has Borel $(r_0,R)$-dimension at most $d$.

As $(X,\rho)$ has standard asymptotic dimension $d$, there is some $r_1 \geq r_0$ such that $X$ has standard $(15r_0, r_1)$-dimension at most $d$.  Continue this process, choosing for each $i \in s+1$ some $r_{i+1} \geq r_i$ such that $X$ has standard $(15r_i, r_{i+1})$-dimension at most $d$.  Finally, using the fact that $(X,\rho)$ has asymptotic separation index $s$, find a Borel partition $\{X_0, X_1, \ldots, X_s\}$ of $X$ such that $\walkeq{X_i}{15 r_s}$ is bounded for every $i \in s+1$. Lemma \ref{lem:smooth_asdim} implies that each $X_i$ has Borel $(15r_i, r_{i+1})$-dimension at most $d$, and thus Lemma \ref{lem:finiteunion} ensures that $(X,\rho)$ has Borel $(r_0, 5r_{s+1})$-dimension at most $d$ as required.
\end{proof}

We next turn our attention to asymptotic separation index and will need an important technique that will recur throughout this paper. Asymptotic dimension is fundamentally concerned with bounding the multiplicities of boundaries at any prescribed large scale (as can be seen most clearly from condition (1) of Lemma \ref{lem:equivalent_asym_dim}). The technique we need therefore relates to ensuring that boundaries stay separated from each other, and is based on two simple observations. The first is that if the classes of $\walkeq{U_1}{2a}$ have diameter at most $b$, then the sets $B(D; a)$, where $D$ varies over all $\walkeq{U_1}{2a}$-classes, are pairwise disjoint and have diameter at most $2a+b$. So for any given set $V_2$, if we union $V_2$ with the sets $B(D; a)$ meeting $V_2$, we obtain a set $V_2 \subseteq U_2 \subseteq B(V_2; 2a+b)$ with the property that for every $x \in U_1$ the set $B(x; a)$ is either disjoint with $U_2$ or contained in $U_2$ (i.e. the boundaries of $U_1$ and $U_2$ are separated by distance at least $a$). The second is that if $U \subseteq B(V; a)$ then every $\walkeq{U}{r}$-class is contained within distance $a$ of some $\walkeq{V}{2a + r}$-class; in particular, if $\walkeq{V}{2a+r}$ is uniformly bounded then so is $\walkeq{U}{r}$. The next lemma formalizes this technique when applied to a sequence $(V_n)_{n \in \N}$.

\begin{lem} \label{lem:buffer}
Let $(X, \rho)$ be a Borel extended metric space with $E_\rho$ countable and for each $n \in \N$ let $V_n \subseteq X$ be Borel and let $a_n, b_n, r_n \geq 0$. Assume for every $n \in \N$ that every $\walkeq{V_n}{6 a_n}$-class has diameter at most $b_n$ and that $a_n \geq r_n + \sum_{k=0}^{n-1} (4 a_k + b_k)$. Then there are Borel sets $U_n \supseteq V_n$ such that $\walkeq{U_n}{r_n}$ is uniformly bounded for every $n$ and whenever $n < m$ and $x \in U_n$ the set $B(x; r_n)$ is either disjoint with or contained in $U_m$.
\end{lem}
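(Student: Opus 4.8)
The plan is to build the sets $U_n$ by recursion on $n$, maintaining along the way the auxiliary invariant that $U_n \subseteq B(V_n; s_n)$, where $s_n := \sum_{k<n}(4a_k + b_k)$; this is consistent because the hypothesis on $a_n$ gives $s_n \leq a_n$, with room. To pass from $V_m$ to $U_m$ I would perform one \emph{buffering} operation for each of the previously-built sets $U_{m-1}, U_{m-2}, \ldots, U_0$, \emph{processed in that order} --- that is, from the coarsest scale down to the finest. Precisely: put $W_m := V_m$, and for $k = m-1, m-2, \ldots, 0$ let $W_k$ be $W_{k+1}$ together with every bubble $B(D; a_k)$ as $D$ ranges over the $\walkeq{U_k}{2a_k}$-classes meeting $W_{k+1}$; finally set $U_m := W_0$. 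Each step is Borel: using Lusin--Novikov functions for $E_\rho$ and for $\walkeq{U_k}{2a_k}$ (the latter Borel by Lemma~\ref{lem:walkborel}), the set $\Phi_k$ of all $w \in U_k$ whose $\walkeq{U_k}{2a_k}$-class $D$ satisfies $B(D; a_k) \cap W_{k+1} \neq \emptyset$ is Borel, and $W_k = W_{k+1} \cup B(\Phi_k; a_k)$.

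The easy consequences follow from the two elementary facts highlighted just before the lemma. From the invariant $U_k \subseteq B(V_k; s_k)$ and the inequality $2 s_k + 2 a_k \leq 6 a_k$ (valid since $s_k \leq a_k$), the fact that every $\walkeq{U}{r}$-class sits within distance $a$ of some $\walkeq{V}{2a+r}$-class when $U \subseteq B(V;a)$ shows that every $\walkeq{U_k}{2a_k}$-class has diameter at most $2 s_k + b_k \leq 2 a_k + b_k$. Consequently each bubble $B(D; a_k)$ has diameter at most $4 a_k + b_k$, and two intersecting scale-$k$ bubbles would arise from a pair of $\walkeq{U_k}{2a_k}$-equivalent points, so distinct scale-$k$ bubbles are disjoint. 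Since a bubble thrown in at step $k$ meets $W_{k+1} \subseteq B(V_m; \sum_{i=k+1}^{m-1}(4a_i+b_i))$, we get $W_k \subseteq B(V_m; \sum_{i=k}^{m-1}(4a_i+b_i))$, and hence $U_m \subseteq B(V_m; s_m)$, re-establishing the invariant. Applying the same fact once more (now with $2 s_m + r_m \leq 3 a_m \leq 6 a_m$) shows every $\walkeq{U_m}{r_m}$-class has diameter at most $2 s_m + b_m$, so $\walkeq{U_m}{r_m}$ is uniformly bounded. (For $m = 0$ there is no buffering and $U_0 = V_0$ plainly works.)

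The substance of the lemma is the separation clause. Fix $k < m$ and $x \in U_k$, and let $D$ be the $\walkeq{U_k}{2a_k}$-class of $x$, so that $B(x; r_k) \subseteq B(D; r_k) \subseteq B(D; a_k)$. Consider step $k$ in the construction of $U_m$. If $B(D; a_k)$ meets $W_{k+1}$, then $B(D; a_k)$ is one of the bubbles thrown in, so $B(D; a_k) \subseteq W_k \subseteq U_m$ and $B(x;r_k) \subseteq U_m$. Otherwise $B(D; a_k)$ is disjoint from $W_{k+1}$, from $D$ itself (which is therefore not among the bubbles added at step $k$), and from the remaining scale-$k$ bubbles, hence $B(D; a_k) \cap W_k = \emptyset$. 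The key claim is that in this case a shrunken version of the disjointness survives the remaining, finer-scale steps: by downward induction on $j$ from $k$ to $0$, one has $B\bigl(D; \ a_k - \sum_{i=j}^{k-1}(4a_i+b_i)\bigr) \cap W_j = \emptyset$. For the step from $j$ to $j-1$, any scale-$(j{-}1)$ bubble $B(D'; a_{j-1})$ added in forming $W_{j-1}$ meets $W_j$; if it also met the ball about $D$ of radius $a_k - \sum_{i=j-1}^{k-1}(4a_i+b_i)$, then --- since that bubble has diameter at most $4a_{j-1}+b_{j-1}$ --- it would contain a point of $W_j$ within $a_k - \sum_{i=j}^{k-1}(4a_i+b_i)$ of $D$, contradicting the inductive hypothesis. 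Taking $j=0$ and using the hypothesis in the form $a_k - \sum_{i<k}(4a_i+b_i) \geq r_k$ yields $B(D; r_k) \cap U_m = \emptyset$, hence $B(x; r_k) \cap U_m = \emptyset$. I expect this propagation to be the crux of the argument, and it is precisely why the buffers are added from coarse to fine: in the other order the large coarse-scale bubbles could swallow the delicate fine-scale separations, whereas coarse-to-fine means each later step perturbs the configuration near $D$ by only $4a_{j-1}+b_{j-1}$, and the hypothesis on $a_k$ is exactly what pays for the accumulated perturbation $\sum_{i<k}(4a_i+b_i)$.
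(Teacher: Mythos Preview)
Your argument is correct and follows essentially the same strategy as the paper: buffer $V_m$ by saturation operations at scales $k=m-1,\ldots,0$ (coarse to fine), use disjointness of same-scale bubbles to get the dichotomy at scale $k=n$, and then note that the remaining finer-scale steps only perturb by $\sum_{k<n}(4a_k+b_k)\le a_n-r_n$. The paper's version is marginally cleaner in that its saturation $s_k$ is based on $\walkeq{B(V_k;2a_k)}{2a_k}$ rather than on $\walkeq{U_k}{2a_k}$; the operations are then defined independently of the inductively built $U_k$'s, and the post-step-$n$ analysis collapses to the single containment $U_m\subseteq B(W;\sum_{k<n}(4a_k+b_k))$ in place of your explicit shrinking-ball induction. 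One slip to fix: your initial description adds the bubble $B(D;a_k)$ when the \emph{class} $D$ meets $W_{k+1}$, but your Borel check and your separation argument both use the condition that $B(D;a_k)$ meets $W_{k+1}$; the latter is what the separation clause needs, so make the definition match it.
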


\begin{proof}
Notice that every $\walkeq{B(V_n; 2 a_n)}{2 a_n}$-class is contained within distance $2 a_n$ of some $\walkeq{V_n}{6 a_n}$-class and thus has diameter at most $4 a_n + b_n$. For $Y \subseteq X$ define the saturation $s_n(Y) = Y \cup [Y \cap B(V_n; 2 a_n)]_{\walkeq{B(V_n; 2 a_n)}{2 a_n}}$. Then $s_n(Y)$ is Borel when $Y$ is Borel and $s_n(Y) \subseteq B(Y; 4a_n + b_n)$. Set $U_0 = V_0$, for $n \geq 1$ set $U_n = s_0 \circ \cdots \circ s_{n-1}(V_n)$, and notice $U_n \subseteq B(V_n; a_n)$. From this last containment and the inequality $r_n \leq a_n$, we see that $\walkeq{U_n}{r_n}$ is uniformly bounded. Now consider $n < m$ and $x \in U_n$ and assume that $B(x; r_n)$ is not a subset of $U_m$. Since $B(x; a_n) \subseteq B(V_n; 2 a_n)$ is contained in a single $\walkeq{B(V_n; 2 a_n)}{2 a_n}$-class, it follows from the definition of $s_n$ and $U_m$ that $B(x; a_n)$ is disjoint with $W = s_n \circ \cdots \circ s_{m-1}(V_m)$. Since $U_m \subseteq B(W; \sum_{k=0}^{n-1} 4 a_k + b_k)$ and $r_n + \sum_{k=0}^{n-1} 4 a_k + b_k \leq a_n$, it follows that $B(x; r_n)$ is disjoint with $U_m$.
\end{proof}

Below, for sets $A$ and $B$, we say that $A$ \emph{divides} $B$ if $A \cap B$ is a non-empty proper subset of $B$.

\begin{cor} \label{cor:ortho}
Let $(X, \rho)$ be a Borel extended metric space with $E_\rho$ countable and with Borel asymptotic dimension $d < \infty$, and let $(r_n)_{n \in \N}$ be a sequence of radii. Then there exists a sequence of Borel covers $\cU^n = \{U_0^n, \ldots, U_d^n\}$ of $X$ satisfying:
\begin{enumerate}
\item[(i)] $\walkeq{U_i^n}{r_n}$ is uniformly bounded for every $n$ and $i$;
\item[(ii)] if $n < m$, then for every $x \in U_i^n$ the set $B(x; r_n)$ is either disjoint with or a subset of $U_i^m$.
\end{enumerate}
\end{cor}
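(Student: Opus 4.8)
\textbf{Proof proposal for Corollary \ref{cor:ortho}.}

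The plan is to realize the $d+1$ ``color classes'' simultaneously as the output of a single application of Lemma \ref{lem:buffer}, by interleaving the sequences of covers into one sequence indexed so that at stage $n$ we are refining color $i$ where $n \equiv i \pmod{d+1}$. Concretely: since $\Basdim(X,\rho) = d < \infty$, condition (2) of Lemma \ref{lem:equivalent_asym_dim} gives, for each radius $s > 0$, a Borel cover $V_0^{(s)}, \dots, V_d^{(s)}$ of $X$ with $\walkeq{V_j^{(s)}}{s}$ uniformly bounded. We will choose an increasing sequence of radii $s_n$ and set $V_n := V_{i}^{(s_n)}$ where $i$ is the residue of $n$ modulo $d+1$; the radii $s_n$ must be chosen large enough (using the uniform bounds just mentioned) that the hypotheses $a_n \geq r_n + \sum_{k<n}(4a_k + b_k)$ of Lemma \ref{lem:buffer} can be satisfied, where $b_n$ is a uniform bound on the diameters of $\walkeq{V_n}{6a_n}$-classes and $a_n$ is yet to be chosen. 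The construction is recursive: having chosen $a_0, \dots, a_{n-1}$ (and hence $b_0,\dots,b_{n-1}$), set $a_n := r_n + \sum_{k=0}^{n-1}(4a_k + b_k)$, then pick $s_n \geq 6 a_n$ and apply condition (2) with radius $s_n$ to get $V_n$ with $\walkeq{V_n}{s_n}$, hence $\walkeq{V_n}{6a_n}$, uniformly bounded by some $b_n$. This closes the recursion.

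Having fixed the $V_n$, $a_n$, $b_n$, $r_n$, apply Lemma \ref{lem:buffer} to obtain Borel sets $U_n \supseteq V_n$ with $\walkeq{U_n}{r_n}$ uniformly bounded and with the separation property: for $n < m$ and $x \in U_n$, the ball $B(x;r_n)$ is either disjoint from or contained in $U_m$. Now for each $i \in \{0,\dots,d\}$ and each $n \in \N$ I would set $U_i^n := U_{(d+1)n + i}$. Then $\cU^n := \{U_0^n, \dots, U_d^n\}$; since for each fixed $n$ the indices $(d+1)n, (d+1)n+1, \dots, (d+1)n+d$ run through one full residue cycle, and $U_{(d+1)n+i} \supseteq V_{(d+1)n+i} = V_i^{(s_{(d+1)n+i})}$ while $\{V_0^{(s)}, \dots, V_d^{(s)}\}$ already covers $X$ — wait, that does not immediately give that $\cU^n$ covers $X$, since the different $U_i^n$ come from covers at different radii. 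This is the point that needs care: to ensure each $\cU^n$ covers $X$, I should instead, at the stages $(d+1)n, \dots, (d+1)n+d$, use the \emph{same} radius $s$, i.e. set $s_{(d+1)n} = s_{(d+1)n+1} = \dots = s_{(d+1)n+d}$ and let $V_{(d+1)n+i} = V_i^{(s)}$ be the $i$-th piece of one fixed cover at that common radius; then $\bigcup_{i=0}^d V_{(d+1)n+i} = X$, so $\bigcup_{i=0}^d U_i^n \supseteq X$. The recursion for choosing the common radius still works: when we reach stage $(d+1)n$ we first compute $a_{(d+1)n}, \dots, a_{(d+1)n+d}$ — but these depend on $b$'s at those very stages, which depend on the radius. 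To break this, choose the common radius $s$ large enough in one shot: note $a_{(d+1)n+i} = r_{(d+1)n+i} + \sum_{k < (d+1)n+i}(4a_k+b_k)$, and $b_{(d+1)n+i}$ is a uniform diameter bound for $\walkeq{V_i^{(s)}}{6a_{(d+1)n+i}}$; since increasing $s$ only shrinks these classes we may first pick any $s$, read off $a$'s and $b$'s, and if the inequalities $a_n \geq r_n + \sum_{k<n}(4a_k+b_k)$ fail for stages in this block we enlarge $s$ — but enlarging $s$ enlarges the $a$'s (through later sums) and shrinks the $b$'s, so a straightforward monotone fixed-point / iteration argument, or simply choosing $s$ after the $a$'s are pinned down purely in terms of earlier data, resolves it. Cleanest: since $a_n$ for $n$ in the current block depends only on $r$'s and on $a_k, b_k$ for $k$ in \emph{earlier} blocks (define the $a_n$ first for all $n$ in the block using the known earlier data and the definition $a_n := r_n + \sum_{k<n}(4a_k+b_k)$, interpreting the sum over the current block recursively in $i$ — this is well-defined since within the block $a_{(d+1)n+i}$ only references $a_{(d+1)n+j}$ for $j<i$ and $b_{(d+1)n+j}$ for $j<i$), we then choose $s$ large enough that for every $i$, $\walkeq{V_i^{(s)}}{6a_{(d+1)n+i}}$-classes have diameter $\leq b_{(d+1)n+i}$ for whatever uniform bounds $b$ we declared — except $b$ was supposed to be \emph{determined} by $s$. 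I will instead declare the $b$'s first as large placeholders, note $a_n$ grows with them, and observe that for \emph{any} fixed choice of $a_n$'s (hence $b$-targets), condition (2) lets us find $s$ achieving diameter $\leq b_n$; the only circularity, that larger $b$ forces larger $a$ forces needing smaller diameter forces larger $s$, is harmless because $s$ can be taken arbitrarily large for any target. So: fix $a$'s and $b$'s block-by-block by the recursion $a_n = r_n + \sum_{k<n}(4a_k+b_k)$ and $b_n = $ (some finite number), then choose $s_n$ (constant on blocks) large enough to realize diameter $\leq b_n$ — possible since $\walkeq{V_i^{(s)}}{6a_n}$ is uniformly bounded for each $s \geq 6a_n$ and we just need \emph{some} such $s$ with the bound $\leq b_n$; taking $b_n$ to be exactly the uniform bound furnished by condition (2) at radius $s_n := 6a_n$ makes this consistent with no circularity, because $a_n$ does not depend on $b_n$.

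With the $U_i^n$ in hand, verification is routine: (i) $\walkeq{U_i^n}{r_n^{*}}$ is uniformly bounded where $r_n^{*}$ is the radius assigned to stage $(d+1)n+i$; but we want it for $r_n$, so we should arrange $r_{(d+1)n+i} := r_n$ for all $i$ (the sequence $(r_n)$ of the Corollary gets repeated $d+1$ times within each block), giving (i) directly from Lemma \ref{lem:buffer}. For (ii), take $n < m$ and $x \in U_i^n = U_{(d+1)n+i}$; then $(d+1)n+i < (d+1)m+i$, so applying the separation clause of Lemma \ref{lem:buffer} with the pair $\bigl((d+1)n+i,\ (d+1)m+i\bigr)$ shows $B(x; r_n)$ is disjoint from or contained in $U_{(d+1)m+i} = U_i^m$, which is exactly (ii). I expect the main obstacle to be precisely the bookkeeping flagged above — organizing the interleaved radii and the recursion for $a_n, b_n$ so that (a) each $\cU^n$ genuinely covers $X$ (forcing a common radius within each block) and (b) the growth condition $a_n \geq r_n + \sum_{k<n}(4a_k+b_k)$ of Lemma \ref{lem:buffer} holds, with no vicious circularity between ``$b_n$ determined by $s_n$'' and ``$s_n$ chosen large in terms of $a_n$'' — which, as sketched, is broken by defining $a_n$ independently of $b_n$ and then taking $s_n = 6a_n$, $b_n$ the resulting uniform bound. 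Everything else (Borelness of the $U_i^n$, which is inherited from Lemma \ref{lem:buffer}) is immediate.
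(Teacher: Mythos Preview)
Your interleaving strategy creates a circularity that you yourself flag but do not actually resolve. To ensure that $\cU^n = \{U_0^n,\dots,U_d^n\}$ covers $X$, you (correctly) need all $d+1$ sets $V_{(d+1)n},\dots,V_{(d+1)n+d}$ to come from a \emph{single} cover of $X$, chosen at some common radius $s$. But the hypothesis of Lemma~\ref{lem:buffer} requires $s \geq 6a_{(d+1)n+d}$, and $a_{(d+1)n+d}$ depends on $b_{(d+1)n},\dots,b_{(d+1)n+d-1}$, which in turn depend on the cover and hence on $s$. Your final suggestion, ``take $s_n := 6a_n$ and let $b_n$ be the resulting bound,'' silently drops the constant-$s$-within-blocks requirement (since the $a_n$ strictly increase within a block), so the pieces $V_{(d+1)n+i}$ then come from different covers and there is no reason $\bigcup_i U_i^n = X$. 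The fact that condition~(2) of Lemma~\ref{lem:equivalent_asym_dim} only furnishes \emph{some} uniform bound, not any prescribed one, means your ``declare $b$'s as large placeholders, then realize them'' maneuver also fails: you cannot force the diameter bound to be below a preassigned target.

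The paper sidesteps all of this by applying Lemma~\ref{lem:buffer} not once to an interleaved sequence, but $d+1$ times in parallel --- once for each color $i$ --- using the \emph{same} sequences $(a_n), (b_n), (r_n)$ in every application. Concretely: set $a_0=r_0$; given $a_n$, choose a single Borel cover $\{V_0^n,\dots,V_d^n\}$ with each $\walkeq{V_i^n}{6a_n}$ uniformly bounded; let $b_n$ bound the diameters across \emph{all} $i$; set $a_{n+1}=r_{n+1}+\sum_{k\le n}(4a_k+b_k)$. Now for each fixed $i$ apply Lemma~\ref{lem:buffer} to the sequence $(V_i^n)_{n\in\N}$ to get $U_i^n\supseteq V_i^n$. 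Coverage of $X$ is immediate since $\{V_0^n,\dots,V_d^n\}$ was already a cover, and there is no circularity because $a_n$ and $b_n$ are fixed once, independently of $i$, before any application of Lemma~\ref{lem:buffer}.
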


\begin{proof}
Set $a_0 = r_0$. Given $a_n > 0$, pick Borel sets $V_0^n, \ldots, V_d^n$ covering $X$ with the property that $\walkeq{V_i^n}{6 a_n}$ is uniformly bounded for every $i$, let $b_n > 0$ bound the diameters of every class of each relation $\walkeq{V_i^n}{6 a_n}$, and set $a_{n+1} = r_{n+1} + \sum_{k=0}^n 4 a_k + b_k$. For every $i \in d+1$, apply Lemma \ref{lem:buffer} to $\{V_i^n : n \in \N\}$ to obtain Borel sets $U_i^n \supseteq V_i^n$ satisfying conditions (i) and (ii).
\end{proof}

We will now prove that the asymptotic separation index is often at most $1$ (in fact we know of no examples where it is both finite and greater than one). Our proof will use the following criterion.

\begin{lem} \label{lem:toast}
Let $(X, \rho)$ be a Borel extended metric space with $E_\rho$ countable. Assume that for every $r > 0$ there are Borel sets $V_n \subseteq X$, $n \in \N$, such that $\walkeq{V_n}{3 r}$ is bounded for every $n$, for every $x$ there is $n$ with $B(x; r) \subseteq V_n$, and whenever $n < m$ and $x \in V_n$ the set $B(x; 4r)$ is either contained in or disjoint with $V_m$. Then $\asi(X, \rho) \leq 1$.
\end{lem}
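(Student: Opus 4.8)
The plan is to verify condition (2') of Lemma \ref{lem:equivalent_asym_dim} with $d = 1$: given $r_0 > 0$, I will produce Borel sets $U_0, U_1$ covering $X$ such that $\walkeq{U_i}{r_0}$ is bounded for each $i$. The hypothesis, applied at a suitable scale $r$ (I expect $r = r_0$ will do, though one may want to take $r$ somewhat larger to absorb triangle-inequality losses), supplies Borel sets $V_n$ with $\walkeq{V_n}{3r}$ bounded, with the covering property that every $B(x;r)$ sits inside some $V_n$, and with the crucial separation property that for $n < m$ and $x \in V_n$, the ball $B(x;4r)$ is either inside $V_m$ or disjoint from it. The idea is the standard ``toast'' / layered-cake construction: peel off the region covered by $V_0$, then the part of $V_1$ not yet used, then the part of $V_2$, and so on, and two-color these layers by parity of the index $n$.

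First I would define $W_n = V_n \setminus \bigcup_{k < n} V_k$, so that the $W_n$ are Borel, pairwise disjoint, and their union is $\bigcup_n V_n$, which equals $X$ by the covering hypothesis (every point lies in $B(x;r) \subseteq V_n$ for some $n$). Then set $U_0 = \bigcup_{n \text{ even}} W_n$ and $U_1 = \bigcup_{n \text{ odd}} W_n$; these are Borel and cover $X$. It remains to bound the diameters of the classes of $\walkeq{U_0}{r_0}$ and $\walkeq{U_1}{r_0}$ uniformly. The key point is that a $\walkeq{U_0}{r_0}$-class cannot spread across two different pieces $W_n, W_m$ with $n \neq m$: if $x \in W_n \subseteq V_n$ and $y \in W_m$ with $n < m$ and $\rho(x,y) \le r_0 \le 4r$, then $y \in B(x;4r)$, so by the separation hypothesis $B(x;4r) \subseteq V_m$; but $x \in B(x;4r)$ would then force $x \in V_m$, and since $n < m$ this is consistent, so I must instead run the argument the other way — using that $y \in V_m$ and $m > n$ cannot ``pull'' a path back down into $W_n$, because any point within $r_0$ of $V_m$ that lies in $V_n$ with $n<m$ would already have been removed when forming $W_m$...

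Let me restructure this crux: the right claim is that within a single $\walkeq{U_0}{r_0}$-chain $x_0, x_1, \dots$, the index $n(j)$ with $x_j \in W_{n(j)}$ is constant. Suppose $x_j \in W_n$, $x_{j+1} \in W_m$, $n \ne m$, say $n < m$. Then $x_{j+1} \in B(x_j; r_0)$. Now $B(x_j; 4r) \supseteq B(x_j;r) \ni x_j$, and since $x_j \in V_n$, the separation hypothesis says $B(x_j;4r)$ is contained in or disjoint from $V_m$; as $x_{j+1} \in B(x_j;4r) \cap V_m$, it is \emph{contained} in $V_m$, so $x_j \in V_m$, contradicting $x_j \in W_n = V_n \setminus \bigcup_{k<n} V_k$ together with... no — $m > n$, so $V_m$ is not among the sets subtracted. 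The honest fix is to compare in the other direction: from $x_{j+1} \in V_m$ and $x_j \in B(x_{j+1};r_0) \subseteq B(x_{j+1};4r)$, separation applied at index $m$ to points of $V_m$ tells us nothing about smaller indices. So the genuinely correct grouping uses the \emph{parity} coloring directly: two consecutive points $x_j \in W_n$, $x_{j+1}\in W_m$ of a $\walkeq{U_0}{r_0}$-chain have $n,m$ both even, hence $|n-m|\ge 2$; I then show a chain confined to even indices with jumps of size $\le r_0$ actually stays in a single $W_n$, because the separation property at scale $4r$ prevents a ball of radius $4r \ge r_0$ based at a point of $V_n$ from straddling the boundary of $V_{n+1}$ or $V_{n+2}$ etc. I expect this step — pinning a $\walkeq{U_i}{r_0}$-class inside one $W_n$ and then bounding its diameter by the known bound on $\walkeq{V_n}{3r}$-classes (using $r_0 \le 3r$, so $\walkeq{W_n}{r_0} \subseteq \walkeq{V_n}{3r}$ restricted to $W_n$) — to be the main obstacle, and the delicate part is choosing $r$ relative to $r_0$ (e.g. $r = r_0$) and carefully exploiting ``$B(x;4r)$ contained in or disjoint from $V_m$'' to see that the chain never changes layer. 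Once the class is trapped in a fixed $W_n \subseteq V_n$, its $\rho$-diameter is at most the uniform bound on diameters of $\walkeq{V_n}{3r}$-classes, which by hypothesis is bounded uniformly in $n$; this gives a uniform bound and hence $\asi(X,\rho) \le 1$.
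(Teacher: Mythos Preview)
Your parity-coloring construction does not work, and the difficulty you flag in the middle of the argument is not a technicality but a genuine obstruction. Nothing in the hypotheses forces a $\walkeq{U_0}{r_0}$-chain to stay in a single layer $W_n$: take $X=\Z$ with the usual metric, $r=r_0=1$, set $V_{2k}=[-10^k,10^k]$ and $V_{2k+1}=\varnothing$. All three hypotheses hold (each $\walkeq{V_n}{3}$ has a single finite-diameter class, every $B(x;1)$ sits in some $V_{2k}$, and the separation condition is immediate since $B(V_{2k};4)\subseteq V_{2k+2}$ and the odd sets are empty). But then every $W_n$ with $n$ odd is empty, so $U_1=\varnothing$ and $U_0=\Z$, and $\walkeq{U_0}{1}$ has a single unbounded class. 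The same phenomenon occurs less degenerately whenever consecutive same-parity pieces $W_n,W_{n+2}$ are adjacent, which the hypotheses do not preclude. You also misread the hypothesis at the end: $\walkeq{V_n}{3r}$ is only assumed bounded for each $n$, not uniformly in $n$, so even if you could trap a class in one $W_n$ you would only get boundedness, not a uniform bound.

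The paper's decomposition is different in kind: rather than coloring by which $V_n$ a point first falls into, it colors by whether \emph{some} $V_n$ divides the ball $B(x;r)$. Set $U_1=\{x:\exists n\ V_n\text{ divides }B(x;r)\}$ and $U_0=X\setminus U_1$. The separation hypothesis forces the dividing index (for $x\in U_1$) or the least containing index (for $x\in U_0$) to be constant along an $r$-chain: if $V_n$ divides $B(x;r)$ and $\rho(x,y)\le r$, pick $z\in V_n\cap B(x;r)$ and note $B(y;2r)\subseteq B(z;4r)$, so for any $m>n$ the set $V_m$ either contains or misses $B(y;2r)$ entirely and hence cannot divide $B(y;r)$; this pins the whole class inside $B(V_{n_x};r)$, and boundedness of $\walkeq{V_{n_x}}{3r}$ finishes it. The key idea you are missing is to separate ``interior'' points from ``boundary'' points rather than to two-color the layers themselves.
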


\begin{proof}
Let $r > 0$ and let $(V_n)_{n \in \N}$ be as described. Let $U_1$ be the set of $x \in X$ for which there is an $n$ with $V_n$ dividing $B(x; r)$, and set $U_0 = X \setminus U_1$. For $x \in U_1$ let $n_x$ be such that $V_{n_x}$ divides $B(x; r)$, and for $x \in U_0$ let $n_x$ be least with $B(x; r) \subseteq V_{n_x}$. Clearly if $x, y \in U_0$ and $\rho(x, y) \leq r$ then $n_x = n_y$. Similarly, if $x, y \in U_1$ and $\rho(x, y) \leq r$ then $B(y; 2 r)$ is divided by both $V_{n_x}$ and $V_{n_y}$ and hence $n_y = n_x$. So for $x \in U_i$ we have $[x]_{\walkeq{U_i}{r}}$ is contained in a $\walkeq{B(V_{n_x}; r)}{r}$-class, and has finite diameter since $\walkeq{V_{n_x}}{3r}$ is bounded.
\end{proof}

The above criterion has some rough similarity with the notions of ``toast'' in \cite{GJKSa,GJKSb,GJKSc} and ``barriers'' in \cite{CM16}. In fact, the proof of (b) below is similar to the proof of \cite[Thm. B]{CM16}.

\begin{thm} \label{thm:asi}
Let $(X, \rho)$ be a Borel extended metric space with $E_\rho$ countable.
\begin{enumerate}
\item[(a)] If $\Basdim(X, \rho) < \infty$ then $\asi(X, \rho) \leq 1$.
\item[(b)] Assume $\rho$ is proper and fix a compatible Polish topology on $X$. Then there is an $E_\rho$-invariant comeager set $X' \subseteq X$ such that $\asi(X', \rho) \leq 1$.
\end{enumerate}
\end{thm}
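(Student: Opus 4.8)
My plan is to derive both parts from the toast criterion Lemma~\ref{lem:toast}: if $(Y,\rho)$ is a Borel extended metric space with $E_\rho$ countable and, for every $r>0$, there are Borel sets $(V_n)_{n\in\N}$ in $Y$ with $\walkeq{V_n}{3r}$ bounded for each $n$, with $\bigcup_n\{y\in Y:B(y;r)\subseteq V_n\}=Y$, and with $n<m,\ y\in V_n$ implying $B(y;4r)\subseteq V_m$ or $B(y;4r)\cap V_m=\varnothing$, then $\asi(Y,\rho)\le 1$; I will call such a family a \emph{toast at scale $r$ on $Y$}. Two elementary remarks will be used throughout: restricting a toast at scale $r$ to a Borel subset again yields a toast at scale $r$ (the three conditions are inherited upon intersecting the pieces with the subset), and a toast at scale $r'$ is also a toast at every scale $r\le r'$; hence it suffices to exhibit toasts at integer scales.

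\emph{Part (a).} Assume $\Basdim(X,\rho)=d<\infty$ and fix $r\in\N$. Feed Corollary~\ref{cor:ortho} a sufficiently rapidly growing sequence of radii $(r_k)$ with $r_k\ge 4r$, obtaining for each $k$ a Borel cover $\cU^k=\{U_0^k,\dots,U_d^k\}$ of $X$ with (i) $\walkeq{U_i^k}{r_k}$ uniformly bounded and (ii) whenever $k<\ell$ and $x\in U_i^k$ the ball $B(x;r_k)$ is contained in, or disjoint from, $U_i^\ell$. I would assemble a toast at scale $r$ out of the $U_i^k$. The covering condition of the toast comes from a pigeonhole argument: among the $d+1$ ``columns'', each $x\in X$ lies in some column $U_{i^*}^k$ for infinitely many $k$, and applying (ii) at two such indices $k_0<\ell$ (the relevant balls are non-disjoint from $U_{i^*}^\ell$ since they contain $x$) forces $B(x;r)\subseteq B(x;r_{k_0})\subseteq U_{i^*}^\ell$. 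The nesting condition of the toast comes from the buffer property (ii). The technical core, which I expect to be the main obstacle, is to weld the $d+1$ columns spread over the scales $r_k$ into a \emph{single} sequence of toast pieces for which the nesting condition holds between \emph{all} pairs of pieces: (ii) only provides compatibility between pieces of the same column at different scales, whereas pieces in different columns at a common level are a priori incompatible (the $d+1$ sets of a single cover cannot all be mutually $4r$-separated). Resolving this requires a careful choice of the radii $(r_k)$ and of the covers so that the across-scale buffers absorb these within-level discrepancies --- morally, by additionally buffering each newly placed piece against the pieces already placed via the saturation technique of Lemma~\ref{lem:buffer}, with the scales chosen large enough (recursively, so no circularity arises) to accommodate the cumulative thickening. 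Once a toast is built for every $r$, Lemma~\ref{lem:toast} yields $\asi(X,\rho)\le 1$.

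\emph{Part (b).} Fix a compatible Polish topology on $X$. By the remarks above it suffices to produce, for each $r\in\N$, an $E_\rho$-invariant dense $G_\delta$ set $Y_r\subseteq X$ carrying a toast at scale $r$, and then to take $X'=\bigcap_r Y_r$, which is again $E_\rho$-invariant and dense $G_\delta$, has countable associated equivalence relation, and carries toasts at all scales (the restricted pieces still work), so Lemma~\ref{lem:toast} applies on $X'$. To build a toast on a comeager set with no dimension hypothesis, I would run a Baire category recursion in the spirit of the proof of \cite[Thm.~B]{CM16}, exploiting that $\rho$ is proper, so every ball $B(x;s)$ is finite. Fix a countable basis $(B_k)$ of $X$; build the toast pieces $V_0,V_1,\dots$ one at a time, maintaining their bounded-walk and nesting properties on all of $X$ by running the saturation/buffer machinery of Lemma~\ref{lem:buffer} in parallel (this machinery is itself a recursion and so interleaves with the construction), while at the stage devoted to $B_k$ one adds a bounded piece that pins down the toast structure on a dense open subset of $B_k$ --- in particular forcing, on that subset, that $B(x;r)$ lies inside a toast piece. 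The comeager $G_\delta$ set on which the covering condition holds is the intersection of these dense open sets; carrying out the construction $E_\rho$-equivariantly (the pieces being $E_\rho$-invariant unions of $\rho$-balls) makes $Y_r$ $E_\rho$-invariant. The main obstacle here is the category bookkeeping: organizing a single recursion that simultaneously keeps the bounded-walk and nesting conditions true everywhere and drives the covering condition to hold on a comeager set, and verifying that all three conditions survive on the intersection.
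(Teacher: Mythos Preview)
Your framework is right --- both parts reduce to Lemma~\ref{lem:toast}, and for (a) Corollary~\ref{cor:ortho} is the correct input --- but in each part you are missing the device that makes the argument go through.

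\textbf{Part (a).} The obstacle you identify is real, and your proposed fix does not work: the Lemma~\ref{lem:buffer} recursion forces the parameter $a_n$ to strictly exceed $4a_{n-1}+b_{n-1}$, so the $d+1$ pieces of a single cover from Corollary~\ref{cor:ortho}, all built at the \emph{same} scale, cannot be fed into it consecutively. No ordering of the $(d+1)\times\N$ pieces and no recursive choice of radii escapes this tension. The paper resolves it not by more buffering but by a different observation. Your pigeonhole argument produces, for each $x$, a least column index $c(x)$ with $x\in U_{c(x)}^n$ for infinitely many $n$. The point you are missing is that $c$ is \emph{$E_\rho$-invariant}: if $\rho(x,y)<\infty$ then $y\in B(x;r_n)$ once $n$ is large, and by clause~(ii) the ball $B(x;r_n)\subseteq U_{c(x)}^m$ for infinitely many $m\ge n$, so $c(y)\le c(x)$, whence $c(y)=c(x)$ by symmetry. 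One then sets $V_n=\bigcup_{i\le d} U_i^n\cap c^{-1}(i)$. On each $E_\rho$-class only one column contributes, so the within-level incompatibility vanishes and the nesting and covering conditions of Lemma~\ref{lem:toast} follow directly from~(ii). This replaces your entire ``welding'' step.

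\textbf{Part (b).} Your reduction to integer scales and the intersection $X'=\bigcap_r Y_r$ is correct. But your construction of each $Y_r$ is too vague to evaluate, and one assertion is wrong: toast pieces cannot be ``$E_\rho$-invariant unions of $\rho$-balls'', since any $V_n$ with $\walkeq{V_n}{3r}$ bounded cannot contain an entire infinite $E_\rho$-class. The $E_\rho$-invariance of $Y_r$ has to come from the \emph{defining condition} of $Y_r$, not from the shape of the pieces. The paper's mechanism is concrete and rather different from a hand-built recursion over a basis: for each $n$ it takes a Borel proper coloring $c_n:X\to\N$ of the graph $\{(x,y):0<\rho(x,y)\le 6a_n+2r\}$ (available since $\rho$ is proper), sets $Y_i^n=c_n^{-1}(i)$, and applies the Kuratowski--Ulam theorem on $X\times\N^\N$ to find a single $s\in\N^\N$ for which $X'=\{x:\text{every }x'\in[x]_{E_\rho}\text{ lies in }Y_{s(n)}^n\text{ for some }n\}$ is comeager. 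This $X'$ is $E_\rho$-invariant by the form of its defining condition. The toast pieces on $X'$ are then obtained by feeding $W_n=B(Y_{s(n)}^n;r)\cap X'$ through Lemma~\ref{lem:buffer}; these pieces are not themselves $E_\rho$-invariant. A direct Baire-category recursion of the sort you sketch might be salvageable, but as written it neither explains how placing a single bounded piece secures the covering condition on a dense open set, nor how $E_\rho$-invariance of the comeager set is obtained.
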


We point out that item (b) and Theorem \ref{thm:equal} together imply Theorem \ref{intro:bp} from the introduction.

\begin{proof}
(a). Set $d = \Basdim(X, \rho)$ and let $r > 0$. Set $r_n = 4r + n$ and apply Corollary \ref{cor:ortho} to obtain a sequence of Borel covers $\cU^n = \{U_0^n, \ldots, U_d^n\}$ satisfying clauses (i) and (ii) of that corollary. For each $x \in X$ define $c(x) \in d+1$ to be least with $x \in U_{c(x)}^n$ for infinitely many $n$. An immediate consequence of (ii) is that $B(x; r_n) \subseteq U_{c(x)}^m$ for every $n$ and infinitely many $m \geq n$. Consequently, $c(x) = c(y)$ when $\rho(x, y) < \infty$. Thus $c$ is Borel and $E_\rho$-invariant. Finally, it is easily seen that the sets $V_n = \bigcup_{i \in d+1} U_i^n \cap c^{-1}(i)$ satisfy the conditions of Lemma \ref{lem:toast}.

(b). Let $r > 0$. Set $a_0 = 4r$ and inductively define $a_{n+1} = 4r + \sum_{k=0}^n (4 a_k + 2r)$. Let $\Gamma_n$ be the Borel graph on $X$ where $(x, y) \in \Gamma_n$ if and only if $0 < \rho(x, y) \leq 6 a_n + 2 r$. Then $\Gamma_n$ is locally finite since $\rho$ is proper, so \cite[Prop. 4.3]{KST99} implies that there is a Borel proper coloring $c_n : X \rightarrow \N$ of $\Gamma_n$. Set $Y_i^n = c_n^{-1}(i)$.

Consider the set $P$ of pairs $(x, s) \in X \times \N^\N$ with the property that for every $x'$ with $\rho(x, x') < \infty$ there is $n$ with $x' \in Y_{s(n)}^n$. By applying the Lusin--Novikov uniformization theorem, we can pick Borel functions $f_n : X \rightarrow X$, $n \in \N$, satisfying $E_\rho = \{(x, f_n(x) : x \in X, \ n \in \N)\}$. Since for every $x \in X$ and $k, n \in \N$ there is $i \in \N$ with $f_k(x) \in Y_i^n$, it is immediate that the set $\{s \in \N^\N : \exists n \in \N \ f_k(x) \in Y_{s(n)}^n\}$ is open and dense. By taking the intersection over $k \in \N$, we find that the set $\{s \in \N^\N : (x, s) \in P\}$ is comeager for every $x \in X$. Since
$$P = \bigcap_{k \in \N} \bigcup_{n \in \N} \{(x,s) \in X \times \N^{\N} : c_n(f_k(x)) = s(n)\}$$
is Borel and in particular has the Baire property, it follows from the Kuratowski--Ulam theorem \cite[Thm. 8.41]{K95} that there is a comeager set of $s \in \N^\N$ such that $\{x \in X : (x, s) \in P\}$ is comeager.

Now fix $s \in \N^\N$ with the property that the set $X' = \{x \in X : (x, s) \in P\}$ is comeager. From the definition of $P$ it is immediate that $X'$ is $E_\rho$-invariant. Set $W_n = B(Y_{s(n)}^n; r) \cap X'$. By construction each class of $\walkeq{W_n}{6 a_n}$ coincides with $B(y; r)$ for some $y \in Y_{s(n)}^n$ and thus has diameter at most $2r$. Therefore we can apply Lemma \ref{lem:buffer}, using $r_n = 4 r$ for all $n$, to obtain Borel sets $V_n \supseteq W_n$ such that $\walkeq{V_n}{3 r}$ is uniformly bounded for every $n$ and whenever $n < m$ and $x \in V_n$ the set $B(x; 4 r)$ is either disjoint with or contained in $V_m$. Additionally, by our choice of $s$ for every $x \in X'$ there is $n$ with $x \in Y_{s(n)}^n$ and hence $B(x; r) \subseteq W_n \subseteq V_n$. Therefore $\asi(X', \rho) \leq 1$ by Lemma \ref{lem:toast}.
\end{proof}

\section{Polynomial growth and bounded packing} \label{sec:polygrow}

In this section we observe the basic fact that Borel actions of finitely-generated groups of polynomial volume-growth have finite Borel asymptotic dimension. We also explore notions of bounded packing, some of which will be significant to the next section.

Our work in the first half of this section, dealing with groups of polynomial volume-growth, essentially replicates the work of Jackson--Kechris--Louveau \cite{JKL02} but is written in the slightly different language of asymptotic dimension.

\begin{defn}
We say that a proper extended metric space $(X, \rho)$ has \emph{bounded packing of degree $d$} if for every $t > 1$ and $r_0 > 0$ there is $r \geq r_0$ so that for all $x \in X$, $B(x; r t)$ contains at most $2 (t+1)^d$-many pairwise disjoint balls of radius $r$.
\end{defn}

\begin{lem} \label{lem:polygrowth}
Let $G$ be a finitely generated group with polynomial volume-growth of degree $d$, and let $B$ be a finite symmetric generating set for $G$ with $1_G \in B$. Then for every $m \geq 1$ and $r_0 > 0$ there is $r \geq r_0$ so that $|B^{r m}| \leq 2 m^d |B^r|$.
\end{lem}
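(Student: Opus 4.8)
The plan is to exploit the polynomial bound on the growth function together with a simple telescoping argument. By the definition of polynomial volume-growth of degree $d$, fix $c > 0$ with $|B^r| \leq c r^d$ for all $r \geq 1$. We may assume $m \geq 2$, since for $m = 1$ the desired inequality reads $|B^r| \leq 2 |B^r|$ and holds for every $r \geq r_0$; we may also assume $r_0 \geq 1$, which is needed to invoke the polynomial bound.

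First I would argue by contradiction: suppose no suitable $r$ exists, i.e.\ $|B^{rm}| > 2 m^d |B^r|$ for every $r \geq r_0$. Apply this repeatedly along the geometric sequence $r_j = r_0 m^j$, each term of which is $\geq r_0$. We get $|B^{r_j}| = |B^{r_{j-1} m}| > 2 m^d |B^{r_{j-1}}|$, so by induction on $j$, $|B^{r_j}| > (2 m^d)^j |B^{r_0}| \geq (2 m^d)^j$, using $|B^{r_0}| \geq 1$. On the other hand, the polynomial bound gives $|B^{r_j}| \leq c r_j^d = c r_0^d m^{j d}$. Combining the two estimates yields $(2 m^d)^j < c r_0^d m^{j d}$, that is, $2^j < c r_0^d$ for all $j \in \N$, which is absurd. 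Hence there is some $r \geq r_0$ with $|B^{rm}| \leq 2 m^d |B^r|$, as required.

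I do not expect any real obstacle here: the heart of the matter is the elementary observation that a quantity which multiplies by a factor of at least $2 m^d$ every time the radius is scaled by $m$ would eventually grow faster than any fixed polynomial in the radius, contradicting polynomial volume-growth. The only points requiring a moment of care are the trivial special case $m = 1$ and the harmless normalization $r_0 \geq 1$, both of which are dispatched at the outset.
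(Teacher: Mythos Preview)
Your proof is correct and follows essentially the same contradiction argument as the paper: assume the inequality fails for all $r \geq r_0$, iterate along the geometric sequence $r_0 m^j$ to force $|B^{r_0 m^j}| > 2^j m^{jd}$, and compare with the polynomial upper bound $c r_0^d m^{jd}$ to obtain the absurdity $2^j < c r_0^d$. Your explicit treatment of the trivial case $m=1$ and the normalization $r_0 \geq 1$ is slightly more careful than the paper's version, but the substance is identical.
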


\begin{proof}
Fix $m \geq 1$, $r_0 > 0$, and let $c > 0$ satisfy $|B^r| \leq c r^d$ for all $r \geq 1$. Towards a contradiction, suppose $|B^{r m}| > 2 m^d |B^r|$ for every $r \geq r_0$. Then by induction, for every $k \in \N$
$$c m^{kd} r_0^d \geq |B^{m^k r_0}| > 2^k m^{kd} |B^{r_0}| \geq 2^k m^{kd},$$
which is a contradiction when $c r_0^d < 2^k$.
\end{proof}

Recall from Section \ref{sec:prelim} that if $B$ is any finite generating set for $G$ containing $1_G$ then we obtain a proper right-invariant metric $\tau_B$ on $G$. Furthermore, for an action $G \acts X$ we obtain a proper extended metric $\rho_{\tau_B}$ on $X$.

\begin{cor} \label{cor:polygrowth_pack}
Let $G$ be a finitely-generated group with polynomial volume-growth of degree $d$, let $B$ be a finite symmetric generating set for $G$ with $1_G \in B$, and let $G \acts X$ be any action. Then $(X, \rho_{\tau_B})$ has bounded packing of degree $d$.
\end{cor}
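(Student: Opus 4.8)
The plan is to reduce the corollary to Lemma~\ref{lem:polygrowth} via a ball-counting argument on $X$. First I would observe that for any action $G \acts X$ and any fixed $x \in X$, the map $g \mapsto g \cdot x$ is $1$-Lipschitz from $(G, \tau_B)$ to $(X, \rho_{\tau_B})$, so the image of $B^r$ covers $B(x; r)$; in particular every ball $B(x; r)$ in $X$ is the image of $B^r$ and hence has cardinality at most $|B^r|$. More importantly, if $B(x_1; r), \ldots, B(x_N; r)$ are pairwise disjoint balls all contained in $B(x; rt)$, I want to compare them to the group. The idea is that a disjoint family of $r$-balls inside an $rt$-ball cannot be too large because, after translating everything into a single copy of the group, each $r$-ball ``occupies'' at least $|B^r|/(\text{something})$ points while the ambient region has at most $|B^{r(t+1)}|$ points.

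More precisely, here is the step I would carry out. Given pairwise disjoint balls $B(x_i; r) \subseteq B(x; rt)$ for $i \le N$, for each $i$ pick $g_i \in B^{rt}$ with $g_i \cdot x = x_i$ (possible since $x_i \in B(x; rt)$). Then $B(x_i; r) \supseteq g_i \cdot (B^r \cdot x)$ is not quite what I want since the action need not be free; instead I argue on cardinalities: each $B(x_i; r)$ has cardinality at most $|B^r|$, but I need a lower bound. The cleaner route is to instead \emph{not} try to lower-bound $|B(x_i;r)|$ in $X$, but rather transport the whole configuration back to $G$. Fix any $y \in X$ with $B(y; rt) $ containing all the relevant balls; enumerate $B(x; r(t+1)) \cap [x]_{E}$ and lift: since $G \acts X$ need not be free this is delicate, so instead I'd argue as follows. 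Replace $t$ by $t+1$ in Lemma~\ref{lem:polygrowth}: for every $m := \lceil t \rceil + 1$ and every $r_0$ there is $r \ge r_0$ with $|B^{rm}| \le 2 m^d |B^r|$. Now given disjoint $B(x_i; r) \subseteq B(x; rt) \subseteq B(x; rm)$, choose $g_i \in B^{rm}$ with $g_i x = x_i$; the sets $g_i B^r \subseteq B^{rm}$ (using $rt + r \le rm$) have union contained in $B^{rm}$, and the images $g_i B^r \cdot x = B(x_i; r) $ being pairwise disjoint in $X$ forces the sets $g_i B^r$ to be ``essentially disjoint'' in the sense that no single point of $X$ is hit too often — but a point $z \in X$ is the image of at most $|\mathrm{Stab}(z)|$ elements of a given coset, which is unbounded.

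Because of that stabilizer issue, the honest argument must avoid freeness. The fix: we only need an \emph{upper} bound on $N$, and disjointness of the balls in $X$ together with $B(x_i;r) \ni x_i$ means the points $x_1, \ldots, x_N$ are pairwise at $\rho_{\tau_B}$-distance $> r$ wait, actually $> 2r$ is false; but they lie in $B(x; rt)$ and have pairwise disjoint $r$-balls, which certainly implies they are pairwise at distance $> 0$ and the balls $B(x_i; r/2)$... no. The genuinely correct approach, and the one I would commit to: count using that $B(x; rt) = B^{rt} \cdot x$ has cardinality $\le |B^{rt}|$, while each $B(x_i; r)$ is nonempty and they are disjoint, so trivially $N \le |B^{rt}|$ — too weak. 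To get the factor $|B^r|$ in the denominator one does need each $B(x_i;r)$ to have cardinality $\gtrsim |B^r|$, and \emph{that} is where freeness-free reasoning fails. I suspect the intended proof simply applies Lemma~\ref{lem:polygrowth} with a suitable $m$ depending on $t$ and uses that $|B(x_i; r)| \ge$ (some fixed fraction of) $|B^r|$ via a packing/covering duality in the group — concretely, $B(x_i; r) = g_i B^r x \supseteq g_i B^{r} $ pushed forward, and one pigeonholes on the $\le |B^{rt+r}|$ points of $B(x; rt+r) \supseteq \bigcup_i B(x_i; r)$: since the $B(x_i;r)$ are disjoint and each has size $\ge 1$... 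The main obstacle, then, is precisely handling non-free actions: I would resolve it by noting $|B(x_i; r)| \cdot N \le |B(x; r(t+1))| \le |B^{r(t+1)}|$ is the wrong direction and instead that disjointness gives $N \le |B^{r(t+1)}| / \min_i |B(x_i;r)|$, and then showing $\min_i |B(x_i; r)| \ge |B^r| / c'$ is \emph{not} needed — rather one takes $r$ from Lemma~\ref{lem:polygrowth} so that $|B^{r(t+1)}| \le 2(t+1)^d |B^r|$ directly bounds $|B^{r(t+1)}|$, and then uses that the $N$ disjoint $r$-balls cover at most $|B^{r(t+1)}|$ points while... I would present it by choosing $m = \lceil t+1\rceil$, invoking Lemma~\ref{lem:polygrowth} to get $r \ge r_0$ with $|B^{rm}| \le 2 m^d |B^r| \le 2(t+1)^d |B^r|$ (adjusting constants), and then a short pigeonhole: map each of the $N$ disjoint balls to the point of $B^{rm}$-translates it arises from, concluding $N \le |B^{rm}|/|B^r| \le 2(t+1)^d$, with the one technical lemma being that the map $G \to X$, $g \mapsto gx$, pushes the counting measure on $B^{rm}$ to a measure giving each $B(x_i;r)$ mass $\ge |B^r|$ (true because $B(x_i; r) = g_i B^r x$, wait this still needs the fiber bound). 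I will flag this stabilizer point as the crux and handle it by restricting attention to a single $E_{\rho_{\tau_B}}$-class and the observation that within one ball the relevant translates can be chosen injectively.
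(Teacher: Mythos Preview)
Your proposal has the right overall shape---pick $g_i \in B^{rt}$ with $g_i \cdot x = x_i$, pull back to the group, and invoke Lemma~\ref{lem:polygrowth} with $m = t+1$---but you talk yourself out of the one clean observation that makes it work and end up flagging a nonexistent obstacle.

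The ``stabilizer point'' you worry about is a red herring: freeness plays no role. The map $g \mapsto g \cdot x$ is a \emph{function} from $G$ to $X$, so preimages of disjoint sets are disjoint. Concretely, $B(x_i; r) = B^r g_i \cdot x$ (note the order: $B^r$ on the left, since $\rho_{\tau_B}(x_i,z)\le r$ means $z = h \cdot x_i$ for some $h \in B^r$). Now if $B^r g_i$ and $B^r g_j$ shared an element $h \in G$, then $h \cdot x$ would lie in both $B(x_i;r)$ and $B(x_j;r)$, contradicting disjointness in $X$. Hence the sets $B^r g_i \subseteq B^{r(t+1)}$ are \emph{genuinely} pairwise disjoint in $G$, giving $N \le |B^{r(t+1)}|/|B^r| \le 2(t+1)^d$ immediately. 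You had this line written down (``forces the sets $g_i B^r$ to be `essentially disjoint'\,'') but then reversed the logic: stabilizers would only matter if you were trying to deduce disjointness in $X$ from disjointness in $G$, which is the wrong direction.
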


\begin{proof}
Fix $t > 1$ and $r_0 > 0$. Set $m = t+1$ and let $r \geq r_0$ be as given by Lemma \ref{lem:polygrowth}. Fix $x \in X$ and suppose that the balls $B(y_i; r)$, $i \in k$, are pairwise disjoint and contained in $B(x; rt) = B^{r t} \cdot x$. Pick $g_i \in B^{r t}$ with $g_i \cdot x = y_i$. Since the sets $B(y_i; r) = B^r g_i \cdot x$ are pairwise disjoint, the sets $B^r g_i$ must be pairwise disjoint subsets of $B^{r(t+1)} = B^{r m}$. Therefore $k \leq \frac{|B^{r m}|}{|B^r|} \leq 2 m^d = 2 (t+1)^d$.
\end{proof}

\begin{lem} \label{lem:polygrowth_asdim}
Let $(X, \rho)$ be a proper Borel extended metric space with bounded packing of degree $d$. Then $\Basdim(X, \rho) < 2 \cdot 5^d < \infty$.
\end{lem}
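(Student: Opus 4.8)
The plan is to build, for a given radius $r_0$, a single uniformly bounded Borel equivalence relation $E$ on $X$ such that every ball of radius $r_0$ meets at most $2\cdot 5^d$ classes of $E$, which by condition (1) of Lemma \ref{lem:equivalent_asym_dim} gives the bound $\Basdim(X,\rho) \le 2\cdot 5^d - 1 < 2\cdot 5^d$. The construction will be a Borel version of the standard ``colored chains'' or ``Voronoi'' argument used to bound asymptotic dimension of spaces with polynomial growth. First I would invoke bounded packing with $t = 5$ (or some fixed constant of that order) and $r_0$ to obtain a large scale $r \gg r_0$ such that every ball $B(x; 5r)$ contains at most $2\cdot 6^d$ — more carefully, I will tune $t$ so that the packing bound comes out to exactly $2\cdot 5^d$, i.e. take $t+1 = 5$, giving at most $2\cdot 5^d$ disjoint $r$-balls inside any $5r$-ball. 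Using properness, the relation $\walkeq{X}{r}$ is a finite Borel equivalence relation, hence admits a Borel selector; let $S \subseteq X$ be the resulting Borel transversal (a maximal $r$-separated-ish set, or at least a set of representatives with one point per $\walkeq{X}{r}$-class — but I actually want a genuinely $r$-net, so instead I would take a maximal $r$-separated Borel set $S$, obtained as a Borel selector of an appropriate finite Borel graph, e.g. color the graph connecting points at distance $\le r$ with countably many colors à la \cite[Prop. 4.3]{KST99} and greedily extract a maximal independent set).

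Next I would partition $S$ into Borel pieces $S_0, \ldots, S_k$ (with $k+1$ small) such that within each $S_j$ the points are pairwise at distance $> 5r$; this is possible with $k+1 \le 2\cdot 5^d$ by properties of the ``$5r$-ball intersection graph'' on $S$, whose degree is bounded by the packing constant minus one, again using \cite[Prop. 4.3]{KST99} to Borel-color it. Then for each $j$, define an equivalence relation $F_j$ on the Borel set $[S_j]_{\text{Vor}}$ of points whose nearest $S$-point lies in $S_j$ (break ties in a Borel way using a fixed Borel linear order), by putting two such points in the same $F_j$-class iff they have the same nearest point of $S_j$; since distinct points of $S_j$ are $> 5r$ apart and every point of $X$ is within $r$ of $S$, each $F_j$-class has diameter at most $2r$, and a ball $B(x; r_0)$ with $r_0 \le r$ can meet the Voronoi cell of at most one point of $S_j$, hence meets at most one $F_j$-class. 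The final equivalence relation $E$ is obtained by ``stacking'' the $F_j$: set $E = \bigcup_j F_j'$ where $F_j'$ partitions the leftover region $X \setminus \bigcup_{j' < j}[S_{j'}]_{\text{Vor}}$ restricted appropriately — more simply, since the Voronoi cells of $S_0, \ldots, S_k$ already cover $X$, take $E$ to be the common refinement-free union, i.e. $x \mathrel{E} y$ iff they lie in the same $F_j$-class for the least $j$ such that both lie in $\bigcup F_j$'s domain; each $E$-class then has diameter $\le 2r$, so $E$ is uniformly bounded, and $B(x;r_0)$ meets at most one class per index $j$, hence at most $k+1 \le 2\cdot 5^d$ classes total. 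Borelness throughout follows from Lusin--Novikov uniformization, properness (finiteness of $\walkeq{X}{r}$), and Borel selectors for finite Borel equivalence relations, exactly as in Lemmas \ref{lem:walkborel} and \ref{lem:smooth_asdim}.

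The main obstacle I anticipate is the bookkeeping to get the constant exactly $2\cdot 5^d$ rather than something like $(2\cdot 5^d)^2$ or a larger power: naively one would partition $S$ into color classes and then within each color class run a Voronoi argument, but one must be careful that the scale $r$ used for the net, the scale used for the packing estimate, and the separation required between same-color centers are all compatible, and that the number of colors needed to properly color the $5r$-separation graph on $S$ really is bounded by the packing number of a $5r$-ball (which requires that distinct $S$-points are, say, $> r$ apart so a $5r$-ball around an $S$-point contains boundedly many $S$-points, each ``occupying'' a disjoint $r/2$-ball — so one should run packing with $t$ chosen so that $2(t+1)^d$ bounds the number of disjoint $(r/2)$-balls in a ball of radius $5r + r/2$, forcing a slightly different $t$). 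Getting these constants to multiply/add correctly to land strictly below $2\cdot 5^d$ is the delicate part; the topological/Borel content is routine given the tools already developed in Sections \ref{sec:prelim} and \ref{sec:asdim}.
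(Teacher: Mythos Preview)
Your overall strategy---build a Borel net $S$, take Voronoi-type cells, and count how many cells a small ball can meet---is exactly the paper's approach, but your coloring detour is unnecessary and is precisely what causes your constant trouble. The paper does not color $S$ at all. Instead, with $Y$ a Borel maximal $2r$-separated set (obtained via \cite{KST99} exactly as you suggest) and $s \colon X \to Y$ a Borel assignment of each $x$ to some $y \in Y$ with $\rho(x,y) \leq 2r$, it simply observes that the number of $E$-classes meeting $B(x; r)$ equals $|s(B(x;r))|$, and bounds that quantity directly: the balls $B(y; r)$ for $y \in s(B(x;r))$ are pairwise disjoint (since $Y$ is $2r$-separated) and lie in $B(x; 4r)$, so bounded packing with $t = 4$ gives $|s(B(x;r))| \leq 2 \cdot 5^d$ in one line.

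Your route would give \emph{some} finite bound, but the constant genuinely suffers, and this is not just bookkeeping. To color the adjacency graph on $S$ (edges at distance $\leq D$) with at most $2\cdot 5^d$ colors you need $D + r \leq 4r$, i.e.\ $D \leq 3r$, so that the disjoint $r$-balls around neighbors sit inside a $4r$-ball; yet for ``at most one cell per color meets $B(x; r_0)$'' you need same-color centers separated by more than roughly $2\cdot(\text{cell radius}) + 2r_0$, which already exceeds $3r$ once cells have radius $\geq 2r$. These two requirements are incompatible, so the coloring approach cannot reach $2\cdot 5^d$. The paper's direct count sidesteps this entirely.

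One further correction: your initial attempt to obtain $S$ as a Borel selector for $\walkeq{X}{r}$ does not work, since $\walkeq{X}{r}$ need not be finite---properness says balls are finite, not that coarse connected components are. You correctly abandon this for the \cite{KST99} maximal-independent-set construction, which is exactly what the paper uses.
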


\begin{proof}
Since $\rho$ is proper, $E_\rho$ is countable so there are Borel functions $f_n \from X \to X$ with $E_\rho = \{(x, f_n(x)) : x \in X, \ n \in \N\}$. Fix $r_0 > 0$, set $t = 4$, and let $r \geq r_0$ be as in the definition of bounded packing. We will verify condition (1) of Lemma \ref{lem:equivalent_asym_dim}. Consider the Borel graph $\Gamma$ where $x, y \in X$ are joined by an edge if $0 < \rho(x, y) \leq 2r$. Then $\Gamma$ is locally finite since $\rho$ is proper, so by \cite[Prop. 4.2 and Prop. 4.3]{KST99} there is a $\Gamma$-independent Borel set $Y \subseteq X$ that is maximal, meaning every $x \in X \setminus Y$ is adjacent to some $y \in Y$. Therefore the sets $B(y; r)$, $y \in Y$, are pairwise disjoint and $B(Y; 2r) = X$. Now define $s \from X \to Y$ by setting $s(x) = f_k(x)$ if there is $k \in \N$ with $f_k(x) \in Y$ and $\rho(x, f_k(x)) \leq r$, and otherwise setting $s(x) = f_k(x)$ where $k \in \N$ is least with $f_k(x) \in Y$ and $\rho(x, f_k(x)) \leq 2r$. Lastly, define $E$ by $x \mathrel{E} x' \Leftrightarrow s(x) = s(x')$. Then $E$ is Borel and every class has diameter at most $4r$, so $E$ is uniformly bounded. Also, for any $x \in X$ the sets $B(y; r)$, $y \in s(B(x;r))$, are pairwise disjoint and contained in $B(x; 4r)$, so the number of $E$-classes that meet $B(x; r)$ is $|s(B(x;r))| \leq 2 \cdot 5^d$.
\end{proof}

\begin{cor} \label{cor:polygrow_asdim}
If $G$ is a countable group having uniform local polynomial volume-growth of degree $d$, $X$ is a standard Borel space, and $G \acts X$ is any Borel action, then $\Basdim(G \acts X) < 2 \cdot 5^d < \infty$.
\end{cor}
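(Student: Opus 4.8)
The plan is to reduce Corollary~\ref{cor:polygrow_asdim} to the previously established machinery, the key step being a reduction from the action of all of $G$ to the action of a single finitely generated subgroup. First I would observe that, by Corollary~\ref{cor:subgroups} and its analogue for Borel asymptotic dimension noted after Definition~\ref{defn:asdim}, we have $\Basdim(G \acts X) = \sup_H \Basdim(H \acts X)$ as $H$ ranges over the finitely generated subgroups of $G$. So it suffices to bound $\Basdim(H \acts X)$ for each such $H$ by a constant independent of $H$.

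Next, fix a finitely generated subgroup $H \leq G$. By hypothesis $H$ has polynomial volume-growth of degree at most $d$, so Gromov's theorem applies, but we don't even need that: we simply choose a finite symmetric generating set $B$ of $H$ with $1_H \in B$, and apply Corollary~\ref{cor:polygrowth_pack} to conclude that $(X, \rho_{\tau_B})$ has bounded packing of degree at most $d$ (a quick check: bounded packing of degree $d'$ implies bounded packing of degree $d$ for any $d \geq d'$, since $2(t+1)^{d'} \leq 2(t+1)^d$ for $t > 1$). Here $\rho_{\tau_B}$ is the extended metric on $X$ induced by the $B$-word-length metric $\tau_B$ via the restricted action $H \acts X$.

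Then I would invoke Lemma~\ref{lem:polygrowth_asdim}, which gives $\Basdim(X, \rho_{\tau_B}) < 2 \cdot 5^d$. Since $\Basdim(H \acts X)$ is by definition $\Basdim(X, \rho_{\tau})$ for any proper right-invariant metric $\tau$ on $H$, in particular for $\tau = \tau_B$, we get $\Basdim(H \acts X) < 2 \cdot 5^d$. As this bound is uniform in $H$, taking the supremum over all finitely generated $H \leq G$ yields $\Basdim(G \acts X) < 2 \cdot 5^d < \infty$, as desired.

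The only point requiring any care — and it is minor — is making sure the passage from $G \acts X$ to $H \acts X$ is legitimate when $G$ itself need not be finitely generated: the Borel asymptotic dimension of $G \acts X$ is defined via an arbitrary proper right-invariant metric on $G$ (which exists by the construction recalled in Section~\ref{sec:prelim}), and the monotonicity/supremum formula of Corollary~\ref{cor:subgroups} transfers to the Borel setting verbatim, as remarked after Definition~\ref{defn:asdim}. There is no real obstacle here; the corollary is essentially a bookkeeping consequence of Corollary~\ref{cor:polygrowth_pack} and Lemma~\ref{lem:polygrowth_asdim}.
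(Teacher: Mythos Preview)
Your proposal is correct and follows essentially the same route as the paper: reduce to finitely generated subgroups via the Borel analogue of Corollary~\ref{cor:subgroups}, then apply Corollary~\ref{cor:polygrowth_pack} and Lemma~\ref{lem:polygrowth_asdim}. Your write-up simply unpacks the paper's one-line citation of these three results, including the minor observation that bounded packing of degree $d' \leq d$ yields bounded packing of degree $d$.
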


\begin{proof}
This follows from Corollary \ref{cor:subgroups} (which extends to Borel asymptotic dimension, as noted in the final paragraph of Section \ref{sec:asdim}), Corollary \ref{cor:polygrowth_pack}, and Lemma \ref{lem:polygrowth_asdim}.
\end{proof}

We next explore a variant notion of bounded packing that applies to group extensions and will help us to move beyond groups of polynomial volume-growth. The form of the next definition (the use of $B^3$ in particular) is tailored to the specific needs we will encounter in the next section.

\begin{defn} \label{defn:autopack}
Let $G$ be a countable group and let $\Phi$ be a finite set of automorphisms of $G$. We say that $G$ has \emph{$\Phi$-bounded packing} if there is $k$ with the following property: for every finite set $B_0 \subseteq G$ there is a finite symmetric set $B \supseteq B_0 \cup \{1_G\}$ such that $|T| \leq k$ whenever $\phi \in \Phi$, $T \subseteq B^3 \phi(B^3)$ and $(T T^{-1} \setminus \{1_G\}) \cap B = \varnothing$.
\end{defn}

\begin{lem} \label{lem:autopack}
Let $G$ be a countable group and $\Phi$ a finite set of automorphisms of $G$. Assume that either:
\begin{enumerate}
\item[(i)] $G$ has uniform local polynomial volume-growth and $\Phi = \{\mathrm{id}\}$ consists of the trivial automorphism;
\item[(ii)] $G$ is a finite group, an increasing union of finite characteristic subgroups, or a countable torsion-free abelian group with finite $\Q$-rank; or
\item[(iii)] $G = \bigoplus_{n \in \Z} \Z_2$ and each $\phi \in \Phi$ shifts the $\Z$-coordinates of $G$ by some $s \in \Z$.
\end{enumerate}
Then $G$ has $\Phi$-bounded packing.
\end{lem}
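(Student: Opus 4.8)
The plan is to treat the three cases separately, since the structure of $G$ is very different in each. The common thread is that in every case we need to find, given $B_0$, a finite symmetric set $B \supseteq B_0 \cup \{1_G\}$ so that any ``$B$-separated'' set $T$ (meaning $TT^{-1} \setminus \{1_G\}$ avoids $B$) sitting inside $B^3 \phi(B^3)$ has size bounded by a constant $k$ independent of $B_0$. The point of $B$-separation is that the translates $\{tB^{1/2}\}$ — or more precisely a packing argument — forces $|T|$ to be controlled by a volume ratio, and the burden is to arrange that this ratio stays bounded as $B_0$ (hence $B$) grows.

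For case (i), $\Phi = \{\mathrm{id}\}$, so we need $|T| \le k$ for $T \subseteq B^6$ with $(TT^{-1}\setminus\{1_G\}) \cap B = \varnothing$. Working inside the finitely generated subgroup $H \le G$ generated by $B_0$, which has polynomial volume-growth of degree at most $d$ (the uniform degree), we invoke Lemma \ref{lem:polygrowth}: after enlarging $B_0$ to a symmetric generating set and replacing it by a suitable power $B$ of it (so $B = B_1^r$ for a generating set $B_1$ and well-chosen $r$), we get $|B^{6}| \le C\,|B|$ with $C$ depending only on $d$. Now $B$-separation of $T \subseteq B^6$ means the sets $tB$, $t \in T$, are pairwise disjoint (if $t_1 B \cap t_2 B \ne \varnothing$ then $t_1^{-1}t_2 \in BB^{-1} = B^2$... here one should instead use a radius-$1/2$ style argument: take a symmetric $B'$ with $(B')^2 \subseteq B$; then $B$-separation gives disjointness of the $tB'$). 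The disjoint sets $tB'$ all lie in $B^6 B' \subseteq B^7$, so $|T| \le |B^7|/|B'| \le |B^7|/|B'|$, and choosing the power $r$ via Lemma \ref{lem:polygrowth} so that $|B^{7}| \le C'|B'|$ bounds $|T|$ by $C'$. The slight annoyance is matching up the exponents ($B^3\phi(B^3) = B^6$, the $B'$ vs $B$ bookkeeping, and which power to feed to Lemma \ref{lem:polygrowth}); I would absorb all of this into a single application of Lemma \ref{lem:polygrowth} with $m$ taken large enough to cover all the constants $6,7,\dots$ that appear.

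For case (ii), write $G$ additively. If $G$ is finite, take $B = G$ and $k = |G|$. If $G$ is torsion-free abelian of $\Q$-rank $n$, embed $G \hookrightarrow \Q^n \hookrightarrow \R^n$; then $G$ is a countable subgroup of $\R^n$, any automorphism of $G$ extends to a $\Q$-linear (hence $\R$-linear) automorphism $\phi$ of $\R^n$. Given $B_0$, pick a symmetric finite $B \supseteq B_0 \cup \{0\}$ large enough that $B$ contains a generating set of the subgroup spanned by $B_0$ and such that $B$ ``absorbs small differences'': concretely, arrange that $B$ contains all elements of $G$ lying in some Euclidean ball $\bar B_{\R^n}(0;\epsilon)$ that also lie in the finitely generated subgroup in play — this is possible since that intersection is finite. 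Then a $B$-separated $T$ has all pairwise differences of Euclidean norm $\ge \epsilon'$ for a fixed $\epsilon'$ (after scaling), while $T \subseteq B^3 + \phi(B^3)$ lies in a Euclidean ball whose radius is comparable to $\epsilon'$ times a constant depending only on $n$ and $\|\phi\|_{op}$ over $\phi \in \Phi$ — provided we choose $B$ inside a ball of radius $\asymp \epsilon'$. A standard Euclidean packing bound (balls of radius $\epsilon'/2$ disjoint inside a ball of radius $R$ number at most $(2R/\epsilon' + 1)^n$) then gives $|T| \le k$ with $k$ depending only on $n$ and $\max_{\phi \in \Phi}\|\phi\|_{op}$. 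The delicate point, and the one I'd be most careful about, is that $B^3$ and $\phi(B^3)$ must themselves be forced to sit inside a Euclidean ball of controlled radius; this means we cannot just take $B$ arbitrarily large — we must take $B$ to be (the $G$-points inside) a Euclidean ball of radius $\epsilon$, then shrink $\epsilon$ if necessary so $B \supseteq B_0$, which works because $B_0$ is finite.

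For case (iii), $G = \bigoplus_{n \in \Z} \Z_2$ with $\Phi$ consisting of coordinate-shifts, is the genuinely new case and I expect it to be the main obstacle. Here every element has order $2$, so $TT^{-1} = T + T$ and $B$-separation says $(T+T) \setminus \{0\}$ avoids $B$. Given finite $B_0$, which is supported on coordinates in some interval $[-N, N]$, take $B$ to be the subgroup $\bigoplus_{|n| \le M} \Z_2$ for $M$ much larger than $N$ and than the maximal shift length $\max_{\phi}|s_\phi|$; this $B$ is symmetric, finite, contains $B_0 \cup \{0\}$. Now $B^3 = B$ (it's a subgroup) and $\phi(B^3) = \phi(B)$ is the subgroup supported on $[-M + s, M + s]$, so $B^3\phi(B^3) = B + \phi(B)$ is the subgroup supported on $[-M - |s|, M + |s|]$, call it $B'$. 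A $B$-separated subset $T \subseteq B'$ has the property that distinct $t, t' \in T$ satisfy $t + t' \notin B$, i.e. $t$ and $t'$ differ somewhere on the coordinate set $\{M < |n| \le M + |s|\}$ — a set of size $2|s| \le 2\max_\phi|s_\phi|$. Hence the map sending $t \in T$ to its restriction to those $\le 2\max_\phi|s_\phi|$ coordinates is injective, giving $|T| \le 2^{2\max_\phi|s_\phi|} =: k$, which depends only on $\Phi$. The thing to verify carefully is exactly which coordinate window the difference must be detected in — it is the symmetric difference of the supports of $B$ and $\phi(B)$ — and that this window has size bounded purely in terms of the shift lengths in $\Phi$, independent of $M$ and hence of $B_0$; once that is pinned down the bound is immediate.
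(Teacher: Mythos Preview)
Your arguments for cases (i) and (iii) are essentially correct and follow the same route as the paper: in (i) you take $B$ to be a suitable power of a generating set and use Lemma~\ref{lem:polygrowth} to bound the ratio $|B^{c}|/|B'|$, and in (iii) you take $B$ to be the finite subgroup $\bigoplus_{|n|\le M}\Z_2$ and bound $|T|$ by the index in $\bigoplus_{|n|\le M+|s|}\Z_2$. These are exactly the paper's proofs, modulo bookkeeping.

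Your case (ii), however, has a genuine gap. When $G$ is torsion-free abelian of $\Q$-rank $n$, the embedding $G\hookrightarrow\Q^n\hookrightarrow\R^n$ does \emph{not} make $G$ discrete (think $G=\Z[1/2]\subset\R$), so ``the $G$-points inside a Euclidean ball of radius $\epsilon$'' is typically infinite and cannot serve as $B$. You try to repair this by intersecting with ``the finitely generated subgroup in play'', say $H$, but then the implication ``$t-t'\notin B\Rightarrow \|t-t'\|\ge\epsilon'$'' fails: one has $t-t'\in H+\phi(H)$, which in general strictly contains $H$, so $t-t'$ can lie outside $B=H\cap\bar B_{\R^n}(0;\epsilon)$ simply by falling outside $H$, with arbitrarily small Euclidean norm. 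Enlarging $H$ to be $\Phi$-invariant does not help either, since a finitely generated $\Phi$-invariant subgroup containing $B_0$ need not exist: for $G=\Z[1/2]$ and $\phi(x)=x/2$, the $\Phi$-orbit of $1$ already generates all of $G$. So the Euclidean packing bound never gets off the ground, and your proposed $k$ depending only on $n$ and $\max_\phi\|\phi\|_{\mathrm{op}}$ is not justified.

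The paper's fix is to avoid Euclidean geometry altogether and instead work inside a fixed finitely generated full-rank subgroup $G_0\le G$. The two key moves you are missing are: first, the $m$-th power map $\pi_m(g)=g^m$ is an injective endomorphism of $G$ (torsion-freeness), and since $G/G_0$ is torsion one can choose $m$ with $\pi_m(B_0)\subseteq G_0$, then set $B=\pi_m^{-1}(A^{2r})$ for a generating set $A$ of $G_0$ and a radius $r$ from Lemma~\ref{lem:polygrowth}; second, one controls $\phi$ by showing $\phi(A^r)\subseteq A^{\lambda r}\Delta$ for a fixed $\lambda$ and a fixed finite set $\Delta$ of coset representatives for $G_0$ in $\langle G_0\cup\bigcup_\phi\phi(G_0)\rangle$. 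This reduces the packing to a polynomial-growth count entirely inside $G_0$, with all constants ($\lambda$, $|\Delta|$, $d$) fixed in advance of $B_0$.
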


\begin{proof}
(i). Say $G$ has uniform local polynomial volume-growth of degree at most $d$ and set $k = 2 \cdot 13^d$. Let $B_0 \subseteq G$ be finite. Without loss of generality we can assume $B_0$ is symmetric and contains $1_G$. Apply Lemma \ref{lem:polygrowth} to obtain $r$ such that $|B_0^{13r}| \leq 2 (13)^d |B_0^r|$, and set $B = B_0^{2 r}$. If $T \subseteq B^6$ satisfies $(T T^{-1} \setminus \{1_G\}) \cap B = \varnothing$ then the sets $B_0^r \cdot t$, $t \in T$, are pairwise disjoint subsets of $B_0^{13r}$ and hence $|T| \leq |B_0^{13r}| / |B_0^r| \leq k$.

(ii). When $G$ is finite this is trivial; simply take $B = G$ and $k = 1$. Next suppose that $G$ is the increasing union of finite characteristic subgroups $G_n$, and set $k = 1$. If $B_0 \subseteq G$ is finite, pick any $n$ with $B_0 \subseteq G_n$ and set $B = G_n$. Since $B = G_n$ is a characteristic subgroup, we have $B^3 \phi(B^3) = G_n$ for all automorphisms $\phi$ of $G$, and any $T \subseteq G_n$ satisfying $(T T^{-1} \setminus \{1_G\}) \cap G_n = \varnothing$ must be a singleton or empty, meaning $|T| \leq 1 = k$.

Now suppose $G$ is a countable torsion-free abelian group with finite $\Q$-rank $d$. Let $A$ be a finite symmetric set that contains $1_G$ and generates a subgroup $G_0$ of $\Q$-rank $d$. Then every element of $G / G_0$ has finite order, and since $G_1 = \langle G_0 \cup \bigcup_{\phi \in \Phi} \phi(G_0) \rangle$ is finitely generated, $G_1 / G_0$ must be finite. Pick a finite set $\Delta$ containing $1_G$ with $G_0 \Delta = G_1$, and pick $\lambda \in \N$ with $(\bigcup_{\phi \in \Phi} \Delta \phi(A) \Delta^{-1}) \cap G_0 \subseteq A^\lambda$. Then for $\phi \in \Phi$, $r \in \N$, and $a_1, \ldots, a_r \in A$, if we pick $\delta_i \in \Delta$ with $\phi(a_1 a_2 \cdots a_i) \in G_0 \delta_i$ we obtain
$$\phi(a_1 \cdots a_r) = (\phi(a_1) \delta_1^{-1}) (\delta_1 \phi(a_2) \delta_2^{-1}) \cdots (\delta_{r-1} \phi(a_r) \delta_r^{-1}) \delta_r \in A^{\lambda r} \Delta.$$
Therefore $\phi(A^r) \subseteq A^{r \lambda} \Delta$ for all $\phi \in \Phi$ and $r \in \N$.

Set $k = 2 (7 + 6 \lambda)^d |\Delta|$ and let $B_0 \subseteq G$ be finite. Notice that since $G$ is a torsion-free abelian group, for every $m \geq 1$ the map $\pi_m : G \rightarrow G$ given by $\pi_m(g) = g^m$ is an injective homomorphism. Since every element of $G / G_0$ has finite order, there is $m \geq 1$ with $\pi_m(B_0) \subseteq G_0$. As finitely generated abelian groups of $\Q$-rank $d$ have polynomial volume-growth of degree at most $d$, we can apply Lemma \ref{lem:polygrowth} to obtain $r \in \N$ with $A^r \supseteq \pi_m(B_0)$ and $|A^{(7 + 6 \lambda)r}| \leq 2 (7 + 6 \lambda)^d |A^r|$. Now set $B = \pi_m^{-1}(A^{2r})$. Then $B$ is finite, symmetric, and contains $B_0 \cup \{1_G\}$. Lastly, consider $\phi \in \Phi$ and suppose $T \subseteq B^3 \phi(B^3)$ satisfies $(T T^{-1} \setminus \{1_G\}) \cap B = \varnothing$. Then the sets $A^r \cdot \pi_m(t)$, $t \in T$, are pairwise disjoint subsets of $A^{7r} \phi(A^{6 r}) \subseteq A^{(7 + 6 \lambda) r} \Delta$. Therefore
$$|T| \leq \frac{|A^{(7 + 6 \lambda) r}|}{|A^r|} \cdot |\Delta| \leq 2 (7 + 6 \lambda)^d |\Delta| = k.$$

(iii). Suppose that every $\phi \in \Phi$ shifts the $\Z$-coordinates by at most $s \in \N$ units. Set $k = 2^{2s}$ and let $B_0 \subseteq G$ be finite. For $m \in \N$ define the subgroup $F_m = \bigoplus_{n = -m}^m \Z_2$. Pick any $m \geq s$ with $B_0 \subseteq F_m$ and set $B = F_m$. Then $B^3 = B$ and $\phi(B) \subseteq F_{m+s}$ for every $\phi \in \Phi$. Let $\phi \in \Phi$ and consider $T \subseteq B^3 \phi(B^3)$ satisfying $(T T^{-1} \setminus \{1_G\}) \cap B = \varnothing$. Then the sets $F_m t$, $t \in T$, are pairwise disjoint subsets of $F_{m+s}$ and hence $|T| \leq |F_{m+s} : F_m| = 2^{2s} = k$.
\end{proof}

The notion of $\Phi$-bounded packing immediately leads to a packing property for group extensions.

\begin{lem} \label{lem:steppack0}
Let $G \lhd H$ be countable groups and let $C \subseteq H$ be finite. Assume that $G$ has $\{\phi_c : c \in C\}$-bounded packing where $\phi_c(g) = c g c^{-1}$. Then there is $k_G$ with the following property: for any finite set $B_0 \subseteq G$ there is a finite symmetric set $B_0 \subseteq B \subseteq G$ with $1_H \in B$ such that $|T| \leq k_G$ whenever $T \subseteq B^3 C B^3$ satisfies $(T T^{-1} \setminus \{1_G\}) \cap B = \varnothing$.	
\end{lem}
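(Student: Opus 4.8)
The plan is to reduce the statement for the extension $G \lhd H$ to the $\Phi$-bounded packing hypothesis on $G$ by absorbing the finite set $C \subseteq H$ into the automorphisms. Write $C = \{c_1, \dots, c_\ell\}$ and set $\Phi = \{\phi_c : c \in C\}$ where $\phi_c(g) = cgc^{-1}$; since $G \lhd H$, each $\phi_c$ is indeed an automorphism of $G$, so the hypothesis gives us a constant $k$ (which we rename $k_G$) witnessing $\Phi$-bounded packing. Given a finite $B_0 \subseteq G$, the first step is to arrange that $B_0$ already "sees" the conjugators, so that a product $B^3 C B^3$ can be rewritten purely inside $G$. Concretely, enlarge $B_0$ to $B_0' = B_0 \cup \{1_H\} \cup \bigcup_{c \in C}\{c\}$ — wait, $c$ need not lie in $G$, so instead I would observe that for any $c \in C$ and any $b \in G$ we have $cb = \phi_c(b)c$, and $cb' = c b'$; the point is to move all occurrences of elements of $C$ to one side.

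The key rewriting step: suppose $T \subseteq B^3 C B^3$ and pick $t \in T$, say $t = u\, c\, v$ with $u, v \in B^3$ and $c \in C$. Then $t = u c v = u (c v c^{-1}) c = u\, \phi_c(v)\, c$. If I apply the $\Phi$-bounded packing conclusion I need a set living inside $B^3 \phi_c(B^3)$ with the separation property, but the trailing factor $c$ is a nuisance and, worse, different elements of $T$ may use different conjugators $c$. To handle this, I would first partition $T = \bigsqcup_{c \in C} T_c$ according to which conjugator is used (choosing one for each $t$), so it suffices to bound each $|T_c|$ by $k$ and conclude $|T| \leq \ell \cdot k$; thus the final constant in the lemma should be $\ell \cdot k$ rather than $k$ — I will state it as such (the paper's "$|T| \le k$" in the displayed lemma is presumably shorthand, and I will use a constant $k_G$ possibly depending on $|C|$). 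For fixed $c$, write $T_c = \{u_t\, \phi_c(v_t)\, c : t \in T_c\}$ and let $T_c' = T_c c^{-1} = \{u_t \phi_c(v_t)\}$; then $T_c' \subseteq B^3 \phi_c(B^3)$, and since right-translation by $c^{-1}$ preserves the relation $T_c' T_c'^{-1} = T_c T_c^{-1}$, the hypothesis $(T T^{-1}\setminus\{1_G\})\cap B = \varnothing$ gives $(T_c' T_c'^{-1}\setminus\{1_G\})\cap B = \varnothing$. Now the defining property of $\Phi$-bounded packing applied to $B_0$ (producing the symmetric $B \supseteq B_0 \cup \{1_G\}$, which we further union with $\{1_H\}$ — harmless since $1_H = 1_G$) yields $|T_c'| \le k$, hence $|T_c| \le k$.

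Assembling: choose $B$ from the $\Phi$-bounded packing property of $G$ for the finite set $B_0$; it is finite, symmetric, contains $B_0 \cup \{1_G\} = B_0 \cup \{1_H\}$, and sits inside $G$ as required. Set $k_G = |C| \cdot k$. For any $T \subseteq B^3 C B^3$ with $(TT^{-1}\setminus\{1_G\})\cap B = \varnothing$, partition $T$ by conjugator and apply the above to each piece to get $|T| \le |C|\cdot k = k_G$. The main obstacle is purely bookkeeping — making sure that the conjugation identity $cv = \phi_c(v)c$ is applied on the correct side so that the leftover factor is a single group element that can be cancelled by a translation (which preserves the "difference set" $TT^{-1}$), and correctly tracking that the constant degrades by a factor of $|C|$; there is no analytic content beyond the hypothesis. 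One subtlety to double-check is that we only need the separation hypothesis with $B$ (not a larger set), and indeed the translated sets $T_c'$ land exactly in $B^3\phi_c(B^3)$ with the same $B$, so the hypothesis transfers verbatim.
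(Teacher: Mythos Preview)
Your proof is correct and is essentially identical to the paper's: both partition $T$ according to the conjugator $c$, right-translate each piece by $c^{-1}$ to land in $B^3 \phi_c(B^3) = B^3 c B^3 c^{-1}$, observe that right-translation preserves $T_c T_c^{-1}$ so the separation hypothesis transfers, and then apply $\Phi$-bounded packing to bound each piece by $k$ and hence $|T| \le |C|\cdot k = k_G$. You are also right that the ``$|T| \le k$'' in the displayed lemma is a typo for $|T| \le k_G$.
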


\begin{proof}
Set $k_G = k |C|$ where $k$ is as in Definition \ref{defn:autopack}. Let $B_0 \subseteq G$ be finite. By assumption there is a finite symmetric set $B_0 \subseteq B \subseteq G$ with $1_G \in B$ and with the property that $|S| \leq k$ whenever $c \in C$ and $S \subseteq B^3 c B^3 c^{-1}$ satisfies $(S S^{-1} \setminus \{1_G\}) \cap B = \varnothing$. Now let $T \subseteq B^3 C B^3$ satisfy $(T T^{-1} \setminus \{1_G\}) \cap B = \varnothing$. Then for $c \in C$ the set $T_c = (T \cap B^3 c B^3) c^{-1}$ satisfies $T_c \subseteq B^3 c B^3 c^{-1}$ and $(T_c T_c^{-1} \setminus \{1_G\}) \cap B = \varnothing$. Therefore $|T_c| \leq k$ and $|T| \leq k |C| = k_G$.
\end{proof}

For our work with normal series in the next section, we will need the following relativized version of the previous lemma. 

\begin{cor} \label{cor:steppack}
Let $H$ be a countable group, let $F \lhd G$ be normal subgroups of $H$, and let $C \subseteq H$ be finite. Assume that $G / F$ has $\{\phi_c : c \in C\}$-bounded packing where $\phi_c(g F) = c g c^{-1} F$, and let $k_{G/F}$ be as in Lemma \ref{lem:steppack0}. Then for any finite set $B_0 \subseteq G$ there are finite symmetric sets $A \subseteq F$ and $B_0 \subseteq B \subseteq G$ with $1_H \in A \cap B$ such that $|T| \leq k_{G/F}$ whenever $T \subseteq B^3 C B^3$ satisfies $(T T^{-1} \setminus \{1_G\}) \cap A B A = \varnothing$.
\end{cor}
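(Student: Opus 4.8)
The plan is to reduce this to Lemma~\ref{lem:steppack0} by passing to the quotient $H/F$, and then to lift the set that lemma produces. Write $q \from H \to H/F$ for the quotient homomorphism and, for $x \in H$ and $S \subseteq H$, abbreviate $\bar x = q(x)$ and $\bar S = q(S)$. Since $F$ and $G$ are both normal in $H$ and $F \leq G$, we have $\overline{G} \lhd \overline{H}$, and for $\bar c = cF$ the inner automorphism $\bar g \mapsto \bar c\,\bar g\,\bar c^{-1}$ of $\overline{G}$ is precisely $\phi_c$; so the hypothesis that $G/F$ has $\{\phi_c : c \in C\}$-bounded packing is exactly what is needed to apply Lemma~\ref{lem:steppack0} to $\overline{G} \lhd \overline{H}$ with the finite set $\bar C \subseteq \overline{H}$, and $k_{G/F}$ is the resulting constant. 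The sets $A$ and $B$ in the conclusion will be built by lifting the set produced by Lemma~\ref{lem:steppack0}, with $A \subseteq F$ chosen afterwards just large enough to ``absorb'' the finitely many elements of $F$ that could otherwise cause trouble.

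Concretely, given a finite $B_0 \subseteq G$, I would first apply Lemma~\ref{lem:steppack0} to $\bar B_0 = \overline{B_0}$ to obtain a finite symmetric $\bar B_0 \subseteq \bar B \subseteq \overline{G}$ with $1_{\overline H} \in \bar B$ such that $|\bar T| \leq k_{G/F}$ whenever $\bar T \subseteq \bar B^3 \bar C \bar B^3$ and $(\bar T\bar T^{-1}\setminus\{1_{\overline H}\})\cap\bar B = \varnothing$. Then I would choose one $q$-preimage in $G$ of each element of $\bar B$, collect these into a finite set $B_1$, and set $B = B_1 \cup B_1^{-1} \cup B_0 \cup B_0^{-1} \cup \{1_H\}$. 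This $B \subseteq G$ is finite and symmetric, contains $B_0$ and $1_H$, and—because $\bar B$ is symmetric, contains $1_{\overline H}$, and contains $\overline{B_0}$—one checks that $q(B) = \bar B$ exactly (the lift creates no new $F$-cosets). Finally I would set $N = B^4 C B^6 C^{-1} B^3$, which is a finite subset of $G$ (finite since $B,C$ are finite; inside $G$ since $G \lhd H$), and take $A = (F\cap N)\cup(F\cap N)^{-1}\cup\{1_H\}$, a finite symmetric subset of $F$ containing $1_H$.

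To verify the packing bound, suppose $T \subseteq B^3 C B^3$ satisfies $(TT^{-1}\setminus\{1_G\})\cap ABA = \varnothing$; note $TT^{-1}\subseteq B^3 C B^6 C^{-1} B^3 \subseteq N$ since $B$ is symmetric and $1_H \in B$. Two things must be checked. First, $q$ is injective on $T$: if $t\neq t'$ in $T$ had $\bar t = \bar t'$, then $u:=tt'^{-1}$ would be a nonidentity element of $F \cap TT^{-1} \subseteq F\cap N \subseteq A \subseteq ABA$, a contradiction. Second, $\bar T := q(T) \subseteq \bar B^3\bar C\bar B^3$ satisfies $(\bar T\bar T^{-1}\setminus\{1_{\overline H}\})\cap\bar B = \varnothing$: otherwise some $\bar s \in \bar B$ with $\bar s \neq 1_{\overline H}$ equals $q(tt'^{-1})$ for $t,t'\in T$; picking $b\in B$ with $\bar b = \bar s$ and setting $u = tt'^{-1}$, $f = b^{-1}u$, we get $f\in F$ (as $\bar f = 1_{\overline H}$) while $u\notin F$ so $u\neq 1_G$, and $f = b^{-1}u \in B^{-1}\cdot B^3 C B^6 C^{-1} B^3 = N$, whence $f\in F\cap N\subseteq A$ and $u = bf \in ABA$, a contradiction. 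Granting both, the defining property of $\bar B$ gives $|\bar T|\leq k_{G/F}$, and injectivity of $q$ on $T$ yields $|T| = |\bar T| \leq k_{G/F}$.

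The one point that requires genuine attention—what I would flag as the crux—is the realization that, once $B$ is fixed, \emph{both} possible failures (a collision under $q$ among elements of $T$, and a short nonidentity element witnessing $(\bar T\bar T^{-1}\setminus\{1\})\cap\bar B\neq\varnothing$) are detected by a single \emph{finite} subset of $F$, namely $F\cap N$; this is what makes a finite $A\subseteq F$ suffice. The only other care needed is in the lifting step, ensuring $q(B)=\bar B$ on the nose—throwing $B_0$ and inverses into $B$ is harmless precisely because $\bar B$ already contains $\overline{B_0}$, its inverses, and the identity. Everything else is bookkeeping with products of finite sets.
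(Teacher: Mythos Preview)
Your proof is correct and follows essentially the same approach as the paper's: apply Lemma~\ref{lem:steppack0} in the quotient $H/F$, lift the resulting set to $B \subseteq G$, and take $A$ to contain (the symmetrization of) $F \cap B^4 C B^6 C^{-1} B^3$, then verify that the hypothesis $(TT^{-1}\setminus\{1_G\})\cap ABA=\varnothing$ forces both injectivity of $q$ on $T$ and the packing condition for $\bar T$ in the quotient. One harmless slip: your parenthetical claim that $N = B^4 C B^6 C^{-1} B^3$ lies in $G$ is false in general (elements $c_1 b c_2^{-1}$ with $c_1,c_2$ in different $G$-cosets need not be in $G$), but since you only ever use $F\cap N$, the argument is unaffected.
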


\begin{proof}
It suffices to observe that for any finite set $B \subseteq G$ containing $1_H$ there is a finite symmetric set $A \subseteq F$ containing $1_H$ such that whenever $T \subseteq B^3 C B^3$ and $(T T^{-1} \setminus \{1_H\}) \cap A B A = \varnothing$, we have that $T$ maps injectively into $H / F$ and $(T T^{-1} \setminus \{1_H\}) \cap B F = \varnothing$. Indeed, as long as $A$ contains $F \cap B^4 C B^6 C^{-1} B^3$, we will have that when $t_1, t_2 \in B^3 C B^3$ satisfy $t_1 t_2^{-1} \in B F$ then
\begin{equation*}
t_1 t_2^{-1} \in B F \cap (B^3 C B^6 C^{-1} B^3) \subseteq B (F \cap B^4 C B^6 C^{-1} B^3) \subseteq B A \subseteq A B A.\qedhere
\end{equation*}
\end{proof}

\section{Normal series} \label{sec:series}

Our goal in this section is to prove that if a group $G$ has a suitable normal series $\{1_G\} = G_n \lhd G_{n-1} \lhd \cdots \lhd G_0 = G$ then all free Borel actions of $G$ have finite Borel asymptotic dimension. Our argument will be inductive and will ascend through the normal series. However, even in the simplest situation of an extension by $\Z$, we do not know of any property of $G$-actions that both is preserved by $\Z$-extensions and implies all free Borel $G$-actions have finite Borel asymptotic dimension. Our inductive assumption will therefore need to be forward-looking and will be a statement about both the $k^{\text{th}}$ term $G_k$ and the whole group $G$. The condition we need is specified in the definition below.

For an action $G \acts X$ and $B \subseteq G$, we say that $Y \subseteq X$ is \emph{$B$-independent} if $B \cdot Y \cap Y = \varnothing$.

\begin{defn} \label{defn:expand}
Let $G \lhd H$ be countable groups. We say the pair $(G, H)$ has \emph{good asymptotics} if for every standard Borel space $X$ and every free Borel action $H \acts X$, the restricted action $G \acts X$ has finite Borel asymptotic dimension and for every finite set $C \subseteq H$ there is $\ell \in \N$ so that for every finite set $B \subseteq G$ there is a Borel cover $\{W_0, \ldots, W_\ell\}$ of $X$ consisting of $C B \setminus G$-independent sets.
\end{defn}

Ultimately we will show that when $F \lhd G$ are normal subgroups of $H$, if $(F, H)$ has good asymptotics and $G / F$ has a suitable form then $(G, H)$ has good asymptotics. This will require us to structure $H$-actions in a way that diminishes the dynamics of $F$ and emphasizes the structure of $H / F$. The following lemma achieves this, as we will explain before going into the proof.

\begin{lem} \label{lem:expand}
Let $F \lhd H$ be countable groups with $(F, H)$ having good asymptotics. Let $X$ be a standard Borel space, let $H \acts X$ be a free Borel action, and set $d = \Basdim(F \acts X) < \infty$. Then for any finite sets $C \subseteq H$ and $A \subseteq F$ there is a Borel cover $\{U_i^n : i \in d+1, n \in \ell+1\}$ of $X$ and there are finite sets $C A \cap F \subseteq A_n \subseteq F$ such that:
\begin{enumerate}
\item each set $U_i^n$ is $C A \setminus F$-independent;
\item each equivalence relation $\walkeq{U_i^n}{A}$ is uniformly finite;
\item whenever $Z$ is a $\walkeq{U_i^n}{A_n}$-class, $c \in C$, and $m > n$, the set $c A \cdot Z$ is either disjoint with $U_i^m$ or else contained in a single $\walkeq{U_i^m}{A_m}$-class.
\end{enumerate}
\end{lem}

The most important feature of this lemma is the final property, which is best illustrated in the case that $1_H \in A$ and $C^{-1} C \cap F \subseteq A$. Specifically, since $C \cap c F \subseteq c A$, for any $\walkeq{U_i^n}{A_n}$-class $Z$ and any $\walkeq{U_i^m}{A_m}$-class $Z'$ with $m > n$, whether $c \cdot Z$ is contained in or disjoint with $Z'$ only depends on the coset $c F \in C F / F$. Moreover, $C \cdot Z$ is disjoint with $U_i^n \setminus Z$ since $U_i^n$ is $C \setminus F$-independent and $C \cap F \subseteq A_n$. Thus in a certain local (only considering transformations by the elements of $C$) and ordered (requiring $n \leq m$) sense, we are able to view $X$ in a way that allows dynamical data to descend to the quotient $H / F$.

The basic idea of the proof is that we will take a Borel cover $\{W_0, \ldots, W_\ell\}$ of $X$ consisting of $(C^{-1} C A') \setminus F$-independent sets for some carefully chosen $A' \subseteq F$. Using the fact that $\Basdim(F \acts X) = d$, we will choose a Borel cover $\{V_0^n, \ldots, V_d^n\}$ of $W_n$ for each $n \in \ell+1$. Moreover, we will arrange for $\walkeq{V_i^n}{Q_n}$ to be uniformly finite, where $Q_n \supseteq A_n$ is chosen to be large relative to the shapes of the $\walkeq{V_j^k}{Q_k}$-classes for $k < n$, $j \in d+1$. In other words, the $Q_n$'s signify a rapid growth in scales. We will then slightly enlarge each set $V_i^n \cap W_n$ to $U_i^n \subseteq A_n \cdot (V_i^n \cap W_n)$ in a process quite similar to the proofs of Lemma \ref{lem:buffer} and Corollary \ref{cor:ortho}. Specifically, whenever $D$ is a $\walkeq{V_i^n \cap W_n}{Q_n}$-class, $c \in C$, and $A_n^{-1} (C \cap c F) A_n \cdot D$ comes close to $V_i^m \cap W_m$ with $m > n$, we will arrange that $U_i^m$ contain $c A_n \cdot D$. For this enlargement procedure to be stable, we need the sets $A_n^{-1} (C \cap c F) A_n \cdot D$ to be pairwise disjoint. Such disjointness will be derived from the independence properties of the $W_n$'s.

\begin{proof}
Without loss of generality, we may assume $1_H \in A \cap C$. Let $\ell$ be as in Definition \ref{defn:expand} for the set $C^{-1} C$. Set $A_0 = A \cup (C A \cap F)$. Inductively assume that $A_n \subseteq F$ has been defined. Set
$$T_n = A_n^{-1} C A_n \text{ and } Q_n = A_n^{-1} A A_n A^{-1} A_n \cup (T_n^{-1} T_n \cap F) \subseteq F.$$
Let $\{V_0^n, \ldots, V_d^n\}$ be a Borel cover of $X$ such that $\walkeq{V_i^n}{Q_n}$ is uniformly finite for every $i$. Now pick a finite set $A_{n+1} \subseteq F$ containing $A_n$ and satisfying $A_n A_n^{-1} (C \cap c F) A_n \cdot Y \subseteq A_{n+1} \cdot x$ for every $\walkeq{V_i^n}{Q_n}$-class $Y$, $c \in C$, and $x \in A_n^{-1} c A_n \cdot Y$.
	
By the normality of $F$ we have that $T_n^{-1} T_n \subseteq C^{-1} C F$ for every $n$, so by our choice of $\ell$ there is a Borel cover $\{W_0, \ldots, W_\ell\}$ of $X$ so that $W_n$ is $T_n^{-1} T_n \setminus F$-independent for every $n$. The basic idea now will be to define $U_i^n$ to be a saturation (i.e. enlargement) of $V_i^n \cap W_n$, where the saturation operation is designed to achieve property (3).

Before proceeding, we note that if $Y$ and $Y'$ are $\walkeq{V_i^n \cap W_n}{Q_n}$-classes and $c, c' \in C$ satisfy $(c F, Y) \neq (c' F, Y')$ then $A_n^{-1} (C \cap c F) A_n \cdot Y$ and $A_n^{-1} (C \cap c' F) A_n \cdot Y'$ are disjoint. Indeed, notice that the set
$$(A_n^{-1} (C \cap c F) A_n)^{-1} A_n^{-1} (C \cap c' F) A_n$$
is a subset of $T_n^{-1} T_n$ and that moreover it is a subset of $T_n^{-1} T_n \cap F \subseteq Q_n$ when $c F = c' F$ and is a subset of $T_n^{-1} T_n \setminus F$ when $c F \neq c' F$. Therefore when $c F = c' F$ the claim holds since $Y$ and $Y'$ are distinct classes of $\walkeq{V_i^n \cap W_n}{Q_n}$, and when $c F \neq c' F$ the claim holds since $W_n \supseteq Y \cup Y'$ is $T_n^{-1} T_n \setminus F$-independent.

We now define the saturation functions. For $X' \subseteq X$ define $s_i^n(X')$ to be the union of $X'$ with the members of the collection
$$\{A_n^{-1} (C \cap c F) A_n \cdot Y : c F \in C F / F, \ Y \text{ a } \walkeq{V_i^n \cap W_n}{Q_n}\text{-class}\}$$
that have nonempty intersection with $X'$. By the previous paragraph, the collection above consists of pairwise disjoint sets and thus $s_i^n \circ s_i^n(X') = s_i^n(X')$ for all $X' \subseteq X$. Also notice that it is immediate from the definition of $A_{n+1}$ that $A_n \cdot (s_i^n(X') \setminus X') \subseteq A_{n+1} \cdot X'$. Since also $A_n \subseteq A_{n+1}$, we conclude that $A_n \cdot s_i^n(X') \subseteq A_{n+1} \cdot X'$ for all $X' \subseteq X$.

We now verify that $s_i^n(X')$ is Borel whenever $X'$ is Borel. For $x \in X$, we have that $x \in s_i^n(X')$ if and only if $x \in X'$ or there exists $x' \in X'$, $c \in C$, and a $\walkeq{V_i^n \cap W_n}{Q_n}$-class $Y$ such that $x, x' \in A_n^{-1} (C \cap c F) A_n \cdot Y$. More simply, we can avoid reference to $Y$ by requiring that there exist $t, t' \in A_n^{-1} (C \cap c F)^{-1} A_n$ such that $(t \cdot x, t' \cdot x') \in \walkeq{V_i^n \cap W_n}{Q_n}$. Additionally, the definition of $A_{n+1}$ ensures that $x' \in A_{n+1} \cdot x$. Therefore $x \in s_i^n(X')$ if and only if $x \in X'$ or there exist $a \in A_{n+1}$ and $t, t' \in A_n^{-1} (C \cap c F)^{-1} A_n$ such that $a \cdot x \in X'$ and $(t \cdot x, t' a \cdot x) \in \walkeq{V_i^n \cap W_n}{Q_n}$. This shows that $s_i^n(X')$ is Borel when $X'$ is Borel, as claimed. 

Define the Borel sets $U_i^0 = V_i^0 \cap W_0$ and $U_i^n = s_i^0 \circ \cdots \circ s_i^{n-1}(V_i^n \cap W_n)$ for $n \geq 1$. Then $V_i^n \cap W_n \subseteq U_i^n$ so the sets $U_i^n$ cover $X$. Also, since $1_H \in A \subseteq A_0$ and since it always holds that $A_n \cdot s_i^n(X') \subseteq A_{n+1} \cdot X'$, by induction we have that
$$U_i^n \subseteq A \cdot U_i^n \subseteq A_n \cdot (V_i^n \cap W_n).$$
We proceed to check conditions (1) through (3).

(1). Since $W_n$ is $T_n^{-1} T_n \setminus F$-independent and $A_n^{-1} (C \setminus F) A_n = T_n \setminus F \subseteq T_n^{-1} T_n \setminus F$, we have that $A_n \cdot W_n \cap (C \setminus F) A_n \cdot W_n = \varnothing$. Since also $U_i^n \subseteq A \cdot U_i^n \subseteq A_n \cdot W_n$, it follows that $U_i^n \cap (C \setminus F) A \cdot U_i^n = \varnothing$. Therefore $U_i^n$ is $C A \setminus F$-independent as claimed since $C A \setminus F = (C \setminus F) A$.

(2). Since $U_i^n \subseteq A_n \cdot V_i^n$, every $\walkeq{U_i^n}{A}$-class is contained within the $A_n$-translates of some $\walkeq{V_i^n}{A_n^{-1} A A_n}$-class. Since $A_n^{-1} A A_n \subseteq Q_n$, this latter equivalence relation is a subrelation of $\walkeq{V_i^n}{Q_n}$ and is therefore uniformly finite. We conclude that $\walkeq{U_i^n}{A}$ is uniformly finite.

(3). Let $Z$ be a $\walkeq{U_i^n}{A_n}$-class, $c \in C$, and $m > n$. Since $A \cdot Z \subseteq A \cdot U_i^n \subseteq A_n \cdot (V_i^n \cap W_n)$, if we set
$$Y_0 = (A_n^{-1} A \cdot Z) \cap V_i^n \cap W_n$$
then we have $Z \subseteq A \cdot Z \subseteq A_n \cdot Y_0$ and $Y_0 \subseteq V_i^n \cap W_n$. Since $Z$ is a single $\walkeq{U_i^n}{A_n}$-class, every point in $Y_0$ can be translated by an element of $A^{-1} A_n$ to arrive at a point in $Z$, every point in $Z$ can be translated by an element of $A_n^{-1} A$ to arrive at a point in $Y_0$, and since $A_n^{-1} A A_n A^{-1} A_n \subseteq Q_n$, we must have that $Y_0$ is contained in a single $\walkeq{V_i^n \cap W_n}{Q_n}$-class $Y$. Then we have
$$A_n^{-1} c A \cdot Z \subseteq A_n^{-1} c A_n \cdot Y_0 \subseteq A_n^{-1} (C \cap c  F) A_n \cdot Y.$$
Recalling our saturation operation, we see that the set $s_i^n \circ \cdots s_i^{m-1}(V_i^m \cap W_m)$ either contains $A_n^{-1} (C \cap c F) A_n \cdot Y$ or is disjoint with it. Therefore, since
$$s_i^n \circ \cdots \circ s_i^{m-1}(V_i^m \cap W_m) \subseteq U_i^m \subseteq A_n \cdot s_i^n \circ \cdots \circ s_i^{m-1}(V_i^m \cap W_m),$$
it follows that $U_i^m$ either contains $A_n^{-1} (C \cap c F) A_n \cdot Y$ or is disjoint with $(C \cap c F) A_n \cdot Y$. In the latter case $c A \cdot Z$ is disjoint with $U_i^m$. In the former case, $c A \cdot Z$ is contained in $A_n^{-1}(C \cap c F) A_n \cdot Y \subseteq U_i^m$, and the definition of $A_{n+1}$ and the inclusion $A_{n+1} \subseteq A_m$ ensure that $A_n^{-1}(C \cap c F) A_n \cdot Y$ is contained in a single $\walkeq{U_i^m}{A_m}$-class.
\end{proof}

The next lemma provides the crucial inductive step that will allow us to ascend through a normal series.

\begin{lem} \label{lem:polystep}
Let $H$ be a countable group and let $F \lhd G$ be normal subgroups of $H$. Assume that $(F, H)$ has good asymptotics and that $G / F$ has $\{\phi_c : c \in C\}$-bounded packing whenever $C \subseteq H$ is a finite set with the property that each automorphism $\phi_c(g F) = c g c^{-1} F$ is either trivial or an outer automorphism. Then $(G, H)$ has good asymptotics.
\end{lem}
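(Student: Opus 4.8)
The goal is to verify, for the pair $(G,H)$, the two requirements in Definition~\ref{defn:expand}: that $\Basdim(G\acts X)<\infty$ for every free Borel action $H\acts X$, and that for each finite $C\subseteq H$ there is some $\ell\in\N$ witnessing the independent-cover condition. Since $(F,H)$ has good asymptotics we may set $d:=\Basdim(F\acts X)<\infty$. The plan is to feed the good asymptotics of $(F,H)$ into Lemma~\ref{lem:expand} so as to organize $X$ into $d+1$ ``towers'' $\{U_i^n:n\le\ell\}$, $i\le d$, inside which the $G$-action — modulo the relations $\walkeq{U_i^n}{A_n}$ — descends to the quotient $G/F$; then, inside each tower, to run a packing argument modeled on the proof of Lemma~\ref{lem:polygrowth_asdim}, with the multiplicity bound supplied by Corollary~\ref{cor:steppack}.

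The first point is to put the packing hypothesis into a usable form. It only yields $\{\phi_c:c\in C\}$-bounded packing of $G/F$ when each $\phi_c$ is trivial or outer, whereas every element of $G$ induces an \emph{inner} automorphism of $G/F$. However, an inner automorphism $\phi_c$ is implemented by an element $\bar g\in G/F$ which we are free to absorb into the balls used to test packing; consequently the $\{\mathrm{id}\}$-case of the hypothesis (which it does supply, via $C=\{1_H\}$, since $\phi_{1_H}=\mathrm{id}$ is trivial) already yields $\{\phi_c:c\in C\}$-bounded packing of $G/F$ for \emph{any} finite $C\subseteq G$. For the Borel-asymptotic-dimension clause, where the target radius is a finite $B_0\subseteq G$, it therefore suffices to fix a finite symmetric $C\subseteq G$ with $B_0\cup\{1_H\}\subseteq C$, apply Corollary~\ref{cor:steppack} to this $C$ and to a large enough $B_0$ to obtain $A\subseteq F$, $B\supseteq B_0$ and a bound $k:=k_{G/F}$, and then apply Lemma~\ref{lem:expand} to the same $C$ and $A$. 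For the independent-cover clause, where the given finite $C_0\subseteq H$ may contain elements inducing inner \emph{and} genuine outer automorphisms, I would split $C_0$: elements whose $\phi_c$ is trivial or outer are handled directly by the hypothesis, while any $c$ with $\phi_c$ inner is factored as $c=g_c\,c'_c$ with $g_c\in G$ and $\phi_{c'_c}$ trivial (lift to $G$ the element of $G/F$ implementing $\phi_c$), so that a move by $C_0$ splits into a move by a fixed finite subset of $G$ and a move whose induced automorphism of $G/F$ is trivial.

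With the towers $\{U_i^n\}$, the finite sets $CA\cap F\subseteq A_n\subseteq F$, and the bound $k$ in hand, I would work inside a fixed color $i$. The relations $\walkeq{U_i^n}{A_n}$ cut $\bigcup_n U_i^n$ into ``cells'', and by the last clause of Lemma~\ref{lem:expand} together with the $CA\setminus F$-independence of the $U_i^n$, a one-step $C$-move sends a level-$n$ cell either into a single cell at some level $m\ge n$ — in a manner depending only on the relevant coset of $G/F$ — or out of the tower. Viewing cells as points of a space on which $G/F$ acts, I would select in a Borel way a sparse subfamily of cells — one per ball of a suitable radius, processed level by level and buffered exactly as in Lemma~\ref{lem:buffer} and Corollary~\ref{cor:ortho}, which is legitimate because the $A_n$ grow rapidly relative to the cells of lower levels — and assign every cell to the unique nearby selected cell. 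This produces a uniformly bounded Borel equivalence relation $E_i$ on $\bigcup_n U_i^n$, and the key estimate is that a $B_0$-ball in $X$ meets at most $k$ of its classes: choosing section points in $G$ for the selected cells near the ball yields a set $T\subseteq B^3CB^3$ with $(TT^{-1}\setminus\{1_G\})\cap ABA=\varnothing$, whence $|T|\le k$ by Corollary~\ref{cor:steppack}. Refining $\{U_i^n:i\le d,\ n\le\ell\}$ to a partition across colors and taking $E=\bigcup_{i\le d}E_i$ gives a uniformly bounded Borel equivalence relation every $B_0$-ball of which meets at most $(d+1)k$ classes; since $d$ and $k$ do not depend on $B_0$, this yields $\Basdim(G\acts X)\le(d+1)k-1<\infty$. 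The independent-cover clause is obtained in the same framework: the towers already supply $(C\setminus F)A$-independence in the $F$-direction, and inside each tower the finite standard asymptotic dimension of $G/F$ (a consequence of its bounded packing, by Lemma~\ref{lem:polygrowth_asdim}) provides, for any finite $B\subseteq G$, a further subdivision into boundedly many pieces that are $B$-independent modulo $F$; combining these and unwinding the factorization $c=g_c\,c'_c$ produces a Borel cover of $X$ by $\ell+1$ many $C_0B\setminus G$-independent sets, with $\ell$ depending only on $C_0$.

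The step I expect to be the main obstacle is the bookkeeping of the third paragraph: making precise the sense in which the cells inside a single tower form a metric space on which $G/F$ acts, so that a $B_0$-ball in $X$ genuinely corresponds to a bounded region of $G/F$ where the packing bound applies — the difficulty being that a single $B_0$-step can move between levels of a tower and between colors. Controlling this is precisely what forces the rapidly growing parameters $A_n$ of Lemma~\ref{lem:expand} (to keep the level structure from interacting with the packing) and the precise $B^3CB^3$ and $ABA$-separation shapes of Corollary~\ref{cor:steppack} (to absorb the conjugations $\phi_c$ uniformly in the level).
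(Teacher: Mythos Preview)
Your setup is right—reduce $C$ so each $\phi_c$ is trivial or outer (the paper does this in one stroke: since Definition~\ref{defn:expand} quantifies over all finite $B\subseteq G$, one may freely replace each $c\in C$ by any element of $cG$, so there is no need for your splitting and factoring of $C_0$), then feed into Corollary~\ref{cor:steppack} to get $k_{G/F}$, $A$, $B\supseteq B_0$, and into Lemma~\ref{lem:expand} to get the towers $\{U_i^n\}$. But the construction you propose in your third paragraph—selecting sparse cells and assigning each cell to a nearby selected one, in the style of Lemma~\ref{lem:polygrowth_asdim}—does not fit the structure at hand, and you are right to flag it as the obstacle.

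The difficulty is that the cells $\{[x]_{\walkeq{U_i^n}{A_n}}:n\le\ell_F\}$ inside a fixed color $i$ do not form anything like a metric space on which $G/F$ acts: the conclusion of Lemma~\ref{lem:expand} is asymmetric in the levels (a $c$-translate of a level-$n$ cell lands in a single level-$m$ cell only when $m>n$), and distinct cells at the \emph{same} level need not be $ABA$-separated, so Corollary~\ref{cor:steppack} cannot yet be invoked. The paper repairs both issues with two moves you are missing. First it trims $U_i^n$ to $V_i^n=U_i^n\setminus\bigcup_{m>n}B\cdot V_i^m$ by reverse induction on $n$; this $\walkeq{U_i^n}{A_n}$-invariant refinement forces distinct $\walkeq{V_i^\bullet}{A_\bullet}$-cells (across all levels) to be $ABA$-separated, which is precisely the hypothesis of Corollary~\ref{cor:steppack}. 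Second—and this replaces your selection idea entirely—it places a \emph{directed} graph $\Gamma_i$ on $\bigsqcup_n V_i^n$, with an edge from $x$ to $y$ whenever $x\in B^3CB^3\cdot y$ and $y$ is at a strictly higher level; the packing bound now reads as ``each vertex has forward neighbors in at most $k_{G/F}$ cells,'' so a greedy $(k_{G/F}{+}1)$-coloring $\pi_i$ processed from the top level down succeeds and is constant on cells. The sets $W_i^j=B\cdot\pi_i^{-1}(j)$ then give $(d_F{+}1)(k_{G/F}{+}1)$ Borel pieces that are simultaneously $CB\setminus G$-independent and have $\walkeq{W_i^j}{B}$ uniformly finite, so both clauses of good asymptotics fall out of one construction with $\ell_G=(k_{G/F}{+}1)(d_F{+}1)$. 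Your separate treatment of the independent-cover clause cannot work as stated: to obtain $C_0B\setminus F$-independence of the towers from Lemma~\ref{lem:expand} you must apply it with a ``$C$''-parameter containing $C_0B$, which makes the number of levels $\ell_F$ depend on $B$; the coloring is exactly what collapses this $B$-dependent number of levels into a $B$-independent number of colors.
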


\begin{proof}
Let $X$ be a standard Borel space, let $H \acts X$ be a free Borel action, let $C \subseteq H$ be a finite symmetric set containing $1_H$, and set $d_F = \Basdim(F \acts X) < \infty$. From Definition \ref{defn:expand} we see that without loss of generality we can replace each $c \in C$ with any element of $c G$. We can therefore assume that the conjugation of each $c \in C$ on $G / F$ either is an outer automorphism or is trivial, while still maintaining that $C$ is symmetric and contains $1_H$. Our assumptions then allow us to apply Corollary \ref{cor:steppack} to obtain $k_{G/F} \in \N$. Set $\ell_G = (k_{G/F}+1)(d_F+1)$, fix a finite set $B_0 \subseteq G$, and let $A \subseteq F$ and $B_0 \subseteq B \subseteq G$ be finite symmetric sets containing $1_H$ as described in Corollary \ref{cor:steppack}.

Apply Lemma \ref{lem:expand} to obtain finite sets $(A B^3 A) \cap F \subseteq A_n \subseteq F$ and a Borel cover $\{U_i^n : i \in d_F + 1, \ n \in \ell_F + 1\}$ of $X$ such that each set $U_i^n$ is $(A B A \cup B^3 C B^3) \setminus F$-independent, each relation $\walkeq{U_i^n}{B^3 \cap F}$ is uniformly finite, and whenever $Z$ is a $\walkeq{U_i^n}{A_n}$-class, $t \in B^3 C B^3$, and $m > n$, the set $t ((B A)^2 \cap F) \cdot Z$ is either disjoint with $U_i^m$ or else contained in a single $\walkeq{U_i^m}{A_m}$-class. Next define Borel sets $V_i^n \subseteq U_i^n$ as follows: set $V_i^{\ell_F} = U_i^{\ell_F}$ and by reverse induction on $n \in \ell_F+1$ define
$$V_i^n = U_i^n \setminus \textstyle{ \bigcup_{m > n} B \cdot V_i^m }.$$
Notice that since $1_H \in B$ the collection $\{B \cdot V_i^n : i \in d_F+1, \ n \in \ell_F+1\}$ covers each $U_i^n$ and thus covers $X$. Additionally, it is simple to see by reverse induction on $n$ that $V_i^n$ is a $\walkeq{U_i^n}{A_n}$-invariant subset of $U_i^n$. Consequently, the following properties hold:
\begin{enumerate}
\item[(i)] if $Z$ is a $\walkeq{V_i^n}{A_n}$-class then for every $m > n$ and $t \in B^3 C B^3$ we have that $t \cdot Z$ is either disjoint with $V_i^m$ or else contained in a $\walkeq{V_i^m}{A_m}$-class;
\item[(ii)] if $Z$ and $Z'$ are classes of $\walkeq{U_i^n}{A_n}$ and $\walkeq{U_i^m}{A_m}$ respectively and $Z \neq Z'$, then $A B A \cdot Z \cap Z' = \varnothing$.
\end{enumerate}
Statement (i) holds since each $V_i^n$ is $\walkeq{U_i^n}{A_n}$-invariant, and when $n = m$ statement (ii) holds since $U_i^n \supseteq V_i^n$ is $(A B A) \setminus F$-independent and $(A B A) \cap F \subseteq A_n$. Now suppose $Z$ and $Z'$ are as described in (ii) with $n \neq m$. Since $A B A = (A B A)^{-1}$, we can assume $m > n$. Then for every $b \in B$ we have that $A b A \cdot Z = b (b^{-1} A b) A \cdot Z \subseteq b ((B A)^2 \cap F) \cdot Z$ is either disjoint with or contained in $Z'$, but the containment $b \cdot Z \subseteq A b A \cdot Z \subseteq Z'$ is prohibited by definition of $V_i^n$.

Define a directed Borel graph $\Gamma_i$ on $\bigsqcup_{n \in \ell_F+1} V_i^n$ by placing an edge directed from $x$ to $y$ if $x \in B^3 C B^3 \cdot y$ (equivalently $y \in B^3 C B^3 \cdot x$) and there are $n < m$ with $x \in V_i^n$ and $y \in V_i^m$. We construct a proper Borel coloring $\pi_i \from \bigsqcup_{n \in \ell_F+1} V_i^n \to k_{G/F} + 1$ of $\Gamma_i$ as follows. First let $\pi_i$ have constant value $0$ on $V_i^{\ell_F}$. Now inductively assume that $\pi_i$ is defined on $V_i^m$ and is $\walkeq{V_i^m}{A_m}$-invariant for every $m > n$. Given any $x \in V_i^n$, let $T \subseteq B^3 C B^3$ be such that for each $m > n$ and $\walkeq{V_i^m}{A_m}$-class $Z$ receiving an edge from $x$, there is exactly one $t \in T$ with $t \cdot x \in Z$. Then (ii) and Corollary \ref{cor:steppack} imply that $|T| \leq k_{G/F}$, and our inductive hypothesis implies that the number of $\pi_i$-values used by forward-neighbors of $x$ is at most $|T|$. Moreover, (i) and our inductive hypothesis imply that for all $(x, y) \in \walkeq{V_i^n}{A_n}$ and $t \in B^3 C B^3$, either $\pi_i(t \cdot x) = \pi_i(t \cdot y)$ or else neither is defined. Thus if for each $x \in V_i^n$ we define $\pi_i(x)$ be the least element of $k_{G/F} + 1$ not used by any of the forward-neighbors of $x$, then $\pi_i \res V_i^n$ is Borel and $\walkeq{V_i^n}{A_n}$-invariant.

To finish the proof, set $W_i^j = B \cdot \pi_i^{-1}(j)$. Then we have $\ell_G$-many Borel sets $W_i^j$ that cover $X$. Notice that every $w \in W_i^j$ is a $B$-translate of some $v \in \bigsqcup_{n \in \ell_F+1} V_i^n$ with $\pi_i(v) = j$. Therefore, since $\pi_i$ is a proper coloring, whenever $w, w' \in W_i^j$ satisfy $w \in B^2 C B^2 \cdot w'$, there must be a single $n$ with $w, w' \in B \cdot V_i^n$. It follows that $\walkeq{W_i^j}{B}$ is uniformly finite since each set $U_i^n \supseteq V_i^n$ is $B^3 \setminus F$-independent and each relation $\walkeq{U_i^n}{B^3 \cap F} \supseteq \walkeq{V_i^n}{B^3 \cap F}$ is uniformly finite. As $\ell_G$ was specified before $B$, we find $\Basdim(G \acts X) \leq \ell_G-1 < \infty$.

Finally, we need to check that each set $W_i^j$ is $C B \setminus G$-independent. Suppose $w, w' \in W_i^j$, $h \in C B$, and $h \cdot w = w'$. Since $h \in C B \subseteq B^2 C B^2$, our remark in the previous paragraph implies that there is $n$ and $v, v' \in V_i^n \subseteq U_i^n$ with $w \in B \cdot v$ and $w' \in B \cdot v'$. Consequently, $v' \in B h B \cdot v \subseteq B C B^2 \cdot v$. Since $U_i^n$ is $(B C B^2) \setminus F$-independent, we must have that $B h B \cap F \neq \varnothing$ and hence $h \in B F B \subseteq G$. We conclude that $W_i^j$ is $C B \setminus G$-independent as claimed, and that $(G, H)$ has good asymptotics.
\end{proof}

\begin{thm} \label{thm:normal_asdim}
Let $G$ be a countable group admitting a normal series where each quotient of consecutive terms is a finite group, an increasing union of finite characteristic subgroups, or a torsion-free abelian group with finite $\Q$-rank, except the top quotient can be any group of uniform local polynomial volume-growth or the lamplighter group $\Z_2 \wr \Z$. Then for every free Borel action $G \acts X$ on a standard Borel space $X$ we have $\Basdim(G \acts X) = \asdim(G) < \infty$.
\end{thm}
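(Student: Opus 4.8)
The plan is to prove this by induction on the length $n$ of the normal series, using the machinery built up in Lemmas \ref{lem:polystep} and \ref{lem:expand}, together with the packing results of Lemma \ref{lem:autopack}. The key reduction is to establish the stronger statement that the pair $(G, G)$ has good asymptotics in the sense of Definition \ref{defn:expand}; once we know $\Basdim(G \acts X) < \infty$, the equality $\Basdim(G \acts X) = \asdim(G)$ follows immediately from Theorem \ref{thm:equal} (noting that $\rho_\tau$ is proper and, for a free action, $g \mapsto g \cdot x$ is an isometry onto the orbit so $\asdim(X, \rho_\tau) = \asdim(G, \tau) = \asdim(G)$), and the finiteness $\asdim(G) < \infty$ follows from the normal-series bound on standard asymptotic dimension recalled in the preliminaries (each finite or finite-$\Q$-rank torsion-free abelian quotient has finite standard asymptotic dimension, as does any group of uniform local polynomial volume-growth or the lamplighter group, since $\Z_2 \wr \Z$ has asymptotic dimension $1$).

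First I would set up the induction. Let $\{1_G\} = G_n \lhd G_{n-1} \lhd \cdots \lhd G_0 = G$ be the given normal series. The base case is the pair $(G_n, G) = (\{1_G\}, G)$, which trivially has good asymptotics: the trivial group acting on $X$ has Borel asymptotic dimension $0$, and for any finite $C \subseteq G$ one takes a single set $W_0 = X$ if $C \subseteq G$ (which holds automatically, making $C B \setminus G = \varnothing$). For the inductive step, suppose $(G_{k+1}, G)$ has good asymptotics; I want to conclude $(G_k, G)$ has good asymptotics by applying Lemma \ref{lem:polystep} with $H = G$, $F = G_{k+1}$, and $G$ (of the lemma) $= G_k$. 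The hypothesis of Lemma \ref{lem:polystep} that must be checked is that $G_k / G_{k+1}$ has $\{\phi_c : c \in C\}$-bounded packing whenever $C \subseteq G$ is finite and each $\phi_c(g G_{k+1}) = c g c^{-1} G_{k+1}$ is either trivial or an outer automorphism of $G_k/G_{k+1}$. This is exactly what Lemma \ref{lem:autopack} is designed to supply: for the consecutive quotients that are finite groups or torsion-free abelian groups with finite $\Q$-rank, case (ii) of Lemma \ref{lem:autopack} applies directly (for any finite $\Phi$, with no restriction on the automorphisms). For the top quotient $G_0/G_1$: if it has uniform local polynomial volume-growth, then since it is a normal subgroup of itself with every conjugation by $c \in G_0$ being inner, the only automorphisms $\phi_c$ that can be trivial-or-outer are the trivial ones, so $\Phi = \{\mathrm{id}\}$ and case (i) applies; if it is the lamplighter group $\Z_2 \wr \Z = (\bigoplus_{n \in \Z} \Z_2) \rtimes \Z$, then the inner automorphisms coming from the $\Z$-factor act by shifting coordinates, and one checks that a conjugation by $c \in G_0$ inducing an outer automorphism must, modulo inner automorphisms, still act as a coordinate shift on the base $\bigoplus_{n\in\Z}\Z_2$ (the relevant point being that $\mathrm{Out}$ of the lamplighter, restricted to what arises as $\phi_c$, still shifts the $\Z$-coordinates), so after the harmless replacement of each $c$ by an element of $c G_0$ allowed in Lemma \ref{lem:polystep} we land in case (iii). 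I would spell out this last verification carefully, as it is the only place the specific structure of $\Z_2 \wr \Z$ enters.

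I expect the main obstacle to be precisely this last point — confirming that for the lamplighter top quotient, every automorphism of the form $\phi_c$ that is trivial or outer can be brought (by modifying $c$ within its $G_0$-coset, which only changes $\phi_c$ by an inner automorphism of $G_0/G_1$, and hence does not affect $\Phi$-bounded packing since inner automorphisms of $\Z_2\wr\Z$ are coordinate shifts combined with translations that lie in $B$ for suitable $B$) into the form required by Lemma \ref{lem:autopack}(iii), namely a map shifting the $\Z$-coordinates by some fixed $s \in \Z$. One must be slightly careful that $G_1$ might be a proper subgroup not equal to the trivial group in which case $G_0/G_1 \cong \Z_2 \wr \Z$ as an abstract group but the conjugation action of $c \in G_0 = G$ descends to an honest automorphism of this quotient, and we need all such automorphisms that are "trivial or outer" to be covered — this amounts to understanding $\mathrm{Out}(\Z_2 \wr \Z)$ well enough, or at least the subgroup of it realized by the $\phi_c$. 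Once the inductive step is justified, running the induction from $k = n$ down to $k = 0$ yields that $(G, G) = (G_0, G)$ has good asymptotics, in particular $\Basdim(G \acts X) < \infty$, and combining with Theorem \ref{thm:equal} and the standard-asymptotic-dimension bounds completes the proof.
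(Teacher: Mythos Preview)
Your overall strategy---inducting along the normal series to show each $(G_k, G)$ has good asymptotics via Lemma~\ref{lem:polystep}, then invoking Theorem~\ref{thm:equal}---matches the paper's. However, there are two genuine gaps.

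First, the base case is not as trivial as you claim. In the pair $(G_n, G) = (\{1_G\}, G)$, the role of ``$G$'' in Definition~\ref{defn:expand} is played by the \emph{first} coordinate $G_n = \{1_G\}$, so the required independence is with respect to $CB \setminus \{1_G\} = C \setminus \{1_G\}$, which is generally nonempty. One cannot take $W_0 = X$; the paper instead invokes \cite[Prop.~4.6]{KST99} to partition $X$ into $|C|+1$ Borel sets that are each $(C \setminus \{1_G\})$-independent.

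Second, and more seriously, your treatment of the lamplighter case does not work. You attempt to apply Lemma~\ref{lem:autopack}(iii) with the top quotient $G_0/G_1 \cong \Z_2 \wr \Z$, but that case of the lemma is stated only for the group $\bigoplus_{n\in\Z}\Z_2$, not for the lamplighter itself. Moreover, since $G_0 = G$, every $\phi_c$ for $c \in G$ is an \emph{inner} automorphism of $G_0/G_1$, so the only $\phi_c$ that are ``trivial or outer'' are the trivial ones; you would therefore need $\{\id\}$-bounded packing for $\Z_2 \wr \Z$, and this fails because the lamplighter has exponential growth (there is no uniform bound on the number of disjoint $B$-translates inside $B^6$). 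The paper's fix is to \emph{refine} the normal series by inserting one term: let the new $G_1$ be the preimage in $G_0$ of $\bigoplus_{n\in\Z}\Z_2$ (normal in $G$ since this subgroup is characteristic in the lamplighter), so that after re-indexing $G_0/G_2 \cong \Z_2 \wr \Z$ with $G_1/G_2 \cong \bigoplus_{n\in\Z}\Z_2$ and $G_0/G_1 \cong \Z$. Then the step $k=1$ genuinely uses Lemma~\ref{lem:autopack}(iii) (the outer automorphisms of $G_1/G_2$ induced by $G$ are coordinate shifts, coming from the $\Z$-factor), and the step $k=0$ uses Lemma~\ref{lem:autopack}(ii) since $G_0/G_1 \cong \Z$ is torsion-free abelian of $\Q$-rank $1$.
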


We remark that the quotients of consecutive terms can be allowed to have a more general form. Specifically, by refining the normal series it is enough to assume that each quotient of consecutive terms, aside from the top quotient, admits a characteristic series where each consecutive quotient is finite, an increasing union of finite characteristic subgroups, or torsion-free abelian with finite $\Q$-rank. For instance, abelian groups that have a finite torsion subgroup and finite $\Q$-rank admit such a characteristic series, as do all finitely generated nilpotent groups, all polycyclic groups, and all solvable groups having finite Pr{\"u}fer rank (as shown in the proof of Lem. \ref{lem:prufer}), so one can allow these groups to appear as consecutive quotients.

\begin{proof}
Let the normal series be $\{1_G\} = G_n \lhd G_{n-1} \lhd \cdots \lhd G_0 = G$. In the case where the top quotient is the lamplighter group, we may enlarge the normal series by one term and re-index so that $G_0 / G_2$ is isomorphic to $\Z_2 \wr \Z$ via an isomorphism identifying $G_1 / G_2$ with $\bigoplus_{n \in \Z} \Z_2$. Since $\asdim(G \acts X) = \asdim(G)$ for all free actions, by Theorem \ref{thm:equal} we only need to show $\Basdim(G \acts X) < \infty$ for all free Borel actions. So it suffices to prove that $(G_k, G)$ has good asymptotics for $k = n, n-1, \ldots, 0$. For $(G_n, G) = (\{1_G\}, G)$ this is trivial, since for any free Borel action $G \acts X$ we clearly have $\Basdim(\{1_G\} \acts X) = 0$ and for any finite $C \subseteq G$ we can apply \cite[Prop. 4.6]{KST99} to partition $X$ into $(2|C|+1)$-many Borel sets that are $C \setminus \{1_G\}$-independent. For the inductive step, suppose $(G_{k+1}, G)$ has good asymptotics for some $n > k \geq 0$. If $G_k / G_{k+1}$ is finite, an increasing union of finite characteristic subgroups, or is a torsion-free abelian group with finite $\Q$-rank, then Lemmas \ref{lem:autopack}.(ii) and \ref{lem:polystep} imply that $(G_k, G)$ has good asymptotics (note this case applies when $G_0 / G_2$ is the lamplighter group and $k = 0$). If $k = 1$ and $G_0 / G_2$ is the lamplighter group, then every outer automorphism of $G_1 / G_2$ induced by $G$ is of the form described in Lemma \ref{lem:autopack}.(iii) and thus that lemma and Lemma \ref{lem:polystep} imply that $(G_1, G)$ has good asymptotics. Finally, if $k = 0$ and $G_0 / G_1$ has uniform local polynomial volume-growth then $G$ induces no outer automorphisms of $G_0 / G_1$ and thus Lemmas \ref{lem:autopack}.(i) and \ref{lem:polystep} imply that $(G_0, G)$ has good asymptotics.
\end{proof}

The above theorem applies to the lamplighter group $\Z_2 \wr \Z$, all groups of uniform local polynomial volume-growth, and all virtually solvable groups having finite Pr{\"u}fer rank. This latter class of groups contains all polycyclic groups and the Baumslag--Solitar group $BS(1, 2)$ (this group has finite Pr{\"u}fer rank but can also be directly realized as an extension of the dyadic rationals, which is a torsion-free abelian group of $\Q$-rank $1$, by $\Z$).

If in Theorem \ref{thm:normal_asdim} we additionally allowed consecutive quotients to be locally finite, then to the best of our knowledge this theorem would cover all countable amenable groups currently known to have finite standard asymptotic dimension. Since Lemma \ref{lem:autopack} fails for many locally finite groups, a different method would be needed to prove this stronger version. On the other hand, Theorem \ref{thm:normal_asdim} cannot be extended to any non-amenable group as this would contradict Theorem \ref{thm:asdimhyp} since such groups admit free actions whose orbit equivalence relation is not hyperfinite \cite{JKL02}.

Another way of improving the above theorem would be to extend it to hold for all (not necessarily free) Borel actions. Hypothetically this might be possible while keeping most of our methods intact, but we were not able to find any way of doing this.

\section{Hyperfinite equivalence relations} \label{sec:hyperfinite}

Our initial motivation for this present work was to study hyperfinite equivalence relations with the goal of providing a positive answer to Weiss' question in the case of polycyclic groups. It was through working towards that goal that the notion of Borel asymptotic dimension slowly but naturally emerged. Perhaps one of the most surprising aspects of this work is the discovery of just how well-suited Borel asymptotic dimension is to the study of hyperfinite equivalence relations, and how in hindsight it may have played an important but hidden role in past investigations into Weiss' question.

We first present the simple argument that $E_\rho$ is hyperfinite whenever $\rho$ is proper and $(X, \rho)$ has finite Borel asymptotic dimension.

\begin{thm} \label{thm:asdimhyp}
Suppose that $(X,\rho)$ is a proper Borel extended metric space, and set $E_\rho = \{(x, x') \in X \times X : \rho(x, x') < \infty\}$. If $(X, \rho)$ has finite Borel asymptotic dimension then $E_\rho$ is hyperfinite.
\end{thm}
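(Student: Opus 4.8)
The plan is to exhibit $E_\rho$ as an increasing union of finite Borel equivalence relations built from a \emph{laminar family of finite ``tiles''} extracted from Corollary \ref{cor:ortho}. Write $d=\Basdim(X,\rho)<\infty$; since $\rho$ is proper, $E_\rho$ is countable. First I would apply Corollary \ref{cor:ortho} with radii $r_n=n+1$ to obtain Borel covers $\cU^n=\{U^n_0,\dots,U^n_d\}$ satisfying clauses (i) and (ii) there. Exactly as in the proof of Theorem \ref{thm:asi}(a), define $c\from X\to d+1$ by letting $c(x)$ be least with $x\in U^n_{c(x)}$ for infinitely many $n$; clause (ii) (applied to two large indices at which $x$ lies in $U_i^{\,\cdot}$) gives $B(x;r_n)\subseteq U^m_{c(x)}$ for every $n$ and infinitely many $m\ge n$, so $c$ is Borel and $E_\rho$-invariant. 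As the sets $c^{-1}(i)$ are Borel and $E_\rho$-invariant, it suffices to prove $E_\rho\res c^{-1}(i)$ is hyperfinite for each fixed $i$ and then take the (disjoint) union over the finitely many pieces. So fix $i$, set $X'=c^{-1}(i)$ and $U^m=U^m_i\cap X'$; since any ball around a point of $X'$ already lies in $X'$, we retain: (a) $\walkeq{U^m}{r_m}$ is uniformly bounded; (b) if $n<m$ and $x\in U^n$ then $B(x;r_n)$ is either disjoint from, or contained in, $U^m$; (c) for every $x\in X'$ and every $n$, $B(x;r_n)\subseteq U^m$ for infinitely many $m$.

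Next, call the classes of $\walkeq{U^m}{r_m}$ the \emph{level-$m$ tiles}; by (a) and properness each tile is finite, and each lies within a single $E_\rho$-class. The key step is to show that the family of all tiles, across all levels, is laminar: distinct tiles of the same level are disjoint, and for $n<m$, if a level-$n$ tile $T$ meets a level-$m$ tile $T'$ then $T\subseteq T'$. To prove the latter I would fix $x\in T\cap T'\subseteq U^n\cap U^m$ and walk out from $x$ along a $\walkeq{U^n}{r_n}$-path inside $T$: each step has length $\le r_n$, and by (b) the $r_n$-ball about any point already known to lie in $U^m$ lies in $U^m$, so inductively the whole path stays in $U^m$; since its steps have size $\le r_n\le r_m$, that path lies in one $\walkeq{U^m}{r_m}$-class, which is forced to be $T'$. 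Hence $T\subseteq T'$.

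Finally, for $N\in\N$ define $E_N$ on $X'$ by: $x\mathrel{E_N}y$ iff $x=y$ or $x,y$ lie in a common tile of some level $\le N$. Laminarity makes $E_N$ transitive, hence an equivalence relation; its classes are finite, being contained in the union of the at most $N+1$ tiles of levels $\le N$ through a given point; it is Borel by Lemma \ref{lem:walkborel}; and clearly $E_N\subseteq E_{N+1}$. It remains to check $\bigcup_N E_N=E_\rho\res X'$: the inclusion $\subseteq$ holds since tiles lie in single $E_\rho$-classes, and for $\supseteq$, given $x,y\in X'$ with $\rho(x,y)=s<\infty$, I would pick $n_0$ with $r_{n_0}\ge s$ and, using (c), some $m\ge n_0$ with $B(x;r_{n_0})\subseteq U^m$; then $x$ and $y$ (at distance $\le r_{n_0}\le r_m$, both in $U^m$) share a level-$m$ tile, so $(x,y)\in E_N$ for all $N\ge m$. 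Thus $E_\rho\res X'$, and hence $E_\rho$, is hyperfinite.

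I expect the only genuinely delicate points to be (1) the reduction to a single family $U^m$ via the invariant coloring $c$ — this is essential because the tiles of the various $U^m_i$ need \emph{not} form a laminar family, since the $U^m_i$ only cover rather than partition $X$ — and (2) the cross-level laminarity in the second step; once these are established the construction of the increasing finite Borel equivalence relations is routine. So the main substantive task is simply to read the laminarity of the tiles off the orthogonality clause (ii) of Corollary \ref{cor:ortho}.
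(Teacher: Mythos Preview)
Your argument is correct, but it takes a genuinely different route from the paper's own proof. The paper argues directly from condition~(1) of Lemma~\ref{lem:equivalent_asym_dim}: it builds, by induction on $n$, uniformly bounded Borel equivalence relations $F_n$ with $F_{n-1}\subseteq F_n$ and with every $r_n$-ball meeting at most $d+1$ classes of $F_n$; the inductive step takes a Borel selector $s$ for $F_{n-1}$, picks a new witness $E$ to condition~(1) at radius $2r_n$, and sets $(x,y)\in F_n\Leftrightarrow s(x)\,E\,s(y)$. The resulting $F_\infty=\bigcup_n F_n$ is hyperfinite but need not equal $E_\rho$; rather, each $E_\rho$-class meets at most $d+1$ classes of $F_\infty$, and the proof concludes by citing that hyperfiniteness passes to finite-index superrelations.

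By contrast, you work from condition~(2) via Corollary~\ref{cor:ortho} and the $E_\rho$-invariant coloring $c$ borrowed from the proof of Theorem~\ref{thm:asi}(a), obtaining on each color class a laminar family of tiles whose associated equivalence relations exhaust $E_\rho$ \emph{exactly}. Your approach costs more infrastructure (it leans on Lemma~\ref{lem:buffer} through Corollary~\ref{cor:ortho}) but delivers a direct increasing union $\bigcup_N E_N=E_\rho$ with no appeal to the finite-index closure property, and it is closer in spirit to the orthogonality machinery used later in Theorem~\ref{thm:union}. The paper's proof is shorter and essentially self-contained; indeed, the authors remark immediately after it that this argument ``relies on condition~(1)'' while ``the discoveries we make here depend heavily on condition~(2)'', so your proof is in some sense the condition-(2) counterpart they allude to but do not write out.
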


\begin{proof}
We will rely on condition (1) of Lemma \ref{lem:equivalent_asym_dim}. Let $d < \infty$ be the Borel asymptotic dimension of $(X, \rho)$. We will inductively build radii $r_n \geq n$ and an increasing sequence of uniformly bounded Borel equivalence relations $F_n$ with the property that for every $n$ and $x \in X$, $B(x; r_n)$ meets at most $d+1$ classes of $F_n$. To begin set $r_0 = 0$ and let $F_0$ be the equality relation on $X$. Inductively assume that $F_{n-1}$ has been constructed. Then $F_{n-1}$ is finite since $\rho$ is proper. So there is a Borel selector $s \from X \rightarrow X$ satisfying $s(y) = s(x) \ F_{n-1} \ x$ for all $(x, y) \in F_{n-1}$. Choose $r_n \geq n$ so that $[x]_{F_{n-1}} \subseteq B(x; r_n)$ for all $x \in X$, and choose a uniformly bounded Borel equivalence relation $E$ with the property that for every $x$, $B(x; 2 r_n)$ meets at most $d+1$ classes of $E$. Now define $F_n = \{(x, y) \in X \times X : s(x) \ E \ s(y)\}$. Then $F_n$ is Borel, uniformly bounded, and contains $F_{n-1}$. Additionally, for any $x$, the number of $F_n$ classes meeting $B(x; r_n)$ is bounded by the number of $E$-classes meeting $B(x; 2 r_n) \supseteq s(B(x; r_n))$, which is at most $d+1$. Now set $F_\infty = \bigcup_n F_n$. Then $F_\infty$ is hyperfinite. For every $n$ and $x$, $B(x; n)$ meets at most $d+1$ classes of $F_n$ and hence at most $d+1$ classes of $F_\infty$. Thus every $E_\rho$-class divides into at most $d+1$ classes of $F_\infty$, which implies $E_\rho$ is hyperfinite as well \cite[Prop. 1.3.(vii)]{JKL02}.
\end{proof}

The above proof together with our proof of Corollary \ref{cor:polygrow_asdim} is essentially identical to the argument of Jackson--Kechris--Louveau that Borel actions of groups of polynomial volume-growth generate hyperfinite equivalence relations \cite{JKL02}. It is natural to wonder then why the utility of Borel asymptotic dimension was not identified earlier. We suspect that part of the reason is that the above argument relies on condition (1) of Lemma \ref{lem:equivalent_asym_dim} while the discoveries we make here depend heavily on condition (2) of that lemma, and the implication (1) $\Rightarrow$ (2) is not immediately obvious.

When we consider the overlap between Weiss' question and the Union Problem, the significance of Borel asymptotic dimension becomes much more apparent. Special cases of this overlap were solved in the works of Gao--Jackson \cite{GJ15} and Schneider--Seward \cite{SS} through highly technical arguments that spanned dozens of pages. However, in the next theorem we will see that a much more general result can be obtained using Borel asymptotic dimension and that the proof is less technical and tremendously shorter in comparison. Furthermore, we believe that our proof here preserves the core concept of those prior works, as we now discuss.

The key concept in \cite{GJ15} and \cite{SS} is the notion of orthogonality. In those works, orthogonality is a property for sequences of finite Borel equivalence relations $E_n$ that are contained in a fixed orbit equivalence relation $E_G^X$, and the property functions to enforce that the boundaries of the $E_n$'s within $E_G^X$ vanish as $n \rightarrow \infty$. In this situation, $E_G^X$ will be seen to be hyperfinite as it will be equal to the union of the increasing sequence of equivalence relations $\bigcap_{k \geq n} E_k$. Roughly speaking, orthogonality was defined by using the geometry of the group $G$ to define various types of facial boundaries and then requiring that every facial boundary of $E_n$ be far (in a manner determined by $n$) from the corresponding facial boundary of $E_m$ when $m > n$. For instance, when $G = \Z^d$ and $e_1, \ldots, e_d$ are the standard generators of $\Z^d$, the type $i$ boundary of $E_n$ is the set of $x$ with $(x, e_i \cdot x) \not\in E_n$ or $(x, -e_i \cdot x) \not\in E_n$. 

We believe that the essence of orthogonality can be captured in the language of asymptotic dimension. Instead of seeking separation of various facial boundary types in order to guarantee the vanishing of boundaries in the limit, one can consider sequences of $(d+1)$-indexed covers and require that for each index $i$ the boundaries of the corresponding sets are separated from each other. In this sense, we already constructed orthogonal sequences in Corollary \ref{cor:ortho}, and this construction was far less technical than those in \cite{GJ15,SS}. Before proceeding, we record a simplified version of that corollary.

\begin{cor} \label{cor:ortho2}
Let $(X, \rho)$ be a Borel extended metric space with $E_\rho$ countable and with Borel asymptotic dimension $d < \infty$. Then for any $r > 0$ and $D \in \N$ there exist Borel covers $\mathcal{V}^j = \{V_0^j, \ldots, V_d^j\}$ of $X$ for $j \in D+1$ such that $\walkeq{V_i^j}{r}$ is uniformly bounded for all $i$ and $j$, and for every $x \in X$ and $i \in d+1$ there is at most one $j \in D+1$ such that $V_i^j$ divides $B(x; r)$.
\end{cor}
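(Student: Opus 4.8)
The plan is to obtain this corollary as a direct consequence of Corollary \ref{cor:ortho} by feeding it a constant sequence of radii and then simply truncating the resulting infinite tower of covers to its first $D+1$ levels. First I would fix $r > 0$ and $D \in \N$ and apply Corollary \ref{cor:ortho} to the constant sequence $r_n = 2r$, producing Borel covers $\cU^n = \{U_0^n, \ldots, U_d^n\}$ of $X$, $n \in \N$, for which $\walkeq{U_i^n}{2r}$ is uniformly bounded for all $n$ and $i$ (clause (i)) and for which, whenever $n < m$ and $x \in U_i^n$, the ball $B(x;2r)$ is either disjoint with or a subset of $U_i^m$ (clause (ii)). I would then set $V_i^j = U_i^j$ for $j \in D+1$ and $i \in d+1$.

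Checking the first two required properties is immediate: each $\cV^j$ is a Borel cover of $X$ because each $\cU^j$ is, and $\walkeq{V_i^j}{r}$ equals $\walkeq{U_i^j}{r}$, which as a set of pairs is contained in $\walkeq{U_i^j}{2r}$; since every class of the smaller relation sits inside a class of the larger, uniform boundedness is inherited. For the separation of divisions, the key step is the following observation, which is exactly where the choice of radius $2r$ rather than $r$ is used: if $j < j'$ are both at most $D$ and $V_i^j$ divides $B(x;r)$, then $V_i^{j'}$ does not. Indeed, picking $y \in U_i^j \cap B(x;r)$ and using $\rho(x,y) \le r$, the triangle inequality gives $B(x;r) \subseteq B(y;2r)$, so clause (ii) forces $B(y;2r)$, hence $B(x;r)$, to be either disjoint with or a subset of $U_i^{j'} = V_i^{j'}$; in neither case does $V_i^{j'}$ divide $B(x;r)$. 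Consequently, for fixed $x$ and $i$ the set of $j \in D+1$ with $V_i^j$ dividing $B(x;r)$ contains no pair $j < j'$ and therefore has at most one element, which is precisely the asserted conclusion.

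I do not anticipate a genuine obstacle: all of the combinatorial and descriptive-set-theoretic work — the inductively growing scales, the buffering of Lemma \ref{lem:buffer}, and the verification of Borelness — has already been carried out in Corollary \ref{cor:ortho} and its proof. The only point that requires a moment's care is remembering to invoke Corollary \ref{cor:ortho} at radius $2r$, so that the one-step dichotomy in clause (ii), which is stated only for points $x \in U_i^n$, can be transported to the entire ball $B(x;r)$ around an arbitrary $x \in X$ via the triangle inequality.
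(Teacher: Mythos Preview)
Your proof is correct and follows exactly the same approach as the paper: apply Corollary~\ref{cor:ortho} with the constant sequence $r_n = 2r$ and read off the conclusion. The paper states this in a single line, whereas you have spelled out the triangle-inequality step showing why the choice $2r$ forces $V_i^{j'}$ not to divide $B(x;r)$ once $V_i^j$ does for some $j < j'$; this elaboration is accurate and is precisely the content hidden behind the word ``immediately'' in the paper.
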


\begin{proof}
By using $r_j = 2r$, such a sequence of covers is immediately obtained from Corollary \ref{cor:ortho}.
\end{proof}

In \cite{GJ15,SS} the notion of orthogonality (using facial boundary types as discussed above) was defined for each of the finitely generated subgroups of $G$ and, in confronting the Union Problem, the ultimate technical challenge was to combine these notions together by diagonalizing over all the finitely generated subgroups of $G$. This involved an intricate and cumbersome inductive argument that was choreographed through a $2$-dimensional triangular array. In contrast, with Borel asymptotic dimension we are able to perform this diagonalization in a different way that is much more automated and direct. We put all of these pieces together in the theorem below.

\begin{thm} \label{thm:union}
Let $X$ be a standard Borel space and let $(\rho_n)_{n \in \N}$ be a sequence of proper Borel extended metrics on $X$. Assume for every $n \in \N$ and $r > 0$ that $\sup \{\rho_{n+1}(x, y) : x, y \in X \ \rho_n(x, y) < r\}$ is finite and set $E = \bigcup_n E_{\rho_n}$. If $(X, \rho_n)$ has finite Borel asymptotic dimension for every $n \in \N$ then $E$ is hyperfinite.
\end{thm}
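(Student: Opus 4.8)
The plan is to express $E$ as an increasing union of finite Borel equivalence relations by building, for each $n$, a uniformly bounded (with respect to $\rho_n$) Borel equivalence relation $F_n$ whose boundary within $E$ is controlled in a way that makes $F_n \subseteq F_{n+1}$ forceable after suitable choices. The key structural tool is Corollary \ref{cor:ortho2}: since each $(X, \rho_n)$ has finite Borel asymptotic dimension $d_n$, for any prescribed radius and any $D$ we get a family of $D+1$ Borel covers, each of size $d_n+1$, whose pieces are uniformly $\rho_n$-bounded and such that for each point $x$ and each index $i \in d_n+1$ at most one of the $D+1$ covers has its $i$-th piece \emph{dividing} $B_{\rho_n}(x; r)$. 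The "only one bad $j$ per index" property is precisely the orthogonality we need to diagonalize: as $n$ grows we will have more and more covers to choose from, and for all but finitely many of them the boundaries of $F_n$ avoid any given small $\rho_n$-ball.

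First I would set up the recursion. Using the hypothesis that $\rho_n$-balls have bounded $\rho_{n+1}$-diameter, I would choose radii $r_n$ increasing to $\infty$ with the property that $[x]_{F_{n-1}} \subseteq B_{\rho_n}(x; r_n)$ once $F_{n-1}$ is built (this uses uniform $\rho_{n-1}$-boundedness of $F_{n-1}$ together with the ball-comparison hypothesis), so that refining at stage $n$ will automatically contain the previous relation. Then, at stage $n$, apply Corollary \ref{cor:ortho2} to $(X,\rho_n)$ with radius comparable to $r_n$ and with $D = D_n$ chosen large enough (growing with $n$) to leave room for the diagonalization. This produces covers $\mathcal V^{n,j} = \{V^{n,j}_0,\dots,V^{n,j}_{d_n}\}$ for $j \le D_n$. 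From each single cover $\mathcal V^{n,j}$ one manufactures (by the standard (2)$\Rightarrow$(1) argument inside Lemma \ref{lem:equivalent_asym_dim}, after disjointifying the pieces) a uniformly $\rho_n$-bounded Borel equivalence relation $E^{n,j}$ meeting every $r_n$-ball in at most $d_n+1$ classes, whose "boundary" near a point $x$ is governed by which pieces $V^{n,j}_i$ divide $B_{\rho_n}(x;r_n)$.

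The heart of the argument — and the step I expect to be the main obstacle — is the diagonalization: I must pick, for each $n$, a single index $j(n) \le D_n$ so that the resulting $F_n := E^{n, j(n)}$ (or rather, $F_n$ obtained by intersecting/coarsening with the previously-built relations to restore monotonicity) forms an increasing sequence whose union is all of $E$. The point of Corollary \ref{cor:ortho2} is that for a fixed $x$ and a fixed scale there is at most one bad $j$ per index $i$, hence at most $d_n+1$ bad values of $j$ total where a boundary crosses $B_{\rho_n}(x;r_n)$; but this must be made to work simultaneously for all $x$ in a given $E_{\rho_m}$-class (with $m \le n$) and coherently across all $n$. The clean way to handle this is to run a single recursion: having fixed $F_0,\dots,F_{n-1}$, apply Corollary \ref{cor:ortho2} with a radius large enough to swallow the $F_{n-1}$-classes and with $D_n$ just needing to exceed $d_n$, then use the "at most one bad $j$ per index" clause to find a $j(n)$ such that for every $x$, no piece $V^{n,j(n)}_i$ divides the $F_{n-1}$-class of $x$ — equivalently, $E^{n,j(n)}$ refines into $\le d_n+1$ classes on each $F_{n-1}$-class without cutting across an $F_{n-1}$-class boundary in an uncontrolled way — so that $F_n := $ the common coarsening generated by $F_{n-1}$ and $E^{n,j(n)}$ is again uniformly $\rho_n$-bounded and meets every $r_n$-ball in boundedly many classes. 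Then $F_\infty = \bigcup_n F_n$ is hyperfinite, each $F_\infty$-class is $E$-contained, and since every $E_{\rho_m}$-class meets only boundedly many $F_n$-classes for $n \ge m$ (the bound depending on $d_n$, which we control by demanding, say, that the radii are chosen so the $d_n+1$ bound applies at scale $\ge$ the $\rho_n$-diameter of the relevant $E_{\rho_m}$-class), each $E_{\rho_m}$-class divides into finitely many $F_\infty$-classes; by \cite[Prop. 1.3.(vii)]{JKL02} this gives that $E_{\rho_m}$ is $F_\infty$-equivalent up to finite index, whence $E = \bigcup_m E_{\rho_m}$ is hyperfinite. The delicate bookkeeping is exactly ensuring the boundedness constants $d_n+1$ remain valid at the scales dictated by the previous stages while the radii $r_n$ race ahead — this is where the ball-comparison hypothesis on the $\rho_n$ is essential and where I expect the real work to lie.
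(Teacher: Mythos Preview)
Your proposal has a genuine gap at exactly the step you flagged as the main obstacle. You want to choose a single index $j(n)$ so that \emph{for every} $x$ no piece $V^{n,j(n)}_i$ divides the $F_{n-1}$-class of $x$. But the conclusion of Corollary~\ref{cor:ortho2} is pointwise: for each fixed $x$ and each $i$ there is at most one bad $j$, hence at most $d_n+1$ bad $j$'s total for that $x$. Different points $x$ have different bad sets, and there is no reason a single $j$ avoids all of them simultaneously. So the selection of a global $j(n)$ simply cannot be made, and with it your plan to build a genuinely increasing sequence $F_0 \subseteq F_1 \subseteq \cdots$ collapses. (The subsequent appeal to \cite[Prop.~1.3.(vii)]{JKL02} is also shaky, since the bound ``$d_n+1$ classes'' depends on $n$ and you have not arranged any uniform finite-index relationship between $E_{\rho_m}$ and $F_\infty$.)

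The paper's proof takes a different route that sidesteps this obstruction. It does \emph{not} build an increasing sequence of $F_n$'s; instead it builds covers $\cU^n$ (each giving a finite relation $F_n$) with the property that for any fixed pair $(x,y) \in E$, say with $\rho_k(x,y) \leq r_k$, the set of $n \geq k$ for which $\cU^n$ divides $B_{\rho_k}(x;r_k)$ has size at most $d_k+1$. Then $E = \bigcup_k \bigcap_{n \geq k} F_n$ exhibits $E$ as an increasing union of finite Borel relations directly. The mechanism that makes the ``at most $d_k+1$ bad stages'' bound work is a thickening step you are missing: at stage $n$ one applies Corollary~\ref{cor:ortho2} to get raw covers $\bar{\cV}^{n,j}$, and then enlarges each $\bar V^{n,j}_i$ by saturating under the $\walkeq[\rho_{n-1}]{V}{2r_{n-1}}$-classes for $V \in \cV^{n-1,i}$. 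This guarantees that whenever the stage-$n$ cover divides a small $\rho_k$-ball, some stage-$(n-1)$ cover already did, and the orthogonality of the $\bar{\cV}^{n,j}$'s makes this correspondence injective. The count of bad indices therefore never increases along the recursion, giving the uniform bound $d_k+1$ that your approach lacks.
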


\begin{proof}
Inductively for $n \in \N$, pick $r_n \geq n$ large enough that $B_{\rho_k}(x; n + r_k) \subseteq B_{\rho_n}(x; r_n)$ for all $k < n$ and $x \in X$. So we have $(x, y) \in E \Leftrightarrow \exists n \ \rho_n(x, y) \leq r_n$. Also let $d_n$ be the Borel asymptotic dimension of $(X, \rho_n)$. Inductively we will build finite Borel covers $(\cU^n)_{n \in \N}$, $(\cV^{n,j})_{n \in \N, j \in d_{n+1}+1}$ of $X$ satisfying the following:
\begin{enumerate}
\item[(i)] $\walkeq[\rho_n]{U}{2 r_n}$ is $\rho_n$-uniformly bounded for every $U \in \cU^n \cup \bigcup_{j \in d_{n+1}+1} \cV^{n,j}$;
\item[(ii)] for every $k \in \N$, $n \geq k$, and $x \in X$ there is at most $(d_k+1)$-many covers $\cW \in \{\cU^m : k \leq m \leq n\} \cup \{\cV^{n,j} : j \in d_{n+1}+1\}$ that divide $B_{\rho_k}(x; r_k)$ (meaning there is $W \in \cW$ that divides $B_{\rho_k}(x; r_k)$).
\end{enumerate}
The role of the $\cV^{n,j}$'s will be transient and only used to inductively build the $\cU^n$'s. Denote by $F_n$ the Borel equivalence relation where $(x, x') \in F_n$ if and only if for every $U \in \mathcal{U}^n$ we have either $x, x' \not\in U$ or $(x, x') \in \walkeq[\rho_n]{U}{2 r_n}$. Then (i) implies that $F_n$ is finite. If $(x, y) \in E$ then by construction there is $k$ with $\rho_k(x, y) \leq r_k$, and so from (ii) we see that $(x, y) \in F_n$ for all but at most $(d_k+1)$-many $n \geq k$. Thus, once these covers are constructed we will have that $E = \bigcup_k \bigcap_{n \geq k} F_n$ is hyperfinite. To begin the construction, apply Corollary \ref{cor:ortho2} to obtain Borel covers $\cU^0$, $(\cV^{0,j})_{j \in d_1+1}$ satisfying (i) and (ii) with $k = n = 0$.

Now suppose that $\cU^0, \ldots, \cU^{n-1}$, $\cV^{n-1,0}, \ldots, \cV^{n-1,d_n}$ have been constructed. By enlarging $r_n$ if necessary, we can assume that the $\rho_n$-diameter of the classes of $\walkeq[\rho_{n-1}]{V}{2 r_{n-1}}$ are bounded by $r_n$ for every $j$ and $V \in \cV^{n-1,j}$. Apply Corollary \ref{cor:ortho2} to obtain a sequence of Borel covers $\bar{\cV}^{n,j} = \{\bar{V}_0^{n,j}, \ldots, \bar{V}_{d_n}^{n,j}\}$ for $j \in d_{n+1}+2$ with the property that $\walkeq[\rho_n]{\bar{V}_i^{n,j}}{4 r_n}$ is $\rho_n$-uniformly bounded for all $i$ and $j$, and such that for every $x \in X$ and $i \in d_n+1$ there is at most one $j \in d_{n+1}+2$ with $\bar{V}_i^{n,j}$ dividing $B(x; 2 r_n)$.

Now for $j \in d_{n+1}+2$ define $\cV^{n,j} = \{V_0^{n,j}, \ldots, V_{d_n}^{n,j}\}$ where $V_i^{n,j}$ is the Borel set obtained by taking the union, over all $V \in \cV^{n-1,i}$, of the $\walkeq[\rho_{n-1}]{V}{2 r_{n-1}}$-classes meeting $\bar{V}_i^{n,j}$. Since $\mathcal{V}^{n-1,i}$ covers $X$ we have $V_i^{n,j} \supseteq \bar{V}_i^{n,j}$ and thus $\mathcal{V}^{n,j}$ covers $X$. Also, since $V_i^{n,j} \subseteq B_{\rho_n}(\bar{V}_i^{n,j}; r_n)$ it is immediate that $\walkeq[\rho_n]{V_i^{n,j}}{2 r_n}$ is $\rho_n$-uniformly bounded.

Define $\cU^n = \cV^{n, d_{n+1}+1}$. It only remains to check (ii). So fix $k \leq n$ and $x \in X$. If $k = n$ then this is immediate since $\cV^{n,j}$ divides $B_{\rho_n}(x; r_n)$ only when $\bar{\cV}^{n,j}$ divides $B_{\rho_n}(x; 2r_n)$. So assume $k < n$. Let $J$ be the set of $j \in d_{n+1}+2$ such that $\cV^{n,j}$ divides $B_{\rho_k}(x; r_k)$ and let $I$ be the set of $i \in d_n+1$ such that $\cV^{n-1,i}$ divides $B_{\rho_k}(x; r_k)$. By our inductive assumption it suffices to show that $|J| \leq |I|$.

Fix $j \in J$ and let $t(j) = i$ be any element of $d_n+1$ such that $V_i^{n,j}$ divides $B_{\rho_k}(x; r_k)$. By definition of $V_i^{n,j}$, there is some $V \in V^{n-1,i}$ with a class of $\walkeq[\rho_{n-1}]{V}{2 r_{n-1}}$ dividing $B_{\rho_k}(x; r_k)$. This is only possible if $V$ divides $B_{\rho_k}(x; r_k)$ as the latter set has $\rho_{n-1}$-diameter at most $2 r_{n-1}$. Thus $i \in I$. Additionally, $\bar{V}_i^{n,j}$ must divide $B_{\rho_n}(x; 2 r_n)$, and with $i$ and $x$ fixed there is at most one such value of $j \in d_{n+1} + 2$ with this property. Thus $t$ is an injection from $J$ to $I$.
\end{proof}

\begin{cor} \label{cor:free_weiss}
Suppose that $G$ is the increasing union of groups satisfying the assumption of Theorem \ref{thm:normal_asdim}. Then for every free Borel action $G \acts X$ on a standard Borel space $X$, the orbit equivalence relation $E_G^X$ is hyperfinite.
\end{cor}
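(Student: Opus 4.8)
The plan is to reduce directly to Theorem \ref{thm:union}, applied to the orbit equivalence relations of an exhausting sequence of subgroups. First I would write $G = \bigcup_{n \in \N} H_n$ as an increasing union of subgroups, each of which satisfies the hypothesis of Theorem \ref{thm:normal_asdim} (this is precisely the meaning of the assumption that $G$ is such a union). For each $n$ fix a proper right-invariant metric $\tau_n$ on the countable group $H_n$ and let $\rho_n := \rho_{\tau_n}$ be the associated Borel proper extended metric on $X$ coming from the restricted action $H_n \acts X$; then $E_{\rho_n}$ is exactly the orbit equivalence relation $E_{H_n}^X$. Since $G \acts X$ is free, every restricted action $H_n \acts X$ is free, so Theorem \ref{thm:normal_asdim} gives $\Basdim(X, \rho_n) = \Basdim(H_n \acts X) < \infty$ for every $n$, and $\rho_n$ is proper by construction.

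Next I would verify the two hypotheses of Theorem \ref{thm:union}. That $\bigcup_n E_{\rho_n} = E_G^X$ is immediate, since $(x,y) \in E_{\rho_n}$ iff $y \in H_n \cdot x$ and $G = \bigcup_n H_n$. For the compatibility condition, fix $n$ and $r > 0$; as $\tau_n$ is proper, the set $A = \{g \in H_n : \tau_n(1_{H_n}, g) < r\}$ is finite, so whenever $\rho_n(x,y) < r$ we have $y = g \cdot x$ for some $g \in A \subseteq H_{n+1}$, hence $\rho_{n+1}(x,y) \le \max_{g \in A} \tau_{n+1}(1_{H_{n+1}}, g) =: M < \infty$, a bound independent of $x$ and $y$. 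Thus $\sup\{\rho_{n+1}(x,y) : \rho_n(x,y) < r\} \le M$, i.e. the $\rho_n$-balls of radius $r$ have uniformly bounded $\rho_{n+1}$-diameter. Theorem \ref{thm:union} then yields that $E_G^X = \bigcup_n E_{\rho_n}$ is hyperfinite, which is the desired conclusion.

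I do not expect a genuine obstacle here: all of the real work is already contained in Theorem \ref{thm:normal_asdim} (finiteness of Borel asymptotic dimension for free actions of groups with suitable normal series) and in Theorem \ref{thm:union} (the solved instance of the Union Problem). The only points needing care are routine bookkeeping: confirming that each $\rho_{\tau_n}$ is a Borel extended metric (its sublevel sets are finite unions of graphs of the Borel maps $x \mapsto g \cdot x$, by Lusin--Novikov), extracting the increasing sequence $(H_n)$, and checking the compatibility condition of Theorem \ref{thm:union}, which as above is a direct consequence of the properness of the metrics $\tau_n$. If one wishes to avoid even the mild ambiguity in ``union of groups'', one can instead enumerate $G = \{g_k : k \in \N\}$ and note that the conclusion is monotone under passing to larger free subactions, so it suffices to treat each finitely generated subgroup, each of which is contained in some member of the given family.
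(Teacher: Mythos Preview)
Your proposal is correct and is exactly the intended argument: the paper states this corollary without proof, as it is meant to be an immediate consequence of Theorem~\ref{thm:normal_asdim} and Theorem~\ref{thm:union}, and you have filled in precisely those details (choosing an increasing exhaustion $H_n$, passing to the induced metrics $\rho_{\tau_n}$, and verifying the compatibility hypothesis via properness of $\tau_n$). One caution about your final sentence: the remark that ``it suffices to treat each finitely generated subgroup'' should not be read as saying that hyperfiniteness of each $E_{H_n}^X$ alone implies hyperfiniteness of $E_G^X$ (that is the open Union Problem); rather, what you need is that each finitely generated $H_n = \langle g_0,\ldots,g_n\rangle$ lies in some member of the given family so that Corollary~\ref{cor:subgroups} gives $\Basdim(H_n \acts X)<\infty$, and this still presumes the family is directed---which is the intended reading of ``union of groups'' here.
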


The above corollary applies to all groups that locally have finite Pr{\"u}fer rank and are locally virtually solvable, all locally polycyclic groups, the Baumslag--Solitar group $BS(1, 2)$, the lamplighter group $\Z_2 \wr \Z$, and many groups not having finite standard asymptotic dimension, such as $(\bigoplus_{n \in \N} \Z) \rtimes S_\infty$ where $S_\infty$ is the group of finitely supported permutations of $\N$.

In \cite[Cor. 8.2]{SS} it was proven that if all free Borel actions of polycyclic groups generate hyperfinite Borel equivalence relations, then the same is true for all (not necessarily free) Borel actions of polycyclic groups. This result allows us to remove the restriction to free actions in the case of polycyclic groups. Similarly, Corollary \ref{cor:polygrow_asdim} allows us to remove this restriction in the case of groups having local polynomial volume-growth. We thus obtain a positive solution to Weiss' question in the case of these two classes of groups.

\begin{cor}
Let $G$ be either a polycyclic group or a countable group of local polynomial volume-growth. If $X$ is a standard Borel space and $G \acts X$ is any Borel action, then the orbit equivalence relation $E_G^X$ is hyperfinite.
\end{cor}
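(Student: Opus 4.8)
The plan is to handle the two cases of the hypothesis separately, since they draw on different parts of the paper.

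\emph{The polycyclic case.} Suppose $G$ is polycyclic. The terms of its derived series form a characteristic (hence normal) series of $G$ whose consecutive quotients are finitely generated abelian groups, and each finitely generated abelian group $\Z^r \oplus T$ (with $T$ its finite torsion subgroup) has the short characteristic series $0 \lhd T \lhd \Z^r \oplus T$ with quotients $T$ finite and $\Z^r$ torsion-free abelian of $\Q$-rank $r < \infty$. Refining the derived series by these characteristic refinements produces a normal series of $G$ all of whose consecutive quotients are finite or torsion-free abelian of finite $\Q$-rank, which in particular meets the hypothesis of Theorem \ref{thm:normal_asdim}. Hence by Theorem \ref{thm:normal_asdim} together with Theorem \ref{thm:asdimhyp} (equivalently, directly by Corollary \ref{cor:free_weiss}), for every \emph{free} Borel action $G \acts X$ the orbit equivalence relation $E_G^X$ is hyperfinite. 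To drop the freeness assumption we invoke \cite[Cor. 8.2]{SS}: since every polycyclic group has all of its free Borel actions generating hyperfinite equivalence relations, that corollary upgrades this to arbitrary Borel actions, giving hyperfiniteness of $E_G^X$ for any Borel action $G \acts X$.

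\emph{The local polynomial volume-growth case.} Now suppose every finitely generated subgroup of $G$ has polynomial volume-growth, and fix an arbitrary Borel action $G \acts X$. Write $G = \bigcup_n G_n$ as an increasing union of finitely generated subgroups. Each $G_n$ has polynomial volume-growth of some degree $d_n$, and since a finitely generated subgroup of a group of polynomial volume-growth of degree $d$ again has polynomial volume-growth of degree at most $d$ (comparing word lengths), $G_n$ has \emph{uniform} local polynomial volume-growth of degree $d_n$. Corollary \ref{cor:polygrow_asdim}, which requires no freeness hypothesis, therefore yields $\Basdim(G_n \acts X) < 2 \cdot 5^{d_n} < \infty$ for every $n$. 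Fix a proper right-invariant metric $\tau_n$ on $G_n$ and let $\rho_n = \rho_{\tau_n}$ be the associated proper Borel extended metric on $X$; then $E_{\rho_n}$ is exactly the orbit equivalence relation of $G_n \acts X$, so $\bigcup_n E_{\rho_n} = E_G^X$. It remains to verify the compatibility hypothesis of Theorem \ref{thm:union}: for each $r > 0$ the set $S = \{g \in G_n : \tau_n(1_G, g) < r\}$ is finite since $\tau_n$ is proper, and $S \subseteq G_{n+1}$, so $S$ has finite $\tau_{n+1}$-diameter, say $R$; by right-invariance of $\tau_{n+1}$, whenever $\rho_n(x,y) < r$ there is $g \in S$ with $g \cdot x = y$ and hence $\rho_{n+1}(x,y) \leq \tau_{n+1}(1_G,g) \leq R$. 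Theorem \ref{thm:union} now applies and shows $E_G^X = \bigcup_n E_{\rho_n}$ is hyperfinite.

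\emph{Where the difficulty lies.} In both cases the substantive mathematics has already been carried out earlier in the paper — Theorem \ref{thm:normal_asdim} in the polycyclic case, and Corollary \ref{cor:polygrow_asdim} with Theorem \ref{thm:union} in the local polynomial-growth case — so what is left is organizational. The one genuinely external input is \cite[Cor. 8.2]{SS}, which is precisely what allows the freeness restriction to be removed for polycyclic groups; without it our methods answer Weiss' question only for free actions of polycyclic groups. No analogous obstacle arises in the second case, because Corollary \ref{cor:polygrow_asdim} already covers arbitrary (not merely free) Borel actions, so the freeness of the action plays no role there. The only calculations involved are the routine checks that the derived series of a polycyclic group can be refined to satisfy the hypothesis of Theorem \ref{thm:normal_asdim} and that the extended metrics $\rho_n$ satisfy the bounded-diameter-on-balls condition of Theorem \ref{thm:union}.
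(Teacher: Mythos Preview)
Your proposal is correct and follows the same approach the paper indicates in the text immediately preceding the corollary: polycyclic groups via Theorem \ref{thm:normal_asdim} (or Corollary \ref{cor:free_weiss}) plus \cite[Cor. 8.2]{SS} to remove freeness, and the local polynomial volume-growth case via Corollary \ref{cor:polygrow_asdim} (which needs no freeness) together with Theorem \ref{thm:union}. The paper states the corollary without a written proof, relying on that surrounding discussion; your write-up simply fills in the routine verifications (the characteristic refinement of the derived series, and the metric compatibility hypothesis of Theorem \ref{thm:union}) accurately.
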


These two corollaries could be strengthened if Theorem \ref{thm:normal_asdim} were strengthened (as we discussed at the end of Section \ref{sec:series}). However, further progress on Weiss' question will eventually involve groups that do not have finite standard asymptotic dimension nor are unions of such groups. For instance, $\Z \wr \Z$ and the Grigorchuk group are not unions of groups having finite standard asymptotic dimension. Looking at the big picture, we feel that Borel asymptotic dimension is more than a mere property that implies hyperfiniteness, but rather the language surrounding this concept is meaningful and efficient at handling certain tasks related to hyperfiniteness proofs, as displayed in the proof of Theorem \ref{thm:union}. Thus, while the ability of Borel asymptotic dimension is strictly limited in providing further advances on Weiss' question, we see potential for the language surrounding this notion to evolve to handle more general situations.

\section{Borel graphs and chromatic numbers} \label{sec:chromatic}

The objects as in Lemma \ref{lem:equivalent_asym_dim}.(2') that witness $\asi(X, \rho) \leq 1$ are somewhat similar to objects called ``barriers'' in \cite{CM16} and ``toast'' in \cite{GJKSa,GJKSb,GJKSc}. In \cite{CM16,GJKSc} those objects were used to obtain bounds on Borel chromatic numbers. Below we generalize the proof by Conley--Miller in \cite{CM16} to our setting of asymptotic separation index. A peculiar aspect to our generalization is that we do not require $\asi(X, \rho)$ be at most $1$ but instead simply require it to be finite; this produces a dynamic bound rather than the customary bound of $2 \chi(\Gamma) - 1$, and further motivates the question as to whether $\asi(X, \rho)$ can be both finite and greater than one.

\begin{thm}
Let $X$ be a standard Borel space, let $\Gamma$ a Borel graph on $X$, and let $\rho$ be the graph metric on $\Gamma$. Assume that $\Gamma$ is locally finite and that $s = \asi(X, \rho) < \infty$. Then $\chi_\textbf{B}(\Gamma) \leq (s + 1) \cdot (\chi(\Gamma) - 1) + 1$.
\end{thm}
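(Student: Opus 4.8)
The plan is to reduce the statement to producing a single Borel ``cutting set'', and then to construct it by a toast-style recursion adapted from Conley--Miller.

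First I would record the combinatorial reduction. Applying condition (2') of Lemma~\ref{lem:equivalent_asym_dim} with radius $1$ (noting that the graph metric of a locally finite graph is proper), fix a Borel partition $X = U_0 \sqcup \cdots \sqcup U_s$ such that each $\Gamma[U_i]$ has finite connected components. I claim it then suffices to produce a Borel $\Gamma$-independent set $S \subseteq X$ with $\chi\big(\Gamma[U_i \setminus S]\big) \le \chi(\Gamma) - 1$ for every $i$. Indeed, given such an $S$: since $\Gamma[U_i \setminus S]$ is a locally finite Borel graph with finite components, Proposition~\ref{lem:smooth_coloring} supplies a Borel proper coloring $d_i \from U_i \setminus S \to \{1, \dots, \chi(\Gamma) - 1\}$; now color $x$ by $\star$ if $x \in S$, and by the pair $(i, d_i(x))$ if $x \in U_i \setminus S$. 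This uses $1 + (s+1)(\chi(\Gamma)-1)$ colors and is proper: edges inside a single $U_i \setminus S$ are handled by $d_i$, edges between distinct $U_i \setminus S$ and $U_j \setminus S$ by the disjoint palettes, and edges meeting $S$ by independence of $S$ together with the fact that $S$ has no internal edges. A non-Borel such $S$ exists trivially — take $S = \gamma^{-1}(\chi(\Gamma)-1)$ for any proper $\chi(\Gamma)$-coloring $\gamma$ — so the entire content is to make $S$ Borel.

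To build $S$ I would follow the Conley--Miller method, using the asymptotic separation index to manufacture enough ``room''. Applying condition (2') along a rapidly increasing sequence of radii and then buffering, in the spirit of Lemma~\ref{lem:buffer} and Corollary~\ref{cor:ortho}, one arranges a nested sequence of Borel layers whose finite ``pieces'' at a given level are mutually well separated and strictly engulf (in their interiors) the pieces they meet from lower levels. One then defines $S$ by recursion on levels: at each newly appearing piece $C$ inside some $U_i$ one selects, in a Borel (e.g. lexicographically least) way, a ``removable class'' $T_C \subseteq C$ — an independent subset of $C$ with $\chi(\Gamma[C \setminus T_C]) \le \chi(\Gamma)-1$ — chosen to be compatible with the finitely many removable classes already fixed at the engulfed lower-level pieces and to avoid $\Gamma$-edges to the finitely many already-resolved pieces nearby (in other coordinates as well as the same one). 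Because the engulfed pieces sit deep in the interior of $C$ and are pairwise far apart, these finitely many constraints are local and ``independent'', so a compatible removable class is available. Setting $S = \bigcup_C T_C$ produces a Borel $\Gamma$-independent set with $\chi(\Gamma[U_i\setminus S]) \le \chi(\Gamma)-1$ for each $i$.

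The main obstacle is precisely this last extendability/coordination step: one must verify that the partial determination of $S$ can never obstruct the choice at a new piece. This is a finite list-coloring/extension lemma — the generalization of the combinatorial heart of \cite{CM16} — and it is exactly what forces one to massage the raw $(s+1)$-cover into a genuinely buffered, nested structure rather than working with an arbitrary cover (the naive ``color $U_0$, then $U_1$, then $\dots$'' scheme fails already for $\chi = s = 1$). A secondary technical point is that condition (2') provides only ``bounded'' (not uniformly bounded) pieces, so the buffering has to be carried out piece-by-piece rather than via the single-scale estimates of Lemma~\ref{lem:buffer}; this is routine but should be noted.
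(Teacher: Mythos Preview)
Your reduction to a single Borel independent ``cutting set'' $S$ is valid, but the toast-style recursion you propose to construct it is both more elaborate than needed and carries the gap you yourself flag: the extendability step is asserted, not proved, and it is not obvious that partial choices of $S$ on already-resolved pieces---across \emph{different} indices $i$ as well as within one---never obstruct the choice on a new piece. There is also a structural issue: with $\asi = s$ rather than $1$, condition~(2') at a sequence of scales gives you $(s+1)$-covers, not a single nested toast, and Corollary~\ref{cor:ortho} (which would produce the buffered nested layers you want) assumes finite Borel asymptotic dimension, not merely finite $\asi$. The paper sidesteps all of this.

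The paper's proof is a few lines and does not go through your reduction. Take the $(s+1)$-cover $U_0,\dots,U_s$ at radius $4$ (not $1$), set $V_i = B(U_i;1)$, and note that $\walkeq{V_i}{2}$ is still bounded, so Proposition~\ref{lem:smooth_coloring} gives a Borel proper $\chi(\Gamma)$-coloring $\pi_i$ of $\Gamma \res V_i$; the overlap of the $V_i$'s is the whole point. Now color $x$ by a single shared symbol $0$ if $\pi_i(x) = \chi(\Gamma)-1$ for \emph{every} $i$ with $x \in V_i$, and otherwise by the pair $(j,\pi_j(x))$ where $j$ is least with $x \in V_j$ and $\pi_j(x) \neq \chi(\Gamma)-1$. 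Any edge $\{x,y\}$ lies entirely in some $V_i$ (one endpoint is in $U_i$, the other within distance $1$), so $\pi_i$ separates them and in particular not both receive $0$; and if $c(x) = (j,p)$, then either $y \notin V_j$ or $\pi_j(y) \neq p$, so $c(y) \neq c(x)$. No buffering, no recursion, no extension lemma---just a priority rule over $s+1$ independently chosen colorings on overlapping pieces. Note in particular that the shared class $c^{-1}(0)$ need \emph{not} satisfy $\chi(\Gamma[U_i \setminus c^{-1}(0)]) \le \chi(\Gamma)-1$, since the index $j$ used to color $x \notin c^{-1}(0)$ need not be the $i$ with $x \in U_i$; so the paper's argument is genuinely different in shape from the reduction you set up.
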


\begin{proof}
Let $U_0, \ldots, U_s$ be Borel sets covering $X$ with $\walkeq{U_i}{4}$ bounded for every $i$, and set $V_i = B(U_i; 1)$. Since $\walkeq{V_i}{2}$ is a finite Borel equivalence relation, by Proposition~\ref{lem:smooth_coloring} there is a Borel proper coloring $\pi_i \from V_i \to \chi(\Gamma)$ of $\Gamma \res V_i$.

Define $c \from X \to \{0\} \cup ((s+1) \times (\chi(\Gamma)-1))$ by setting $c(x) = 0$ if $\pi_i(x) = \chi(\Gamma)-1$ for every $i$ with $x \in V_i$, and otherwise set $c(x) = (i, \pi_i(x))$ where $i$ is least with $x \in V_i$ and $\pi_i(x) \neq \chi(\Gamma)-1$. Then $c$ is Borel. Now suppose $(x, y) \in \Gamma$. Then there is $i$ with $x \in U_i$ and thus $x, y \in V_i$, which implies $c(x)$ and $c(y)$ cannot both be $0$. Say $c(x) = (j, p) \neq 0$. Then either $y \not\in V_j$ or else $\pi_j(y) \neq p$, and in either case $c(y) \neq c(x)$.
\end{proof}

Combining the above theorem with Theorem \ref{thm:asi}.(a) produces the following.

\begin{cor} \label{cor:chrome}
Let $X$ be a standard Borel space, let $\Gamma$ a locally finite Borel graph on $X$, and let $\rho$ be the graph metric on $\Gamma$. If $\Basdim(X, \rho) < \infty$ then $\chi_\textbf{B}(\Gamma) \leq 2 \chi(\Gamma) - 1$.
\end{cor}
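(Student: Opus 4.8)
The plan is to deduce Corollary~\ref{cor:chrome} directly from the preceding theorem together with Theorem~\ref{thm:asi}.(a). First I would recall that Theorem~\ref{thm:asi}.(a) tells us that $\Basdim(X,\rho) < \infty$ implies $\asi(X,\rho) \leq 1$. Since the only hypothesis of the preceding theorem that must be checked is that $s = \asi(X,\rho)$ is finite (local finiteness of $\Gamma$ being already assumed), this gives $s \leq 1$.

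Next I would substitute $s \leq 1$ into the bound $\chi_\textbf{B}(\Gamma) \leq (s+1)\cdot(\chi(\Gamma)-1) + 1$ furnished by the preceding theorem. With $s+1 \leq 2$ this yields $\chi_\textbf{B}(\Gamma) \leq 2(\chi(\Gamma)-1) + 1 = 2\chi(\Gamma) - 1$, which is exactly the claimed inequality. One should note that the preceding theorem's bound is monotone in $s$, so using the (possibly non-sharp) bound $s \leq 1$ rather than the exact value of $\asi(X,\rho)$ causes no loss — indeed the whole point is that finiteness of $\Basdim$ forces $\asi \leq 1$, collapsing the dynamic bound to the clean statement.

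I do not anticipate a genuine obstacle here: the corollary is a two-line consequence of results already proved. If anything, the only point requiring a sentence of care is confirming that the graph metric $\rho$ is proper (needed so that Theorem~\ref{thm:asi}.(a) and the $\asi$ machinery apply cleanly), which follows from local finiteness of $\Gamma$ as noted in the preliminaries; since $\Basdim(X,\rho)$ is assumed finite, $E_\rho$ is in particular countable, so all hypotheses of Theorem~\ref{thm:asi}.(a) are met. Thus the proof is simply: apply Theorem~\ref{thm:asi}.(a) to get $\asi(X,\rho) \leq 1$, then apply the preceding theorem with $s \leq 1$.

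\begin{proof}
By Theorem~\ref{thm:asi}.(a), the assumption $\Basdim(X,\rho) < \infty$ implies $\asi(X,\rho) \leq 1$. Since $\Gamma$ is locally finite, the previous theorem applies with $s = \asi(X,\rho) \leq 1$, giving $\chi_\textbf{B}(\Gamma) \leq (s+1)(\chi(\Gamma)-1) + 1 \leq 2(\chi(\Gamma)-1) + 1 = 2\chi(\Gamma) - 1$.
\end{proof}
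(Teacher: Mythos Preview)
Your proposal is correct and matches the paper's approach exactly: the paper simply states that the corollary follows by combining the preceding theorem with Theorem~\ref{thm:asi}.(a), and that is precisely what you do. One minor remark: Theorem~\ref{thm:asi}.(a) does not actually require $\rho$ to be proper, only that $E_\rho$ be countable, which follows from local finiteness of $\Gamma$; your formal proof does not rely on this unnecessary assumption anyway.
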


Notice that the above corollary provides bounds on the Borel chromatic numbers of Schreier graphs induced by free Borel actions of $G$ whenever $G$ is a finitely generated group satisfying the assumption of Theorem \ref{thm:normal_asdim}.

Lastly, we briefly discuss the Borel asymptotic dimension of Borel graphs in the highly restricted setting of acyclic graphs. It is not hard to see that when $\Gamma$ is acyclic $(X, \rho)$ has standard asymptotic dimension at most $1$. However, this can fail in the Borel case. For example, take the graph generated by the free part of the shift action of the free group $\mathbb{F}_2$ on $\N^{\mathbb{F}_2}$, and use \cite[Lemma 2.1]{M} to see that Lemma~\ref{lem:equivalent_asym_dim}.(2) fails for every $d$. Furthermore, in \cite{CJMST-D} hyperfinite acyclic bounded degree Borel graphs are constructed that have arbitrarily large Borel chromatic number and thus have infinite Borel asymptotic dimension by Corollary \ref{cor:chrome}. On the other hand, below we show that when a Borel graph is induced by a bounded-to-one Borel function it has Borel asymptotic dimension at most $1$. Recall that if $X$ is a standard Borel space and $f : X \rightarrow X$ is a Borel function, then $\Gamma_f$ is the Borel graph whose vertices are the elements of $X$ and where $x, y \in X$ are adjacent if $f(x) = y$ or $f(y) = x$. 

\begin{lem} \label{lem:bound_to_one}
Let $X$ be a standard Borel space, let $f : X \rightarrow X$ be a bounded-to-one Borel function, and let $\rho$ be the graph metric on $\Gamma_f$. Then $(X, \rho)$ has Borel asymptotic dimension at most $1$.
\end{lem}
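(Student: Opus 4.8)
The plan is to verify condition (1) of Lemma~\ref{lem:equivalent_asym_dim} with $d=1$, using a fixed-point/tail-of-the-orbit analysis of the forward map $f$. Since $f$ is bounded-to-one, say $|f^{-1}(y)| \le N$ for all $y$, the graph $\Gamma_f$ is locally finite and $\rho$ is proper, so $E_\rho$ is countable and the Lusin--Novikov uniformization applies. The key structural observation is that each connected component of $\Gamma_f$ has a very constrained shape: following $f$ forward from any vertex, either the forward orbit is eventually periodic (it runs into a unique cycle, which we call the \emph{core} of that component) or it is infinite and injective-eventually (a ``line'' going to infinity). Thus every component is a (possibly infinite) collection of finite trees hanging off either a single cycle or a single bi-infinite or forward-infinite backbone.

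First I would reduce to the two cases by a Borel partition of $X$. Let $X_{\mathrm{per}}$ be the set of $x$ whose forward orbit under $f$ is eventually periodic; this is Borel (it is the set of $x$ for which some $f^m(x) = f^{m'}(x)$ with $m<m'$). On $X_{\mathrm{per}}$, each component has a canonical finite cycle, and one can choose a Borel selector picking a distinguished vertex on that cycle; then the ``distance from the core'' gives a Borel $\Z_{\ge 0}$-valued height function $h$ (for a vertex off the cycle, the number of forward steps to reach the cycle, where vertices on the cycle all get height $0$, or rather we mod out the cycle into finitely many equivalence classes and measure height from there). On the complement $X \setminus X_{\mathrm{per}}$, the map $f$ is eventually injective along each orbit, and again one gets a Borel height function valued in $\Z$ (or $\Z_{\ge 0}$), essentially by measuring signed distance along the backbone; here I would use Lusin--Novikov and the acyclic-on-this-part structure to extract a Borel $\Z$-valued cocycle $h$ with $h(f(x)) = h(x)+1$ for all $x$ off the trees, extended to tree vertices by adding the forward distance to the backbone. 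The point in both cases is that $\rho$-close vertices have $h$-values differing by a bounded amount (at most the diameter-scale of the trees is NOT bounded, so some care is needed — see below).

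Second, given a radius $r$, I would build the uniformly bounded Borel equivalence relation $E$ by cutting each component into blocks according to the height function: put $x \mathrel{E} y$ iff they lie in the same component and $\lfloor h(x)/(2r) \rfloor = \lfloor h(y)/(2r) \rfloor$, then further refine within a block so that classes are connected and of bounded diameter — in the line/cycle case a block of backbone of length $2r$ together with all its hanging trees, the whole thing having bounded diameter provided the trees have bounded depth, which holds because $f$ is $N$-to-one does \emph{not} by itself bound tree depth; the trees can be deep. So instead I would cut using $\rho$-distance directly: use $h$ only to choose, in a Borel way, one ``cut level'' per residue class mod $2r$, staggered between adjacent components so that no $r$-ball meets three blocks. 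Concretely, for each component $C$ with backbone/core, let $E$ restricted to $C$ be $\walkeq{C \setminus S}{2r}$ where $S \subseteq C$ is a Borel ``slice'' set chosen so that the connected pieces of $C \setminus S$ have diameter $\le$ some bound $R(r,N)$ and every $r$-ball of $C$ meets at most two of them; the slice $S$ consists of the backbone vertices at the chosen cut levels. Because the backbone is one-dimensional, its slicing contributes multiplicity $\le 2$; a ball of radius $r$ reaching into a hanging tree stays within one block since one can choose cut levels on the backbone only (away from branch points), and a deep tree, being itself a tree, is split inductively — this is where I would iterate the $d=1$ construction for trees, or simply invoke that finite trees within an $r$-ball are themselves handled.

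The main obstacle I anticipate is precisely the unbounded depth of the hanging trees and the need to do all cutting in a \emph{Borel} and \emph{uniformly bounded} way simultaneously. The clean way around it is: work component-by-component using the backbone/core as an ``axis,'' observe that every vertex is at some finite forward distance from the axis, and note that since $f$ is $N$-to-one, within forward-distance $t$ of the axis there are at most $N^t$ vertices, so an $r$-ball is finite (already known from properness) and more importantly the whole component is a union of the axis plus trees, and a tree is itself a bounded-to-one-function graph, so by an inductive/self-similar application (or by the standard fact that trees have Borel asymptotic dimension... wait, that fails — so instead) one must genuinely handle trees here. The resolution: a ball $B(x;r)$ in $\Gamma_f$ that lies entirely inside a hanging tree is contained in the subtree of radius $r$ around the point where that ball's branch meets lower vertices, and one can cut trees into blocks of diameter $\le 4r$ with multiplicity $2$ using the axis-distance function $h$ restricted to the tree (which \emph{is} a bona fide Borel $\Z_{\ge0}$-valued function with $h(f(x)) = h(x)-1$ along the tree toward the axis), staggering cut-levels across sibling subtrees. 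So the honest statement of the hard step is: produce a single Borel height function $h: X \to \Z$ (or $\Z_{\ge 0}$ on $X_{\mathrm{per}}$) that is $1$-Lipschitz-up-to-a-constant for $\rho$ and increases by exactly $1$ along $f$; then $E$ defined by blocking $h$ into windows of length $2r$ (with a Borel, component-wise staggered choice of window offsets via a Borel selector on components) is uniformly bounded and has ball-multiplicity $\le 2$, giving $\Basdim(X,\rho) \le 1$.
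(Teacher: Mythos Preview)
Your approach has a genuine gap that does not seem repairable along the lines you sketch. The core problem is that a height function $h$ with $h(f(x)) = h(x)+1$, while $1$-Lipschitz for $\rho$, has level-windows $\{x : \lfloor h(x)/(2r) \rfloor = k\}$ that are \emph{not} uniformly bounded in diameter. Concretely: take a backbone vertex at position $p \gg 2rk$ with a tree of depth $p - 2rk$ hanging off it; the leaves of that tree have $h$-value $2rk$ and so share a window with the backbone vertex at position $2rk$, yet their $\rho$-distance is of order $2(p - 2rk)$. You notice this and try to cut the trees separately by distance-to-axis, but that also fails the multiplicity bound: if $v$ is at distance exactly $2rk$ from the axis and its parent $u$ has another child $v'$, then $B(v;2)$ meets $v$'s block, the parent block containing $u$, and the sibling block rooted at $v'$ --- three classes, not two. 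This is exactly the phenomenon that makes general trees (even hyperfinite acyclic bounded-degree graphs) have infinite Borel asymptotic dimension, as noted just before the lemma.

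There is also a second, independent obstruction: your ``Borel, component-wise staggered choice of window offsets via a Borel selector on components'' presumes a Borel transversal for $E_\rho$, which does not exist in general (and cannot exist on the aperiodic part). Similarly, on the periodic core $h(f(x)) = h(x)+1$ is inconsistent with $f^c(x) = x$.

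The paper avoids all of this by never looking at components or backward trees at all. It works purely with the \emph{forward} dynamics: iterate a thinning lemma to produce a Borel marker set $Y$ such that every forward orbit segment of length $2r$ meets $Y$ at most once but every forward segment of length $m$ meets $Y$ at least once; then set $g(x) = f^i(x)$ for the least $i > 0$ with $f^i(x) \in Y$, and declare $x \mathrel{E} y$ iff $g(f^r(x)) = g(f^r(y))$. The point is that $f^r(B(x;r)) \subseteq \{x, f(x), \ldots, f^{2r}(x)\}$ is a single forward segment, so $g$ takes at most two values on it. This sidesteps trees, selectors, and height functions entirely --- the one-dimensionality comes from the forward orbit being a line, not from any global decomposition of the component.
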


\begin{proof}
Fix $r > 0$. We begin by finding a Borel set $Y \subseteq X$ which has nice recurrence properties for $f$ and $r$. Set $A_0 = X$ and $f_0 = f$. Inductively assume that a Borel set $A_n \subseteq X$ and a bounded-to-one Borel function $f_n : A_n \rightarrow A_n$ have been defined. Apply \cite[Lem. 4.1]{CJMST-D}\footnote{The published proof of \cite[Lem. 4.1]{CJMST-D} incorrectly assumes $\forall x \in X \ f(x) \neq x$, but the result is true as stated. To correct the proof, it suffices to make two changes. First, require the function $c$ referred to in that proof have the additional property that for every $x \in X$, if $f(x) = x$ then $c(x) = 0$, and if there is $n \geq 1$ with $f^{n+1}(x) = f^n(x) \neq f^{n-1}(x)$ then $c(x) \in \{0,1\}$ must be congruent to $n$ mod $2$. Second, include in the set $A$ all points $x$ satisfying $f(x) = x$.} to find a Borel set $A_{n+1} \subseteq A_n$ such that $A_{n+1}$ shares at most one point with $\{x, f_n(x)\}$ and at least one point with $\{f_n(x), f_n^2(x), f_n^3(x), f_n^4(x)\}$ for every $x \in A_n$, and for $x \in A_{n+1}$ set $f_{n+1}(x) = f_n^i(x)$ where $i > 0$ is least with $f_n^i(x) \in A_{n+1}$. Pick $k \in \N$ with $2^k > 2 r$ and set $Y = A_k$. Then for some $m \in \N$ (one can use $m = 4^k$) we have that for every $x \in X$,
$$|Y \cap \{x, f(x), \ldots, f^{2r}(x)\}| \leq 1 \leq |Y \cap \{f^i(x) : 1 \leq i \leq m\}|.$$

Let $g \from X \to Y$ be defined by $g(x) = f^i(x)$ where $i > 0$ is least such that $f^i(x) \in Y$. Let $E$ be the equivalence relation on $X$ where $x \mathrel{E} y$ if and only if $g(f^r(x)) = g(f^r(y))$. Then $E$ is uniformly bounded since $\rho(x, g(x)) \leq m$ for all $x$. Lastly, we claim that for every $x \in X$ the ball $B(x; r)$ meets at most $2$ classes of $E$. Indeed, $f^r(B(x;r)) \subseteq \{x, f(x), \ldots, f^{2r}(x)\}$ and $g$ can take at most $2$ values on $\{x, f(x), \ldots, f^{2r}(x)\}$ since $|Y \cap \{f(x), \ldots, f^{2r+1}(x)\}| \leq 1$.
\end{proof}

Lemma~\ref{lem:bound_to_one} cannot be generalized to finite-to-one Borel functions. For example, let $[\N]^{\infty}$ be the standard Borel space of infinite subsets of $\N$, and let $f \from [\N]^{\infty} \to [\N]^{\infty}$ be the shift function where $f(A) = A \setminus \{\min(A)\}$. Letting $\rho$ be the graph metric on $\Gamma_f$, we have that $([\N]^{\infty}, \rho)$ does not have finite Borel asymptotic dimension. This follows from condition (2) of Lemma~\ref{lem:equivalent_asym_dim} and the Galvin-Prikry theorem \cite[Thm. 19.11]{K95}. Specifically, if $U_0, \ldots, U_d$ are Borel sets covering $[\N]^\infty$, then there is some $A \in [\N]^\infty$ and $i \in d+1$ with $f^n(A) \in U_i$ for all $n$.

\section{F{\o}lner tilings} \label{sec:folner}

For an action $G \acts X$, a nonempty finite set $K \subseteq G$, and $\delta > 0$, we say that a finite set $F \subseteq X$ is \emph{$(K, \delta)$-invariant} if $|(K \cdot F) \symd F| < \delta |F|$. We similarly say that a finite set $F \subseteq G$ is $(K, \delta)$-invariant if $|(K F) \symd F| < \delta |F|$. Recall from the F{\o}lner characterization of amenability that a countable group $G$ is \emph{amenable} if and only if for every finite nonempty set $K \subseteq G$ and $\delta > 0$ there exists a finite $(K, \delta)$-invariant subset of $G$. The concept of amenability appears as a common theme throughout much of ergodic theory, underlying the classical ergodic theorems, Kolmogorov--Sinai entropy theory, and the foundation of orbit equivalence theory.

A key method for applying amenability in dynamics is the use of F{\o}lner tilings (i.e. partitions of the space into finite sets that are $(K, \delta)$-invariant for some prescribed $K$ and $\delta$). In ergodic theory, the weaker property of F{\o}lner quasi-tilings often suffices and was established in the seminal work of Ornstein and Weiss that extended Kolmogorov--Sinai entropy theory and Ornstein theory to all countable amenable groups \cite{OW87}. Outside of ergodic theory, quasi-tilings no longer suffice and stronger types of tilings are needed. This issue is receiving renewed attention after the recent proof by Downarowicz, Huczek, and Zhang of the purely group-theoretic fact that every countable amenable group can be tiled by F{\o}lner sets\cite{DHZ19}. This led to the discovery that free measure-preserving actions of countable amenable groups always admit measurable F{\o}lner tilings on an invariant conull set \cite{CJKMST-D18}, and that continuous actions of groups of subexponential volume-growth on $0$-dimensional compact metric spaces always admit clopen F{\o}lner tilings \cite{DZ}.

In this brief section we show that finite Borel asymptotic dimension (or more generally, finite asymptotic separation index) is sufficient for the existence of Borel F{\o}lner tilings. A similar topological result providing clopen F{\o}lner tilings is proven in the next section. Combined with Theorem \ref{thm:normal_asdim} this produces a large class of new groups whose actions admit F{\o}lner tilings.

\begin{thm} \label{thm:folner}
Let $G$ be a countable amenable group, let $X$ be a standard Borel space, and let $G \acts X$ be a free Borel action. Assume that $\asi(H \acts X) < \infty$ for every finitely generated subgroup $H \leq G$. Then for every finite nonempty $K \subseteq G$ and $\delta > 0$ there exist $(K, \delta)$-invariant finite sets $F_1, \ldots, F_n \subseteq G$ and Borel sets $C_1, \ldots, C_n \subseteq X$ such that the map $\theta : \bigsqcup_{i=1}^n F_i \times C_i \rightarrow X$ given by $\theta(f, c) = f \cdot c$ is a bijection.
\end{thm}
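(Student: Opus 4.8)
The plan is to reduce to a finitely generated subgroup, to fix once and for all a finite family of F\o lner ``shapes'' which can exactly tile that subgroup and its large co-bounded subsets, and then to use finiteness of the asymptotic separation index to assemble a Borel tiling of $X$ in finitely many layers. First I would reduce to the finitely generated case: by amenability of $G$, fix a finite $(K,\delta)$-invariant set $S\subseteq G$ and put $H=\langle S\cup K\cup\{1_G\}\rangle$, a finitely generated subgroup; equip $H$ with a word-length metric $\tau$ and let $\rho=\rho_\tau$ be the induced proper extended metric on $X$. Any subset of $H$ that is $(K,\delta)$-invariant in $H$ is $(K,\delta)$-invariant in $G$, and a Borel partition of $X$ into finitely many translate-families $F_i\cdot C_i$ with $F_i\subseteq H$ is automatically a tiling for $G$; so it suffices to produce such a tiling for the restricted free action $H\acts X$, for which $s:=\asi(H\acts X)<\infty$ by hypothesis.

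Next I would record the group-theoretic engine. By the Downarowicz--Huczek--Zhang theorem \cite{DHZ19}, in a suitably uniform ``relative'' form, there are a finite family $\mathcal F=\{F_1,\dots,F_n\}$ of $(K,\delta)$-invariant subsets of $H$, each containing $1_H$ and contained in a ball $B_\tau(1_H;R_0)$, and a constant $R\geq R_0$, such that any finite $P\subseteq H$ whose complement only touches $P$ ``thinly'' at scale $R$ --- more precisely, any co-bounded region of $H$ sandwiched between two $R$-separated unions of $\mathcal F$-translates --- can be partitioned exactly into translates $gF_i\subseteq P$. The key is that $\mathcal F$ and $R$ are chosen \emph{before} the separation machinery is invoked, so the Borel construction below will only ever present filling-in problems whose ``holes'' are separated from the ambient boundary by more than $R$, even though those problems are of unbounded size.

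The main construction is then a layer-by-layer Borel tiling. Using $\asi(H\acts X)=s$ together with a straightforward adaptation of the buffer technique of Lemma \ref{lem:buffer} and Corollary \ref{cor:ortho} (adapted to work with merely bounded, rather than uniformly bounded, equivalence relations, with buffer widths taken as Borel functions of the point), I would produce, for a rapidly increasing sequence of radii $r_0\ll r_1\ll\cdots\ll r_s$ whose successive gaps far exceed $R$, a Borel cover $\{U_0,\dots,U_s\}$ of $X$ with (i) every $\walkeq{U_i}{r_i}$-class finite, and (ii) whenever $i<j$ and $x\in U_i$, the ball $B(x;r_i)$ either disjoint from or contained in $U_j$. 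One builds an increasing Borel sequence $\varnothing=T_{-1}\subseteq T_0\subseteq\cdots\subseteq T_s=X$, each $T_i$ a union of placed tiles $gF_k$, by the following step: given $T_{i-1}$, work inside each (finite) $\walkeq{U_i}{r_i}$-class $D$; the set $T_{i-1}\cap D$ is a union of previously placed tiles whose boundary, by (ii) and the choice of gaps, is $R$-separated from $\partial D$, so $D\setminus T_{i-1}$ is of the ``thick'' form handled by the engine and can be filled by $\mathcal F$-translates; performing this on a Borel transversal of the finite relation $\walkeq{U_i}{r_i}$ extends the tiling in a Borel way to cover $\bigcup_{k\leq i}U_k$. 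After stage $s$ all of $X$ is tiled using only $F_1,\dots,F_n$, and letting $C_k$ be the Borel set of centres assigned shape $F_k$ gives the required bijection $\theta$.

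The main obstacle is the group-theoretic filling-in step and its interaction with the separation constants: one must ensure a \emph{single} finite family $\mathcal F$ exactly tiles every relative region the layered construction produces --- regions that are ``thick at scale $R$'' but of unbounded size and of shapes determined by the lower layers --- and that the width $R$ by which the buffers must dominate can be fixed before the radii $r_i$ are chosen. Getting the quantifier order right (choose $\mathcal F$ and $R$ first, then the $r_i$) and verifying that the $\asi$-machinery, adapted to bounded-but-not-uniformly-bounded classes, still yields covers with property (ii) is where the real care is needed; the remaining Borel bookkeeping (Borel selectors for finite relations, Lusin--Novikov uniformization to see the relevant sets are Borel) is routine, as in the proofs of Lemma \ref{lem:buffer} and Theorem \ref{thm:asdimhyp}.
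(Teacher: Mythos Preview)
Your overall architecture is reasonable --- reduce to a finitely generated $H$, layer $X$ using $\asi$, and tile layer by layer --- but there is a genuine gap at precisely the step you flag as the main obstacle. You invoke a ``relative'' form of the Downarowicz--Huczek--Zhang theorem asserting that a \emph{single} finite family $\mathcal F$ of $(K,\delta)$-invariant shapes, together with a single constant $R$, exactly partitions \emph{every} finite region $P\subseteq H$ that is ``thick at scale $R$''. No such statement appears in \cite{DHZ19}, and it is not true in this generality: already for $H=\Z^2$ a fixed finite list of rectangles cannot exactly tile every sufficiently thick finite region, and the sets $D\setminus T_{i-1}$ your construction produces have essentially arbitrary (if thick) shape, dictated by the lower layers. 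The quantifier order you correctly identify as critical --- fix $\mathcal F$ and $R$ before choosing the $r_i$ --- is exactly what makes this engine unavailable. (There is also a secondary issue: the orthogonality machinery of Lemma~\ref{lem:buffer} and Corollary~\ref{cor:ortho} genuinely uses uniform bounds to fix the buffer constants $a_n,b_n$ in advance, and passing to pointwise-Borel buffer widths while preserving property~(ii) is not as routine as you suggest.)

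The paper avoids the exact-filling problem by \emph{not} fixing the shapes in advance and not attempting to tile the leftover at each stage. Instead it applies \cite{DHZ19} separately at each of the $d+1$ layers, obtaining at layer $i$ a family $A_0^i,\dots,A_{\ell_i}^i$ that tiles all of $H$ and is $(K_0\cdots K_i,\delta)$-invariant, where $K_{i+1}=K_i\cup\bigcup_p A_p^i(A_p^i)^{-1}$. Inside each $\walkeq{U_i}{B}$-class one simply restricts this global tiling of $H$; the pieces that spill into higher layers are not re-tiled but \emph{absorbed} via a Borel retraction $s$ that pushes each such point into a higher-layer tile. The final pieces are the sets $s^{-1}(A_p^i\cdot y)$, which satisfy $A_p^i\cdot y\subseteq s^{-1}(A_p^i\cdot y)\subseteq K_1\cdots K_i A_p^i\cdot y$ and hence remain $(K,\delta)$-invariant by the extra invariance built into the $A_p^i$'s. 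The shape list is then just the finite collection of $(K,\delta)$-invariant subsets of $K_1\cdots K_{d+1}$. This absorption-and-overshoot mechanism is what replaces your unavailable exact-tiling engine.
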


\begin{proof}
Without loss of generality, we may assume that $1_G \in K$ and thus a set $F$ is $(K, \delta)$-invariant if and only if $|(K \cdot F) \setminus F| < \delta |F|$. Set $H = \langle K \rangle$, $d = \asi(H \acts X)$, and $K_0 = K$.

Inductively assume that finite sets $K_0, \ldots, K_i \subseteq H$ have been defined, where $i \in d+1$. By \cite{DHZ19} there exists a tiling of $H$ by $(K_0 \cdots K_i, \delta)$-invariant sets. More specifically, there are finite $(K_0 \cdots K_i, \delta)$-invariant sets $A_0^i, \ldots, A_{\ell_i}^i \subseteq H$, all containing $1_G$, and sets $T_0^i, \ldots, T_{\ell_i}^i \subseteq H$ such that $\{A_p^i t : p \leq \ell_i, \ t \in T_p^i\}$ partitions $H$. Now set $K_{i+1} = K_i \cup \bigcup_{p \leq \ell_i} A_p^i (A_p^i)^{-1} \subseteq H$ and continue the induction if $i < d$.

Next, set $B = K_{d+1}^{-1} K_{d+1} \subseteq H$ and invoke the definition of $d$ to obtain a Borel cover $\{U_0, \ldots, U_d\}$ of $X$ such that $\walkeq{U_i}{B}$ is finite for every $i$. Let $M_i \subseteq U_i$ be a Borel set containing exactly one point from every $\walkeq{U_i}{B}$-class. For each point $x \in M_i$ we can use the tiling of $H$ by the shapes $A_0^i, \ldots, A_{\ell_i}^i$ to obtain a tiling of $H \cdot x$. By restricting to those tiles that intersect the $\walkeq{U_i}{B}$-equivalence class of $x$ and letting $x \in M_i$ vary, we can obtain a tiling of a set $U_i^+$ containing $U_i$. Specifically, define the Borel sets
$$Z_p^i = \{t \cdot x : x \in M_i, \ t \in T_p^i, \text{ and } A_p^i t \cdot x \cap [x]_{\walkeq{U_i}{B}} \neq \varnothing\}$$
and define $U_i^+ = \bigcup_{p \leq \ell_i} A_p^i \cdot Z_p^i$. Notice that $U_i \subseteq U_i^+ \subseteq K_{d+1} \cdot U_i$ and that the sets $\{A_p^i \cdot z : p \leq \ell_i, \ z \in Z_p^i\}$ partition $U_i^+$.

We now have a tiling of each set $U_i^+$, but a problem we need to overcome is that these sets might not be pairwise disjoint. We first shrink the $U_i^+$'s to pairwise disjoint sets $V_i$ by removing tiles from $U_i^+$ that intersect some $U_j^+$ for $j > i$. Specifically, let $Y_p^i$ consist of those points $z \in Z_p^i$ for which $A_p^i \cdot z$ is disjoint with $\bigcup_{j > i} U_j^+$ and set $V_i = \bigcup_{p \leq \ell_i} A_p^i \cdot Y_p^i$. Then the $V_i$'s are pairwise disjoint and each $V_i$ is partitioned by $\{A_p^i \cdot y : p \leq \ell_i, \ y \in Y_p^i\}$.

The last thing we need to do is to assign each point in $X \setminus \bigcup_{i \in d+1} V_i$ to some tile of some $V_i$ in a way that doesn't interfere with the almost-invariance properties of the tiles. To do this, we will define a Borel function $s : X \rightarrow \bigcup_{i \in d+1} V_i$ that is the identity on $\bigcup_{i \in d+1} V_i$. To begin, for $x \in U_d^+ = V_d$ set $s(x) = x$. Now inductively assume that $s$ is defined on $\bigcup_{j > i} U_j^+$ and consider $x \in U_i^+$. If $x \in V_i$ then set $s(x) = x$. Otherwise pick $p \leq \ell_i$ and $z \in Z_p^i$ with $x \in A_p^i \cdot z$ and, using any fixed enumeration of $G$, set $s(x) = s(a \cdot z)$ where $a$ is the least element of $A_p^i$ satisfying $a \cdot z \in \bigcup_{j > i} U_j^+$. Notice that $s$ is Borel, $s(x) = x$ for all $x \in \bigcup_{i \in d+1} V_i$, and when $x \in U_i^+$ and $x \neq s(x) \in V_j$ we have $j > i$ and $x \in K_{i+1} \cdots K_j \cdot s(x)$.

Using the assignment $s$ and the tilings of the $V_i$'s, we obtain a Borel equivalence relation $E$ by declaring $(x, x') \in E$ if and only if there is $i$ with $s(x), s(x') \in V_i$ and there are $p \leq \ell_i$ and $y \in Y_p^i$ with $s(x), s(x') \in A_p^i \cdot y$. In other words, the classes of $E$ consist of the sets $s^{-1}(A_p^i \cdot y)$ for $i \in d+1$, $p \leq \ell_i$, and $y \in Y_p^i$. Since $A_p^i \cdot y \subseteq s^{-1}(A_p^i \cdot y) \subseteq K_1 \cdots K_i A_p^i \cdot y$, we have
$$|(K \cdot s^{-1}(A_p^i \cdot y)) \setminus s^{-1}(A_p^i \cdot y)| \leq |(K_0 K_1 \cdots K_i A_p^i) \setminus A_p^i| < \delta |A_p^i| \leq \delta |s^{-1}(A_p^i \cdot y)|,$$
and thus every $E$-class is $(K, \delta)$-invariant.

Notice that $[y]_E \subseteq K_1 \cdots K_{d+1} \cdot y$ for every $y \in \bigcup_{i \in d+1} \bigcup_{p \leq \ell_i} Y_p^i$. Let $F_1, F_2, \ldots, F_n$ enumerate the $(K, \delta)$-invariant subsets of $K_1 \cdots K_{d+1}$. For every $m \leq n$ let $C_m$ be the set of $y \in \bigcup_{i \in d+1} \bigcup_{p \leq \ell_i} Y_p^i$ satisfying $[y]_E = F_m \cdot y$. Then $C_m$ is Borel since $y \in C_m$ if and only if $y \in \bigcup_{i \in d+1} \bigcup_{p \leq \ell_i} Y_p^i$, $(y, f \cdot y) \in E$ for every $f \in F_m$, and $(y, h \cdot y) \not\in E$ for every $h \in (K_1 \cdots K_{d+1}) \setminus F_m$. Finally, the map $\theta : \bigsqcup_{i=1}^n F_i \times C_i \rightarrow X$ given by $\theta(f, c) = f \cdot c$ is a bijection since the sets $C_1, \ldots, C_n$ are pairwise disjoint, $\bigcup_{i=1}^n C_i$ contains precisely one point from every $E$-class, and $\theta$ bijects $F_i \times \{c\}$ with $[c]_E$ for every $1 \leq i \leq n$ and $c \in C_i$.
\end{proof}

\section{Topological dynamics and \texorpdfstring{$C^*$}{C*}-algebras} \label{sec:calg}

In this last section we observe that the proof of our main theorem, Theorem \ref{thm:normal_asdim}, easily adapts to the realm of topological dynamics. Moreover, we will see that the proof of Theorem \ref{thm:folner} adapts as well and that together these results have important consequences to $C^*$-algebras.

We first record a basic lemma.

\begin{lem} \label{lem:clopen}
Let $X$ be a $0$-dimensional second countable Hausdorff space and let $G \acts X$ be a free continuous action. Let $U \subseteq X$ be clopen and let $B \subseteq G$ be finite and suppose that $\walkeq{U}{B}$ is uniformly finite. Then:
\begin{enumerate}
\item[(i)] for every $g \in G$ the set $\{x \in U \cap g^{-1} \cdot U : (x, g \cdot x) \in \walkeq{U}{B}\}$ is clopen; and
\item[(ii)] there is a clopen set $Y \subseteq U$ that contains exactly one point from every $\walkeq{U}{B}$-class.
\end{enumerate}
\end{lem}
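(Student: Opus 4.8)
The plan is to prove (i) directly by bounding the lengths of the relevant group words, and then feed (i) into an explicit construction of the clopen transversal in (ii).

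First I would fix a uniform bound $N$ on the cardinalities of the $\walkeq{U}{B}$-classes and set $B' = B \cup B^{-1} \cup \{1_G\}$. The key point is that freeness makes $\walkeq{U}{B}$-paths ``visible'' in the group: any path $x_0, \dots, x_k$ inside $U$ satisfies $x_j = (c_{j-1}\cdots c_0)\cdot x_0$ for suitable $c_i \in B'$, and a path of minimal length has pairwise distinct vertices, all lying in one $\walkeq{U}{B}$-class, so $k \leq N-1$. Hence, for $g \in G$, the set in (i) consists exactly of those $x$ for which there is a word $(c_{k-1},\dots,c_0)$ over $B'$ with $k \leq N-1$ and product $g$ such that every partial product $p_j = c_{j-1}\cdots c_0$ satisfies $p_j \cdot x \in U$. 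There are finitely many such words, each $p_j^{-1}\cdot U$ is clopen because translation by a group element is a homeomorphism and $U$ is clopen, and finite Boolean combinations of clopen sets are clopen; this gives (i). I would then record the notation $S_g$ for this clopen set and note that, writing $G_0 = \{g \in G : S_g \neq \varnothing\}$ (a finite set containing $1_G$), one has $[x]_{\walkeq{U}{B}} = \{g\cdot x : g \in G_0,\ x \in S_g\}$, with $g \mapsto g\cdot x$ injective on $G_0$ by freeness.

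For (ii) I would fix a countable base $\{V_n\}_{n\in\N}$ of clopen subsets of $X$ (available since $X$ is second countable and $0$-dimensional) and define the transversal as follows: for each class $C$, since $X$ is Hausdorff and $0$-dimensional and $C$ is finite, any one point of $C$ has a clopen neighborhood missing the other points of $C$, hence there is a least $n$, call it $n(C)$, with $|V_{n(C)} \cap C| = 1$; let $r(C)$ be that unique point and set $Y = \{r(C) : C \in U/\walkeq{U}{B}\}$. This meets every class exactly once by construction, so the real work is showing $Y$ is clopen. Using (i) and freeness, for each $k$ the sets $P_k = \bigcup_{g\in G_0}(S_g \cap g^{-1}\cdot V_k)$ and $Q_k = \bigcup_{g,h\in G_0,\ g\neq h}(S_g \cap g^{-1}\cdot V_k \cap S_h \cap h^{-1}\cdot V_k)$ are clopen, and they are precisely the sets of $x \in U$ whose class meets $V_k$ in at least one, respectively at least two, points; hence $R_k = P_k \setminus Q_k$ (the set of $x \in U$ whose class meets $V_k$ in exactly one point) is clopen, the sets $D_k = R_k \cap \bigcap_{m<k}(X \setminus R_m)$ form a clopen partition of $U$ recording the value $n([x]) = k$, and $Y \cap D_k = D_k \cap V_k$ is clopen.

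The main obstacle I anticipate is precisely this last step: $Y = \bigsqcup_k (D_k \cap V_k)$ is visibly open, but a countable union of clopen sets need not be clopen, so closedness requires an argument. I would settle it with a clopen-partition trick: if $x \in \overline{Y}$ then $x \in \overline{U} = U$, so $x$ lies in a unique $D_k$; since $D_k$ is open, every neighborhood of $x$ that meets $Y$ must meet $Y \cap D_k = D_k \cap V_k$, so $x \in \overline{D_k \cap V_k} = D_k \cap V_k \subseteq Y$. Apart from this, the argument is routine; in particular part (i) is entirely elementary once one observes that uniform finiteness of the classes is exactly what confines the relevant words to a finite set — this is the one place the ``uniformly'' in the hypothesis is used.
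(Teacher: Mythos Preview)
Your proof is correct. Part (i) is essentially the same as the paper's: both bound the length of $\walkeq{U}{B}$-paths by the uniform class-size bound and express the set as a finite Boolean combination of clopen translates of $U$.

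Part (ii) is where the two arguments diverge. The paper first refines the clopen base so that each basic open set $V_n$ has the translates $a\cdot V_n$, $a\in A$, pairwise disjoint (using freeness and Hausdorffness); this guarantees that no $V_n\cap U$ contains two $\walkeq{U}{B}$-equivalent points, and then the selector is simply ``for each class, take the point lying in the earliest $V_n$.'' Closedness is then obtained by writing $X\setminus Y$ as a union of the clopen sets $V_n\setminus(Y_n\cup\bigcup_{i<n}V_i)$. You instead work with an \emph{arbitrary} clopen base and select, for each class $C$, the unique point of $V_{n(C)}\cap C$ where $n(C)$ is the first index with $|V_n\cap C|=1$; the price is that you must manufacture the clopen sets $P_k,Q_k,R_k$ from part (i) to detect the intersection counts. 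Your clopen-partition argument for closedness (via the $D_k$) is the natural counterpart of the paper's complement-as-union argument. One phrasing nit: it is not literally true that ``every neighborhood of $x$ that meets $Y$ must meet $Y\cap D_k$''; what you mean (and what suffices) is that neighborhoods of $x$ \emph{contained in} $D_k$ exist and must meet $Y$, hence meet $Y\cap D_k$. The trade-off is that the paper's special base makes the selection rule and closedness proof shorter, while your approach avoids constructing the special base but leans more heavily on (i).
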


\begin{proof}
Since $\walkeq{U}{B}$ is uniformly finite, there is a finite set $A \subseteq G$ with $[x]_{\walkeq{U}{B}} \subseteq A \cdot x$ for all $x \in X$ (specifically, one can take $A = (B \cup B^{-1} \cup \{1_G\})^n$ for sufficiently large $n$).

(i). Let $P$ be the set of all finite sequences $a_0, a_1, \ldots, a_n \in A$ satisfying $a_0 = 1_G$, $a_i \neq a_j$ for all $i \neq j$, and satisfying either $a_{i+1} \in B a_i$ or $a_i \in B a_{i+1}$ for every $i < n$. Then $P$ is finite and for $g \in G$ and $x \in U \cap g^{-1} \cdot U$ we have $(x, g \cdot x) \in \walkeq{U}{B}$ if and only if there is $(a_0, \ldots, a_n) \in P$ with $a_n = g$ and satisfying $x \in a_i^{-1} \cdot U$ for every $0 \leq i \leq n$. Thus $\{x \in U \cap g^{-1} \cdot U : (x, g \cdot x) \in \walkeq{U}{B}\}$ is clopen.

(ii). Since the action is free, for any $x \in X$ we have that $|A \cdot x| = |A|$ and therefore for every open set $V$ containing $x$ there is a clopen set $V' \subseteq V$ containing $x$ with the sets $a \cdot V'$, $a \in A$, pairwise disjoint. So there is a base $\mathcal{V}$ for the topology on $X$ such that each $V \in \mathcal{V}$ is clopen and the sets $a \cdot V$, $a \in A$, are pairwise disjoint. Since $X$ is second countable, there is a countable base $\{V_n : n \in \N\} \subseteq \mathcal{V}$ for the topology on $X$. Notice that no two points of $V_n \cap U$ can be $\walkeq{U}{B}$-equivalent. Define $Y_n$ to be the set of $v \in V_n \cap U$ with $[v]_{\walkeq{U}{B}} \cap \bigcup_{i < n} V_i = \varnothing$. Then $Y_n$ is clopen since $v \in Y_n$ if and only if $v \in V_n \cap U$ and for every $a \in A$ we either have $a \cdot v \not\in U$, $(v, a \cdot v) \notin \walkeq{U}{B}$, or $a \cdot v \not\in \bigcup_{i < n} V_i$. Therefore $Y = \bigcup_n Y_n$ is open. Moreover, $Y$ is also closed since $X \setminus Y$ is the union of the clopen sets $V_n \setminus (Y_n \cup \bigcup_{i < n} V_i)$. Lastly, it is immediate from the definition that $Y$ meets every $\walkeq{U}{B}$-class precisely once.
\end{proof}

In the realm of topological dynamics, the appropriate analog of asymptotic dimension is the notion of dynamic asymptotic dimension as defined by Guentner, Willet, and Yu in \cite{GWY17}. Specifically, for a discrete group $G$ acting continuously on a locally compact Hausdorff space $X$, the \emph{dynamic asymptotic dimension} is $d$ if $d \in \N$ is least with the property that for any compact set $K \subseteq X$ and finite set $B \subseteq G$ there are open sets $U_0, \ldots, U_d$ that cover $K$ such that for every $i$ the set $T^{U_i}_B$ is finite, where $g \in T^{U_i}_B$ if and only if there are $x \in U_i$, $m \geq 1$, and $b_1, \ldots, b_m \in B \cup B^{-1}$ with $g = b_m b_{m-1} \cdots b_1$ and $b_k b_{k-1} \cdots b_1 \cdot x \in U_i$ for every $1 \leq k \leq m$. When no such $d$ exists the dynamic asymptotic dimension is defined to be $\infty$. When the action is free (as we will always assume), one can instead simply require that $\walkeq{U_i}{B}$ be uniformly finite for every $i$.

For our purposes, we will consider free continuous actions on $0$-dimensional spaces and will want the stronger property that the sets $U_0, \ldots, U_d$ are clopen and cover all of $X$ rather than a given compact subset of $X$. We call the least such $d$ with this property the \emph{strong dynamic asymptotic dimension} of the action (this terminology is introduced solely for purposes of convenience for the remainder of this section). It is immediate from the definitions that for a free continuous action on a $0$-dimensional locally compact Hausdorff space the strong dynamic asymptotic dimension is greater than or equal to the dynamic asymptotic dimension. We first make the easy observation that these concepts coincide when $X$ is additionally compact.

\begin{lem} \label{lem:compact}
Let $G$ be a countable group, $X$ a $0$-dimensional compact Hausdorff space, and $G \acts X$ a free continuous action. Then the strong dynamic asymptotic dimension and the dynamic asymptotic dimension of this action are equal.
\end{lem}

\begin{proof}
Let $d$ be the dynamic asymptotic dimension of the action. The strong dynamic asymptotic dimension must be at least $d$. So we assume $d < \infty$ and will show that the strong dynamic asymptotic dimension is at most $d$. Let $B \subseteq G$ be finite. Since $X$ is compact and the dynamic asymptotic dimension is $d$, there is an open cover $\{V_0, \ldots, V_d\}$ of $X$ such that $\walkeq{V_i}{B}$ is uniformly finite for every $i$. It is easily seen that any disjoint pair of closed subsets of $X$ is separated by a clopen set. So we can define clopen sets $U_i$ inductively by requiring $U_i \subseteq V_i$ to be a clopen set containing $X \setminus (\bigcup_{j < i} U_j \cup \bigcup_{j > i} V_j)$. Then the $U_i$'s are clopen and cover $X$, and $\walkeq{U_i}{B}$ is uniformly finite since $U_i \subseteq V_i$. Thus the strong dynamic asymptotic dimension is at most $d$.
\end{proof}

The following lemma is a topological analog of Theorem \ref{thm:equal}. We remark that in this setting we only obtain a bound on dynamic asymptotic dimension rather than equality. This is due to the fact that for actions on non-compact spaces, dynamic asymptotic dimension can be less than the standard asymptotic dimension of the acting group \cite[Rem. 2.3.(ii)]{GWY17}.

\begin{lem} \label{lem:clopen_equal}
Let $G$ be a countable group, $X$ a $0$-dimensional second countable Hausdorff space, and let $G \acts X$ be a free continuous action. Then the strong dynamic asymptotic dimension of this action is either infinite or equal to $\asdim(G)$. In particular, when the strong dynamic asymptotic dimension is finite and $X$ is in addition locally compact, the dynamic asymptotic dimension of the action $G \acts X$ is at most the standard asymptotic dimension of $G$.
\end{lem}

\begin{proof}
It is immediate from the definitions that the strong dynamic asymptotic dimension must be greater than or equal to $\asdim(G)$ (since the action is free). So it suffices to assume that the strong dynamic asymptotic dimension is finite and show that it is at most $\asdim(G)$. To do so, we will retrace the proof of Theorem \ref{thm:equal} and the supporting Lemmas \ref{lem:smooth_asdim} and \ref{lem:finiteunion} with clopen sets in place of Borel sets. Fix any proper right-invariant metric $\tau$ on $G$, and let $\rho$ be the extended metric on $X$ satisfying $\rho(g \cdot x, h \cdot x) = \tau(g, h)$ for all $g, h \in G$ and $x \in X$ and satisfying $\rho(x, y) = \infty$ whenever $G \cdot x \neq G \cdot y$. Notice that by definition $\asdim(G)$ is equal to $\asdim(G, \tau) = \asdim(X, \rho)$.

Let's say that a clopen set $Y \subseteq X$ has clopen $(r, R)$-dimension $q$ if there exist clopen sets $U_0, \ldots, U_q$ covering $Y$ such that for every $i$ each class of $\walkeq{U_i}{r}$ has diameter at most $R$. In analogy with Lemma \ref{lem:smooth_asdim}, we claim that if $Y \subseteq X$ is clopen, $\walkeq{Y}{r}$ is uniformly bounded, and $(G, \tau)$ has $(r, R)$-dimension $d$, then $Y$ has clopen $(r, R)$ dimension at most $d$. Indeed, by Lemma \ref{lem:clopen} there is a clopen set $Y_* \subseteq Y$ that meets every $\walkeq{Y}{r}$-class in exactly one point. Let $\{T_0, \ldots, T_d\}$ be any partition of $G$ with the property that for every $i$ each class of $\walkeq{T_i}{r}$ has diameter at most $R$. Let $w > 0$ bound the diameter of every $\walkeq{Y}{r}$-class, define the finite sets $T_i' = \{t \in T_i : \tau(1_G, t) \leq w\}$, and set
$$U_i = \bigcup_{t \in T_i'} \{t \cdot y : y \in Y_* \cap t^{-1} \cdot Y \text{ and } (y, t \cdot y) \in \walkeq{Y}{r}\}.$$
Then $U_0, \ldots, U_d$ are clopen by Lemma \ref{lem:clopen} and they partition $Y$. Additionally, any $\walkeq{U_i}{r}$-class is a subset of a $\walkeq{T_i \cdot y}{r}$-class for some $y \in Y_*$ and thus has diameter at most $R$. So $Y$ has clopen $(r, R)$-dimension at most $d$ as claimed.

Next, in analogy with Lemma \ref{lem:finiteunion}, we claim that if $X$ is partitioned by the clopen sets $X_0, \ldots, X_{n-1}$ and each $X_i$ has clopen $(15r_i, r_{i+1})$-dimension at most $d$, where the $r_n$'s are non-decreasing, then $(X, \rho)$ has clopen $(r_0, 5 r_n)$-dimension at most $d$. Indeed, one can follow the proof of Lemma \ref{lem:finiteunion} and change each instance of ``Borel'' to ``clopen.''

Finally, we retrace the proof of Theorem \ref{thm:equal}. Let $D$ be the strong dynamic asymptotic dimension of $G \acts X$, set $d = \asdim(G)$, and let $B \subseteq G$ be finite. Since $\asdim(X, \rho) = d$ we can find an increasing sequence of radii $r_i$ such that $r_0 > \max \{\tau(b, 1_G) : b \in B\}$ and such that $X$ has standard $(15 r_i, r_{i+1})$-dimension at most $d$ for every $i$. By definition of $D$, we can find a clopen partition $\{X_0, \ldots, X_D\}$ of $X$ such that $\walkeq{X_i}{r_D}$ is uniformly bounded for every $i$. Our analog of Lemma \ref{lem:smooth_asdim} implies that each $X_i$ has clopen $(15 r_i, r_{i+1})$-dimension at most $d$, and thus our analog of Lemma \ref{lem:finiteunion} implies that $(X, \rho)$ has clopen $(r_0, 5 r_{D+1})$-dimension at most $d$. This means that there exists a clopen cover $\{U_0, \ldots, U_d\}$ of $X$ such that for every $i$ each class of $\walkeq{U_i}{r_0}$ has diameter at most $5 r_{D+1}$. From our definition of $r_0$ we then see that $\walkeq{U_i}{B}$ is uniformly finite for every $i$. This shows that the strong dynamic asymptotic dimension of the action is at most $d$ as claimed.
\end{proof}

We next turn to proving a topological version of Theorem \ref{thm:normal_asdim}. We will proceed much in the same way as before.

\begin{defn} \label{defn:good_clopen}
For countable groups $G \lhd H$, let's say that $(G, H)$ has \emph{good clopen asymptotics} if for every $0$-dimensional second countable Hausdorff space $X$ and every free continuous action $H \acts X$ the following two properties hold:
\begin{enumerate}
\item[(i)] the restricted action $G \acts X$ has finite strong dynamic asymptotic dimension; and
\item[(ii)] for every finite set $C \subseteq H$ there is $\ell \in \N$ so that for every finite set $B \subseteq G$ there is a clopen cover $\{W_0, \ldots, W_\ell\}$ of $X$ consisting of $C B \setminus G$-independent sets.
\end{enumerate}
\end{defn}

\begin{lem} \label{lem:clopen_expand}
Let $F \lhd H$ be countable groups with $(F, H)$ having good clopen asymptotics. Let $X$ be a $0$-dimensional second countable Hausdorff space, let $H \acts X$ be a free continuous action, and let $d < \infty$ be the strong dynamic asymptotic dimension of the restricted action $G \acts X$. Then for any finite sets $C \subseteq H$ and $A \subseteq F$ there is a clopen cover $\{U_i^n : i \in d+1, n \in \ell+1\}$ of $X$ such that $U_i^n$ is $C A \setminus F$-independent and $\walkeq{U_i^n}{A}$ is uniformly finite, and there are finite sets $C A \cap F \subseteq A_n \subseteq F$ such that whenever $Z$ is a $\walkeq{U_i^n}{A_n}$-class, $c \in C$, and $m > n$, the set $c A \cdot Z$ is either disjoint with $U_i^m$ or else contained in a single $\walkeq{U_i^m}{A_m}$-class.
\end{lem}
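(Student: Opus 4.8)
The plan is to retrace the proof of Lemma~\ref{lem:expand} essentially verbatim, replacing ``Borel'' by ``clopen'' and ``standard Borel space'' by ``$0$-dimensional second countable Hausdorff space'' throughout, with Definition~\ref{defn:good_clopen} playing the role of Definition~\ref{defn:expand}. The only point in that proof where Borel measurability was used---namely, the application of Lusin--Novikov uniformization to verify that the saturation operators $s_i^n$ send Borel sets to Borel sets---is precisely where Lemma~\ref{lem:clopen} will take over. Concretely, we may assume $1_H \in A \cap C$; we apply part (ii) of the good clopen asymptotics of $(F,H)$ to the set $C^{-1}C$ to obtain $\ell$; we set $A_0 = A \cup (CA \cap F)$; and, inductively, we put $T_n = A_n^{-1} C A_n$ and $Q_n = A_n^{-1}A_n^2 \cup (T_n^{-1}T_n \cap F)$, apply part (i) of the good clopen asymptotics of $(F,H)$ with $B = Q_n$ to obtain a clopen cover $\{V_0^n,\ldots,V_d^n\}$ of $X$ with each $\walkeq{V_i^n}{Q_n}$ uniformly finite, and choose the finite set $A_{n+1}\subseteq F$ exactly as in Lemma~\ref{lem:expand}. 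Since $n$ ranges only over the finite set $\{0,\ldots,\ell\}$, the union $\bigcup_{n\leq\ell} T_n^{-1}T_n$ is a finite subset of $C^{-1}CF$, so we may fix a finite $B\subseteq F$ with $\bigcup_{n\leq\ell}T_n^{-1}T_n \subseteq C^{-1}C\,B$ and apply part (ii) of good clopen asymptotics with this $B$ to produce a clopen cover $\{W_0,\ldots,W_\ell\}$ of $X$ with each $W_n$ being $T_n^{-1}T_n\setminus F$-independent. We then define the operators $s_i^n$ and the sets $U_i^0 = V_i^0\cap W_0$ and $U_i^n = s_i^0\circ\cdots\circ s_i^{n-1}(V_i^n\cap W_n)$ word for word as in the proof of Lemma~\ref{lem:expand}.

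The one genuinely new step is the claim that $s_i^n(X')$ is clopen whenever $X'$ is clopen. As recorded in the proof of Lemma~\ref{lem:expand}, $x\in s_i^n(X')$ if and only if there exist $c\in C$, $t_1,t_2\in A_n^{-1}(C\cap cF)A_n$, and $a\in A_{n+1}$ with $a\cdot x\in X'$, with $t_1^{-1}\cdot x,\ t_2^{-1}a\cdot x\in V_i^n\cap W_n$, and with $(t_1^{-1}\cdot x,\ t_2^{-1}a\cdot x)\in \walkeq{V_i^n\cap W_n}{Q_n}$. Since the action is by homeomorphisms, each of the first three membership conditions defines a clopen set of $x$; and, writing $U = V_i^n\cap W_n$ and $g = t_2^{-1}at_1$, the last condition describes the preimage, under the homeomorphism $x\mapsto t_1^{-1}\cdot x$, of the set $\{u\in U\cap g^{-1}\cdot U : (u,g\cdot u)\in\walkeq{U}{Q_n}\}$, which is clopen by Lemma~\ref{lem:clopen}.(i). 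Here we use that $\walkeq{V_i^n\cap W_n}{Q_n}$ is uniformly finite---being a subrelation of $\walkeq{V_i^n}{Q_n}$---so that Lemma~\ref{lem:clopen} applies. As $C$, the $A_n$'s, and $A_{n+1}$ are finite, $s_i^n(X')$ is therefore a finite Boolean combination of clopen sets, hence clopen, and by induction each $U_i^n$ is clopen. Everything else in the proof of Lemma~\ref{lem:expand} is a purely set-theoretic or group-theoretic manipulation that never used measurability: that $\{U_i^n\}$ covers $X$; that $U_i^n$ is $CA\setminus F$-independent and $\walkeq{U_i^n}{A}$ is uniformly finite; the idempotence $s_i^n\circ s_i^n = s_i^n$; and the final dichotomy that for a $\walkeq{U_i^n}{A_n}$-class $Z$, $c\in C$, and $m>n$, the set $cA\cdot Z$ is either disjoint from $U_i^m$ or contained in a single $\walkeq{U_i^m}{A_m}$-class. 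Each of these transfers unchanged.

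I expect the only real---and rather mild---obstacle to be bookkeeping: one must take care that the finitely many scales $Q_0,\ldots,Q_\ell$ (hence the sets $T_n$) and the finite set $B\subseteq F$ with $\bigcup_{n\leq\ell}T_n^{-1}T_n\subseteq C^{-1}C\,B$ are all fixed \emph{before} invoking part (ii) of good clopen asymptotics, so that a single clopen cover $\{W_0,\ldots,W_\ell\}$ serves all the indices $n\leq\ell$ at once, just as in the proof of Lemma~\ref{lem:expand}. No conceptual ingredient beyond Lemma~\ref{lem:clopen} is needed.
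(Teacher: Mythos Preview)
Your proposal is correct and follows exactly the approach of the paper's own proof: retrace Lemma~\ref{lem:expand} with clopen sets in place of Borel sets, using Lemma~\ref{lem:clopen}.(i) to verify that the saturation operators $s_i^n$ preserve clopen sets. Your write-up is in fact more detailed than the paper's, which simply asserts that the Borel characterization of $s_i^n(X')$ from the proof of Lemma~\ref{lem:expand}, combined with Lemma~\ref{lem:clopen}, yields the clopen version.
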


\begin{proof}
We argue that the proof of Lemma \ref{lem:expand} can be followed but with clopen sets in place of Borel sets. Specifically, in that proof we can choose the sets $V_i^n$ and $W_n$ to be clopen rather than Borel. Notice that the explanation given there for why $s_i^n(X')$ is Borel when $X'$ is Borel shows, when combined with Lemma \ref{lem:clopen}, that $s_i^n(X')$ is clopen when $X'$ is clopen. Therefore the sets $U_i^0 = V_i^0 \cap W_0$ and $U_i^n = s_i^0 \circ \cdots \circ s_i^{n-1}(V_i^n \cap W_n)$, $n \geq 1$, are clopen.
\end{proof}

\begin{lem} \label{lem:clopen_polystep}
Let $H$ be a countable group and let $F \lhd G$ be normal subgroups of $H$. Assume that $(F, H)$ has good clopen asymptotics and that $G / F$ has $\{\phi_c : c \in C\}$-bounded packing whenever $C \subseteq H$ is a finite set with the property that each automorphism $\phi_c(g F) = c g c^{-1} F$ is either trivial or an outer automorphism. Then $(G, H)$ has good clopen asymptotics.
\end{lem}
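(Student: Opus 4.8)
The plan is to run the proof of Lemma~\ref{lem:polystep} essentially verbatim, substituting ``clopen'' for ``Borel'' throughout, feeding in Lemma~\ref{lem:clopen_expand} in place of Lemma~\ref{lem:expand}, and invoking Lemma~\ref{lem:clopen} (together with continuity of the action and finiteness of all the relevant group sets) wherever the Borel argument silently used a Borel selector or the Borel-ness of a union of walk-classes.

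First I would fix a $0$-dimensional second countable Hausdorff space $X$ and a free continuous action $H \acts X$, let $C \subseteq H$ be a finite symmetric set containing $1_H$, and let $d_F \in \N$ be the integer witnessing Definition~\ref{defn:good_clopen}.(i) for $(F, H)$. Exactly as in the original, I replace each $c \in C$ by a suitable element of $cG$ so that every conjugation automorphism of $G$ by an element of $C$ is trivial or outer, apply Corollary~\ref{cor:steppack} to obtain $k_{G/F}$, set $\ell_G = (k_{G/F}+1)(d_F+1)$, fix a finite $B_0 \subseteq G$, and obtain $A \subseteq F$ and $B_0 \subseteq B \subseteq G$ as there. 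Then I apply Lemma~\ref{lem:clopen_expand} in place of Lemma~\ref{lem:expand} to obtain the finite sets $A_n$ and a \emph{clopen} cover $\{U_i^n\}$ with the properties stated there.

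The construction of the shrunken sets $V_i^n \subseteq U_i^n$ and of the proper colorings $\pi_i$ then proceeds as before. The sets $V_i^{\ell_F} = U_i^{\ell_F}$ and $V_i^n = U_i^n \setminus \bigcup_{m > n} B \cdot V_i^m$ are clopen, since $B$ is finite, the action is continuous, and the union is finite; their pairwise disjointness (for fixed $i$), their $\walkeq{U_i^n}{A_n}$-invariance, and properties (i) and (ii) from the original proof are purely set-theoretic consequences of Lemma~\ref{lem:clopen_expand} and go through unchanged. The one genuinely new point is that the greedy coloring $\pi_i \from \bigsqcup_n V_i^n \to k_{G/F}+1$ must now be \emph{locally constant}, not merely Borel. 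This is where care is needed: the directed graph $\Gamma_i$ uses only edges given by elements of the fixed finite set $B^3 C B^3$, so for each $t \in B^3 C B^3$ the set $\{x \in V_i^n : t \cdot x \in \bigcup_{m>n} V_i^m\}$ is clopen by continuity, and on it $\pi_i(t\cdot x)$ is locally constant by the reverse induction hypothesis; hence the set of $\pi_i$-values realized by forward $\Gamma_i$-neighbors of $x$, and therefore the least value not among them, varies locally constantly in $x \in V_i^n$. Combined with the (unchanged) verification that $\pi_i \res V_i^n$ is $\walkeq{V_i^n}{A_n}$-invariant via property (i), this shows each $\pi_i^{-1}(j)$ is clopen; Lemma~\ref{lem:clopen}.(i) may also be used here to express the relevant sets clopenly.

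Finally, $W_i^j = B \cdot \pi_i^{-1}(j)$ is clopen (again finite $B$, continuous action), and the verifications that the $\ell_G$-many sets $W_i^j$ cover $X$, that $\walkeq{W_i^j}{B}$ is uniformly finite, and that each $W_i^j$ is $C B \setminus G$-independent are identical to those in Lemma~\ref{lem:polystep}. Since $\ell_G$ was chosen before $B$, this establishes clause~(i) of Definition~\ref{defn:good_clopen} with $d = \ell_G - 1$ and clause~(ii) via the $C B \setminus G$-independence, so $(G, H)$ has good clopen asymptotics. I expect the main obstacle to be precisely the local constancy of the colorings $\pi_i$: one must check that the greedy rule is computed from only finitely much data about the action near each point, so that clopen-ness propagates through the reverse induction; everything else is a mechanical translation of the Borel argument.
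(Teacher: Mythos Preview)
Your proposal is correct and follows essentially the same approach as the paper: replace Lemma~\ref{lem:expand} by Lemma~\ref{lem:clopen_expand}, observe that the $V_i^n$ are clopen by finite unions and continuity, and verify by reverse induction that the greedy coloring $\pi_i$ is continuous since its value at $x$ depends only on finitely many clopen conditions and the already-established continuity of $\pi_i$ on higher-indexed pieces. Your treatment of the local constancy of $\pi_i$ is in fact somewhat more detailed than the paper's, which simply asserts that continuity ``follows immediately from induction.''
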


\begin{proof}
Let $X$ be a $0$-dimensional second countable Hausdorff space and let $H \acts X$ be a free continuous action. Let $d_F$ be the strong dynamic asymptotic dimension of the restricted action $F \acts X$. We follow the proof of Lemma \ref{lem:polystep}. By replacing Lemma \ref{lem:expand} with Lemma \ref{lem:clopen_expand}, we can choose the sets $U_i^n$ to be clopen. It is immediate that the sets $V_i^{\ell_F} = U_i^{\ell_F}$ and $V_i^n = U_i^n \setminus \bigcup_{m > n} B \cdot V_i^m$ are clopen. Since the value $\pi_i(x)$, for $x \in V_i^n$, is defined to be $0$ if $n = \ell_F$ and for $n < \ell_F$ is defined to be the least element of $(k_{G/F}+1) \setminus \{\pi_i(t \cdot x) : t \in B^3 C B^3 \text{ and } t \cdot x \in \bigcup_{m > n} V_i^m\}$, it follows immediately from induction that the function $\pi_i$ is continuous. Therefore the sets $W_i^j = B \cdot \pi_i^{-1}(j)$ are clopen. The concluding arguments of the proof of Lemma \ref{lem:polystep} then show that $(G, H)$ has good clopen asymptotics.
\end{proof}

\begin{thm} \label{thm:normal_clopen}
Let $G$ be a countable group admitting a normal series where each quotient of consecutive terms is a finite group, an increasing union of finite characteristic subgroups, or a torsion-free abelian group with finite $\Q$-rank, except the top quotient can be any group of uniform local polynomial volume-growth or the lamplighter group $\Z_2 \wr \Z$. If $X$ is a $0$-dimensional second countable Hausdorff space and $G \acts X$ is a continuous free action then the strong dynamic asymptotic dimension of $G \acts X$ is equal to $\asdim(G) < \infty$. In particular, all free continuous actions of $G$ on locally compact $0$-dimensional second countable Hausdorff spaces have dynamic asymptotic dimension at most $\asdim(G) < \infty$.
\end{thm}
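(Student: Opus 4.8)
The plan is to follow the proof of Theorem~\ref{thm:normal_asdim} verbatim, substituting the clopen constructions of this section for the Borel ones. Write the normal series as $\{1_G\} = G_n \lhd G_{n-1} \lhd \cdots \lhd G_0 = G$, and in the lamplighter case add one term and re-index so that $G_0/G_2 \cong \Z_2 \wr \Z$ via an isomorphism carrying $G_1/G_2$ onto $\bigoplus_{m \in \Z}\Z_2$. By Theorem~\ref{thm:normal_asdim} we already know $\asdim(G) < \infty$. It therefore suffices to prove that $(G,G)$ has good clopen asymptotics in the sense of Definition~\ref{defn:good_clopen}: property (i) then supplies, for each finite $B \subseteq G$, a clopen cover $\{U_0, \ldots, U_{d_0}\}$ of $X$ (with $d_0 \in \N$ independent of $B$) such that $\walkeq{U_i}{B}$ is uniformly finite, whereupon Lemma~\ref{lem:clopen_equal} replaces $d_0$ by $\asdim(G)$, and the final sentence about dynamic asymptotic dimension is exactly the ``in particular'' clause of Lemma~\ref{lem:clopen_equal}.

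To prove $(G,G)$ has good clopen asymptotics I would argue by downward induction on $k$ that $(G_k, G)$ has good clopen asymptotics, for $k = n, n-1, \ldots, 0$. The inductive step is Lemma~\ref{lem:clopen_polystep}, which passes good clopen asymptotics from $(G_{k+1}, G)$ to $(G_k, G)$ provided $G_k/G_{k+1}$ has $\{\phi_c : c \in C\}$-bounded packing for every finite $C \subseteq G$ all of whose induced conjugation automorphisms are trivial or outer. This packing hypothesis is verified exactly as in Theorem~\ref{thm:normal_asdim}: if $G_k/G_{k+1}$ is finite or torsion-free abelian of finite $\Q$-rank, Lemma~\ref{lem:autopack}.(ii) applies to an arbitrary finite family of automorphisms; if $k = 1$ in the lamplighter case, $G_1/G_2 = \bigoplus_{m\in\Z}\Z_2$ is abelian so every such $\phi_c$ is either trivial or an outer automorphism given by a coordinate shift, and Lemma~\ref{lem:autopack}.(iii) applies; and if $k = 0$ with $G_0/G_1$ of uniform local polynomial volume-growth, each $c \in G_0$ may be replaced by $1_G \in cG_0$, making every $\phi_c$ trivial, so Lemma~\ref{lem:autopack}.(i) applies with $\Phi = \{\id\}$.

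The base case is $(G_n, G) = (\{1_G\}, G)$. Property (i) of Definition~\ref{defn:good_clopen} holds with $d = 0$ and $U_0 = X$, since for $B \subseteq \{1_G\}$ the relation $\walkeq{X}{B}$ is equality. For property (ii), given a finite symmetric $C \subseteq G$ with $1_G \in C$, I would invoke the clopen counterpart of \cite[Prop.~4.6]{KST99}: because the action is free and continuous on a $0$-dimensional second countable Hausdorff space, the graph on $X$ joining $x$ to $cx$ for $c \in C \setminus \{1_G\}$ is a clopen graph of bounded degree and hence admits a finite clopen proper coloring, whose color classes form the required cover by $C \setminus \{1_G\}$-independent clopen sets. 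This is the one place where $0$-dimensionality and continuity of the action (rather than mere Borelness) genuinely enter, and I expect it to be the main obstacle.

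Everything else is bookkeeping: the heavy lifting has already been packaged into Lemmas~\ref{lem:clopen_expand}, \ref{lem:clopen_polystep} and \ref{lem:clopen_equal}, whose proofs mirror their Borel predecessors with ``Borel'' replaced by ``clopen'' and with Lemma~\ref{lem:clopen} supplying clopen selectors in place of Borel selectors. Once the clopen coloring input for the base case is in hand, the induction and the concluding application of Lemma~\ref{lem:clopen_equal} run automatically, and the statement about dynamic asymptotic dimension follows since a clopen cover of all of $X$ with $\walkeq{U_i}{B}$ uniformly finite restricts to an open cover of any compact subset witnessing $\asdim(G)$ as an upper bound.
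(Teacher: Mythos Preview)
Your proposal is correct and follows essentially the same route as the paper: induct through the normal series using Lemma~\ref{lem:clopen_polystep} in place of Lemma~\ref{lem:polystep}, then apply Lemma~\ref{lem:clopen_equal} in place of Theorem~\ref{thm:equal} to pass from some finite $d_0$ to $\asdim(G)$. The only point where you stop short is the base case, where you appeal to a ``clopen counterpart of \cite[Prop.~4.6]{KST99}'' without proving it; the paper fills exactly this gap with an explicit greedy construction on a clopen basis $\{V_n\}$ satisfying $c \cdot V_n \cap V_n = \varnothing$ for $c \in C \setminus \{1_G\}$ (such a basis was already produced in the proof of Lemma~\ref{lem:clopen}), defining $\pi(x)$ for $x \in V_n \setminus \bigcup_{k<n} V_k$ to be the least color not used by any $c \cdot x$ lying in $\bigcup_{k<n} V_k$.
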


\begin{proof}
Follow the proof of Theorem \ref{thm:normal_asdim}, but with Lemma \ref{lem:clopen_polystep} replacing Lemma \ref{lem:polystep} and Lemma \ref{lem:clopen_equal} replacing Theorem \ref{thm:equal}. For verifying the base case of the induction, recall from the proof of Lemma \ref{lem:clopen} that, given a finite set $C \subseteq G$, there is a base $\{V_n : n \in \N\}$ for the topology on $X$ such that each $V_n$ is clopen and $c \cdot V_n \cap V_n = \varnothing$ for all $c \in C \setminus \{1_G\}$. Inductively define a continuous function $\pi : X \rightarrow |C|+1$ by letting $\pi(x)$ be the least element of $(|C| + 1) \setminus \{\pi(c \cdot x) : c \in C, \ c \cdot x \in \bigcup_{k < n} V_k\}$ for $x \in V_n \setminus \bigcup_{k < n} V_k$. Then $\{\pi^{-1}(i) : i \in |C|+1\}$ is a clopen cover of $X$ consisting of $C \setminus \{1_G\}$-independent sets.
\end{proof}

The theorem below together with Lemma \ref{lem:compact} and Theorem \ref{thm:folner} completes the proof of Theorem \ref{intro:folner} from the introduction.

\begin{thm} \label{thm:clopen_folner}
Let $G$ be a countable amenable group, let $X$ be a $0$-dimensional second countable Hausdorff space, and let $G \acts X$ be a free continuous action.  Assume that for every finitely generated subgroup $H \leq G$ the restricted action $H \acts X$ has finite strong dynamic asymptotic dimension. Then for every finite $K \subseteq G$ and $\delta > 0$ there exist $(K, \delta)$-invariant finite sets $F_1, \ldots, F_n \subseteq G$ and clopen sets $C_1, \ldots, C_n \subseteq X$ such that the map $\theta : \bigsqcup_{i=1}^n F_i \times C_i \rightarrow X$ given by $\theta(f, c) = f \cdot c$ is a bijection.
\end{thm}

\begin{proof}
We follow the proof of Theorem \ref{thm:folner} but use clopen sets. As before let $H = \langle K \rangle$ and let $d$ be the strong dynamic asymptotic dimension of $H \acts X$. We can then choose the sets $U_0, \ldots, U_d$ to be clopen and with the property that $\walkeq{U_i}{B}$ is uniformly finite for every $i$. Consequently, there is a finite set $Q \subseteq G$ with $K_{d+1} \cdot [x]_{\walkeq{U_i}{B}} \subseteq Q \cdot x$ for every $x \in U_i$ (one can use $Q = (B \cup B^{-1} \cup \{1_G\})^n$ for sufficiently large $n$). Next we use Lemma \ref{lem:clopen} to choose the $M_i$'s to be clopen. Since $T_p^i$ can be replaced with the finite set $T_p^i \cap Q$ in the definition of $Z_p^i$, we see that each $Z_p^i$ is clopen by Lemma \ref{lem:clopen}. Similarly, $U_i^+$, $Y_p^i$, and $V_i$ are clopen.

Next, we claim that the function $t : X \rightarrow H$ defined by the condition $s(x) = t(x) \cdot x$ is continuous. For $x \in U_d^+ = V_d$ we have $s(x) = x$ and thus $t(x) = 1_G$. So $t$ is continuous on $U_d^+$. Now inductively assume that $t$ is continuous on $\bigcup_{j > i} U_j^+$. For $x \in V_i$ we have $s(x) = x$ and $t(x) = 1_G$. When $x \in U_i^+ \setminus V_i$ we pick $p \leq \ell_i$ and $a_1 \in A_p^i$ satisfying $a_1^{-1} \cdot x \in Z_p^i$, we pick the least $a_2 \in A_p^i$ satisfying $a_2 a_1^{-1} \cdot x \in \bigcup_{j > i} U_j^+$, and set $s(x) = s(a_2 a_1^{-1} \cdot x)$, meaning $t(x) = t(a_2 a_1^{-1} \cdot x) a_2 a_1^{-1}$. Since $i$, $p$, $a_1$, and $a_2$ depend continuously on $x$, we conclude that $t$ is continuous.

It only remains to check that each set $C_m$ constructed as in the proof of Theorem \ref{thm:folner} is clopen. This is the case because $y \in C_m$ if and only if there is $i \in d+1$ and $p \leq \ell_i$ with $y \in Y_p^i$ and with the property that $t(f \cdot y) \in A_p^i f^{-1}$ for every $f \in F_m$ and $t(h \cdot y) \not\in A_p^i h^{-1}$ for all $h \in (K_1 \cdots K_{d+1}) \setminus F_m$.
\end{proof}

The combination of Theorems \ref{thm:normal_clopen} and \ref{thm:clopen_folner} allow us to find new examples of actions that are almost finite. From this property we derive consequences to $C^*$-algebras, ultimately finding new examples of classifiable crossed products.

Recall that for an action $G \acts X$, a set $A \subseteq X$ is topologically small if there exists $n \in \N$ such that whenever $g_1, \ldots, g_n \in G$ are distinct we have $\bigcap_{i=1}^n g_i \cdot A = \varnothing$. If $X$ is a topological space and $G$ acts by homeomorphisms, the action is said to have the \emph{topological small boundary property} if there is a basis for the topology on $X$ consisting of open sets whose boundaries are topologically small. We remark that every free continuous action of a countable group on a compact metrizable space with finite covering dimension automatically has the topological small boundary property \cite[Thm. 3.8]{Szabo}.

\begin{cor} \label{cor:cstar1}
Let $G$ be a countably infinite group that is an increasing union of groups satisfying the assumption of Theorem \ref{thm:normal_clopen}. Let $X$ be a compact metrizable space and let $G \acts X$ be a free continuous action. Assume either that $X$ has finite covering dimension or, more generally, that the action $G \acts X$ has the topological small boundary property. Then:
\begin{enumerate}
\item[(i)] the action $G \acts X$ is almost finite;
\item[(ii)] if additionally $G \acts X$ is minimal, then $C(X) \rtimes G$ is $\mathcal{Z}$-stable and has finite nuclear dimension.
\end{enumerate}
\end{cor}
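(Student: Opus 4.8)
The statement has two parts: part (i) is the substantive claim, and part (ii) is a formal consequence of (i) together with the structure theory for $C^*$-algebras. Throughout we may assume $G$ is countably infinite, the case of finite $G$ being degenerate. The first thing to record is the dynamical input coming from the earlier sections. Every finitely generated subgroup $H \le G$ is contained in one of the groups of which $G$ is the union, and each of those satisfies the hypothesis of Theorem \ref{thm:normal_clopen}; hence, by Theorem \ref{thm:normal_clopen}, there is $d = \asdim(H) < \infty$ such that every free continuous action of $G$ on a $0$-dimensional compact metrizable space $Z$ has, for each finite $B \subseteq H$, a clopen cover $\{U_0, \dots, U_d\}$ with each $\walkeq{U_i}{B}$ uniformly finite. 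In particular such actions have finite dynamic asymptotic dimension, and by Theorem \ref{thm:clopen_folner} they moreover admit exact clopen F{\o}lner tilings: for each finite $K \subseteq G$ and $\delta > 0$ there are $(K,\delta)$-invariant finite sets $F_1, \dots, F_n \subseteq G$ and clopen $C_1, \dots, C_n \subseteq Z$ making $(f,c) \mapsto f \cdot c$ a bijection $\bigsqcup_i F_i \times C_i \to Z$.

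Next I would deduce almost finiteness of $G \acts X$ by feeding this into the theory of almost finiteness developed by Kerr and by Kerr--Szab\'o. If the action $G \acts X$ has the (topological) small boundary property, then---$G$ being amenable and the action free---it is almost finite by Kerr--Szab\'o, using the fact (originating with Ornstein--Weiss) that free actions of countably infinite amenable groups are automatically almost finite in measure; this case uses nothing from the present paper beyond amenability. If instead $X$ has finite covering dimension, I would first replace $X$ by a zero-dimensional model: there is a $0$-dimensional compact metrizable space $Z$, a free continuous action $G \acts Z$, and a $G$-equivariant continuous surjection $\pi \colon Z \to X$ (this is well known, and freeness of $G \acts Z$ is inherited from that of $G \acts X$). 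By the dynamical input above, $G \acts Z$ has finite dynamic asymptotic dimension; combining the clopen tower structure on $Z$ with the factor map $\pi$ and the finiteness of $\dim X$ then yields the small boundary property for $G \acts X$, and the previous paragraph applies. This proves (i).

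To obtain (ii), suppose in addition that $G \acts X$ is minimal, and set $A = C(X) \rtimes G$. Then $A$ is unital, separable and nuclear (nuclearity because $G$ is amenable) and is simple (freeness plus minimality of the action). By Kerr's theorem, almost finiteness of a free minimal action of a countably infinite amenable group implies that $A$ is $\mathcal{Z}$-stable, and a unital simple separable nuclear $\mathcal{Z}$-stable $C^*$-algebra has nuclear dimension at most $1$ by the theorem of Castillejos--Evington--Tikuisis--White--Winter; this is exactly (ii). For the further assertions of the introductory corollary one notes in addition that $A$ satisfies the UCT (since $G$ is amenable, hence a-T-menable) and that its traces are quasidiagonal by Tikuisis--White--Winter, so that the classification theorem for unital simple separable nuclear $\mathcal{Z}$-stable $C^*$-algebras in the UCT class applies; algebras in this class of finite nuclear dimension are simple ASH of topological dimension at most $2$.

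The main obstacle is the finite-covering-dimension half of the middle step: extracting the small boundary property of the possibly positive-dimensional system $G \acts X$ from its zero-dimensional, finite-dynamic-asymptotic-dimension model $G \acts Z$. Everything else is either supplied by the earlier sections of the paper (Theorems \ref{thm:normal_clopen} and \ref{thm:clopen_folner}) or is a black-box appeal to the $C^*$-classification literature.
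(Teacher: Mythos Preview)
Your proposal is correct and follows essentially the same route as the paper: the zero-dimensional case is handled by Theorems \ref{thm:normal_clopen} and \ref{thm:clopen_folner} together with \cite[Thm.~10.2]{Kerr}, and the remaining cases (finite covering dimension, small boundary property) are exactly what the paper black-boxes as \cite[Thm.~7.6 and Cor.~7.7]{KeSz}---which is precisely the ``obstacle'' you flag in your final paragraph. Your observation that the small-boundary-property case needs only amenability of $G$, not the specific structural hypothesis, is correct and is implicit in those same Kerr--Szab\'o results.
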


\begin{proof}
Statement (i) follows from Theorem \ref{thm:normal_clopen}, Theorem \ref{thm:clopen_folner}, and \cite[Thm. 10.2]{Kerr} when $X$ is $0$-dimensional, and from \cite[Thm. 7.6 and Cor. 7.7]{KeSz} in the remaining cases. When the action is minimal, $\mathcal{Z}$-stability follows from (i) and \cite[Thm. 12.4]{Kerr}, and finite nuclear dimension follows from $\mathcal{Z}$-stability and \cite[Thm. A]{CETWW}.
\end{proof}

\begin{cor}
Consider the collection of free minimal continuous actions $G \acts X$ where $G$ is a countably infinite group that is an increasing union of groups satisfying the assumption of Theorem \ref{thm:normal_clopen}, $X$ is a compact metrizable space, and either $X$ has finite covering dimension or the action $G \acts X$ has the topological small boundary property. Then the crossed products arising from these actions are classified by the Elliot invariant (ordered $K$-theory paired with tracial states) and are simple ASH algebras of topological dimension at most $2$.
\end{cor}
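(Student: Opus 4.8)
The plan is to recognize $A := C(X)\rtimes G$ as a member of the class of $C^*$-algebras to which the classification theorem of the Elliott program applies, and then to invoke the range-of-invariant results to pin down its ASH model. The previous corollary (Corollary~\ref{cor:cstar1}) already supplies the two hardest ingredients: since $G$ is a union of groups satisfying the assumption of Theorem~\ref{thm:normal_clopen}, $X$ is compact metrizable, $G\acts X$ is free, and $X$ has finite covering dimension (or the action has the topological small boundary property), the minimal action $G\acts X$ has $C(X)\rtimes G$ both $\mathcal Z$-stable and of finite nuclear dimension.

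The remaining regularity hypotheses follow from amenability of $G$, which holds because each group satisfying the assumption of Theorem~\ref{thm:normal_clopen} is an iterated extension of finite and abelian groups by a group of (uniform local) polynomial volume-growth or the lamplighter group --- each such building block, and hence the whole group, being amenable --- and a directed union of amenable groups is amenable. Thus $A$ is separable (as $X$ is metrizable and $G$ countable), unital (as $X$ is compact), and nuclear. The transformation groupoid $X\rtimes G$ is amenable, so $A$ satisfies the Universal Coefficient Theorem by Tu's theorem. Amenability of $G$ together with compactness of $X$ produces a $G$-invariant Borel probability measure on $X$, which induces a tracial state on $A$; in particular $A$ is stably finite with nonempty trace space. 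Finally, freeness (hence topological freeness) together with minimality of the action makes $A$ simple.

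Consequently $A$ is a simple, separable, unital, nuclear, $\mathcal Z$-stable $C^*$-algebra satisfying the UCT and having nonempty trace space. By the classification theorem for such algebras (the culmination of the Elliott program), $A$ is determined up to isomorphism by its Elliott invariant $\bigl(K_0(A),K_0(A)_+,[1_A],K_1(A),T(A),\rho_A\bigr)$; in particular, any two crossed products arising from actions in the stated collection are isomorphic if and only if their Elliott invariants are. By the range-of-invariant results of the classification program, this Elliott invariant is moreover realized by a simple unital ASH algebra of topological dimension at most $2$, and since that algebra lies in the same classifiable class and carries the same invariant, classification yields an isomorphism of it with $A$. Hence $A$ is a simple ASH algebra of topological dimension at most $2$, as asserted.

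The argument involves no genuinely new difficulty: everything reduces to checking the hypotheses of an established classification theorem. The point requiring the most attention is the repeated use of amenability of $G$ --- for nuclearity, for the UCT via amenability of the transformation groupoid and Tu's theorem, and for the existence of an invariant measure placing $A$ in the stably finite regime --- together with the (purely $C^*$-algebraic) appeal to the range theorem to upgrade ``$A$ is classifiable'' to ``$A$ is an explicit ASH model of topological dimension at most $2$''.
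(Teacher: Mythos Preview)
Your proof is correct and follows essentially the same route as the paper: invoke Corollary~\ref{cor:cstar1} for finite nuclear dimension, use amenability of $G$ for the UCT via Tu and for stable finiteness, and then apply the classification theorem together with the ASH range-of-invariant result (the paper cites \cite[Cor.~D and Thm.~6.2.(iii)]{TWW} for these last two steps). Your write-up is more explicit than the paper's about verifying the standing hypotheses (separability, simplicity, nonempty trace space, why $G$ is amenable), but the argument is the same.
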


\begin{proof}
The crossed products $C(X) \rtimes G$ of these actions have finite nuclear dimension by Corollary \ref{cor:cstar1}, satisfy the universal coefficient theorem (since the groups we consider are amenable) \cite[Prop. 10.7]{Tu99}, and are stably finite (since they come from free minimal actions of amenable groups). Therefore these crossed products are classified by their Elliot invariants \cite[Cor. D]{TWW} and are simple ASH algebras of topological dimension at most $2$ \cite[Thm. 6.2.(iii)]{TWW}.
\end{proof}

\thebibliography{999}

%\bibitem{Ba72}
%H. Bass,
%\textit{The degree of polynomial growth of finitely generated nilpotent groups}, Proc. London Math. Soc. s3-25 (1972), no. 4, 603--614.

\bibitem{BD08}
G. Bell and A. Dranishnikov,
\textit{Asymptotic dimension}, Topology and its Applications 155 (2008), no. 12, 1265--1296.

\bibitem{CETWW}
J. Castillejos, S. Evington, A. Tikuisis, S. White, and W. Winter,
\textit{Nuclear dimension of simple $C^*$-algebras}, Inventiones Mathematicae 224 (2021), 245--290.

\bibitem{CJMST-D}
C. Conley, S. Jackson, A. Marks, B. Seward, and R. Tucker-Drob,
\textit{Hyperfiniteness and Borel combinatorics}, J. European Math. Soc. 22, No. 3 (2020), 877--892 .

\bibitem{CJKMST-D18}
C. Conley, S. Jackson, D. Kerr, A. Marks, B. Seward, and R. Tucker-Drob,
\textit{F{\o}lner tilings for actions of amenable groups}, Math. Annalen 271 (2018), 663--683.

\bibitem{CM16}
C. Conley and B. Miller,
\textit{A bound on measurable chromatic numbers of locally finite Borel graphs}, Mathematical Research Letters 23 (2016), no. 6, 1633--1644.

\bibitem{DJK94}
R. Dougherty, S. Jackson, and A. Kechris,
\textit{The structure of hyperfinite Borel equivalence relations}, Trans. Amer. Math. Soc. 341 (1994), 193--225.

\bibitem{DHZ19}
T. Downarowicz, D. Huczek, and G. Zhang,
\textit{Tilings of amenable groups}, Journal f{\"u}r die reine und angewandte Mathematik 747 (2019), 277--298.

\bibitem{DZ}
T. Downarowicz and G. Zhang,
\textit{Symbolic extensions of amenable group actions and the comparison property}, to appear in Memoirs of the American Mathematical Society.

%\bibitem{DZ}
%T. Downarowicz and G. Zhang,
%\textit{The comparison property of amenable groups}, arXiv:1712.05129.

\bibitem{FM77}
J. Feldman and C. C. Moore,
\textit{Ergodic equivalence relations, cohomology and von Neumann algebras, I.}, Transactions of the American Mathematical Society 234 (1977), 289--324.

\bibitem{GJ15}
S. Gao and S. Jackson,
\textit{Countable abelian group actions and hyperfinite equivalence relations}, Inventiones Mathematicae 201 (2015), 309--383.

\bibitem{GJKSa}
S. Gao, S. Jackson, E. Krohne, and B. Seward,
\textit{Forcing constructions and countable Borel equivalence relations}, preprint. arXiv:1503.07822.

\bibitem{GJKSb}
S. Gao, S. Jackson, E. Krohne, and B. Seward,
\textit{Continuous combinatorics of abelian group actions}, preprint. arXiv:1803.03872.

\bibitem{GJKSc}
S. Gao, S. Jackson, E. Krohne, and B. Seward,
\textit{Borel combinatorics of abelian group actions}, in preparation.

\bibitem{G93}
M. Gromov,
\textit{Asymptotic invariants of infinite groups,} Geometric group theory, Vol. 2 (Sussex, 1991), London Math. Soc. Lecture Note Ser., vol. 182, Cambridge Univ. Press, Cambridge, 1993, pp. 1--295.

\bibitem{G81}
M. Gromov,
\textit{Groups of polynomial growth and expanding maps}, Publications Math\'{e}matiques de l'IH\'{E}S 53 (1981), 53--73.

\bibitem{GWY17}
E. Guentner, R. Willet, and G. Yu,
\textit{Dynamic asymptotic dimension: relation to dynamics, topology, coarse geometry, and $C^*$-algebras}, Mathematische Annalen 367 (2017), 785--829.

%\bibitem{Gu73}
%Y. Guivarc'h,
%\textit{Croissance polynomiale et périodes des fonctions harmoniques}, Bull. Soc. Math. France 101 (1973), 333--379.

\bibitem{JKL02}
S. Jackson, A.S. Kechris, and A. Louveau,
\textit{Countable Borel equivalence relations}, Journal of Mathematical Logic 2 (2002), no. 1, 1--80.

\bibitem{K95}
A. Kechris,
Classical Descriptive Set Theory. Springer-Verlag, New York, 1995.

\bibitem{KST99}
A. Kechris, S. Solecki, and S. Todorcevic,
\textit{Borel chromatic numbers}, Adv. in Math. 141 (1999), 1--44.

\bibitem{Kerr}
D. Kerr,
\textit{Dimension, comparison, and almost finiteness}, J. Eur. Math. Soc. 22 (2020), no. 11, 3697--3745.

\bibitem{KeSz}
D. Kerr and G. Szab{\'o},
\textit{Almost finiteness and the small boundary property}, Comm. Math. Phys. 374 (2020), 1--31.

\bibitem{LenRob}
J. Lennox and D. Robinson,
The Theory of Infinite Soluble Groups. Oxford Mathematical Monographs. The Clarendon Press, Oxford University Press, Oxford, 2004.

\bibitem{M}
A. Marks,
\textit{A determinacy approach to Borel combinatorics}. J. Amer. Math. Soc. 29 (2016), 579--600.

\bibitem{OW80}
D. Ornstein and B. Weiss,
\textit{Ergodic theory of amenable group actions I. The Rohlin lemma}, Bulletin of the American Mathematical Society 2 (1980), 161--164.

\bibitem{OW87}
D. Ornstein and B. Weiss,
\textit{Entropy and isomorphism theorems for actions of amenable groups}, J. Anal. Math. 48 (1987), 1--141.

\bibitem{SS}
S. Schneider and B. Seward,
\textit{Locally nilpotent groups and hyperfinite equivalence relations}, arXiv:1308.5853.

\bibitem{SS88}
T. Slaman and J. Steel,
\textit{Definable functions on degrees}, Cabal Seminar 81--85, 37--55. Lecture Notes in Mathematics 1333. Springer-Verlag, 1988.

\bibitem{Szabo}
G. Szab{\'o},
\textit{The Rokhlin dimension of topological $\Z^m$-actions}, Proc. London Math. Soc. 110 (2015), no. 3, 673--694.

\bibitem{TWW}
A. Tikuisis, S. White, and W. Winter,
\textit{Quasidiagonality of nuclear $C^*$-algebras}, Ann. of Math. 185 (2017), no. 2, 229--284.

\bibitem{Tu99}
J.-L. Tu,
\textit{La conjecture de Baum-Connes pour les feuilletages moyennables}, K-Theory 17 (1999), 215--264.

\bibitem{W84}
B. Weiss,
\textit{Measurable dynamics}, Conference in Modern Analysis and Probability (R. Beals et. al. eds.), 395--421. Contemporary Mathematics 26 (1984), American Mathematical Society, 1984.

\bibitem{Wo68}
J. Wolf, 
\textit{Growth of finitely generated solvable groups and curvature of Riemannian manifolds}, J. Diff. Geom. 2 (1968), 421--446.

\end{document}